\theoremstyle{definition}
\newtheorem{thm}{Theorem}
\newtheorem{lm}[thm]{Lemma}
\newtheorem{cor}[thm]{Corollary}
\newtheorem{defn}[thm]{Definition}
\newtheorem{remark}[thm]{Remark}
\newtheorem{prop}[thm]{Proposition}
\newtheorem{conj}[thm]{Conjecture}
\numberwithin{thm}{section}
\newcommand{\Rmnum}[1]{uppercase\expandafter{\romannumeral #1}}
\newcommand{\pf}{\mathrm{Pf} }
\newcommand{\comp}[1]{#1^{\mathrm{c}}}
\def\ii{{\rm i}}
\def\dd{{\rm d}}
\def\yalph{Y}
\newcommand*{\bigdot}[1]{%
  \accentset{\mbox{\large\bfseries .}}{#1}}
\numberwithin{equation}{section}
\DeclareMathOperator{\Span}{Span}
\DeclareMathOperator{\Pf}{Pf} 
\def\be{\begin{equation}}
\def\ee{\end{equation}}
\title[Symmetric functions from the six-vertex model in half-space]{Symmetric functions from the six-vertex model \\ in half-space}
\author{Alexandr Garbali, Jan de Gier, William Mead and Michael Wheeler}
\address{School of Mathematics and Statistics, University of Melbourne, Victoria 3010, Australia}
\email{alexandr.garbali@unimelb.edu.au, jdgier@unimelb.edu.au, wmead@student.unimelb.edu.au, wheelerm@unimelb.edu.au}
\begin{document}
\maketitle 

\begin{abstract}
We study the stochastic six-vertex model in half-space with generic integrable boundary weights, and define two families of multivariate rational symmetric functions. Using commutation relations between double-row operators, we prove a skew Cauchy identity of these functions. In a certain degeneration of the right-hand side of the Cauchy identity we obtain the partition function of the six-vertex model in a half-quadrant, and give a Pfaffian formula for this quantity. The Pfaffian is a direct generalization of a formula obtained by Kuperberg in his work on symmetry classes of alternating-sign matrices. 

One of our families of symmetric functions admits an integral (sum over residues) formula, and we use this to conjecture an orthogonality property of the dual family. We conclude by studying the reduction of our integral formula to transition probabilities of the (initially empty) asymmetric simple exclusion process on the half-line. 
\end{abstract}

\setcounter{tocdepth}{1}
\tableofcontents

\section{Introduction}

\subsection{Background}
\label{intro:background}
The connection between symmetric functions, exactly solvable vertex models and stochastic processes is a fertile branch of integrable probability, and is motivated by the rigorous study of the asymptotic behaviour of certain random variables within the KPZ universality class. Cornerstone results are the Macdonald processes \cite{borodin_corwin_2014} and their half-space analogues \cite{barraquand_half-space_2020}, as well as the construction of probability measures on full-space from Cauchy identities of multivariate partition functions within the stochastic six-vertex model \cite{borodin_family_2017,borodin_stochastic_2016}. Generalizations to the setting of higher-spin \cite{corwin2016stochastic,borodin_higher_2018} and higher-rank \cite{borodin_coloured_2022,borodin2020observables} models are also known, and have been very topical in recent literature. 

The study of stochastic vertex models in half-space is less developed and comes with additional complications due the presence of a boundary. Some of the first algebraic results in this setting were obtained in \cite{barraquand_stochastic_2018}, with application to rigorous asymptotics of the asymmetric simple exclusion process (ASEP) on the half-line, albeit for a rather restricted case of hopping rates at the boundary. More recently, the results of \cite{barraquand_stochastic_2018} were upgraded to fully generic boundaries in \cite{he_boundary_2023}. In each of these works the interplay with symmetric functions is always close to the foreground, as both \cite{barraquand_stochastic_2018,he_boundary_2023} rely on symmetric function identities obtained in \cite{betea_refined_2015} and \cite{imamura_solvable_2022} to match half-space vertex-model expectations with quantities in the half-space and free-boundary Schur processes, respectively.

A much wider body of work exists in the context of the ASEP (recovered as a continuous-time limit of the stochastic six-vertex model \cite{aggarwal_convergence_2016}) with open boundaries. The integrable structure of the ASEP on a strip (with open left and right boundaries), arising from commuting double-row transfer matrices \cite{sklyanin_boundary_1988}, was reviewed in \cite{Crampe_2014} with a successive exploration of various integrable boundary conditions in \cite{Crampe_2016}. The dynamic phase diagram based on the Bethe ansatz solution for the spectral gap of the open ASEP was computed in \cite{gier_essler_ASEP}. A more recent approach to spectral gap analysis for the open ASEP uses high genus Riemann surface analysis of the Bethe equations \cite{Godreau_2020}. Integrability of the classical open ASEP is relevant to quantum stochastic systems described by integrable Lindbladians \cite{essler_piroli}.

Substantial literature has also been devoted to the stationary properties of the ASEP with two open boundaries. In the context of symmetric polynomials, the ASEP stationary state is related to Askey--Wilson polyomials \cite{Uchiyama_2004,Corteel2006TableauxCF} and the multivariate Koornwinder polynomials in the case of multi-species, or coloured, models \cite{Cantini_2016,Finn_2017}.

In the setting of the half-line (one non-trivial boundary), distribution functions can be accessed following the initial results of \cite{tracy_asymmetric_2013} for reflecting boundary conditions. Recent works include results for transition probabilities, boundary current fluctuation analysis for special boundary conditions \cite{barraquand_half-space_2020,barraquand_half-space_2022,imamura_solvable_2022,he_boundary_2023} and Markov duality analysis \cite{barraquand_markov_2024}. While analyses of current fluctuations both at the boundary and in the bulk has been studied for last passage percolation \cite{baik_pfaffian_2018}.



The goal of the present text is to further develop our understanding of stochastic vertex models in half-space, viewed through the lens of multivariate symmetric functions. Stated in the simplest possible terms, our aim is to construct half-space analogues of the symmetric functions introduced by Borodin in \cite{borodin_family_2017}, and to study their algebraic properties (Cauchy summation identities, integral formulas and orthogonality). As mentioned above, classical symmetric functions (specifically, Hall--Littlewood and Schur polynomials) have already played an important role in probabilistic results in the half-space setting. In the current paper, however, our rational symmetric functions are new, and they are a key result in their own right.

\medskip
We proceed to a detailed summary of our main results.

\subsection{Six-vertex model with a boundary}
\label{intro:6vm}

Our primary algebraic tool is the \textit{stochastic six-vertex model}. The interpretation of the six-vertex model as a Markov process in the quadrant dates back to a number of earlier works; see, for example, \cite{gwa_six-vertex_1992,borodin_stochastic_2016,borodin_higher_2018}. In the current text we will follow the same conventions as \cite[Chapter 1]{borodin_coloured_2022}. 

We shall assign weights to finite collections of \textit{paths} drawn on the square lattice. Each edge of the lattice (whether horizontal or vertical) supports at most one path, and the vertices obey the \textit{ice rule}, which enforces that the total number of paths entering a vertex is the same as the total number of paths exiting it. Imposing the ice rule, one obtains six possible types of vertices:
\begin{align}
		\label{Stochastic 6VM weights table-intro} 
		\begin{tabular}{ccc}
			\qquad
			\begin{tikzpicture}
				\draw[gray,dashed,line width=1pt,-] (-1,0) -- (1,0);
				\draw[gray,dashed,line width=1pt,-] (0,-1) -- (0,1);
			\end{tikzpicture}
			\qquad
			&
			\qquad
			\begin{tikzpicture}
				\draw[gray,dashed,line width=1pt,-] (-1,0) -- (1,0);
				\draw[red,line width=2pt,->] (0,-1) -- (0,1);
			\end{tikzpicture}
			\qquad
			&
			\qquad
			\begin{tikzpicture}
				\draw[gray,dashed,line width=1pt,-] (0,1) -- (0,0) -- (-1,0);
				\draw[red,line width=2pt,->] (0,-1) -- (0,0) -- (1,0);
			\end{tikzpicture}
			\qquad
			\\
			\qquad
			$1$
			\qquad
			& 
			\qquad
			$\dfrac{q(1-z)}{1-qz}$
			\qquad
			& 
			\qquad
			$\dfrac{1-q}{1-qz}$
			\qquad
			\\
			\vspace{0.1cm}
			\\
			\qquad
			\begin{tikzpicture}
				\draw[gray,dashed,line width=1pt,-] (-1,0) -- (1,0);
				\draw[gray,dashed,line width=1pt,-] (0,-1) -- (0,1);
				\draw[red,line width=2pt,->] (-1,0) -- (-0.2,0) -- (0,0.2) -- (0,1);
				\draw[red,line width=2pt,<-] (1,0) -- (0.2,0) -- (0,-0.2) -- (0,-1);
			\end{tikzpicture}
			\qquad
			&
			\qquad
			\begin{tikzpicture}
				\draw[red,line width=2pt,->] (-1,0) -- (1,0);
				\draw[gray,dashed,line width=1pt,-] (0,-1) -- (0,1);
			\end{tikzpicture}
			\qquad
			&
			\qquad
			\begin{tikzpicture}
				\draw[gray,dashed,line width=1pt,-] (0,-1) -- (0,0) -- (1,0);
				\draw[red,line width=2pt,->] (-1,0) -- (0,0) -- (0,1);
			\end{tikzpicture}
			\qquad
			\\
			\qquad
			$1$
			\qquad
			& 
			\qquad
			$\dfrac{1-z}{1-qz}$
			\qquad
			&
			\qquad
			$\dfrac{z(1-q)}{1-qz}$
			\qquad
			\\
		\end{tabular} 
	\end{align}
 where we have indicated the weight of each vertex underneath its picture. Here $q$ denotes the \textit{quantum deformation parameter} arising from the underlying $U_q\left(\widehat{\mathfrak{sl}}_2\right)$ algebra, while $z$ denotes the \textit{spectral parameter} associated to the vertex. The weight of a lattice configuration is defined as the product of the weights of the individual vertices which comprise it. 
 
 The weights \eqref{Stochastic 6VM weights table-intro} have a number of well known fundamental properties. The first is that they obey the Yang--Baxter equation; see Proposition \ref{prop:YB} of the text. This property endows the model with its rich algebraic structure and facilitates the exact computation of observables. The second is that the weights have a sum-to-unity property; see Proposition \ref{prop:stoch}. This property allows one to assign probability measures to collections of paths in the square grid, and is the key feature that in turn allows reductions to one-dimensional stochastic processes such as the \textit{asymmetric simple exclusion process}.

 It turns out to be possible to extend both of the above properties, namely integrability and Markovian dynamics, to the setting of \textit{boundary vertices}. A boundary vertex consists of a single incoming and outgoing edge, such that the two edges join to form a right angle. Once again, each edge of a boundary vertex supports at most one path. However, in contrast to the bulk vertices \eqref{Stochastic 6VM weights table-intro}, we no longer enforce any ice rule on boundary vertices; this means that the total flux of paths through a boundary vertex need not be conserved. In the absence of an ice rule, there are two possible path assignments to the incoming/outgoing edges, leading to four types of boundary vertices:
 \begin{align}
		\label{Stochastic boundary K-weight table-intro}
		\begin{tabular}{cccc}
			\qquad
			\begin{tikzpicture}[baseline={([yshift=-.5ex]current bounding box.center)}]
				\draw[gray,dashed,line width=1pt,-] (1,0) -- (0,1) -- (1,2);
				\draw[blue,fill=blue] (0,1) circle (0.1cm);
			\end{tikzpicture}
			\qquad
			&
			\qquad
			\begin{tikzpicture}[baseline={([yshift=-.5ex]current bounding box.center)}]
				\draw[gray,dashed,line width=1pt,-] (1,0) -- (0,1);
				\draw[red,line width=2pt,->] (0,1) -- (1,2);
				\draw[blue,fill=blue] (0,1) circle (0.1cm);
			\end{tikzpicture}
			\qquad
			&
			\qquad
			\begin{tikzpicture}[baseline={([yshift=-.5ex]current bounding box.center)}]
				\draw[gray,dashed,line width=1pt,-] (0,1) -- (1,2);
				\draw[red,line width=2pt,-] (1,0) -- (0,1);
				\draw[red,line width=2pt,->] (1,0) -- (0.5,0.5);
				\draw[blue,fill=blue] (0,1) circle (0.1cm);
			\end{tikzpicture}
			\qquad
			&
			\qquad
			\begin{tikzpicture}[baseline={([yshift=-.5ex]current bounding box.center)}]
				\draw[gray,dashed,line width=1pt,-] (0,1) -- (1,2);
				\draw[red,line width=2pt,->] (1,0) -- (0,1) -- (1,2);
				\draw[red,line width=2pt,->] (1,0) -- (0.5,0.5);
				\draw[blue,fill=blue] (0,1) circle (0.1cm);
			\end{tikzpicture}
			\qquad
			\\
			\vspace{0cm}
			\\
			\qquad
			$1-h(x)$
			\qquad
			&
			\qquad
			$h(x)$
			\qquad
			&
			\qquad
			$\dfrac{-h(x)}{ac}$
			\qquad
			&
			\qquad
			$1+\dfrac{h(x)}{ac}$
			\qquad
		\end{tabular}
	\end{align}
where we have defined 
\begin{align}
\label{h-intro}
h(x) = \frac{ac(1-x^2)}{(a-x)(c-x)}.   
\end{align} 
The weights \eqref{Stochastic boundary K-weight table-intro} depend on a spectral parameter $x$, but unlike their bulk counterparts, they no longer have any $q$ dependence. Instead, they acquire dependence on two \textit{boundary parameters} $a$ and $c$, whose values are free but the same for all boundary vertices. 

One may verify that the weights \eqref{Stochastic boundary K-weight table-intro}, together with \eqref{Stochastic 6VM weights table-intro}, satisfy the \textit{reflection equation}, which is the boundary-analogue of the Yang--Baxter equation introduced by Sklyanin \cite{sklyanin_boundary_1988}. It is also clear that these weights have a sum-to-unity property and are non-negative for certain mild choices of $x$ and $a,c$, allowing one to maintain links with probability.

\subsection{Double-row operators}
\label{intro:operators}

A standard algebraic device in the setting of lattice models with a boundary is the \textit{double-row transfer matrix}. Motivated by this, we introduce a family of double-row operators whose matrix elements are computed as partition functions in the model \eqref{Stochastic 6VM weights table-intro}--\eqref{Stochastic boundary K-weight table-intro}.

In the sequel, let $\mu,\nu \subset \mathbb{N}$ be finite subsets of the natural numbers\footnote{That is, both $\mu$ and $\nu$ have finitely many elements of finite size.}, and for each $i \in \mathbb{N}$ introduce the indicator function $\eta_i^{\mu} = \bm{1}_{i \in \mu}$ which assigns a value of $1$ if $i$ is an element of $\mu$, and $0$ otherwise. We then construct the following partition function on a semi-infinite lattice:
\begin{align}
\label{A-intro}
A_{\mu \rightarrow \nu}(x)
:=
\begin{tikzpicture}[baseline={([yshift=-.5ex]current bounding box.center)},scale=0.7]
			\draw[lightgray,line width=1.5pt,->] (8,0) -- (1,0) -- (0,0.5) -- (1,1) -- (8,1);
			\draw[lightgray,line width=1.5pt,->] (8,0) -- (7.5,0);
			\draw[blue,fill=blue] (0,0.5) circle (0.1cm);
			\foreach \x in {2,...,7}
			{\draw[lightgray,line width=1.5pt,->] (\x,-0.5) -- (\x,1.5);}
			\node[right] at (8,0) {$0 \leftarrow x$};
			\node[right] at (8,1) {$0 \rightarrow x^{-1}$};
			\node[below] at (2,-0.5) {$\eta^\mu_1$};
			\node[below] at (3,-0.5) {$\eta^\mu_2$};
			\node[below] at (4,-0.5) {$\eta^\mu_3$};
			\node[below] at (5,-0.5) {$\cdots$};
			\node[below] at (6,-0.5) {$\cdots$};
			\node[below] at (7,-0.5) {$\cdots$};
			\node[below] at (2,-1.2) {$\uparrow$}; 
			\node[below] at (3,-1.2) {$\uparrow$};
			\node[below] at (4,-1.2) {$\uparrow$};  
			\node[below] at (2,-1.8) {$y_1$}; 
			\node[below] at (3,-1.8) {$y_2$};
			\node[below] at (4,-1.8) {$y_3$}; 
			\node[above] at (2,1.5) {$\eta^\nu_1$};
			\node[above] at (3,1.5) {$\eta^\nu_2$};
			\node[above] at (4,1.5) {$\eta^\nu_3$};
			\node[above] at (5,1.5) {$\cdots$};
			\node[above] at (6,1.5) {$\cdots$};
			\node[above] at (7,1.5) {$\cdots$};
		\end{tikzpicture}
  \end{align}
  where the assignment of $1$ to any edge of the lattice means that a path is present there, while the assignment of $0$ means that it is vacant. The vertices used in the top row of \eqref{A-intro} are given by the table \eqref{Stochastic 6VM weights table-intro}, where in the $j$-th vertex in the row (read from left to right) we set $z \mapsto x y_j$. Similarly, the vertices used in the bottom row of \eqref{A-intro} are given by \eqref{Stochastic 6VM weights table-intro} under $90^{\circ}$ counterclockwise rotation, where in the $j$-th vertex in the row we set $z \mapsto x/y_j$. The boundary vertex appearing in \eqref{A-intro} has weights given by \eqref{Stochastic boundary K-weight table-intro}. Note that $A_{\mu \rightarrow \nu}(x)$ depends implicitly on the alphabet $Y=(y_1,y_2,\dots)$, but we suppress this dependence in our notation where there is no potential for confusion. 
  
  One evaluates $A_{\mu \rightarrow \nu}(x)$ by computing the weighted sum over all possible path configurations of the picture \eqref{A-intro}; that is, by computing it as a statistical mechanical partition function. Although we have defined $A_{\mu \rightarrow \nu}(x)$ on a semi-infinite lattice, since $\mu$ and $\nu$ are finite subsets of $\mathbb{N}$ it is easily verified that sufficiently far from the boundary vertex one sees only empty vertices (devoid of paths). The weight of such vertices is $1$, meaning that the quantity $A_{\mu \rightarrow \nu}(x)$ is a finite sum of rational functions in $x,y_j,q,a,c$, for any fixed $\mu,\nu$. 
  
  In view of the sum-to-unity property of the bulk weights \eqref{Stochastic 6VM weights table-intro} and boundary weights \eqref{Stochastic boundary K-weight table-intro}, one finds that $A_{\mu \rightarrow \nu}(x)$ obeys a sum-to-unity property:
\begin{align}
\label{eq:sum-to-unity kernel}
\sum_{\nu \in \mathbb{W}} A_{\mu \rightarrow \nu}(x)
=
1,
\quad
\text{for any fixed}\ \mu\in \mathbb{W},
\end{align}
where $\mathbb{W}$ denotes the set of finite subsets of $\mathbb{N}$. This fact allows us to view $A_{\mu \rightarrow \nu}(x)$ as a Markov kernel and to generate a discrete-time Markov process of paths in half-space, similarly to what has been done in the context of the stochastic six-vertex model in a quadrant \cite{borodin_stochastic_2016,borodin_higher_2018}.

It is then natural to construct an infinite-dimensional Markov matrix $A(x)$ with entries $A_{\mu \rightarrow \nu}(x)$, where $\mu$ is the row index and $\nu$ the column index, which acts in the vector space obtained by taking the formal linear span of all finite subsets of $\mathbb{N}$. Our first result is that these operators commute:
\begin{prop}[Proposition \ref{A commutation prop} below]
	\label{A commutation prop-intro}
	Fix $x_1,x_2 \in \mathbb{C}$ and assume there exists $\rho>0$ such that
	\begin{equation}
		\label{A operator commutation condition-intro}
		\abs{\frac{1-x_iy_k}{1-qx_iy_k}\frac{q(1-x_j/y_k)}{1-qx_j/y_k}}\leq \rho <1,
	\end{equation}
    for all $1 \leq i\neq j \leq 2$ and for all $k\in\mathbb{N}$. One then has
	\begin{equation}
		\label{eq:Acommute-intro}
		A(x_1)A(x_2) = A(x_2) A(x_1),
	\end{equation}
 where the latter identity holds in ${\rm End}(\Span \mathbb{W})$.
\end{prop}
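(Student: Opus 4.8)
The plan is to run the standard ``train'' (or railroad) argument for commuting double-row transfer matrices, originally due to Sklyanin \cite{sklyanin_boundary_1988}, adapted here to the semi-infinite geometry. The algebraic inputs are already on the table: the Yang--Baxter equation (Proposition \ref{prop:YB}) for the bulk weights \eqref{Stochastic 6VM weights table-intro}, and the reflection equation satisfied jointly by the bulk weights and the boundary weights \eqref{Stochastic boundary K-weight table-intro}. I would view $A(x)$ as a double-row transfer matrix whose auxiliary space is the folded horizontal rapidity line (carrying $x$ along the bottom, turning around at the boundary vertex, and returning as $x^{-1}$ along the top), and whose quantum space is $\bigotimes_{j\ge 1} V_{y_j}$, the vertical lines. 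The product $A(x_1)A(x_2)$ is then the partition function of two such double-rows stacked vertically and threaded by a common quantum space.

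First I would form $A(x_1)A(x_2)$ and attach, at the far-right end of the lattice (``at infinity''), an auxiliary six-vertex $\check R$-vertex built from the same weights \eqref{Stochastic 6VM weights table-intro} intertwining the two horizontal rapidity lines. Since every external horizontal edge carries the empty (vacuum) state and the all-empty vertex has weight $1$, this insertion is trivial and costs nothing. I would then drag the $\check R$-vertex leftward through the lattice one column at a time: each passage through a bulk column is an application of the Yang--Baxter equation (Proposition \ref{prop:YB}) to both the top and bottom rows of the two double-rows. This is the $RLL$ part of the argument, and it transports the $\check R$-vertex across a column without changing the partition function.

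When the $\check R$-vertex reaches the boundary vertices on the far left, I would use the reflection equation to pass it through the pair of boundary weights, with the effect of interchanging the two rapidities $x_1 \leftrightarrow x_2$ across the two double-rows. At this stage the lattice is precisely the one computing $A(x_2)A(x_1)$, now carrying a single residual $\check R$-vertex at its left end; invertibility (unitarity) of $\check R$ together with the vacuum boundary condition lets me strip this vertex off, yielding $A(x_1)A(x_2) = A(x_2)A(x_1)$ as partition functions, hence as operators on $\Span\mathbb{W}$.

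The main obstacle is analytic rather than algebraic: all of the above must be justified on a genuinely semi-infinite lattice, where the operator product $A(x_1)A(x_2)$ is an infinite sum over intermediate configurations and the $\check R$-vertex is inserted ``at infinity.'' This is exactly the role of the hypothesis \eqref{A operator commutation condition-intro}: the bound $\rho<1$ forces the relevant transmission weights, precisely the products $\abs{\frac{1-x_iy_k}{1-qx_iy_k}\cdot\frac{q(1-x_j/y_k)}{1-qx_j/y_k}}$ appearing there, to decay geometrically in the column index $k$, so that columns far from the boundary contribute negligibly and each matrix entry of $A(x_1)A(x_2)$ is absolutely convergent. I would therefore first execute the train argument on the lattice truncated to its first $N$ columns, where every manipulation is finite and the Yang--Baxter and reflection equations supply exact cancellations, and then take $N\to\infty$, using \eqref{A operator commutation condition-intro} to dominate the tail and to check that the insertion-at-infinity and removal-at-the-boundary steps are compatible with the limit. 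Controlling this limit uniformly on each fixed matrix element is where the real work lies.
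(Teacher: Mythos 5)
Your proposal is correct and is essentially the paper's own proof: Proposition \ref{A commutation prop} is established there by exactly this train argument, run on the $N$-column truncation with the crossing vertex built into the finite-$N$ object, together with the convergence lemma (Proposition \ref{prop: AA limit condition}) showing that under hypothesis \eqref{A operator commutation condition} the extra term produced by that crossing — a $\bra{\mu}B^{(N)}(x_1)C^{(N)}(x_2)\ket{\nu}$ contribution, so the insertion is \emph{not} literally free at finite $N$ — is bounded by $\rho^{N-\tau}$ times a fixed quantity and vanishes as $N\to\infty$, which is precisely your ``insertion at infinity is compatible with the limit'' step. The only cosmetic difference is at the end of the train: in the paper the intertwiners are pushed back through the bulk to the right edge and removed there (frozen by the vacuum boundary conditions), rather than stripped off at the left boundary by unitarity.
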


The proof of Proposition \ref{A commutation prop-intro} follows from use of the Yang--Baxter and reflection equations for our model. It proceeds along similar lines to the proof that double-row transfer matrices commute (see, for example, \cite{sklyanin_boundary_1988}), however it requires a careful adjustment to the current setting where our operators act in the infinite-dimensional space $\Span\mathbb{W}$. The condition \eqref{A operator commutation condition-intro} is an artefact of the proof, and ensures that the product of operators $A(x_i)A(x_j)$ converges.

\subsection{Symmetric functions and Cauchy identities}
\label{intro:sym-cauchy}

With our commuting double-row operators in hand, there is a natural passage to the definition of a family of symmetric functions. In particular, for all $\mu,\nu \subset \mathbb{N}$ and a fixed alphabet $(x_1,\dots,x_L)$, we define

\begin{align}
		\label{G partition function defn picture-intro}
		G_{\nu/\mu} (x_1,\dots,x_L) & := 
		\begin{tikzpicture}[baseline={([yshift=-.5ex]current bounding box.center)},scale=0.7]
			\draw[lightgray,line width=1.5pt,->] (8,0) -- (1,0) -- (0,0.5) -- (1,1) -- (8,1);
			\draw[lightgray,line width=1.5pt,->] (8,0) -- (7.5,0);
			\draw[lightgray,line width=1.5pt,->] (8,2) -- (1,2) -- (0,2.5) -- (1,3) -- (8,3);
			\draw[lightgray,line width=1.5pt,->] (8,2) -- (7.5,2);
			\draw[lightgray,line width=1.5pt,->] (8,4) -- (1,4) -- (0,4.5) -- (1,5) -- (8,5);
			\draw[lightgray,line width=1.5pt,->] (8,4) -- (7.5,4);
			\draw[lightgray,line width=1.5pt,->] (8,6) -- (1,6) -- (0,6.5) -- (1,7) -- (8,7);
			\draw[lightgray,line width=1.5pt,->] (8,6) -- (7.5,6);
			\draw[blue,fill=blue] (0,0.5) circle (0.1cm);
			\draw[blue,fill=blue] (0,2.5) circle (0.1cm);
			\draw[blue,fill=blue] (0,4.5) circle (0.1cm);
			\draw[blue,fill=blue] (0,6.5) circle (0.1cm);
			\foreach \x in {2,...,7}
			{\draw[lightgray,line width=1.5pt,->] (\x,-0.5) -- (\x,7.5);}
			\node[right] at (8,0) {$0 \leftarrow x_1$};
			\node[right] at (8,1) {$0 \rightarrow x_1^{-1}$};
			\node[right] at (8,2.5) {$\vdots$};
			\node[right] at (8,4.5) {$\vdots$};
			\node[right] at (8,6) {$0 \leftarrow x_L$};
			\node[right] at (8,7) {$0 \rightarrow x_L^{-1}$};
			\node[above] at (2,7.5) {$\eta^\nu_1$};
			\node[above] at (3,7.5) {$\eta^\nu_2$};
			\node[above] at (4,7.5) {$\eta^\nu_3$};
			\node[above] at (5,7.5) {$\cdots$};
			\node[above] at (6,7.5) {$\cdots$};
			\node[below] at (2,-0.5) {$\eta^\mu_1$};
			\node[below] at (3,-0.5) {$\eta^\mu_2$};
			\node[below] at (4,-0.5) {$\eta^\mu_3$};
			\node[below] at (5,-0.5) {$\cdots$};
			\node[below] at (6,-0.5) {$\cdots$};
			\node[below] at (7,-0.5) {$\cdots$};
			\node[below] at (2,-1.2) {$\uparrow$}; 
			\node[below] at (3,-1.2) {$\uparrow$};
			\node[below] at (4,-1.2) {$\uparrow$};  
			\node[below] at (2,-1.8) {$y_1$}; 
			\node[below] at (3,-1.8) {$y_2$};
			\node[below] at (4,-1.8) {$y_3$}; 
		\end{tikzpicture}
	\end{align}
 which is again interpreted as a statistical mechanical partition function, with vertex weights given by \eqref{Stochastic 6VM weights table-intro} and \eqref{Stochastic boundary K-weight table-intro}. From its definition, it is clear that $G_{\nu/\mu} (x_1,\dots,x_L)$ may be interpreted as the concatenation of $L$ discrete-time Markov kernels \eqref{A-intro}; expressing this algebraically via the double-row operators $A(x_i)$, we have that
\begin{align}
		\label{G partition function defn A eq-intro}
		G_{\nu/\mu} (x_1,\dots,x_L) & = \bra{\mu}A(x_1) \cdots A(x_L) \ket{\nu}.
\end{align}
The functions \eqref{G partition function defn A eq-intro} are a key focal point of this work\footnote{There is at least one other known instance of symmetric functions appearing from stochastic vertex models with a boundary; see \cite{zhong2022stochastic}. However, the weights that we use in the present work (both in the bulk and in the boundary) are more general and our results are otherwise disjoint from the work performed in \cite{zhong2022stochastic}.}. In view of the commutativity \eqref{eq:Acommute-intro} of the double-row operators that are used to define them, it is clear that the functions $G_{\nu/\mu} (x_1,\dots,x_L)$ are symmetric in $(x_1,\dots,x_L)$ (but for generic subsets $\mu$ and $\nu$ they do not exhibit symmetry in the $(y_1,y_2,\dots)$ alphabet). They also satisfy a sum-to-unity property (see Proposition \ref{prop: G stocahsticity}) that is a direct consequence of \eqref{eq:sum-to-unity kernel}.

Given a family of symmetric functions, a general goal is to produce another family which is \textit{dual} to the first. In practice, this means that the original family is orthogonal to the dual one with respect to a certain inner product, or alternatively, the two families should pair together to yield a \textit{Cauchy summation identity}. We are able to solve the latter of these two problems. In particular, for any subsets $\mu,\nu \subset \mathbb{N}$ and a fixed alphabet $(z_1,\dots,z_M)$, we construct a second family of symmetric functions
\begin{align}
\label{F partition function defn B eq-intro}
F_{\mu/\nu} (z_1,\dots,z_M) = \bra{\mu}\bigdot{B}(z_1) \cdots \bigdot{B}(z_M) \ket{\nu}
\end{align}
where $\bigdot{B}(z_i)$ denotes another kind of double-row operator (see Definition \ref{AB row operator defn}). As with the functions \eqref{G partition function defn A eq-intro}, $F_{\mu/\nu} (z_1,\dots,z_M)$ also depends on the secondary alphabet $(y_1,y_2,\dots)$, but we suppress this from our notation. We then observe the following Cauchy identity between the two families \eqref{G partition function defn A eq-intro} and \eqref{F partition function defn B eq-intro}:

\begin{thm}[Theorem \ref{skew Cauchy identity thm} below]
	\label{skew Cauchy identity thm-intro}
	Fix alphabets of complex numbers $(x_1\dots,x_L),(z_1,\dots,z_M)$ and assume there exists $\rho>0$ such that 
	\begin{equation}
		\label{Cauchy identity condition-intro}
		\abs{\frac{1-x_iy_k}{1-qx_iy_k}\frac{q(1-z_j/y_k)}{1-qz_j/y_k}}\leq\rho<1, \hspace{0.5cm} \abs{\frac{1-x_iy_k}{1-qx_iy_k}\frac{1-qz_jy_k}{1-z_jy_k}}\leq\rho<1,
	\end{equation}
	for all $1\leq i\leq L,1\leq j\leq M$ and $k\in\mathbb{N}$. Then the partition functions \eqref{G partition function defn A eq-intro} and \eqref{F partition function defn B eq-intro} satisfy the skew Cauchy identity
	\begin{multline}
		\label{skew Cauchy identity eq-intro}
		\sum_{\kappa} G_{\kappa/\mu}(x_1,\dots,x_L) F_{\kappa/\nu}(z_1,\dots,z_M) \\  = \prod_{i=1}^M\prod_{j=1}^L\left[\frac{x_j-qz_i}{x_j-z_i}\frac{1-z_ix_j}{1-qz_ix_j}\right]\sum_{\lambda}F_{\mu/\lambda}(z_1,\dots,z_M)G_{\nu/\lambda}(x_1,\dots,x_L).
	\end{multline}
    where the sum on the left is over all finite subsets $\kappa \subset \mathbb{N}$, while the sum on the right is over subsets $\lambda \subset \mathbb{N}$ whose elements do not exceed the maximal element of $\mu$.
\end{thm}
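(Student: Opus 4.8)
The plan is to collapse the combinatorial sums in \eqref{skew Cauchy identity eq-intro} into a single operator exchange relation and then establish that relation by a railroad argument. Using the operator realizations \eqref{G partition function defn A eq-intro}, \eqref{F partition function defn B eq-intro} and the resolution of the identity $\sum_{\kappa}\ket{\kappa}\bra{\kappa}=\mathbbm{1}$ on $\Span\mathbb{W}$, I would first rewrite the left-hand side of \eqref{skew Cauchy identity eq-intro} as the single matrix element $\bra{\mu}A(x_1)\cdots A(x_L)\bigdot{B}(z_1)\cdots\bigdot{B}(z_M)\ket{\nu}$. Reading the subscript conventions of the two families carefully --- the first subscript of $G_{\nu/\mu}$ is a \emph{ket} label whereas the first subscript of $F_{\mu/\nu}$ is a \emph{bra} label --- the right-hand side collapses, via the same completeness relation, to the prefactor of \eqref{skew Cauchy identity eq-intro} times $\bra{\mu}\bigdot{B}(z_1)\cdots\bigdot{B}(z_M)A(x_1)\cdots A(x_L)\ket{\nu}$, with no transpose appearing. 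Thus \eqref{skew Cauchy identity eq-intro} is equivalent to the operator identity
\[
A(x_1)\cdots A(x_L)\,\bigdot{B}(z_1)\cdots\bigdot{B}(z_M)
=
\prod_{i=1}^{M}\prod_{j=1}^{L} C(x_j,z_i)\;\bigdot{B}(z_1)\cdots\bigdot{B}(z_M)\,A(x_1)\cdots A(x_L),
\]
where $C(x,z)=\dfrac{x-qz}{x-z}\dfrac{1-zx}{1-qzx}$ is the scalar appearing in the prefactor of \eqref{skew Cauchy identity eq-intro}.

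By a trivial induction on $L$ and $M$, this reduces to the single exchange relation $A(x)\bigdot{B}(z)=C(x,z)\,\bigdot{B}(z)A(x)$ between one $A$-type and one $\bigdot{B}$-type double-row operator. To prove it I would run the train (railroad) argument, precisely parallel to the proof of Proposition \ref{A commutation prop-intro} but now between the two \emph{distinct} operators: stack the relevant four rows, insert an intertwining $R$-matrix at the far right of the lattice where all edges are frozen empty, drag it leftwards through the bulk by repeated use of the Yang--Baxter equation (Proposition \ref{prop:YB}), carry it past the two boundary $K$-matrices by the reflection equation, and finally re-expand it on the far left. The net effect is to interchange the $A$ and $\bigdot{B}$ rows; the scalar $C(x,z)$ is exactly the ratio of frozen $R$-matrix weights at the far-right end, where the $A$-type and $\bigdot{B}$-type rows carry \emph{different} asymptotic arrow data (this is what makes the prefactor nontrivial, in contrast to the $A$--$A$ case of Proposition \ref{A commutation prop-intro}, where it is $1$).

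The step I expect to be the main obstacle is making this railroad rigorous on the infinite-dimensional space $\Span\mathbb{W}$. One must justify the limit in which the $R$-matrix is pushed to spatial infinity (so that it contributes only the frozen scalar and disappears) and, simultaneously, the interchange of the infinite $\kappa$-sum with the operator manipulations. This is exactly the role of the hypotheses \eqref{Cauchy identity condition-intro}: the two inequalities control the two rows making up each double-row (the factors built from $z_j y_k$ and from $z_j/y_k$), and together they guarantee absolute convergence of both ordered products $A\cdots A\,\bigdot{B}\cdots\bigdot{B}$ and $\bigdot{B}\cdots\bigdot{B}\,A\cdots A$, so that the formal rearrangement is legitimate. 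I would treat the boundary passage (the reflection-equation step) with particular care, as it is the only place where the two $K$-matrices interact with the travelling $R$-matrix and where a weight could easily be mislaid.

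Finally, the asymmetry in the two summation ranges of \eqref{skew Cauchy identity eq-intro} --- all finite $\kappa$ on the left, but only $\lambda$ with $\max\lambda\leq\max\mu$ on the right --- is not an additional hypothesis but a support property that I would isolate as a short lemma. Inspecting the action of a single $\bigdot{B}(z)$ shows that $F_{\mu/\lambda}(z_1,\dots,z_M)=\bra{\mu}\bigdot{B}(z_1)\cdots\bigdot{B}(z_M)\ket{\lambda}$ vanishes unless every element of $\lambda$ is bounded by the largest element of $\mu$; iterating over the $M$ factors then truncates the completeness sum on the right-hand side to exactly the stated range, completing the proof.
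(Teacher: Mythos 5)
Your proposal follows essentially the same route as the paper: the paper likewise rewrites both sides as matrix elements of ordered products of $A$ and $\bigdot{B}$ operators and then applies the single exchange relation $A(x)\bigdot{B}(z)=\frac{x-qz}{x-z}\frac{1-xz}{1-qxz}\bigdot{B}(z)A(x)$ (its Proposition \ref{AB commutation relation prop}), which is itself proved by exactly the railroad argument you describe --- finite-$N$ truncation with frozen intertwiners at the right edge, Yang--Baxter and reflection equations, and the convergence hypotheses \eqref{Cauchy identity condition-intro} to control the $N\to\infty$ limit. Your identification of where the scalar prefactor arises (the differing frozen intertwiner weights, in contrast to the $A$--$A$ case) and your support lemma truncating the $\lambda$-sum are both consistent with the paper's treatment.
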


The proof of Theorem \ref{skew Cauchy identity eq-intro} is by application of a non-trivial commutation relation between the operators $A(x_i)$ and $\bigdot{B}(z_j)$; see Proposition \ref{AB commutation relation prop}. As \eqref{skew Cauchy identity eq-intro} holds for any fixed $\mu$ and $\nu$, one may examine specific choices of these subsets such that the right hand side summation simplifies. One such choice is to take $\mu = \varnothing$, when the sum on the right hand side of \eqref{skew Cauchy identity eq-intro} collapses to a single term, yielding the following corollary:

\begin{cor}[Corollary \ref{Cauchy identity cor} below]
	\label{Cauchy identity cor-intro}
	With the same set of assumptions as in Theorem \ref{skew Cauchy identity thm-intro}, one has the summation identity 
	\begin{equation}
		\label{Cauchy identity eq-intro}
		\sum_{\kappa} G_\kappa(x_1,\dots,x_L) F_{\kappa/\nu}(z_1,\dots,z_M) = \prod_{i=1}^M h(z_i) \prod_{i=1}^M\prod_{j=1}^L\left[\frac{x_j-qz_i}{x_j-z_i}\frac{1-z_ix_j}{1-qz_ix_j}\right]G_\nu(x_1,\dots,x_L),
	\end{equation}
 where we have introduced the abbreviation $G_\kappa(x_1,\dots,x_L)=G_{\kappa/\varnothing}(x_1,\dots,x_L)$, and where $h(z_i)$ is given by \eqref{h-intro}.
\end{cor}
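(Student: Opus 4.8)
The plan is to derive Corollary \ref{Cauchy identity cor-intro} as a direct specialization of the skew Cauchy identity \eqref{skew Cauchy identity eq-intro} from Theorem \ref{skew Cauchy identity thm-intro}, by setting $\mu = \varnothing$. Since the corollary shares the identical hypotheses with the theorem (the condition \eqref{Cauchy identity condition-intro} is assumed to hold), the convergence of all sums is guaranteed and I may freely invoke \eqref{skew Cauchy identity eq-intro}. First I would substitute $\mu = \varnothing$ into the left-hand side of \eqref{skew Cauchy identity eq-intro}; by the abbreviation introduced in the statement, $G_{\kappa/\varnothing}(x_1,\dots,x_L) = G_\kappa(x_1,\dots,x_L)$, so the left-hand summand becomes exactly $G_\kappa(x_1,\dots,x_L)\, F_{\kappa/\nu}(z_1,\dots,z_M)$, matching the left-hand side of \eqref{Cauchy identity eq-intro}.

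The crux of the argument is the collapse of the right-hand summation. With $\mu = \varnothing$, the sum over $\lambda$ in \eqref{skew Cauchy identity eq-intro} is restricted to subsets $\lambda \subset \mathbb{N}$ whose elements do not exceed the maximal element of $\mu = \varnothing$. Since the empty set has no maximal element (equivalently, its maximal element may be taken to be $0$, below every natural number), the only admissible $\lambda$ is $\lambda = \varnothing$ itself. Hence the sum over $\lambda$ reduces to the single term corresponding to $\lambda = \varnothing$, namely $F_{\mu/\varnothing}(z_1,\dots,z_M)\, G_{\nu/\varnothing}(x_1,\dots,x_L) = F_{\varnothing/\varnothing}(z_1,\dots,z_M)\, G_\nu(x_1,\dots,x_L)$, again using the abbreviation for $G_\nu$.

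It remains to identify the prefactor. The product $\prod_{i=1}^M\prod_{j=1}^L\left[\frac{x_j-qz_i}{x_j-z_i}\frac{1-z_ix_j}{1-qz_ix_j}\right]$ carries over unchanged from \eqref{skew Cauchy identity eq-intro} to \eqref{Cauchy identity eq-intro}, so I would then only need to evaluate the residual factor $F_{\varnothing/\varnothing}(z_1,\dots,z_M)$ and show it equals $\prod_{i=1}^M h(z_i)$. This is the one genuine computation in the proof, and I expect it to be the main obstacle. I would compute $F_{\varnothing/\varnothing}(z_1,\dots,z_M) = \bra{\varnothing}\bigdot{B}(z_1)\cdots\bigdot{B}(z_M)\ket{\varnothing}$ directly from the lattice definition: with empty boundary conditions on both the $\mu$ and $\nu$ edges, every bulk edge of the partition function must be vacant (the ice rule forces no paths to enter or leave), so each double-row $\bigdot{B}(z_i)$ contributes only through its boundary vertex, with no paths present in the bulk. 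Examining the boundary weights \eqref{Stochastic boundary K-weight table-intro}, the empty-in/path-out configuration carries weight $h(x)$, and I would argue that the only surviving configuration for $\bigdot{B}$ at the empty-to-empty matrix element picks up precisely the factor $h(z_i)$ from its boundary vertex, so that the product over $i = 1,\dots,M$ yields $\prod_{i=1}^M h(z_i)$. Assembling the collapsed right-hand side with this evaluated prefactor then gives \eqref{Cauchy identity eq-intro} exactly, completing the proof.
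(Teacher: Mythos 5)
Your overall route---setting $\mu=\varnothing$ in the skew Cauchy identity, collapsing the $\lambda$-sum to the single term $\lambda=\varnothing$, and then evaluating the residual factor $F_{\varnothing/\varnothing}(z_1,\dots,z_M)$---is the same as the paper's, which carries out your last two steps in one stroke via the operator identity $\bra{\varnothing}\bigdot{B}(z_1)\cdots\bigdot{B}(z_M)=\prod_{i=1}^M h(z_i)\bra{\varnothing}$ (Proposition \ref{B left-eigenvalue prop}). However, your justification of the crucial evaluation $F_{\varnothing/\varnothing}=\prod_{i=1}^M h(z_i)$, which you yourself identify as the one genuine computation, is flawed as written.

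The flaw: you assert that the ice rule forces every bulk edge to be vacant, and simultaneously that each boundary vertex sits in the empty-in/path-out configuration of weight $h(z_i)$. These two claims contradict each other. Bulk vertices do obey the ice rule, so a path emitted by a boundary vertex cannot vanish: it must propagate through the bulk until it leaves the lattice. Indeed, the operator $\bigdot{B}(z_i)$ is defined (see \eqref{Row-operator B defn}) with the right boundary condition $1\rightarrow z_i^{-1}$ on its top row, i.e.\ a path is \emph{required} to exit at the right end of each top row; an all-vacant bulk is incompatible with this, so under your description there would be no admissible configuration at all and the matrix element would vanish. The correct picture (this is exactly the proof of Proposition \ref{B left-eigenvalue prop}) is that the path created at each boundary vertex runs along the entire top row and exits at the right; the vertices it traverses are the re-normalized (dotted) ones, whose horizontal pass-through weight is exactly $1$, which is why each double row contributes precisely $h(z_i)$ and nothing more. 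Note also that even if a vacant bulk were admissible it would not have weight $1$: the empty dotted vertex carries weight $(1-qz)/(1-z)\neq 1$, so your claimed configuration would not reproduce the stated factor either. Replacing your vacant-bulk argument by a citation of (or the one-configuration argument in) Proposition \ref{B left-eigenvalue prop} closes the gap; the remainder of your proof is sound.
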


Equation \eqref{Cauchy identity eq-intro} is likely to have probabilistic utility. Note that, in view of the stochasticity of $G_\kappa(x_1,\dots,x_L)$, one may view the left hand side of \eqref{Cauchy identity cor-intro} as an expectation value $\mathbb{E}(F_{\kappa/\nu})$. Here $\kappa$ is a random variable and $F_{\kappa/\nu}(z_1,\dots,z_M)$ is a family of observables in which both $M$ and $\nu$ are free. It would be interesting to explore the full range of observables that one may access through such a scheme, and to examine the types of explicit formulas that one obtains for these averages, via the right hand side of \eqref{Cauchy identity eq-intro}; similar approaches have previously been successfully followed in \cite{borodin_higher_2018,borodin2020observables}.

\subsection{A half-space analogue of the domain wall partition function}
\label{intro:pfaff}

Motivated by the quest for further simplifications of the Cauchy identity \eqref{Cauchy identity eq-intro}, we were led to consider the partition function $G_{\nu/\mu}(x_1,\dots,x_L)$ in which both $\mu = \nu = \varnothing$; we denote this quantity $G_{\varnothing}(x_1,\dots,x_L)$ in the sequel. In this situation, no paths enter or exit via the external vertical edges of the lattice \eqref{G partition function defn picture-intro}. At first glance, it may seem that this renders $G_{\varnothing}(x_1,\dots,x_L)$ trivial; this turns out not to be the case, since paths may be created/destroyed by the boundary vertices, and may thus trace out non-trivial configurations within the bulk of the lattice. Nevertheless, we do find an unexpected simplification of the function $G_{\varnothing}(x_1,\dots,x_L)$ that brings us into the realm of the six-vertex model in the \textit{half-quadrant}.

\begin{thm}[Theorem \ref{G=Z empty set thm} below]
	\label{G=Z empty set thm-intro}
Fix an integer $L$ and an alphabet $(x_1,\dots,x_L)$ of complex parameters. Introduce the following \textit{triangular partition function}:
\begin{equation}
		\label{Z triangular partition function defn eq-intro}
		Z_L(x_1\dots,x_L) = 
		\begin{tikzpicture}[baseline={([yshift=-.5ex]current bounding box.center)},scale=1]
			\draw[lightgray,line width=1.5pt,->] (1,0) -- (1,1) -- (6,1);
			\draw[lightgray,line width=1.5pt,->] (2,0) -- (2,2) -- (6,2);
			\draw[lightgray,line width=1.5pt,->] (3,0) -- (3,3) -- (6,3);
			\draw[lightgray,line width=1.5pt,->] (4,0) -- (4,4) -- (6,4);
			\draw[lightgray,line width=1.5pt,->] (5,0) -- (5,5) -- (6,5);
			
			\draw[lightgray,line width=1.5pt,->] (1,0) -- (1,0.5);
			\draw[lightgray,line width=1.5pt,->] (2,0) -- (2,0.5);
			\draw[lightgray,line width=1.5pt,->] (3,0) -- (3,0.5);
			\draw[lightgray,line width=1.5pt,->] (4,0) -- (4,0.5);
			\draw[lightgray,line width=1.5pt,->] (5,0) -- (5,0.5);
			
			\draw[blue,fill=blue] (5,5) circle (0.1cm);
			\draw[blue,fill=blue] (4,4) circle (0.1cm);
			\draw[blue,fill=blue] (3,3) circle (0.1cm);
			\draw[blue,fill=blue] (2,2) circle (0.1cm);
			\draw[blue,fill=blue] (1,1) circle (0.1cm);
			
			\node[right] at (6,1) {$0 \rightarrow x_1^{-1}$};
			\node[right] at (6,2) {$0 \rightarrow x_2^{-1}$};
			\node[right] at (6,3) {$0$};
			\node[right] at (6,4) {$0$};
			\node[right] at (6,5) {$0 \rightarrow x_L^{-1}$};
			
			\node[right] at (6.9,3) {$\vdots$};
			\node[right] at (6.9,4) {$\vdots$};
			
			\node[below] at (1,-0.4) {$\uparrow$};
			\node[below] at (2,-0.4) {$\uparrow$};
			\node[below] at (5,-0.4) {$\uparrow$};
			
			\node[below] at (1,-0.9) {$x_1$};
			\node[below] at (2,-0.9) {$x_2$};
			\node[below] at (3,-0.9) {$\cdots$};
			\node[below] at (4,-0.9) {$\cdots$};
			\node[below] at (5,-0.9) {$x_L$};
			
			\node[below] at (1,0) {$0$};
			\node[below] at (2,0) {$0$};
			\node[below] at (3,0) {$0$};
			\node[below] at (4,0) {$0$};
			\node[below] at (5,0) {$0$};
		\end{tikzpicture}
	\end{equation}
 where the vertex in the $i$-th column (counted from the left) and $j$-th row (counted from the bottom) is given by \eqref{Stochastic 6VM weights table-intro} with $z \mapsto x_i x_j$, and the boundary vertices are given by \eqref{Stochastic boundary K-weight table-intro}. One then has that
	\begin{equation}
		G_{\emptyset} (x_1,\dots,x_L) = Z_L(x_1,\dots,x_L).
	\end{equation}
    In particular, $G_\emptyset(x_1,\dots,x_L)$ does not depend on the collection of vertical spectral parameters $Y$ that are implicit in its definition \eqref{G partition function defn picture-intro}.
\end{thm}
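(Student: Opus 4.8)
The plan is to transform the double-row lattice defining $G_{\emptyset}(x_1,\dots,x_L)$ in \eqref{G partition function defn picture-intro} directly into the triangular lattice \eqref{Z triangular partition function defn eq-intro} defining $Z_L$, using the Yang--Baxter equation (Proposition \ref{prop:YB}) together with the reflection equation, and exploiting crucially that both the bottom and top external vertical edges are empty (since $\mu=\nu=\varnothing$). The key structural phenomenon to establish is that each vertical $y_j$-line decouples completely; once this is shown, the asserted independence of $G_{\emptyset}$ from $Y$ follows immediately, and the leftover lattice is exactly $Z_L$.

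First I would recast the geometry in rapidity-line language. Each horizontal rapidity line $x_i$ enters empty at the bottom-right, traverses the bottom row of the $i$-th double row (crossing the vertical line $y_j$ with spectral parameter $x_i/y_j$), reflects off a boundary vertex \eqref{Stochastic boundary K-weight table-intro}, and returns through the top row (crossing $y_j$ with parameter $x_iy_j$) before exiting empty at the top-right. Thus each $y_j$-line meets the rapidity line $x_i$ in exactly the two crossings $x_i/y_j$ and $x_iy_j$, which are interchanged under the boundary reflection $x_i\mapsto x_i^{-1}$. Because $y_j$ is empty at both ends, the plan is to bring these two crossings together, one rapidity line at a time, and contract them.

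The mechanism for the contraction is a reflection-equation move: I would slide the $y_j$-line towards the boundary and past the reflection point of each rapidity line, so that the bottom-row crossing $x_i/y_j$ and the top-row crossing $x_iy_j$ become adjacent along the $y_j$-line. The relevant local relation, which should follow from the reflection equation together with the explicit boundary weights, collapses this adjacent pair and removes the $i$-th crossing pair of the $y_j$-line. Iterating over $i=1,\dots,L$ detaches $y_j$ from every rapidity line; since it is empty at both ends it then carries no path and may be deleted, contributing a trivial factor, and repeating over all $j$ removes the entire $Y$-alphabet. In the course of these moves the $L$ rapidity lines are reordered so that the incoming (pre-reflection) segment of line $i$ crosses the reflected (post-reflection) segment of line $j$ for each $i>j$; such a crossing carries spectral parameter $x_ix_j$, reproducing the bulk weights of the triangular array \eqref{Z triangular partition function defn eq-intro}, while the diagonal reflection points become its boundary vertices. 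This identifies the result with $Z_L(x_1,\dots,x_L)$.

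The main obstacle is the contraction step. Unlike the textbook decoupling of a spectator line by $R$-matrix unitarity, here the two crossings to be cancelled sit on opposite sides of a boundary vertex whose weights \eqref{Stochastic boundary K-weight table-intro} violate the ice rule, so the cancellation is not the naive inversion relation; I expect to have to verify the precise local identity case-by-case over the occupation patterns of the $y_j$-line, using \eqref{h-intro} and the reflection equation rather than bulk unitarity alone. A secondary difficulty is analytic: the lattice is semi-infinite, so all of the above rearrangements must be justified under convergence hypotheses analogous to \eqref{A operator commutation condition-intro}, ensuring that the sums over path configurations converge and that far from the boundary one genuinely sees only empty, weight-$1$ vertices. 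As an alternative route to establishing independence from $Y$ alone, one could instead argue that $G_{\emptyset}$ is a rational function of each $y_j$ whose only possible poles, at $y_j=(qx_i)^{-1}$ and $y_j=qx_i$, have vanishing residues by the Yang--Baxter symmetry in the $y$-alphabet, and which stays bounded as $y_j\to 0,\infty$, so that a Liouville argument forces it to be constant in $y_j$; its value could then be computed at a convenient specialization of $Y$. I expect the direct reflection-equation argument to be cleaner, since it produces the triangular lattice $Z_L$ at the same time.
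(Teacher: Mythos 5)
Your central contraction step is false, and with it the main route collapses. Place a column $y_j$ against the boundary of the $i$-th double row, so that along the $x_i$-line one meets, in order, the bottom-row crossing at parameter $x_i/y_j$, the boundary vertex $K(x_i)$, and the top-row crossing at parameter $x_iy_j$; detaching the column would require this composite to equal $K(x_i)\otimes\mathsf{id}$. Now compute the matrix element in which the column is empty below but occupied above, while the $x_i$-line is empty at both ends: the lower crossing is forced to be the all-empty vertex (weight $1$), the boundary vertex injects a path (weight $h(x_i)$, second entry of \eqref{Stochastic boundary K-weight table}), and this path turns up the column at the upper crossing (weight $\frac{x_iy_j(1-q)}{1-qx_iy_j}$, last entry of \eqref{Stochastic 6VM weights table}). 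The resulting amplitude $h(x_i)\,\frac{x_iy_j(1-q)}{1-qx_iy_j}$ is nonzero, whereas decoupling demands that it vanish. No case-by-case verification can repair this: because the boundary weights violate path conservation, a path injected at $K(x_i)$ can always escape up the adjacent column, and it is precisely such paths --- injected at one boundary vertex, carried through the bulk, absorbed at another --- that make $G_\emptyset$ nontrivial. Indeed, if your local relation held, you could contract every column against every boundary vertex (no sliding is even required, since the first column is already adjacent to all of them), and you would conclude $G_\emptyset=\prod_{i=1}^L\bigl(1-h(x_i)\bigr)$. That is the $c\to\infty$ degeneration of $Z_L$ (Corollary \ref{cor:triangle-freeze}), not $Z_L$ itself; compare $Z_2$ in \eqref{eq:Z012}. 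So the proposed argument, if completed, would prove a false identity. The vanishing of escape terms when $\mu=\nu=\varnothing$ is an irreducibly global fact (an escaped path must eventually exit through the top or right of the lattice, which the empty boundary data forbid) and cannot be encoded in a local relation.

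There is a second structural gap: your moves cannot manufacture the triangle. Yang--Baxter slides preserve which pairs of lines cross, and in the lattice \eqref{G partition function defn picture} distinct rapidity lines never cross one another, so no sequence of slides and contractions can create the $x_ix_j$-vertices of \eqref{Z triangular partition function defn eq}; the asserted ``reordering'' has no mechanism behind it. This is exactly what the paper's proof supplies: in Lemma \ref{G nu picture lemma} a triangle of intertwining vertices is appended at the free right edge of a finite-$N$ truncation, the sum over the new internal edges collapses to the all-empty term as $N\to\infty$ under the conditions \eqref{A operator commutation condition}, the Yang--Baxter equation transports the triangle to the boundary, and the left-moving sectors of the columns then freeze (since $\mu=\varnothing$), yielding the staircase picture \eqref{G nu picture lemma eq}; setting $\nu=\varnothing$ freezes the entire $Y$-dependent region, leaving exactly $Z_L$ (Theorem \ref{G=Z empty set thm}). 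Your Liouville fallback is also not a proof as stated: the claimed vanishing of residues at $y_j=(qx_i)^{\pm1}$ is unsupported (even the $Y$-symmetry of $G_\emptyset$ requires an appended-intertwiner argument of the above type, as the functions are not $Y$-symmetric for general states), and you identify no specialization of $Y$ at which the constant value could actually be computed and recognized as $Z_L$.
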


Theorem \ref{G=Z empty set thm-intro} is proved by a non-obvious application of the Yang--Baxter equation, and initially came as a complete surprise to us. There is also a version of it that applies to the more general object $G_{\nu}(x_1,\dots,x_L)$; see Lemma \ref{G nu picture lemma} in the text. One of the main points of interest in these results is that they connect our probability measures on the half-line to six-vertex measures in the half-quadrant. The latter have been quite topical in recent years; see, for example, \cite{barraquand_stochastic_2018,he_boundary_2023}.

Another point of interest is that \eqref{Z triangular partition function defn eq-intro} provides a natural two-parameter generalization of the partition function of \textit{off diagonally symmetric alternating-sign matrices} (OSASMs), as introduced by Kuperberg in \cite{kuperberg_symmetry_2002}. In particular, \cite{kuperberg_symmetry_2002} gave a Pfaffian evaluation of the partition function \eqref{Z triangular partition function defn eq-intro}, in a limit where only the middle two boundary vertices in \eqref{Stochastic boundary K-weight table-intro} survive. One result of the current text is that this Pfaffian structure is preserved in the presence of four non-trivial boundary vertices\footnote{After completion of this work we became aware that an analogous result was very recently also presented in \cite{behrend2023diagonally}; we thank Roger Behrend for bringing this to our attention.}: 

 \begin{thm}[Theorem \ref{thm:ZPfaff} below]
\label{thm:ZPfaff-intro}
    When $L$ is even, the triangular partition function \eqref{Z triangular partition function defn eq-intro} admits the Pfaffian formula
    \begin{equation}
        \label{Z single Pfaffian them eq-intro}
        Z_L(x_1,\dots,x_L) = \prod_{1\leq i<j\leq L} \frac{1-x_ix_j}{x_i-x_j} \cdot \pf \left(\frac{x_i-x_j}{1-x_ix_j} Q(x_i,x_j) \right)_{1\leq i,j\leq L}
    \end{equation}
    where $Q$ is a symmetric function in two variables given by
    \begin{equation}
        \label{eq: Q Pfaffian kernel-intro}
        Q(x_i,x_j) = (1-h(x_i))(1-h(x_j)) - \frac{h(x_i)h(x_j)}{ac} \frac{(1-q)x_ix_j}{1-qx_ix_j}.  
    \end{equation}
\end{thm}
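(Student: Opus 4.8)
The plan is to follow the Izergin--Korepin method, in the Pfaffian form introduced by Kuperberg \cite{kuperberg_symmetry_2002}, and to characterise $Z_L$ uniquely as a rational function of one spectral parameter at a time; I will then check that the right-hand side of \eqref{Z single Pfaffian them eq-intro} carries the same characterising data. The induction runs on the even number of variables $L$, reducing $L\to L-2$ so as to mirror the Laplace (minor) expansion of the Pfaffian along its last row, namely $\pf(A)=\sum_{i<L}(-1)^{i-1}A_{iL}\,\pf(A_{\hat i\hat L})$, where $A_{iL}=\tfrac{x_i-x_L}{1-x_ix_L}Q(x_i,x_j)\big|_{x_j=x_L}$ and $A_{\hat i\hat L}$ denotes deletion of rows and columns $i$ and $L$. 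Observe first that, by Theorem~\ref{G=Z empty set thm-intro}, $Z_L=G_{\emptyset}$ is independent of the alphabet $Y$, so all weights in \eqref{Z triangular partition function defn eq-intro} depend only on the $x_i$.

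For the base of the induction I would compute $Z_2(x_1,x_2)$ directly from \eqref{Z triangular partition function defn eq-intro}: the top boundary vertex has its outgoing (rightmost) edge frozen to the empty state, so only two global configurations survive, weighted by $(1-h(x_1))(1-h(x_2))$ and by $h(x_1)\,\tfrac{x_1x_2(1-q)}{1-qx_1x_2}\,\tfrac{-h(x_2)}{ac}$ respectively; their sum is exactly $Q(x_1,x_2)$, which is the $L=2$ case since the $2\times2$ Pfaffian equals the single off-diagonal entry and cancels the prefactor. Symmetry of $Z_L$ in $(x_1,\dots,x_L)$ follows from $Z_L=G_{\emptyset}$ together with the commutation of the double-row operators (Proposition~\ref{A commutation prop-intro}), or equivalently from a direct lattice argument using the Yang--Baxter equation (Proposition~\ref{prop:YB}) to swap adjacent rapidities and the reflection equation to carry the swap past the boundary. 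This matches the symmetry of the right-hand side, where the sign picked up by $\prod_{i<j}\tfrac{1-x_ix_j}{x_i-x_j}$ under a transposition is compensated by that of $\pf$ of an antisymmetric matrix.

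The central object is the recursion. Reading off the weights in \eqref{Stochastic 6VM weights table-intro}--\eqref{Stochastic boundary K-weight table-intro}, as a function of $x_L$ the partition function $Z_L$ is rational with simple poles only at $x_L\in\{a,c\}$ (from the boundary vertex $L$ via $h(x_L)$) and at $x_L=1/(qx_k)$, $k<L$ (from the denominators $1-qx_Lx_k$ of the bulk vertices in column $L$). I would then compute the residue at $x_L=1/(qx_k)$: here the bulk vertex in column $L$, row $k$ freezes, and a Yang--Baxter argument should show that the residue factorises as an explicit rational prefactor times $Z_{L-2}$ with $x_k$ and $x_L$ deleted. On the Pfaffian side the pole at $x_L=1/(qx_k)$ arises solely from the entry $A_{kL}$ (only $Q(x_k,x_L)$ carries that pole), so the minor expansion gives a residue proportional to $\pf(A_{\hat k\hat L})$, which together with the prefactor is precisely the claimed formula for $Z_{L-2}$. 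Matching these residues for all $k$, the behaviour as $x_L\to\infty$ (and at $x_L=0$), and the residues at $x_L=a,c$, then forces the difference of the two sides to be a symmetric rational function with no poles that vanishes at infinity, hence identically zero; the induction closes.

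The main obstacle I expect is twofold. First, establishing the freezing recursion in full: unlike Kuperberg's case, which retains only the middle two boundary weights, here all four boundary vertices of \eqref{Stochastic boundary K-weight table-intro} are active, and I must check that the Yang--Baxter/reflection manipulations still collapse column $L$ cleanly and produce exactly the prefactor demanded by $Q$. Second, controlling the boundary-induced poles at $x_L=a,c$, which (in contrast to the bulk poles) do not obviously reduce by two; their residues involve the partition function with the top boundary vertex replaced by a fixed boundary state, and matching them against the sum over all $i$ of the $h(x_L)$-residues of $A_{iL}\,\pf(A_{\hat i\hat L})$ on the Pfaffian side is the most delicate bookkeeping. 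A subsidiary check is that the spurious poles at $x_L=x_k$ appearing term-by-term in the Pfaffian expansion cancel in the sum, which is guaranteed by antisymmetry but must be noted to confirm that the genuine pole sets on the two sides coincide.
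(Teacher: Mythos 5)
Your plan founders on a point you yourself flag but do not resolve: the residues at $x_L=a$ and $x_L=c$. These are not optional. As a function of $x_L$, the normalised object $(a-x_L)(c-x_L)\prod_{k<L}(1-qx_Lx_k)\,Z_L$ is a polynomial of degree at most $L+1$ (Proposition~\ref{Z polynomial prop}), so your partial-fraction characterisation needs $L+2$ independent conditions; the $L-1$ bulk residues at $x_L=1/(qx_k)$ (which do hold, in the form \eqref{recur3}, by the freezing argument you describe) together with the value at $x_L=0$ and the leading behaviour at $x_L=\infty$ leave you exactly \emph{one or two conditions short}, and those missing conditions are precisely the boundary residues. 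The trouble is that these residues do not reduce within the family $\{Z_m\}$: the pole at $x_L=a$ enters only through $h(x_L)$ in the single boundary vertex of line $L$, so $\mathrm{Res}_{x_L=a}Z_L$ is the partition function of the same lattice with that boundary vertex replaced by a fixed (non-stochastic) vertex with weights proportional to $(-1,1,-1/ac,1/ac)$. No freezing occurs, nothing collapses to $Z_{L-1}$ or $Z_{L-2}$, and your induction gives you no information about this object. Matching it against the corresponding residue of the Pfaffian side is therefore not "delicate bookkeeping" but a new identity of the same depth as the theorem itself.

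The paper avoids this entirely by characterising $Z_L$ through \emph{point evaluations at regular points} rather than residues at poles: the $m+2$ specialisations $x_i=0$, $x_i=\pm1$, $x_i=1/x_j$ ($j\neq i$) of Proposition~\ref{Z trianglular recursion prop Z} determine the degree-$(m+1)$ numerator by Lagrange interpolation, and each of these specialisations collapses the lattice via unitarity of the $R$- and $K$-matrices (at $\pm1$ and $1/x_j$) or a trivial freezing (at $0$), staying inside the family $\{Z_m\}$ — note in particular that $x_i=1/x_j$ is a regular point of $Z_L$, unlike your $x_i=1/(qx_j)$. The verification step is then done not by minor expansion but by passing through the sum-over-subsets formula of Theorem~\ref{thm:ZsubsetK} (proved with the shuffle-exponential identity $Z_{2m}=Z_2^{*m}/m!$) and converting it to the single Pfaffian with Stembridge's addition formula, Lemma~\ref{lm:PFsum}. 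If you want to salvage your route, replace the two boundary-residue conditions by the evaluations at $x_L=\pm1$; but those reduce $Z_L$ to the odd-size function $Z_{L-1}$, so you must first extend the Pfaffian formula to odd $L$ (as the paper does by setting $Z_{2\ell-1}:=Z_{2\ell}(x_1,\dots,x_{2\ell-1},1)$) and verify that reduction on the Pfaffian side — at which point you have essentially reconstructed the paper's argument.
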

A Pfaffian formula is also available in the case where $L$ is odd, but we omit it here as its presentation is less elegant. The reduction of \eqref{Z single Pfaffian them eq-intro} to the OSASM partition function of \cite{kuperberg_symmetry_2002} is obtained by taking $a \to 1$, $c \to -1$; note that in this limit one has that $h(x) \to 1$.

The proof of Theorem \ref{thm:ZPfaff-intro} proceeds by establishing a set of conditions that uniquely determine $Z_L(x_1,\dots,x_L)$ by Lagrange interpolation, and then showing that the right hand side of \eqref{Z single Pfaffian them eq-intro} satisfies these conditions. For the verification step, we show that $Z_L(x_1,\dots,x_L)$ may be recovered from the \textit{shuffle exponential} of a certain linear combination of the partition functions $Z_1$ and $Z_2$, assuming a \textit{shuffle product} that we define; see Section \ref{ssec:shuffle}. The connection with shuffle algebras is non-essential for our proof, but expedites the verification of our recursion relations significantly.

Combining Theorem \ref{thm:ZPfaff-intro} with Corollary \ref{Cauchy identity cor-intro} in the case $\nu = \varnothing$, we then have the following result:
\begin{cor}[Corollary \ref{cor:cauchy-pfaff} below]
\label{cor:cauchy-pfaff-intro}
    With the same set of assumptions as in Theorem \ref{skew Cauchy identity thm-intro}, one has the summation identity
    \begin{multline}
    \label{pfaffian-average}
        \sum_{\kappa} G_\kappa(x_1,\dots,x_L) F_{\kappa}(z_1,\dots,z_M) = \prod_{i=1}^M h(z_i) \prod_{i=1}^M\prod_{j=1}^L\left[\frac{x_j-qz_i}{x_j-z_i}\frac{1-z_ix_j}{1-qz_ix_j}\right]\\
       \times \prod_{1\leq i<j\leq L} \frac{1-x_ix_j}{x_i-x_j} \cdot \pf \left(\frac{x_i-x_j}{1-x_ix_j} Q(x_i,x_j) \right)_{1\leq i,j\leq L}
    \end{multline} 
where $F_{\kappa}(z_1,\dots,z_M) = F_{\kappa/\varnothing}(z_1,\dots,z_M)$ and $Q$ is given by \eqref{eq: Q Pfaffian kernel-intro}.
\end{cor}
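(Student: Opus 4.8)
The plan is to obtain \eqref{pfaffian-average} by a direct specialization and concatenation of results already in hand, so that the corollary requires no new analysis---the substantive work having been carried out in establishing the Cauchy identity and the Pfaffian evaluation.

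First I would take the Cauchy identity of Corollary \ref{Cauchy identity cor-intro}, namely \eqref{Cauchy identity eq-intro}, and specialize the free subset to $\nu = \varnothing$. Under this choice the skew functions on either side collapse to their non-skew counterparts: the observable becomes $F_{\kappa/\varnothing}(z_1,\dots,z_M) = F_\kappa(z_1,\dots,z_M)$, matching the notation in the statement, while the right-hand side factor $G_\nu(x_1,\dots,x_L)$ becomes $G_\varnothing(x_1,\dots,x_L) = G_\emptyset(x_1,\dots,x_L)$. This yields
\begin{equation}
\sum_{\kappa} G_\kappa(x_1,\dots,x_L) F_{\kappa}(z_1,\dots,z_M) = \prod_{i=1}^M h(z_i) \prod_{i=1}^M\prod_{j=1}^L\left[\frac{x_j-qz_i}{x_j-z_i}\frac{1-z_ix_j}{1-qz_ix_j}\right] G_\emptyset(x_1,\dots,x_L),
\end{equation}
with the hypotheses \eqref{Cauchy identity condition-intro} carried over verbatim, since no assumption on $\nu$ was used in deriving them.

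Next I would rewrite the single remaining factor $G_\emptyset(x_1,\dots,x_L)$ using the two structural results for the fully empty partition function. Theorem \ref{G=Z empty set thm-intro} identifies it with the triangular partition function, $G_\emptyset(x_1,\dots,x_L) = Z_L(x_1,\dots,x_L)$, and in particular removes the implicit dependence on the secondary alphabet $Y$. Assuming $L$ even, Theorem \ref{thm:ZPfaff-intro} then supplies the Pfaffian evaluation \eqref{Z single Pfaffian them eq-intro} of $Z_L$ in terms of the kernel $Q$ of \eqref{eq: Q Pfaffian kernel-intro}. Substituting these two identities into the displayed equation above reproduces \eqref{pfaffian-average} exactly, including the prefactor $\prod_{i=1}^M h(z_i)$ and the double product over $i,j$.

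The proof is therefore essentially bookkeeping, and I anticipate no genuine obstacle: each factor on the right of \eqref{pfaffian-average} is accounted for by exactly one of the inputs, and the matching of prefactors is immediate. The only point requiring care is the parity restriction---the Pfaffian of \eqref{Z single Pfaffian them eq-intro} presupposes $L$ even, so the corollary as stated applies in that case, with the analogous odd-$L$ Pfaffian alluded to after Theorem \ref{thm:ZPfaff-intro} governing the complementary case. One should also confirm that the convergence hypotheses \eqref{Cauchy identity condition-intro} are precisely those under which the summation over $\kappa$ on the left is meaningful; since these are inherited unchanged from Corollary \ref{Cauchy identity cor-intro}, nothing further is needed.
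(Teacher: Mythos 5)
Your proposal is correct and follows exactly the paper's own route: specialize Corollary \ref{Cauchy identity cor} to $\nu=\varnothing$, identify $G_\emptyset$ with the triangular partition function $Z_L$ via Theorem \ref{G=Z empty set thm}, and substitute the Pfaffian evaluation of Theorem \ref{thm:ZPfaff}. Your explicit remark on the parity restriction ($L$ even, with the odd case handled by the variant noted after Theorem \ref{thm:ZPfaff-intro}) is a point the paper's one-line proof leaves implicit, but otherwise the two arguments coincide.
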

Corollary \ref{cor:cauchy-pfaff-intro} has a very similar flavour to the refined Cauchy and Littlewood identities obtained in \cite{betea_refined_2015,wheeler_refined_2016}, in which infinite summations of (products of) symmetric functions are evaluated as partition functions in the six-vertex model. As we have already mentioned, the left hand side of \eqref{pfaffian-average} admits the interpretation of an average; the right hand side of \eqref{pfaffian-average} provides a Pfaffian evaluation of this average, which is likely to be valuable for asymptotic purposes. A similar scheme, in the full-space setting, was recently elaborated in \cite[Appendix C]{aggarwal2023deformed}. 

\subsection{Integral formula for transition probabilities}
\label{intro:integral}

Our next result concerns the evaluation of $G_{\nu}(x_1,\dots,x_L)$, where $\nu$ is non-empty. In this more general setting we lose the Pfaffian structure of $G_{\varnothing}(x_1,\dots,x_L)$, but it still turns out to be possible to obtain a compact multiple-integral formula for the object in question.

\begin{thm}[Theorem \ref{thm: G nu inetgral formula} below]
    \label{thm: G nu inetgral formula-intro}
    In the special case $\mu=\varnothing$, $\nu = \{\nu_1 > \cdots > \nu_n \geq 1\}$, the partition function \eqref{G partition function defn picture-intro}
	can be expressed as the following $n$-fold integral:  
	\begin{multline}
        \label{eq: G nu integral forumla thm-intro}
	    G_\nu(x_1,\dots,x_L) = \oint_{\mathcal{C}_1}\frac{\dd w_1}{2\pi\ii} \cdots \oint_{\mathcal{C}_n}\frac{\dd w_n}{2\pi\ii} Z_{L+n} \left(x_1,\dots,x_L,w_1^{-1},\dots,w_n^{-1}\right) \\
        \times\prod_{i=1}^n\prod_{j=1}^L \left[\frac{q w_i-x_j}{w_i-x_j}\frac{1-w_ix_j}{1-qw_ix_j}\right] \prod_{1\leq i<j\leq n} \left[\frac{w_j-w_i}{qw_j-w_i}\frac{1-qw_iw_j}{1- w_iw_j}\right] \\ \times \prod_{i=1}^n \left[\frac{ac\left(q w_i^2-1\right)}{(w_i-a)(w_i-c)}\frac{y_{\nu_i}}{1-q w_i y_{\nu_i}}\prod_{j-1}^{\nu_i-1} \frac{1-w_iy_j}{1-qw_iy_j}\right],
	\end{multline}
 where $Z_{L+n}$ denotes the partition function \eqref{Z triangular partition function defn eq-intro} in $(L+n)$ variables and $\mathcal{C}_1,\dots,\mathcal{C}_n$ are a certain family of $q$-nested contours that surround the points $(x_1,\dots,x_L)$ (for the precise definition of these contours, see Definition \ref{defn: nested contours} and Figure \ref{fig:contour diagram}).
\end{thm}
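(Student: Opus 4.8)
The plan is to prove the integral formula \eqref{eq: G nu integral forumla thm-intro} by combining the Pfaffian/triangular evaluation of the empty-boundary partition function (Theorem \ref{G=Z empty set thm-intro}) with a contour-integral mechanism that \emph{injects} the paths prescribed by $\nu$ back into an otherwise empty lattice. The key conceptual point is that $Z_{L+n}$, which appears on the right-hand side, is the partition function $G_\varnothing$ associated with the enlarged alphabet $(x_1,\dots,x_L,w_1^{-1},\dots,w_n^{-1})$; so the integral formula should arise by starting from $G_\varnothing$ in $L+n$ variables and extracting, via residues in the $n$ auxiliary variables $w_1,\dots,w_n$, exactly the contribution in which the extra $n$ spectral parameters are responsible for creating the $n$ paths that exit through the edges indexed by $\nu_1 > \cdots > \nu_n$. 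Concretely, I expect the role of each auxiliary integration $\oint_{\mathcal{C}_i}$ to be to convert an additional double-row operator $A(w_i^{-1})$ acting on the empty state into the ``creation'' of a single path at horizontal position $\nu_i$, with the residue at the relevant pole producing precisely the last product in \eqref{eq: G nu integral forumla thm-intro}.

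The natural route is to build the proof on an \textbf{algebraic recursion in $n$}, i.e.\ to peel off one element of $\nu$ at a time. First I would establish a single-path creation identity: a formula expressing $G_{\{\nu_n\}}$ (one-point function) as a single contour integral of $G_\varnothing$ in one extra variable, with the weight factors
\[
\frac{ac(qw^2-1)}{(w-a)(w-c)}\,\frac{y_{\nu_n}}{1-qwy_{\nu_n}}\prod_{j=1}^{\nu_n-1}\frac{1-wy_j}{1-qwy_j}
\]
emerging as the residue structure governing where the single path is inserted. This base identity should follow by a direct analysis of the double-row operator $A(w^{-1})$ acting on $\ket{\varnothing}$: the geometric-series structure of the bulk weights \eqref{Stochastic 6VM weights table-intro} (whose ratios appear in the convergence condition \eqref{A operator commutation condition-intro}) organizes the sum over the path's exit position as a sum over residues, and the boundary vertex \eqref{Stochastic boundary K-weight table-intro} supplies the factor $ac(qw^2-1)/((w-a)(w-c))$, which is recognizable as coming from $h(x)$ evaluated appropriately. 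I would then iterate: each further integration variable $w_i$ creates one more path, and the cross-terms $\prod_{i<j}[(w_j-w_i)(1-qw_iw_j)/((qw_j-w_i)(1-w_iw_j))]$ together with the $x$--$w$ factors $\prod_{i,j}[(qw_i-x_j)(1-w_ix_j)/((w_i-x_j)(1-qw_ix_j))]$ should be forced by the commutation relations among the $A$-operators (Proposition \ref{A commutation prop-intro}), which guarantee symmetry and supply exactly the $R$-matrix-type scattering factors between the auxiliary and genuine spectral parameters.

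The technical heart of the argument, and what makes the formula nontrivial, is the \textbf{$q$-nested contour prescription}: the contours $\mathcal{C}_1,\dots,\mathcal{C}_n$ must be arranged so that each residue computation picks up the intended poles (those at the positions encoding $\nu$ and the interactions with $x_1,\dots,x_L$) while the nesting correctly orders which $w_j$-poles lie inside which $w_i$-contour. I would fix these contours following Definition \ref{defn: nested contours} and Figure \ref{fig:contour diagram}, and verify that the $q$-nesting is precisely the arrangement compatible with the pole structure of the factor $1/(qw_j-w_i)$ and of $1/(1-qw_iw_j)$, so that symmetrizing over the residues reproduces the ordered set $\{\nu_1>\cdots>\nu_n\}$ without overcounting. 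The convergence hypotheses inherited from \eqref{A operator commutation condition-intro} ensure the geometric sums underlying the residue expansion converge, so the interchange of summation and integration is legitimate.

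The step I expect to be the main obstacle is \textbf{matching the residue bookkeeping to the combinatorics of the subset $\nu$} --- that is, proving that the nested-contour integral selects exactly the configurations with paths exiting at $\nu_1,\dots,\nu_n$ and no others, with the correct multiplicity. The subtlety is that when two auxiliary paths could in principle be created at the same or adjacent positions, the $q$-nesting must suppress the spurious coincident-pole contributions; checking that the poles at $w_i=w_j$ (and at $w_iw_j=q^{-1}$) fall outside the relevant contours, and that no unwanted poles from the $Z_{L+n}$ prefactor are enclosed, is where the careful contour analysis concentrates. Once the single-path base case and the contour geometry are pinned down, the induction on $n$ is expected to be a (lengthy but routine) propagation of these residue and symmetry arguments.
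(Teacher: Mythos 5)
Your proposal diverges from the paper's route, and unfortunately the places where it diverges are exactly where it breaks down. The paper proves the theorem in two steps: first it establishes the explicit sum-over-subsets formula \eqref{G nu sum expression thm eq} (Theorem \ref{G nu thm}), by showing that $G_\nu$ is uniquely determined by a short list of properties --- simple poles in $y_{\nu_1}$ located only at $y_{\nu_1}=qx_i$, symmetry in the $x$'s, the residue recursion of Lemma \ref{G nu recursions lemma}, decay as $y_{\nu_1}\to\infty$, and the base case $G_\varnothing=Z_L$ --- and then checking that the explicit formula satisfies the same list (induction on the length of $\nu$). Second, the integral \eqref{eq: G nu integral forumla thm-intro} is matched to that subset formula by residue evaluation: the only poles enclosed by the contours of Definition \ref{defn: nested contours} are at $w_i=x_j$; the choice of which residues are taken reproduces the sum over $K\subseteq[L]$, the order of evaluation reproduces the sum over $\sigma\in S_n$, and $Z_{L+n}\left(x_1,\dots,x_L,w_1^{-1},\dots,w_n^{-1}\right)$ collapses to $Z_{L-n}$ via the recursion \eqref{Z recur x1=1/x2}. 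Your proposal has no substitute for the first step, which carries the real content: your ``single-path creation identity'' is asserted to follow from a ``direct analysis'' of $A(w^{-1})$ acting on $\ket{\varnothing}$ with ``geometric-series structure'', but no mechanism is given, and nothing in the paper's toolbox makes such a direct analysis tractable --- even the one-point function is obtained there only through the interpolation argument (or, in the authors' original derivation, Drinfeld twists).

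Two specific claims in your inductive step are incorrect. (i) The cross factors $\prod_{i,j}\bigl[(qw_i-x_j)(1-w_ix_j)\bigr]/\bigl[(w_i-x_j)(1-qw_ix_j)\bigr]$ cannot be ``forced by the commutation relations among the $A$-operators'': by Proposition \ref{A commutation prop} the $A$'s commute with \emph{no} scalar factor. Those factors are Cauchy-kernel factors attached to the $A$--$\bigdot{B}$ exchange relation \eqref{eq: AB exchangr rel}, not to $A$--$A$ commutation; likewise the inter-$w$ factors $\prod_{i<j}$ are not symmetric under $w_i\leftrightarrow w_j$, so they cannot arise from reordering a mutually commuting family. (ii) Your picture of the pole structure is off: the contours \emph{exclude} all $y$-dependent poles (the points $w_i=(qy_k)^{-1}$), so no residue ``creates a path at position $\nu_i$''; the only enclosed poles are at $w_i=x_j$, and the dependence on $\nu$ enters solely through the kernel evaluated at those points. (Also, the poles of the inter-$w$ factors sit at $w_i=qw_j$ and $w_iw_j=1$; the points $w_i=w_j$ and $w_iw_j=q^{-1}$ that you propose to keep outside the contours are zeros, not poles.) Finally, note that the selection statement implicitly underlying your plan --- that integrating $\bra{\kappa}A(w_1^{-1})\cdots A(w_n^{-1})\ket{\varnothing}$ against the kernel singles out $\kappa=\nu$ --- is essentially the orthogonality property that the paper only \emph{conjectures} (Conjecture \ref{conj:F-orth}), precisely because no proof along these lines is currently known.
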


Originally, we constructed a sum-over-subsets formula for $G_{\nu}(x_1,\dots,x_L)$ (see Theorem \ref{G nu thm} below) by the application of \textit{Drinfeld twists} to the columns of the partition function \eqref{G partition function defn picture-intro}, similarly to what was done in \cite{wheeler_refined_2016}. Once obtained, it was then straightforward to match this sum-over-subsets formula to a sum-over-residues produced by evaluating the integral \eqref{eq: G nu integral forumla thm-intro}. While this is a totally direct method, the technical details are quite unwieldy; for this reason, we present instead a simpler (though non-constructive) verification argument, that once again relies on checking a set of recursive properties that uniquely determine $G_{\nu}(x_1,\dots,x_L)$.

The remainder of the text is then devoted to studying the special case $c \to \infty$ of \eqref{eq: G nu integral forumla thm-intro}, when the integrand becomes fully factorized. While this loses some of the generality of boundary vertex weights (under this limit, the third vertex in \eqref{Stochastic boundary K-weight table-intro} vanishes) it has the advantage of making our formulas much more tractable without sacrificing the non-trivial injection of paths into the lattice \eqref{G partition function defn picture-intro} that still occurs in this regime.

\subsection{An orthogonality conjecture}
\label{intro:orthog}

Orthogonality with respect to an integral scalar product was a key feature of the multivariate rational functions studied in \cite{borodin_higher_2018,borodin_family_2017,borodin_coloured_2022}. It seems likely that the symmetric functions considered in the current text also have nice orthogonality properties, and we find the first evidence of this in the following conjecture.
\begin{conj}[Conjecture \ref{conj:F-orth} below]
\label{conj:F-orth-intro}
Fix a finite subset $\nu = \{\nu_1 > \cdots > \nu_n \geq 1\}$, and a second finite subset $\kappa$ whose cardinality satisfies $|\kappa| \leq n$. Then in the limit $c \to \infty$, one has that
\begin{multline}
\label{F-int-orthog}
\oint_{\mathcal{C}}\frac{\dd w_1}{2\pi\ii} 
\cdots 
\oint_{\mathcal{C}}\frac{\dd w_n}{2\pi\ii}
\prod_{1\leq i<j\leq n}
\left[\frac{w_j-w_i}{qw_j-w_i}\frac{1-qw_iw_j}{1- w_iw_j}\right]
\\
\times
\prod_{i=1}^{n}
\left[
\frac{w_i-a}{w_i(1-a w_i)}
\frac{1-q w_i^2}{1-w_i^2    }
\frac{y_{\nu_i}}{1-q w_i y_{\nu_i}}\prod_{j-1}^{\nu_i-1} \frac{1-w_iy_j}{1-qw_iy_j}
\right]
F_{\kappa}(w_1,\dots,w_n)
=
\delta_{\kappa,\nu},
\end{multline}
where the contour $\mathcal{C}$ is a small, positively oriented circle surrounding the points $y_j^{-1}$, $j \geq 1$ and no other singularities of the integrand, such that $q\cdot \mathcal{C}$ is disjoint from the interior of $\mathcal{C}$.
\end{conj}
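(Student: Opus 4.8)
The claim is an orthogonality statement: integrating the dual symmetric function $F_\kappa(w_1,\dots,w_n)$ against a specific weight function (the same measure that appears in the integral formula of Theorem~\ref{thm: G nu inetgral formula-intro} in the $c\to\infty$ limit, with the $Z_{L+n}$ factor stripped away because $L=0$) produces a Kronecker delta $\delta_{\kappa,\nu}$. The test function indexed by $\nu$ is built from products $\prod_{j=1}^{\nu_i-1}(1-w_iy_j)/(1-qw_iy_j)$, so the integrand encodes $\nu$ through the number of factors attached to each variable $w_i$. This is strongly reminiscent of the orthogonality of the Borodin--Corwin--Petrov--Sasamoto rational functions in \cite{borodin_higher_2018,borodin_family_2017}, where one family is orthogonal to its dual under an analogous nested-contour integral pairing.

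**The plan.** My plan is to treat this as a spectral/biorthogonality statement and to verify it by residue computation. First I would substitute the explicit (sum-over-subsets, or equivalently sum-over-residues) formula for $F_\kappa(w_1,\dots,w_n)$—namely the $c\to\infty$ specialization of \eqref{F partition function defn B eq-intro}, which should factorize in the same way the $G_\nu$ integrand does—into the left-hand side of \eqref{F-int-orthog}. Since $F_\kappa$ is symmetric in its arguments and the measure $\prod_{i<j}[(w_j-w_i)/(qw_j-w_i)\cdot(1-qw_iw_j)/(1-w_iw_j)]\prod_i[\cdots]$ is the natural pairing measure, I expect the integral to reduce, after symmetrization, to a sum of residues at the poles $w_i = y_{\ell}^{-1}$ enclosed by $\mathcal{C}$. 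The combinatorics of the cross terms $\prod_{i<j}(1-qw_iw_j)/(1-w_iw_j)$ and $(w_j-w_i)/(qw_j-w_i)$ is designed so that, upon taking residues, only configurations where the residue labels match the structure of $\kappa$ survive; the condition $|\kappa|\le n$ and the $q$-nesting of the contour (so $q\mathcal{C}$ is disjoint from the interior of $\mathcal{C}$) guarantee that no spurious poles from the Gustafson-type kernel are crossed.

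**Key steps in order.** (i) Establish that $F_\kappa$ in the $c\to\infty$ limit has an explicit integrand-compatible form, so that the whole left-hand side becomes a single $n$-fold residue sum over labelings $(\ell_1,\dots,\ell_n)$. (ii) Use the antisymmetry/nesting structure of the kernel to collapse the sum: because the factor $(w_j-w_i)/(qw_j-w_i)$ vanishes when $w_i=w_j$ and the contour nesting forbids coincident residue labels from contributing more than once, the surviving terms are indexed by $n$-element subsets. (iii) Match these surviving residues term-by-term against the indicator $\delta_{\kappa,\nu}$. The natural mechanism is a Cauchy/Gustafson-type summation: I would look to reduce the finite sum to a single nonzero contribution precisely when $\kappa=\nu$, with all cross terms telescoping or cancelling otherwise. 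The cleanest route is likely to phrase this as the inversion of the triangular (in dominance order) change of basis implicit in Theorem~\ref{thm: G nu inetgral formula-intro}: that integral formula expresses $G_\nu$ via the same measure paired against $Z_{L+n}$, and \eqref{F-int-orthog} should be the statement that this measure realizes the dual basis, so that the Cauchy identity of Theorem~\ref{skew Cauchy identity thm-intro} (with $c\to\infty$) becomes an explicit biorthogonality relation.

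**Main obstacle.** The hard part will be Step~(iii): showing that the residue sum collapses to exactly $\delta_{\kappa,\nu}$ rather than to some upper-triangular matrix with nonzero off-diagonal entries. The vanishing of the off-diagonal terms is not automatic—it requires a genuine cancellation among the Gustafson-kernel cross terms, and pinning it down will almost certainly require either a dedicated $q$-integral identity (of Gustafson or Mimachi type) or an inductive argument on $n$ that peels off one variable at a time, integrating out $w_n$ first by residues and tracking how the remaining $(n-1)$-fold integrand matches the $(n-1)$-variable instance of the conjecture. Because the statement is offered only as a conjecture, I anticipate that making this cancellation fully rigorous—rather than verifying it in low-rank cases—is precisely where the difficulty lies, and a complete proof would hinge on identifying the correct symmetric-function identity that forces the triangularity to be in fact diagonal.
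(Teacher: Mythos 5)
First, be aware that the paper does not prove this statement: it appears only as Conjecture \ref{conj:F-orth}, and the authors state explicitly that it holds \emph{assuming} the linear independence of the functions $G_\kappa(x_1,\dots,x_L)$, which they do not establish. The paper's route to \eqref{F-int-orthog} is the one you gesture at only at the end of your step (iii): take $c\to\infty$ in the Cauchy identity so that its right-hand side fully factorizes (Corollary \ref{cor:triangle-freeze} applied in \eqref{cauchy-cor-cinf}), observe that the factorized integrand of the integral formula \eqref{eq: G nu integral forumla thm-cinf} then contains exactly that Cauchy kernel, re-expand the kernel as $\sum_\kappa G_\kappa F_\kappa$ inside the integral (exchanging $\sum$ and $\oint$ by uniform convergence, and trading the nested contours $\mathcal{C}_1,\dots,\mathcal{C}_n$ for the single contour $\mathcal{C}$ under a small-$|q|$ assumption), arriving at \eqref{eq: G nu integral forumla thm-cinf2}, which expresses $\lim_{c\to\infty}G_\nu$ as a sum over $\kappa$ of $\lim_{c\to\infty}G_\kappa$ multiplied by the left-hand side of \eqref{F-int-orthog}. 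Matching coefficients of $G_\kappa$ on both sides gives the conjecture --- but only if the $G_\kappa$ are linearly independent, which is precisely the unproven ingredient that keeps the statement conjectural.

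Your primary plan --- substituting an explicit formula for $\lim_{c\to\infty}F_\kappa$ and evaluating the $n$-fold integral by residues at the points $w_i=y_\ell^{-1}$ --- is a genuinely different and, if completed, stronger route, since it would yield an actual proof rather than a derivation modulo linear independence. However, it has two concrete gaps. First, your step (i) presupposes an ``integrand-compatible'' explicit formula for $F_\kappa(w_1,\dots,w_n)$ at $c\to\infty$; no such formula exists in the paper ($F_\kappa$ is defined only as a matrix element of a product of $\bigdot{B}$ operators, and unlike $G_\nu$ it is never given a sum-over-subsets or integral representation), and there is no reason offered why it should factorize the way $Z_{L+n}$ does --- the $c\to\infty$ collapse of $Z_{L+n}$ comes from the boundary vertices, not from the dotted bulk weights that build $F_\kappa$. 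Note also that all poles of the integrand inside $\mathcal{C}$ come from $F_\kappa$ itself (the explicit prefactors are regular at $w_i=y_\ell^{-1}$), so without controlling the analytic structure of $F_\kappa$ you cannot even set up the residue sum. Second, your step (iii) is not a proof step but a restatement of the claim: the collapse of the residue sum to $\delta_{\kappa,\nu}$ --- diagonality rather than mere triangularity --- is exactly the content of the conjecture, and you correctly flag that you have no mechanism forcing it. As it stands, then, your proposal reduces the conjecture to two open subproblems rather than resolving it; that is a defensible position only because the paper itself is in an analogous position, its single outstanding obstruction being the linear independence of the family $\{G_\kappa\}$.
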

This conjecture has been tested extensively on examples with $n \leq 3$. It was motivated by considering the integral \eqref{eq: G nu integral forumla thm-intro} at $c \to \infty$. In that limit, one may show that the triangular partition function \eqref{Z triangular partition function defn eq-intro} factorizes:
\begin{align*}
\lim_{c \rightarrow \infty}
Z_L(x_1,\dots,x_L)
=
\lim_{c \rightarrow \infty}
\prod_{i=1}^L
(1-h(x_i))
=
\prod_{i=1}^L
\frac{x_i(1-ax_i)}{x_i-a}.
\end{align*}
This has two implications. On the one hand, the right side of the Cauchy identity \eqref{pfaffian-average} factorizes at this point; on the other hand, the integrand of \eqref{eq: G nu integral forumla thm-intro} is itself factorized and one may recognize the Cauchy kernel embedded within it. Expanding the Cauchy kernel via the left side of \eqref{pfaffian-average} and collecting coefficients of $G_{\kappa}(x_1,\dots,x_L)$ on both sides of the equation, one deduces \eqref{F-int-orthog}. We note that Conjecture \ref{conj:F-orth-intro} is true assuming the linear independence of the functions $G_{\kappa}(x_1,\dots,x_L)$, but the proof of such a statement is outside of the scope of the present work.

\subsection{Reduction to the ASEP on the half-line}
\label{intro:ASEP}

It is well known that the stochastic six-vertex model converges, in a certain continuous-time limit, to the asymmetric simple exclusion process (ASEP); for a rigorous proof of this convergence in the full-space setting see, for example, \cite{aggarwal_convergence_2016}. Our final result is an explicit integral formula for transition probabilities of the ASEP on the half-line, obtained by taking an appropriate degeneration of the integral expression \eqref{eq: G nu integral forumla thm-intro}.

In what follows, we consider the ASEP on the half-line with bulk hopping rates $q$ and $1$ associated to left and right moves, respectively, and incoming/outgoing hopping rates parameterized as
\begin{align*}
    \alpha = \frac{a c(1-q)}{(1-a)(1-c)}, \qquad \gamma = - \frac{1-q}{(1-a)(1-c)},
\end{align*}
respectively. For more details on the definition of this system, see Section \ref{ASEP construction section}.

\begin{thm}
Let $\mathbb{P}_t(\emptyset\to\nu)$ denote the probability that the ASEP on the half-line has particles at positions $\nu \subset \mathbb{N}$ at time $t$, given that it is initially empty. Under the limit $c\to\infty$ ($\gamma\to0$), and assuming that $\alpha+q\neq 1$, one has that
    \begin{multline}
    \label{half-line-trans}
    \lim_{\gamma \to 0}
	    \mathbb{P}_t(\emptyset\to\nu) = \alpha^n\mathrm{e}^{-\alpha t}\oint_{\mathcal{D}_1}\frac{\dd w_1}{2\pi\ii} \cdots \oint_{\mathcal{D}_n}\frac{\dd w_n}{2\pi\ii} \prod_{1\leq i<j\leq n} \left[\frac{w_j-w_i}{qw_j-w_i}\frac{1-qw_iw_j}{1- w_iw_j}\right] \\
        \times\prod_{i=1}^n \left[\frac{1-q w_i^2}{w_i(q+\alpha-1-\alpha w_i)(1-qw_i)} \left(\frac{1-w_i}{1-qw_i}\right)^{\nu_i-1} \exp(\frac{(1-q)^2 w_i t}{(1-w_i)(1-qw_i)})\right],
    \end{multline}
     where the contours $\mathcal{D}_1,\dots,\mathcal{D}_n$ surround the point $1$ and satisfy the conditions of Definition \ref{defn: ASEP contours}.
\end{thm}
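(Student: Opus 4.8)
The plan is to derive the formula as the combined $c\to\infty$ and continuous-time degeneration of the integral representation of $G_\nu$ in Theorem~\ref{thm: G nu inetgral formula-intro}, using the convergence of the half-space six-vertex model to the half-line ASEP set up in Section~\ref{ASEP construction section}. That convergence identifies $\mathbb{P}_t(\emptyset\to\nu)$ with a scaling limit of $G_\nu(x_1,\dots,x_L)$ in which we put $y_j=1$ for all $j$, send the horizontal spectral parameters $x_i\to 1$ and the number of double rows $L\to\infty$ with $L(1-x_i)\to (1-q)t/2$, and parametrize the boundary via $a=\alpha/(\alpha+q-1)$, which is the $c\to\infty$ limit of $\alpha=ac(1-q)/((1-a)(1-c))$. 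The assumption $\alpha+q\neq 1$ is exactly what keeps $a$ finite in this limit. Thus the task reduces to computing this limit of the integral in \eqref{eq: G nu integral forumla thm-intro}.

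First I would take $c\to\infty$ inside the integral. Here the triangular partition function factorizes, $\lim_{c\to\infty}Z_{L+n}(x_1,\dots,x_L,w_1^{-1},\dots,w_n^{-1})=\prod_{j=1}^L\frac{x_j(1-ax_j)}{x_j-a}\prod_{i=1}^n\frac{w_i-a}{w_i(1-aw_i)}$, while the explicit boundary factor degenerates as $\frac{ac(qw_i^2-1)}{(w_i-a)(w_i-c)}\to\frac{a(1-qw_i^2)}{w_i-a}$. Combining these per-$w_i$ contributions and using $\frac{a}{1-aw_i}=\frac{\alpha}{q+\alpha-1-\alpha w_i}$, the boundary data collapses to $\frac{\alpha(1-qw_i^2)}{w_i(q+\alpha-1-\alpha w_i)}$, which supplies one factor of $\alpha$ per integration variable and hence the overall $\alpha^n$. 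Setting $y_j=1$ turns $\frac{y_{\nu_i}}{1-qw_iy_{\nu_i}}\prod_{j=1}^{\nu_i-1}\frac{1-w_iy_j}{1-qw_iy_j}$ into $\frac{1}{1-qw_i}\left(\frac{1-w_i}{1-qw_i}\right)^{\nu_i-1}$, giving the position-dependent factor together with the residual $(1-qw_i)^{-1}$.

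Next I would take the continuous-time limit of the two remaining $x$-dependent blocks. The prefactor $\left(\frac{x(1-ax)}{x-a}\right)^L$ equals $1$ at $x=1$ with logarithmic derivative $-2a/(1-a)=2\alpha/(1-q)$, so under the scaling it converges to $\mathrm{e}^{-\alpha t}$. The coupling $\prod_{j=1}^L\frac{qw_i-x_j}{w_i-x_j}\frac{1-w_ix_j}{1-qw_ix_j}$ likewise equals $1$ at $x_j=1$ with logarithmic derivative $-2w_i(1-q)/((1-w_i)(1-qw_i))$, so it converges to $\exp\!\left(\frac{(1-q)^2w_it}{(1-w_i)(1-qw_i)}\right)$; the $w$-$w$ interaction carries no $x,y,c$ dependence and is unchanged. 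Assembling all the pieces then reproduces the right-hand side of \eqref{half-line-trans}.

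The main obstacle is the rigorous control of the contours and of the interchange of limit and integration. Before the limit, the coupling has $L$ poles at $w_i=x_j$ enclosed by the $q$-nested contour $\mathcal{C}_i$; as $x_j\to 1$ these poles accumulate at $w_i=1$ and coalesce into the essential singularity of the limiting exponential, so $\mathcal{C}_i$ must degenerate into a contour $\mathcal{D}_i$ encircling $w_i=1$. I would show that this deformation crosses none of the fixed singularities — the pole at $w_i=1/q>1$, the boundary pole at $w_i=a^{-1}=(q+\alpha-1)/\alpha$ (kept away from $0$ and $1$ precisely by $\alpha+q\neq 1$), the pole at $w_i=0$, and the loci $w_j=qw_i$, $w_iw_j=1$ — so that the nesting of Definition~\ref{defn: nested contours} passes to the nesting of Definition~\ref{defn: ASEP contours}. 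Finally, working on the fixed compact contours $\mathcal{D}_i$ chosen small and bounded away from these singularities, the rational blocks $f^L$ converge uniformly to their exponential limits, which licenses dominated convergence and completes the argument.
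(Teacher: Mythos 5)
Your proposal is correct and follows essentially the same route as the paper: start from the integral formula for $G_\nu$, apply the ASEP scaling limit of Proposition \ref{ASEP cts time limit thm} with $y_j=1$, $x_i=1-(1-q)t/(2L)$ and $a=\alpha/(\alpha+q-1)$, use Corollary \ref{cor:triangle-freeze} to factorize the triangular partition function as $c\to\infty$, compute the $L\to\infty$ limits of the $x$-dependent products, and deform the contours $\mathcal{C}_i\mapsto\mathcal{D}_i$ around the accumulating poles at $w_i=1$. In fact your writeup is somewhat more explicit than the paper's own proof (which leaves the $\alpha^n\mathrm{e}^{-\alpha t}$ prefactor and the $w$-dependent factors of $Z_{L+n}$ as "straightforward"), and your logarithmic-derivative computations and contour-crossing checks all agree with the stated result.
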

In the case of closed (or hard-wall) boundary conditions, integral formulas for the half-line ASEP were previously obtained in \cite{tracy_asymmetric_2013}. Our formula \eqref{half-line-trans} appears to be new, however, as we deal with more generic boundary hopping rates than were considered in \cite{tracy_asymmetric_2013}. In particular, \eqref{half-line-trans} allows the non-trivial injection of particles into the half-line (indeed, this is necessary to make this probability non-zero), whereas in \cite{tracy_asymmetric_2013} no particle may ever enter or exit the system.

\subsection{Future prospects}
\label{intro:future}

A number of natural directions are suggested by the current work, and we plan to pursue at least a few of the following topics in later texts:

\begin{itemize}
\item The stochastic six-vertex model has a \textit{higher-spin} analogue \cite{borodin_higher_2018,borodin_family_2017,corwin2016stochastic} that may be obtained via the \textit{fusion procedure} of \cite{kulish1981yang}. In this more general model, vertical lattice lines may now accommodate any number of paths, while horizontal lines still permit at most one path. 

It is quite straightforward to push many of the formulas in the current text through the machinery of fusion, yielding a theory of higher-spin double-row symmetric functions. We expect that these functions will have many interesting properties, including simpler Cauchy identities and links with known families of functions, such as $BC$-symmetric Hall--Littlewood polynomials.

\item In another direction, the stochastic six-vertex model may also be generalized to ensembles of \textit{coloured paths}, following \cite{borodin_coloured_2022}. Algebraically, this corresponds to lifting the underlying quantum group $U_q\left(\widehat{\mathfrak{sl}}_2\right)$ to $U_q\left(\widehat{\mathfrak{sl}}_{n+1}\right)$, with $n \geq 1$. We expect that, in this coloured lattice model, our double-row operators will satisfy non-trivial commutation relations and that one should observe a family of \textit{non-symmetric} multivariate rational functions that transform nicely under the action of the \textit{Hecke algebra}. Evidence of such a construction, albeit in a slightly different model, has already been obtained in \cite{zhong2022stochastic}.

\item It is interesting to further explore the combinatorial implications of the Pfaffian formula \eqref{Z single Pfaffian them eq-intro}. In view of the generic boundary vertex weights \eqref{Stochastic boundary K-weight table-intro}, the summation set of the partition function \eqref{Z triangular partition function defn eq-intro} is equivalent to \textit{diagonally symmetric} alternating-sign matrices (with a generic diagonal), whose enumeration was unknown for twenty years since Kuperberg's work \cite{kuperberg_symmetry_2002} but very recently resolved in \cite{behrend2023diagonally}.
\end{itemize}

\subsection{Acknowledgments}
We warmly thank Roger Behrend, Vadim Gorin, Leonid Petrov, Jeremy Quastel, Travis Scrimshaw and Ole Warnaar for helpful discussions. The authors gratefully acknowledge financial support of the Australian Research Council. AG was partially supported by the ARC DECRA DE210101264. WM was supported by an Australian Government Research Training Program Scholarship. MW was partially supported by the ARC Future Fellowship FT200100981.

\section{Vertex model with boundaries}
\label{Vertex model definition section}
In this section we will outline a stochastic vertex model which will be the focus of much of this text. Explicit constructions of multi-parameter symmetric functions will be provided using these vertex models on the square lattice with non-trivial boundaries. These constructions have generic boundary parameters which will allow for a relation to the half-line open ASEP in Section \ref{ASEP construction section}.

\subsection{Bulk vertex weights}
Here we define the weights and relations of the vertex models that will be used throughout this work. First, we provide a definition of the stochastic six-vertex model \cite{gwa1992bethe} and its $U_q\left(\widehat{\mathfrak{sl}}_2\right)$ $R$-matrix. We will follow the conventions of \cite{borodin_coloured_2022}.
\begin{defn}
	\label{R-matrix defn}
	The \emph{stochastic six-vertex model} is an assignment of a rational function to a vertex 
	\begin{align}
        \label{R-matrix defn pic}
		 R_{y/x}(i,j;k,l) & =
		\begin{tikzpicture}[baseline={([yshift=-.5ex]current bounding box.center)}]
			\draw[lightgray,line width=1.5pt,->] (-1,0) -- (1,0);
			\draw[lightgray,line width=1.5pt,->] (0,-1) -- (0,1);
			\node[below] at (0,-1) {$i$};
			\node[left] at (-1,0) {$x\to j$};
			\node[above] at (0,1) {$k$};
			\node[right] at (1,0) {$l$};
			\node[below] at (0,-1.4) {$\uparrow$};
			\node[below] at (0,-1.9) {$y$};
		\end{tikzpicture};
    \hspace{0.5cm} i,j,k,l\in\{0,1\}.
	\end{align} 
	At any given edge a 1 indicates the presence of a path while a 0 indicates an absence thereof. The values of the weights \eqref{R-matrix defn pic} are given pictorially in the following table where $z = y/x$. Any weight which does not appear in the table is defined to be equal to zero.
	\begin{align}
		\label{Stochastic 6VM weights table} 
		\begin{tabular}{ccc}
			\qquad
			\begin{tikzpicture}
				\draw[gray,dashed,line width=1pt,-] (-1,0) -- (1,0);
				\draw[gray,dashed,line width=1pt,-] (0,-1) -- (0,1);
			\end{tikzpicture}
			\qquad
			&
			\qquad
			\begin{tikzpicture}
				\draw[gray,dashed,line width=1pt,-] (-1,0) -- (1,0);
				\draw[red,line width=2pt,->] (0,-1) -- (0,1);
			\end{tikzpicture}
			\qquad
			&
			\qquad
			\begin{tikzpicture}
				\draw[gray,dashed,line width=1pt,-] (0,1) -- (0,0) -- (-1,0);
				\draw[red,line width=2pt,->] (0,-1) -- (0,0) -- (1,0);
			\end{tikzpicture}
			\qquad
			\\
			\qquad
			$1$
			\qquad
			& 
			\qquad
			$\dfrac{q(1-z)}{1-qz}$
			\qquad
			& 
			\qquad
			$\dfrac{1-q}{1-qz}$
			\qquad
			\\
			\vspace{0.1cm}
			\\
			\qquad
			\begin{tikzpicture}
				\draw[gray,dashed,line width=1pt,-] (-1,0) -- (1,0);
				\draw[gray,dashed,line width=1pt,-] (0,-1) -- (0,1);
				\draw[red,line width=2pt,->] (-1,0) -- (-0.2,0) -- (0,0.2) -- (0,1);
				\draw[red,line width=2pt,<-] (1,0) -- (0.2,0) -- (0,-0.2) -- (0,-1);
			\end{tikzpicture}
			\qquad
			&
			\qquad
			\begin{tikzpicture}
				\draw[red,line width=2pt,->] (-1,0) -- (1,0);
				\draw[gray,dashed,line width=1pt,-] (0,-1) -- (0,1);
			\end{tikzpicture}
			\qquad
			&
			\qquad
			\begin{tikzpicture}
				\draw[gray,dashed,line width=1pt,-] (0,-1) -- (0,0) -- (1,0);
				\draw[red,line width=2pt,->] (-1,0) -- (0,0) -- (0,1);
			\end{tikzpicture}
			\qquad
			\\
			\qquad
			$1$
			\qquad
			& 
			\qquad
			$\dfrac{1-z}{1-qz}$
			\qquad
			&
			\qquad
			$\dfrac{z(1-q)}{1-qz}$
			\qquad
			\\
		\end{tabular} 
	\end{align}	
\end{defn}

\begin{prop}[Stochasticity]
\label{prop:stoch}
    \label{prop: stochasticity bulk}
    The weights of the stochastic six-vertex model satisfy a sum to unity property
    \begin{equation}
        \sum_{k,l\in\{0,1\}}  R_{z}(i,j;k,l) = 1,
    \end{equation}
    for any fixed $i,j\in\{0,1\}$.
\end{prop}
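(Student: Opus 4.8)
The plan is to verify the identity directly by exhausting the four possible values of the fixed input pair $(i,j)$. The key structural fact that organizes the computation is the ice rule: every non-zero weight in the table \eqref{Stochastic 6VM weights table} satisfies the conservation law $i+j=k+l$, so that for fixed $(i,j)$ only those output pairs $(k,l)$ with $k+l=i+j$ can contribute. This immediately collapses the formal four-term sum over $(k,l)\in\{0,1\}^2$ to at most two surviving terms in each case, and reduces the whole proposition to reading off the relevant entries of \eqref{Stochastic 6VM weights table} and checking an elementary cancellation.

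First I would dispose of the two \emph{frozen} cases. For $(i,j)=(0,0)$ conservation forces the unique output $(k,l)=(0,0)$, and for $(i,j)=(1,1)$ it forces $(k,l)=(1,1)$; each of these carries weight $1$ (the first and fourth entries of the table), so the sum is trivially $1$. The remaining two cases are the only ones with genuine content, but each is a one-line check. For $(i,j)=(1,0)$ the conserved outputs are $(1,0)$ and $(0,1)$, and the second and third table entries sum to
\[
\frac{q(1-z)}{1-qz}+\frac{1-q}{1-qz}=\frac{q(1-z)+(1-q)}{1-qz}=\frac{1-qz}{1-qz}=1,
\]
while for $(i,j)=(0,1)$ the conserved outputs are $(0,1)$ and $(1,0)$, and the fifth and sixth entries sum to
\[
\frac{1-z}{1-qz}+\frac{z(1-q)}{1-qz}=\frac{(1-z)+z(1-q)}{1-qz}=\frac{1-qz}{1-qz}=1.
\]

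There is no real obstacle here: the statement is a direct finite verification, and the numerators telescope in both nontrivial cases to exactly $1-qz$, cancelling the common denominator. The only point demanding a little care is bookkeeping, namely matching each pictorial vertex in \eqref{Stochastic 6VM weights table} to the correct ordered argument $(i,j;k,l)$ of $R_z$ (bottom, left, top, right), and confirming that the off-diagonal turning weights are assigned to the correct input channel so that the two conserved outputs are paired with the intended pair of table entries.
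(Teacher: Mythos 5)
Your proof is correct: the paper states this proposition without any proof at all, precisely because it amounts to the elementary verification you carried out, and your bookkeeping matches the paper's conventions (with $i,j,k,l$ labelling bottom, left, top, right edges of the vertex in \eqref{R-matrix defn pic}). The case analysis via the ice rule, with the two frozen cases trivially giving $1$ and the two non-trivial cases telescoping to $(1-qz)/(1-qz)$, is exactly the intended argument.
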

\begin{defn}
    The \emph{$R$-matrix} of the stochastic six-vertex model is $R_{12}(z)\in\mathrm{End}(V_1\otimes V_2)$ for $V_1,V_2\cong \mathbb{C}^2$ given by
\begin{equation}
    \label{R-matrix form}
    R_{12}(z) = \sum_{i,j,k,l\in\{0,1\}} R_{z}(i,j;k,l) E_1^{(j,l)} \otimes E_2^{(i,k)}  ,
\end{equation}
where $E_m^{(i,j)}\in\mathrm{End}(V_m)$ is the $2\times2$ elementary matrix with a 1 in row $i$ and column $j$.
\end{defn}
We will be interested in $R$-matrices acting on (possibly infinite-dimensional) spaces $V_1\otimes V_2\otimes \cdots$ where each $V_i\cong \mathbb{C}^2$. We will denote by $R_{jk}$ the $R$-matrix which acts non-trivially in the space $V_j$ aligned horizontally and $V_k$ aligned vertically as in the picture \eqref{R-matrix defn pic}.
\begin{prop}[Factorization]
	\label{lm:Rfactor}
	At the special value $z=1$, the $R$-matrix \eqref{R-matrix form} satisfies the relation $ R_{z=1}(i,j;k,l) = \delta_{i,l}\delta_{j,k}$. This has pictoral representation
	\be
	\label{eq:Rfactor}
	 R_{x/x}(i,j;k,l) =
	\begin{tikzpicture}[baseline={([yshift=-.5ex]current bounding box.center)}]
		\draw[lightgray,line width=1.5pt,->] (-1,0) -- (1,0);
		\draw[lightgray,line width=1.5pt,->] (0,-1) -- (0,1);
		\node[below] at (0,-1) {$i$};
		\node[left] at (-1,0) {$x\to j$};
		\node[above] at (0,1) {$k$};
		\node[right] at (1,0) {$l$};
		\node[below] at (0,-1.4) {$\uparrow$};
		\node[below] at (0,-1.9) {$x$};
	\end{tikzpicture}
	=
	\begin{tikzpicture}[baseline={([yshift=-.5ex]current bounding box.center)}]
		\draw[lightgray,line width=1.5pt,->] (-1,0) arc (-90:0:1);
		\draw[lightgray,line width=1.5pt,->] (0,-1) arc (180:90:1);
		\node[below] at (0,-1) {$i$};
		\node[left] at (-1,0) {$x\to j$};
		\node[above] at (0,1) {$k$};
		\node[right] at (1,0) {$l$};
		\node[below] at (0,-1.4) {$\uparrow$};
		\node[below] at (0,-1.9) {$x$};
	\end{tikzpicture}.
	\ee
\end{prop}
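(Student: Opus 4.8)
The plan is to verify the identity by direct inspection of the six nonzero entries of the weight table \eqref{Stochastic 6VM weights table}, since the statement is a finite case check rather than anything requiring the algebraic machinery developed later. First I would fix the convention from \eqref{R-matrix defn pic}, reading off each quadruple $(i,j;k,l)$ as (bottom, left; top, right), and translate the six pictures into their index assignments: the empty vertex is $(0,0;0,0)$ with weight $1$; the full crossing is $(1,1;1,1)$ with weight $1$; the vertical line passing straight through is $(1,0;1,0)$ with weight $q(1-z)/(1-qz)$; the horizontal line passing straight through is $(0,1;0,1)$ with weight $(1-z)/(1-qz)$; the corner from bottom to right is $(1,0;0,1)$ with weight $(1-q)/(1-qz)$; and the corner from left to top is $(0,1;1,0)$ with weight $z(1-q)/(1-qz)$. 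All other quadruples have weight $0$ by definition.

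Next I would observe that $\delta_{i,l}\delta_{j,k}$ is nonzero precisely for the four quadruples with $i=l$ and $j=k$, namely $(0,0;0,0)$, $(1,1;1,1)$, $(1,0;0,1)$ and $(0,1;1,0)$; these are exactly the empty vertex, the full crossing, and the two corner-turning configurations, each of which ought to carry weight $1$. Setting $z=1$, the two ``straight-through'' weights $q(1-z)/(1-qz)$ and $(1-z)/(1-qz)$ both vanish on account of the common factor $(1-z)$, killing the configurations $(1,0;1,0)$ and $(0,1;0,1)$, which are indeed the two (and only two) nonzero quadruples for which $\delta_{i,l}\delta_{j,k}=0$. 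Meanwhile the corner weights evaluate to $(1-q)/(1-q)=1$ and $1\cdot(1-q)/(1-q)=1$ at $z=1$, while the empty and full-crossing weights are identically $1$.

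Putting these together, at $z=1$ the surviving nonzero entries are exactly the four quadruples with $i=l$ and $j=k$, each equal to $1$, which is precisely $\delta_{i,l}\delta_{j,k}$; every other quadruple has both sides zero. There is no real obstacle here beyond the bookkeeping of matching each picture in \eqref{Stochastic 6VM weights table} to the correct index assignment, and the one point demanding care is the orientation convention for the mixed configurations, so that the vanishing factor $(1-z)$ is correctly attributed to the two straight-through weights and not to a corner-turn. Finally, to justify the pictorial form \eqref{eq:Rfactor} I would note that $\delta_{i,l}\delta_{j,k}$ routes the vertical-incoming label $i$ onto the horizontal-outgoing edge and the horizontal-incoming label $j$ onto the vertical-outgoing edge, which is exactly the assertion that the vertex factorizes into the two non-interacting arcs drawn on the right-hand side of \eqref{eq:Rfactor}.
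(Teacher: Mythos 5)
Your verification is correct and complete: the index assignments you read off from the table match the convention of \eqref{R-matrix defn pic}, the four configurations with $i=l$, $j=k$ are exactly those whose weights equal $1$ at $z=1$, the two straight-through weights vanish there, and every ice-rule-violating quadruple has both sides zero (note that $\delta_{i,l}\delta_{j,k}=1$ forces $i+j=k+l$, so no conservation-violating case can contribute). The paper states this proposition without proof, treating it as exactly this kind of immediate case check, so your argument coincides with the intended one.
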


\begin{prop}[Yang\textendash Baxter equation]
\label{prop:YB}
	The $R$-matrix \eqref{R-matrix form} satisfies the \emph{Yang\textendash Baxter equation}
	\begin{equation}
		\label{Yang-Baxter eq R-matrices}
		R_{12}(y/x) R_{13}(z/x) R_{23} (z/y) = R_{23}(z/y) R_{13}(z/x) R_{12}(y/x).
	\end{equation}
	For fixed $i_1,i_2,i_3,j_1,j_2,j_3\in\{0,1\}$, this is represented pictorially as
	\[
    \sum_{k_1,k_2,k_3 \in \{0,1\}} 
	\begin{tikzpicture}[baseline={([yshift=-.5ex]current bounding box.center)}]
		\draw[lightgray,line width=1.5pt,->] (0,1) -- (1,0) -- (3,0);
		\draw[lightgray,line width=1.5pt,->] (0,0) -- (1,1) -- (3,1);
		\draw[lightgray,line width=1.5pt,->] (2,-0.5) -- (2,1.5);
		\node[left] at (0,1) {$x\rightarrow i_1$};
		\node[left] at (0,0) {$y\rightarrow i_2$};
		\node[below] at (2,-0.5) {$i_3$};
		\node[right] at (3,0) {$j_1$};
		\node[right] at (3,1) {$j_2$};
		\node[above] at (2,1.5) {$j_3$};
		\node[below] at (1,0) {$k_1$};
		\node[above] at (1,1) {$k_2$};
		\node[right] at (2,0.5) {$k_3$};
		\node[below] at (2,-0.9) {$\uparrow$};
		\node[below] at (2,-1.4) {$z$};
	\end{tikzpicture}
    =
	\sum_{k_1,k_2,k_3 \in \{0,1\}} 
	\begin{tikzpicture}[baseline={([yshift=-.5ex]current bounding box.center)}]
		\draw[lightgray,line width=1.5pt,->] (0,1) -- (2,1) -- (3,0);
		\draw[lightgray,line width=1.5pt,->] (0,0) -- (2,0) -- (3,1);
		\draw[lightgray,line width=1.5pt,->] (1,-0.5) -- (1,1.5);
		\node[left] at (0,1) {$x\rightarrow i_1$};
		\node[left] at (0,0) {$y\rightarrow i_2$};
		\node[below] at (1,-0.5) {$i_3$};
		\node[right] at (3,0) {$j_1$};
		\node[right] at (3,1) {$j_2$};
		\node[above] at (1,1.5) {$j_3$};
		\node[above] at (2,1) {$k_1$};
		\node[below] at (2,0) {$k_2$};
		\node[right] at (1,0.5) {$k_3$};
		\node[below] at (1,-0.9) {$\uparrow$};
		\node[below] at (1,-1.4) {$z$};
	\end{tikzpicture}.	
	\]
\end{prop}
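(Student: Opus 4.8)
The plan is to read the Yang--Baxter equation as a matrix identity in $\mathrm{End}(V_1 \otimes V_2 \otimes V_3)$ with each $V_i \cong \mathbb{C}^2$, and to verify it sector by sector using the ice rule. Every nonzero weight in the table \eqref{Stochastic 6VM weights table} conserves the total number of paths at each vertex, so each factor $R_{ab}$ commutes with the operator counting the total path number $N = i_1 + i_2 + i_3$. Consequently both sides of \eqref{Yang-Baxter eq R-matrices} are block-diagonal with respect to the decomposition of $V_1 \otimes V_2 \otimes V_3$ into the four sectors $N \in \{0,1,2,3\}$, and it suffices to check the identity within each block.

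The sectors $N=0$ and $N=3$ are one-dimensional: in each, every vertex that occurs is forced into its empty (resp.\ fully-occupied) configuration, both of which carry weight $1$ in \eqref{Stochastic 6VM weights table}, so each side acts as the scalar $1$ and the identity is immediate. The remaining sectors $N=1$ and $N=2$ are each three-dimensional. In the single-path sector, the block is spanned by the states in which the lone path occupies line $1$, $2$ or $3$, and \eqref{Yang-Baxter eq R-matrices} reduces to a $3 \times 3$ identity of rational functions. I would verify it directly: expand each of the two triple products as a sum over the internal labels $(k_1,k_2,k_3)$ (only those with $k_1+k_2+k_3 = 1$ contributing), substitute the explicit weights from \eqref{R-matrix form} with the spectral arguments $y/x$, $z/x$, $z/y$ assigned to $R_{12}$, $R_{13}$, $R_{23}$ respectively, and clear the common denominator $(1-q\,y/x)(1-q\,z/x)(1-q\,z/y)$. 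The sector $N=2$ is the one-hole sector and yields a structurally identical $3\times 3$ identity; I would dispatch it by the same computation, reading the scattering amplitudes of the hole off \eqref{Stochastic 6VM weights table} under the relabelling $i \mapsto 1-i$.

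The main obstacle is the explicit rational identity in these two nontrivial sectors. The structural input that makes it close is that the three spectral parameters are not independent: they obey the multiplicative consistency $(y/x)(z/y) = z/x$, and it is precisely this relation that allows the cleared-denominator polynomials on the two sides to coincide after collecting terms in $x,y,z,q$. Conceptually the computation is the factorization of three-body scattering into pairwise scatterings: reordering the path(s) through the three lines by the two inequivalent sequences of elementary transpositions must yield the same amplitude. I expect the verification to be routine once the denominators are cleared, reducing to a polynomial identity that can be confirmed by direct expansion; alternatively, one may simply identify the weights \eqref{Stochastic 6VM weights table} with the standard $U_q\left(\widehat{\mathfrak{sl}}_2\right)$ $R$-matrix and invoke the known solution of the Yang--Baxter equation, following the conventions of \cite{borodin_coloured_2022}.
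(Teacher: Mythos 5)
Your proposal is correct, but it is worth noting that the paper itself supplies no proof of Proposition \ref{prop:YB} at all: the Yang--Baxter equation is stated as a known property of the stochastic six-vertex weights, implicitly deferring to the literature (the conventions of \cite{borodin_coloured_2022}), which is essentially the second, fallback route you mention at the end. Your primary route is therefore genuinely different and more self-contained: exploiting the ice rule to see that both sides of \eqref{Yang-Baxter eq R-matrices} preserve the total occupation number $N\in\{0,1,2,3\}$, dispatching the one-dimensional sectors $N=0,3$ (where all vertices freeze to weight-$1$ configurations), and reducing the claim to two $3\times 3$ identities of rational functions in the sectors $N=1,2$, which close because of the multiplicative consistency $(y/x)(z/y)=z/x$. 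What this buys is an elementary, finite verification that does not rely on quantum-group input; what it costs is that the actual content of the proof is concentrated in the $N=1$ and $N=2$ computations, which you describe but do not carry out. One small point of care there: the $N=2$ sector is not obtained from $N=1$ by a symmetry of the weights, since particle--hole relabelling $i\mapsto 1-i$ swaps the straight-through weights $q(1-z)/(1-qz)$ and $(1-z)/(1-qz)$ (and likewise the two turning weights), so it genuinely requires its own, though structurally parallel, check --- your phrasing suggests you are aware of this, but it should not be waved through as ``identical''.
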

\begin{prop}[$R$-matrix unitarity]
	\label{R-matrix unitrarity prop}
	The $R$-matrix \eqref{R-matrix form} satisfies the unitarity condition
	\begin{equation}
		\label{R-matrix unitrarity prop eq}
		R_{21}\left(x/y\right) R_{12}\left(y/x\right) = \mathsf{id},
	\end{equation}
        where $\mathsf{id}$ is the identity within $\mathrm{End}(V_1\otimes V_2)$. For fixed $i_1,i_2,j_1,j_2\in\{0,1\}$, this is represented pictorially as
	\[
	\sum_{k_1,k_2\in\{0,1\}}
	\begin{tikzpicture}[baseline={([yshift=-.5ex]current bounding box.center)},scale=0.8]
		\draw[lightgray,line width=1.5pt,-] (1,0) -- (1,1.7);
		\draw[lightgray,line width=1.5pt,-] (1,1.7) arc (180:90:0.3);
		\draw[lightgray,line width=1.5pt,->] (1.3,2) -- (3,2);
		\draw[lightgray,line width=1.5pt,-] (0,1) -- (1.7,1);
		\draw[lightgray,line width=1.5pt,-] (1.7,1) arc (-90:0:0.3);
		\draw[lightgray,line width=1.5pt,->] (2,1.3) -- (2,3);
		\node[below] at (1,0) {$i_1$};
		\node[left] at (1,2) {$k_1$};
		\node[right] at (3,2) {$j_1$};
		\node[left] at (0,1) {$y \rightarrow i_2$};
		\node[right] at (2,1) {$k_2$};
		\node[above] at (2,3) {$j_2$};
		\node[below] at (1,-0.6) {$\uparrow$};
		\node[below] at (1,-1.2) {$x$};
	\end{tikzpicture} =	
	\begin{tikzpicture}[baseline={([yshift=-.5ex]current bounding box.center)},scale=0.8]
		\draw[lightgray,line width=1.5pt,->] (0,1) arc (-60:-30:5.196);
		\draw[lightgray,line width=1.5pt,->] (1,0) arc (150:120:5.196);
		\node[below] at (1,0) {$i_1$};
		\node[right] at (3,2) {$j_1$};
		\node[left] at (0,1) {$y \rightarrow i_2$};
		\node[above] at (2,3) {$j_2$};
		\node[below] at (1,-0.6) {$\uparrow$};
		\node[below] at (1,-1.2) {$x$};
	\end{tikzpicture} =
	\delta_{i_1,j_1} \delta_{i_2,j_2}.
	\] 
\end{prop}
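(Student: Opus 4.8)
The plan is to verify the operator identity \eqref{R-matrix unitrarity prop eq} directly, by computing the action of the composite operator $R_{21}(x/y)R_{12}(y/x)$ on a basis of $V_1\otimes V_2\cong\mathbb{C}^2\otimes\mathbb{C}^2$. The essential simplification is that both factors conserve the total number of paths: every nonzero weight in the table \eqref{Stochastic 6VM weights table} satisfies $i+j=k+l$. Consequently the product is block diagonal with respect to the grading of $V_1\otimes V_2$ by total occupation number, and the verification splits into a $0$-path sector, a two-dimensional $1$-path sector, and a $2$-path sector. Pictorially, this is exactly the statement that the composed ``bubble'' on the left of the displayed picture collapses so that both strands pass through without interaction.

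First I would dispose of the extreme sectors. The $0$-path subspace is spanned by $\ket{0}_1\otimes\ket{0}_2$ and the $2$-path subspace by $\ket{1}_1\otimes\ket{1}_2$; in each case the only admissible vertex is the empty (respectively doubly occupied) one, of weight $1$ for every value of the spectral parameter. Hence $R_{12}(y/x)$ and $R_{21}(x/y)$ each act as the identity on these one-dimensional subspaces, as does their product, matching $\delta_{i_1,j_1}\delta_{i_2,j_2}$. The only genuine computation is in the $1$-path sector, spanned by $e_1=\ket{1}_1\otimes\ket{0}_2$ and $e_2=\ket{0}_1\otimes\ket{1}_2$. Reading off the four single-path weights from \eqref{Stochastic 6VM weights table} and applying $E^{(a,b)}\ket{c}=\delta_{bc}\ket{a}$ in \eqref{R-matrix form}, I would record the $2\times 2$ blocks (rows and columns ordered $(e_1,e_2)$, with $z=y/x$)
\[
R_{12}(z)\big|_{1}=\frac{1}{1-qz}\begin{pmatrix}1-z & z(1-q)\\ 1-q & q(1-z)\end{pmatrix},
\qquad
R_{21}(1/z)\big|_{1}=\frac{1}{z-q}\begin{pmatrix}q(z-1) & z(1-q)\\ 1-q & z-1\end{pmatrix},
\]
taking care that in $R_{21}$ the horizontal factor is $V_2$ and the vertical factor is $V_1$, opposite to $R_{12}$.

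Multiplying these two matrices is the crux of the argument. The off-diagonal entries vanish because each is proportional to $(z-1)+(1-z)=0$, while both diagonal entries collapse to $z(1-q)^2-q(1-z)^2=(z-q)(1-qz)$, precisely cancelling the scalar prefactor $1/[(z-q)(1-qz)]$ and leaving the $2\times 2$ identity. This single cancellation is the only place where the specific form of the six-vertex weights is used, and it is where I expect the real content to reside. There is no deep obstacle here — the statement is a routine inversion relation — so the main risk is purely bookkeeping: keeping the horizontal/vertical roles of the two tensor factors straight when passing from $R_{12}$ to $R_{21}$, and correctly substituting $x/y=1/z$. Reassembling the three sectors then yields $R_{21}(x/y)R_{12}(y/x)=\mathsf{id}$, which is the claim.
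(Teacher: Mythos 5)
Your proof is correct. The paper states Proposition \ref{R-matrix unitrarity prop} \emph{without proof}, treating unitarity as a standard property of the stochastic six-vertex $R$-matrix, so there is no argument in the text to compare against; your direct verification is the natural way to fill that gap. The structure is sound: the ice rule makes both factors block diagonal in the occupation-number grading, the $0$- and $2$-path blocks act as $1$, and your two $2\times 2$ blocks are exactly right in the paper's conventions (first column the image of $e_1$, second of $e_2$, and the horizontal/vertical roles correctly swapped in $R_{21}$). The product computation also checks out: both off-diagonal entries carry the factor $(z-1)+(1-z)=0$, and both diagonal entries equal
\begin{equation*}
z(1-q)^2-q(1-z)^2 \;=\; z(1+q^2)-q(1+z^2) \;=\; (z-q)(1-qz),
\end{equation*}
which cancels the prefactor $1/\bigl[(z-q)(1-qz)\bigr]$ and yields the identity on the one-path sector, completing the proof.
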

We will also define a re-normalized version of the weights from Definition \ref{R-matrix defn}. These new weights inherit many of the algebraic properties of the stochastic ones. 
\begin{defn}
	A \emph{re-normalized vertex} is represented pictorially
		\begin{align}
		\bigdot{R}_{y/x}(i,j;k,l) & =
		\begin{tikzpicture}[baseline={([yshift=-.5ex]current bounding box.center)}]
			\draw[lightgray,line width=1.5pt,->] (-1,0) -- (1,0);
			\draw[lightgray,line width=1.5pt,->] (0,-1) -- (0,1);
			\node[below] at (0,-1) {$i$};
			\node[left] at (-1,0) {$x\to j$};
			\node[above] at (0,1) {$k$};
			\node[right] at (1,0) {$l$};
			\node[below] at (0,-1.4) {$\uparrow$};
			\node[below] at (0,-1.9) {$y$};
			\draw[black,fill=black] (0,0) circle (0.12cm);
		\end{tikzpicture};
    \hspace{0.5cm} i,j,k,l\in\{0,1\}.
	\end{align} 
	The values of these weights are given in the following table, where $z = y/x$. Any weight which does not appear in the table is defined to be equal to zero.
	\begin{align}
		\begin{tabular}{ccc}
			\qquad
			\begin{tikzpicture}
				\draw[gray,dashed,line width=1pt,-] (-1,0) -- (1,0);
				\draw[gray,dashed,line width=1pt,-] (0,-1) -- (0,1);
				\draw[black,fill=black] (0,0) circle (0.1cm);
			\end{tikzpicture}
			\qquad
			&
			\qquad
			\begin{tikzpicture}
				\draw[gray,dashed,line width=1pt,-] (-1,0) -- (1,0);
				\draw[red,line width=2pt,->] (0,-1) -- (0,1);
				\draw[black,fill=black] (0,0) circle (0.1cm);
			\end{tikzpicture}
			\qquad
			&
			\qquad
			\begin{tikzpicture}
				\draw[gray,dashed,line width=1pt,-] (0,1) -- (0,0) -- (-1,0);
				\draw[red,line width=2pt,->] (0,-1) -- (0,0) -- (1,0);
				\draw[black,fill=black] (0,0) circle (0.1cm);
			\end{tikzpicture}
			\qquad
			\\
			\qquad
			$\dfrac{1-qz}{1-z}$
			\qquad
			& 
			\qquad
			$q$
			\qquad
			& 
			\qquad
			$\dfrac{1-q}{1-z}$
			\qquad
			\\
			\vspace{0.1cm}
			\\
			\qquad
			\begin{tikzpicture}
				\draw[gray,dashed,line width=1pt,-] (-1,0) -- (1,0);
				\draw[gray,dashed,line width=1pt,-] (0,-1) -- (0,1);
				\draw[red,line width=2pt,->] (-1,0) -- (-0.2,0) -- (0,0.2) -- (0,1);
				\draw[red,line width=2pt,<-] (1,0) -- (0.2,0) -- (0,-0.2) -- (0,-1);
				\draw[black,fill=black] (0,0) circle (0.1cm);
			\end{tikzpicture}
			\qquad
			&
			\qquad
			\begin{tikzpicture}
				\draw[red,line width=2pt,->] (-1,0) -- (1,0);
				\draw[gray,dashed,line width=1pt,-] (0,-1) -- (0,1);
				\draw[black,fill=black] (0,0) circle (0.1cm);
			\end{tikzpicture}
			\qquad
			&
			\qquad
			\begin{tikzpicture}
				\draw[gray,dashed,line width=1pt,-] (0,-1) -- (0,0) -- (1,0);
				\draw[red,line width=2pt,->] (-1,0) -- (0,0) -- (0,1);
				\draw[black,fill=black] (0,0) circle (0.1cm);
			\end{tikzpicture}
			\qquad
			\\
			\qquad
			$\dfrac{1-qz}{1-z}$
			\qquad
			& 
			\qquad
			$1$
			\qquad
			&
			\qquad
			$\dfrac{z(1-q)}{1-z}$
			\qquad
			\\
		\end{tabular} 
	\end{align}
\end{defn}
These weights are related to their stochastic counterparts \eqref{R-matrix defn} through the relation 
\begin{equation}
	\label{Dotted R-matrix eq}
	\bigdot{R}_z(i,j;k,l) = \frac{1-qz}{1-z}R_z(i,j;k,l).
\end{equation}
Since the re-normalized vertices differ from the stochastic ones only by an overall multiplicative factor, we can write versions of the Yang\textendash Baxter equation \eqref{Yang-Baxter eq R-matrices} what incorporate mixtures of both $R$ and re-normalized $\bigdot{R}$-matrices.

\subsection{Boundary vertex weights}
We also define weights of a boundary vertex. Such boundary vertices and their matrices were introduced in \cite{cher_84,sklyanin_boundary_1988} and in a more general non-diagonal case in \cite{de_Vega93}. We consider the non-diagonal case in which the boundary vertex weights depend on two free parameters.
\begin{defn}
	\label{K-matrix boundary weight defn}
	A \emph{stochastic boundary weight} is an assignment of a rational function to a half-vertex
	\begin{equation}
        \label{boundary weight defn eq}
		K_x(i;j) = 
		\begin{tikzpicture}[baseline={([yshift=-.5ex]current bounding box.center)}]
			\draw[lightgray,line width=1.5pt,->] (1,0) -- (0,1) -- (1,2);
			\draw[blue,fill=blue] (0,1) circle (0.1cm);
			\node[right] at (1,0) {$i \leftarrow x$};
			\node[right] at (1,2) {$j \rightarrow x^{-1}$};
		\end{tikzpicture};
    \hspace{0.5cm} i,j\in\{0,1\}.
	\end{equation}
	As previously, a 0/1 at a given edge indicates the absence/presence of a path. With the use of the rational function 
	\be
	\label{eq:hdef}
	h(x) = \frac{ac(1-x^2)}{(a-x)(c-x)},
	\ee
	these weights are tabulated below.
	\begin{align}
		\label{Stochastic boundary K-weight table}
		\begin{tabular}{cccc}
			\qquad
			\begin{tikzpicture}[baseline={([yshift=-.5ex]current bounding box.center)}]
				\draw[gray,dashed,line width=1pt,-] (1,0) -- (0,1) -- (1,2);
				\draw[blue,fill=blue] (0,1) circle (0.1cm);
			\end{tikzpicture}
			\qquad
			&
			\qquad
			\begin{tikzpicture}[baseline={([yshift=-.5ex]current bounding box.center)}]
				\draw[gray,dashed,line width=1pt,-] (1,0) -- (0,1);
				\draw[red,line width=2pt,->] (0,1) -- (1,2);
				\draw[blue,fill=blue] (0,1) circle (0.1cm);
			\end{tikzpicture}
			\qquad
			&
			\qquad
			\begin{tikzpicture}[baseline={([yshift=-.5ex]current bounding box.center)}]
				\draw[gray,dashed,line width=1pt,-] (0,1) -- (1,2);
				\draw[red,line width=2pt,-] (1,0) -- (0,1);
				\draw[red,line width=2pt,->] (1,0) -- (0.5,0.5);
				\draw[blue,fill=blue] (0,1) circle (0.1cm);
			\end{tikzpicture}
			\qquad
			&
			\qquad
			\begin{tikzpicture}[baseline={([yshift=-.5ex]current bounding box.center)}]
				\draw[gray,dashed,line width=1pt,-] (0,1) -- (1,2);
				\draw[red,line width=2pt,->] (1,0) -- (0,1) -- (1,2);
				\draw[red,line width=2pt,->] (1,0) -- (0.5,0.5);
				\draw[blue,fill=blue] (0,1) circle (0.1cm);
			\end{tikzpicture}
			\qquad
			\\
			\vspace{0cm}
			\\
			\qquad
			$1-h(x)$
			\qquad
			&
			\qquad
			$h(x)$
			\qquad
			&
			\qquad
			$\dfrac{-h(x)}{ac}$
			\qquad
			&
			\qquad
			$1+\dfrac{h(x)}{ac}$
			\qquad
		\end{tabular}
	\end{align}
\end{defn}
\begin{prop}[Stochasticity]
    \label{prop: stochasticity boundary}
    The stochastic boundary weights also satisfy a sum to unity property
    \begin{equation}
        \sum_{j\in\{0,1\}} K_x(i;j) = 1,
    \end{equation}
    for any fixed $i\in\{0,1\}$.        
\end{prop}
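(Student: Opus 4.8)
The plan is to prove the identity by direct inspection of the boundary weight table \eqref{Stochastic boundary K-weight table}, since the sum ranges only over the two admissible values $j \in \{0,1\}$ of the outgoing edge, with the incoming value $i \in \{0,1\}$ held fixed. First I would read off the four entries of $K_x(i;j)$ from the pictures. The empty configuration gives $K_x(0;0) = 1 - h(x)$; the configuration carrying a path only on the outgoing edge gives $K_x(0;1) = h(x)$; the configuration carrying a path only on the incoming edge gives $K_x(1;0) = -h(x)/(ac)$; and the fully occupied configuration gives $K_x(1;1) = 1 + h(x)/(ac)$.

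With these four values in hand, the verification splits into the two cases dictated by the fixed value of $i$. For $i = 0$ one computes
\begin{equation*}
K_x(0;0) + K_x(0;1) = \big(1 - h(x)\big) + h(x) = 1,
\end{equation*}
and for $i = 1$ one computes
\begin{equation*}
K_x(1;0) + K_x(1;1) = -\frac{h(x)}{ac} + \left(1 + \frac{h(x)}{ac}\right) = 1.
\end{equation*}
In each case the $h$-dependent contributions cancel in pairs, so that the explicit form \eqref{eq:hdef} of $h(x)$ in terms of $a$, $c$ and $x$ is never required.

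There is no real obstacle here: the statement follows immediately from the way the weights were assigned. The only genuine content is the observation that the boundary weights have been engineered so that, for each fixed incoming state, the two possible outgoing weights form an affine decomposition of unity --- either $1 - h$ and $h$, or $-h/(ac)$ and $1 + h/(ac)$. This is precisely the feature that permits $K_x$ to be read as a stochastic transition rule on the boundary edge, and it underlies the Markovian interpretation of the double-row operators described in Section \ref{intro:operators}.
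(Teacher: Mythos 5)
Your proof is correct and matches the paper's treatment: the paper states Proposition \ref{prop: stochasticity boundary} without a written proof precisely because it follows by the same direct inspection of the table \eqref{Stochastic boundary K-weight table} that you carry out, with the two row sums $(1-h(x))+h(x)=1$ and $-h(x)/(ac)+\bigl(1+h(x)/(ac)\bigr)=1$. Nothing further is needed.
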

\begin{defn}
    The \emph{$K$-matrix} of the stochastic boundary weights is $K_1(x) \in\mathrm{End}(V_1)$ for $V_1\cong\mathbb{C}^2$ given by
    \begin{equation}
        K_1(x) = \sum_{i,j\in\{0,1\}} K_x(i;j) E_1^{(i,j)} .
    \end{equation}
\end{defn}
Just as with the $R$-matrix, while acting on spaces $V_1\otimes V_2\otimes\cdots$ we will denote by $K_j$ the $K$-matrix which acts non-trivially on the space $V_j$. The boundary vertices \eqref{boundary weight defn eq}, together with the bulk vertices \eqref{R-matrix defn pic}, combine to give the reflection equation. 
\begin{prop}[Sklyanin reflection equation]
	The boundary vertex weights from Definition \ref{K-matrix boundary weight defn} and the bulk weights from Definition \ref{R-matrix defn} satisfy the \emph{Sklyanin reflection equation} 
	\begin{equation}
		\label{Sklyanin reflection equation R,K matrices}
		R_{21} \left(\frac{x}{y}\right) K_1 (x) R_{12}(xy) K_2(y) = K_2(y) R_{21}(xy) K_1 (x) R_{12} \left(\frac{x}{y}\right).
	\end{equation}
	For fixed $i_1,i_2,j_1,j_2\in\{0,1\}$ this can be represented pictorially as
	\[
	\sum_{k_1,k_2,\ell_1,\ell_2 \in\{0,1\}} 
	\begin{tikzpicture}[baseline={([yshift=-.5ex]current bounding box.center)}]
		\draw[lightgray,line width=1.5pt,->] (4,1) -- (3,1) -- (1,0) -- (0,0.5) -- (3,2) -- (4,2);
		\draw[lightgray,line width=1.5pt,->] (4,0) -- (3,0) -- (0,2.5) -- (1,3) -- (4,3);
		\draw[blue,fill=blue] (0,0.5) circle (0.1cm);
		\draw[blue,fill=blue] (0,2.5) circle (0.1cm);
		\node[right] at (4,0) {$i_2\leftarrow y$};
		\node[right] at (4,1) {$i_1\leftarrow x$};
		\node[right] at (4,2) {$j_1\rightarrow x^{-1}$};
		\node[right] at (4,3) {$j_2\rightarrow y^{-1}$};
		\node[below] at (1,0) {$k_1$};
		\node[right] at (1.85,1.15) {$k_2$};
		\node[above] at (0.7,0.82) {$\ell_1$};
		\node[right] at (0.8,2) {$\ell_2$};
	\end{tikzpicture}
	=
	\sum_{k_1,k_2,\ell_1,\ell_2\in\{0,1\}} 
	\begin{tikzpicture}[baseline={([yshift=-.5ex]current bounding box.center)}]
		\draw[lightgray,line width=1.5pt,->] (4,1) -- (3,1) -- (0,2.5) -- (1,3) -- (3,2) -- (4,2);
		\draw[lightgray,line width=1.5pt,->] (4,0) -- (1,0) -- (0,0.5) -- (3,3) -- (4,3);
		\draw[blue,fill=blue] (0,0.5) circle (0.1cm);
		\draw[blue,fill=blue] (0,2.5) circle (0.1cm);
		\node[right] at (4,0) {$i_2\leftarrow y$};
		\node[right] at (4,1) {$i_1\leftarrow x$};
		\node[right] at (4,2) {$j_1\rightarrow x^{-1}$};
		\node[right] at (4,3) {$j_2\rightarrow y^{-1}$};
		\node[above] at (1,3) {$\ell_1$};
		\node[right] at (1.85,1.9) {$\ell_2$};
		\node[below] at (0.75,2.2) {$k_1$};
		\node[right] at (0.8,1) {$k_2$};
	\end{tikzpicture}.
	\]
\end{prop}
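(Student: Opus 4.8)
The plan is to prove the reflection equation \eqref{Sklyanin reflection equation R,K matrices} by direct verification, using the fact that both sides are elements of $\mathrm{End}(V_1 \otimes V_2)$ with $V_1 \otimes V_2 \cong \mathbb{C}^2 \otimes \mathbb{C}^2$. Each side is therefore a $4 \times 4$ matrix, and it suffices to check the $16$ matrix elements indexed by the fixed external states $i_1, i_2, j_1, j_2 \in \{0,1\}$. For each such choice, both sides are finite sums over the internal labels $k_1, k_2, \ell_1, \ell_2$ appearing in the pictorial form, whose summands are products of two bulk weights drawn from \eqref{Stochastic 6VM weights table} and two boundary weights drawn from \eqref{Stochastic boundary K-weight table}. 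The first step is to read off these summations from the two pictures, with the spectral arguments $x/y$, $xy$ assigned to the $R$-matrices and $x$, $y$ to the $K$-matrices as indicated.

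Before grinding through the cases, I would exploit the conservation structure of the weights to reduce the work drastically. The bulk $R$-matrix obeys the ice rule, so both $R_{12}$ and $R_{21}$ commute with the total path-number operator $n_1 + n_2$ on $V_1 \otimes V_2$, whereas each boundary weight $K_x(i;j)$ changes the number of paths on its line by exactly $j - i \in \{-1,0,+1\}$. Reading either side as a composition of two number-preserving $R$-matrices and two number-shifting $K$-matrices, one finds that any nonzero contribution to a fixed matrix element must realise the net change $\Delta = (j_1 + j_2) - (i_1 + i_2)$ entirely through the two boundary reflections, as $\Delta = \delta_{K_1} + \delta_{K_2}$. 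This grades the $16$ elements by $\Delta \in \{-2,-1,0,1,2\}$. In the extreme sectors $\Delta = \pm 2$ both $K$-matrices are forced to act in a single way and the intermediate $R$-routing collapses to one diagonal term on each side, so equality is immediate; a similar collapse handles several of the $\Delta = \pm 1$ cases.

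The remaining work, and the main obstacle, lies in the sector $\Delta = 0$ (together with the genuinely mixed $\Delta = \pm 1$ cases), where a path may be created at one boundary vertex and annihilated at the other, so that $(\delta_{K_1},\delta_{K_2})$ can take the values $(0,0)$, $(+1,-1)$ or $(-1,+1)$ and the intervening $R$-matrices entangle the two lines. Here each side is a sum of several terms and one must verify honest rational-function identities in $x, y, q$ and the boundary parameters $a, c$. I expect these to hold only by virtue of the precise parametrization $h(x) = ac(1 - x^2)/((a-x)(c-x))$ from \eqref{eq:hdef}: after clearing the common bulk denominators $(1 - q xy)$ and $(1 - q x/y)$ together with the boundary factors $(a-x)(c-x)$, both sides should reduce to the same polynomial, with the off-diagonal boundary weights $\pm h(x)/(ac)$ conspiring to cancel against the crossing weights of the $R$-matrix. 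This cancellation is exactly the place where the specific form of $h$ is forced.

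As an alternative to the explicit case check, one may observe that the weights \eqref{Stochastic boundary K-weight table} coincide, up to normalization, with the general non-diagonal $K$-matrix solution of the reflection equation associated to the $U_q(\widehat{\mathfrak{sl}}_2)$ $R$-matrix constructed in \cite{sklyanin_boundary_1988, de_Vega93}, so that \eqref{Sklyanin reflection equation R,K matrices} follows from their classification. I would nonetheless favour the direct verification above, since it keeps the argument self-contained and reuses only the ice rule and the tabulated weights already established in the excerpt.
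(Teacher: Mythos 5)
The paper never actually proves this proposition: the reflection equation is asserted as a property of the weights, the boundary matrices having been drawn from the known non-diagonal $K$-matrix solutions in the cited literature \cite{cher_84,sklyanin_boundary_1988,de_Vega93} --- which is essentially the second, ``classification'' route you mention in your final paragraph. Your primary route, a direct finite verification of the sixteen matrix elements in $\mathrm{End}(V_1\otimes V_2)$, is therefore a genuinely different and more self-contained argument, and it is sound: each side is a composition of two ice-rule-preserving $R$-matrices and two $K$-matrices that shift the path number on their line by $j-i\in\{-1,0,+1\}$, so your grading by $\Delta=(j_1+j_2)-(i_1+i_2)\in\{-2,\dots,2\}$ is correct, the sectors $\Delta=\pm2$ each collapse to a single product of weights on either side, and what remains is a finite list of rational-function identities in $x,y,q,a,c$ that indeed hold precisely because of the parametrization \eqref{eq:hdef}. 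What your approach buys is self-containedness and visibility of exactly where the form of $h$ is forced; what the citation route buys is brevity.

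Two caveats on execution. First, even in the ``immediate'' sectors you must track the distinction between $R_{12}$ and $R_{21}$: for $\Delta=+2$ the single surviving term on the left has the path traversing the $R_{12}(xy)$ crossing on the $x$-line, while on the right it traverses the $R_{21}(xy)$ crossing on the $y$-line, and these weights agree (both equal to $\tfrac{1-xy}{1-qxy}$) only because transposing the tensor factors exchanges which space carries the horizontal indices of the table \eqref{Stochastic 6VM weights table}. If one applies a single fixed convention to both crossings, the two sides appear to disagree by a factor of $q$, since the horizontal and vertical pass-through weights $\tfrac{1-z}{1-qz}$ and $\tfrac{q(1-z)}{1-qz}$ differ. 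Second, your $\Delta=0,\pm1$ sectors are only asserted (``I expect these to hold''), yet they constitute the entire nontrivial content of the verification; the proposal as written is a correct roadmap rather than a finished proof, and at least one representative hard case should be expanded explicitly to complete it.
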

This relation is sometimes referred to as the \emph{boundary Yang\textendash Baxter equation}.
\begin{prop}[$K$-matrix unitarity]
	\label{K-matrix unitrarity prop}
	The $K$-matrix from Definition \ref{K-matrix boundary weight defn} satisfies its own unitarity relation 
	\begin{equation}
		K_1(x) K_1\left(x^{-1}\right) = \mathsf{id},
	\end{equation}
	where $\mathsf{id}$ is the identity within $\mathrm{End}(V_1)$. This can be represented pictorially as 
	\[
	\sum_{k\in\{0,1\}}
	\begin{tikzpicture}[baseline={([yshift=-.5ex]current bounding box.center)}]
		\draw[lightgray,line width=1.5pt,->] (1,0) -- (0,1) -- (1,2) -- (2,1);
		\draw[blue,fill=blue] (0,1) circle (0.1cm);
		\draw[blue,fill=blue] (1,2) circle (0.1cm);
		\node[right] at (1,0) {$i \leftarrow x$};
		\node[right] at (2,1) {$j \rightarrow x$};
		\node[left] at (0.55,1.65) {$k$};
	\end{tikzpicture} = \delta_{i,j},
	\] 
    for all $i,j \in \{0,1\}$.
\end{prop}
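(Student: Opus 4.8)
The plan is to turn this operator identity into a single scalar identity for the function $h$. First I would read off the matrix of $K_1(x)$ directly from the weight table \eqref{Stochastic boundary K-weight table}: writing $h=h(x)$ and indexing rows/columns by $i,j\in\{0,1\}$ so that the $(i,j)$ entry is $K_x(i;j)$, one has
\begin{equation*}
K_1(x) = \begin{pmatrix} 1-h & h \\ -h/(ac) & 1+h/(ac) \end{pmatrix}.
\end{equation*}
In pictorial terms the claim is exactly the component statement $\sum_{k}K_x(i;k)K_{x^{-1}}(k;j)=\delta_{i,j}$, which is the $(i,j)$ entry of $K_1(x)K_1(x^{-1})$, so verifying the matrix identity and verifying the picture are the same task.

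The key observation I would exploit is that the entire $x$-dependence of $K_1(x)$ sits in the overall scalar $h(x)$ multiplying a fixed matrix. Precisely, $K_1(x)=\mathsf{id}+h(x)\,N$ with
\begin{equation*}
N=\begin{pmatrix} -1 & 1 \\ -1/(ac) & 1/(ac) \end{pmatrix},
\end{equation*}
a rank-one matrix. A short computation shows $N$ is a scalar multiple of an idempotent, namely $N^2=\tfrac{1-ac}{ac}\,N$. Expanding the product then gives
\begin{equation*}
K_1(x)K_1(x^{-1})=\mathsf{id}+\Big(h(x)+h(x^{-1})+\tfrac{1-ac}{ac}\,h(x)h(x^{-1})\Big)N,
\end{equation*}
so the whole proposition collapses to the scalar relation $h(x)+h(x^{-1})+\tfrac{1-ac}{ac}\,h(x)h(x^{-1})=0$.

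To settle this last relation I would first record $h(x^{-1})=-ac(1-x^2)/[(ax-1)(cx-1)]$, obtained by clearing the inverse powers of $x$ in \eqref{eq:hdef}. Setting $P=(a-x)(c-x)$ and $Q=(ax-1)(cx-1)$, the one algebraic fact that does the work is the factorization $Q-P=(1-ac)(1-x^2)$, which drops out on expanding both quadratics. Substituting $h(x)=ac(1-x^2)/P$ and $h(x^{-1})=-ac(1-x^2)/Q$, the two surviving contributions each carry the common factor $ac(1-ac)(1-x^2)^2/(PQ)$ with opposite signs and cancel identically, yielding $K_1(x)K_1(x^{-1})=\mathsf{id}$.

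I do not anticipate a genuine obstacle: the argument is a rank-one linear-algebra reduction followed by a one-line rational identity. The only places demanding care are the correct simplification of $h(x^{-1})$ and the sign in $Q-P=(1-ac)(1-x^2)$, since an error in either would destroy the cancellation. One could instead bypass the matrix formalism and check the four pictorial identities $\sum_k K_x(i;k)K_{x^{-1}}(k;j)=\delta_{i,j}$ individually, but the decomposition $K_1(x)=\mathsf{id}+h(x)N$ exposes the mechanism cleanly and avoids repeating the same computation four times.
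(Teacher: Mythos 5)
Your proof is correct, and in fact the paper provides no proof of this proposition at all: it is stated as a routine verification (the implicit argument being direct multiplication of the two $2\times 2$ matrices), so your write-up genuinely fills a gap rather than duplicating anything. Every step checks out: the matrix you read off from the weight table \eqref{Stochastic boundary K-weight table} is correct with rows indexed by the incoming occupation $i$ and columns by the outgoing occupation $j$; the decomposition $K_1(x)=\mathsf{id}+h(x)N$ with the rank-one matrix $N$ satisfying $N^2=\tfrac{1-ac}{ac}N$ is easily verified (writing $N=uv^{T}$ with $u=(1,1/(ac))^{T}$, $v=(-1,1)^{T}$ gives $N^2=(v^{T}u)N$ and $v^{T}u=(1-ac)/(ac)$); and the resulting scalar identity
\begin{equation*}
h(x)+h\!\left(x^{-1}\right)+\frac{1-ac}{ac}\,h(x)\,h\!\left(x^{-1}\right)
=\frac{ac(1-ac)(1-x^2)^2}{PQ}-\frac{ac(1-ac)(1-x^2)^2}{PQ}=0
\end{equation*}
follows precisely from $Q-P=(1-ac)(1-x^2)$ with $P=(a-x)(c-x)$, $Q=(ax-1)(cx-1)$, which expands correctly. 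You also correctly identify that the pictorial statement is literally the component form $\sum_k K_x(i;k)K_{x^{-1}}(k;j)=\delta_{i,j}$ of the matrix identity, since the outgoing spectral parameter of the first boundary vertex, $x^{-1}$, is the incoming parameter of the second. Compared with the brute-force entry-wise check, your reduction has the advantage of isolating the single algebraic mechanism responsible for unitarity — the whole relation is equivalent to one rational identity for $h$ — and it makes transparent why the two boundary parameters $a,c$ enter only through the product $ac$ in this computation.
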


\section{Row operators and symmetric functions}
\label{sec:row-ops + sym func}
\subsection{Space of states and row-operators}
\label{sec:row-operators}
In this section we construct double-row operators which serve as our transfer matrices. Our operators will act on the vector space with basis elements indexed by configurations in the set 
\[\mathbb{W}=\left\{S\subset \mathbb{N}:\sum_i S_i\text{ is finite}\right\}.\]
We denote a configuration $\mu\in\mathbb{W}$ with $m\geq 0$ parts by $\mu = (\mu_1,\dots,\mu_m)$ where $\mu_m < \cdots < \mu_2 < \mu_1$. By agreement $\mu=\emptyset$ is defined when $m=0$. We will only consider finite configurations $\mu,\nu\in\mathbb{W}$. That means that there are only finitely-many occupations in these states and that these occur at finite positions. We also define an orthogonal inner product on $\mathbb{W}$ by $\bra{\mu}\ket{\nu} = \delta_{\mu,\nu}$.

\begin{defn}[Occupation notation]
	\label{Occupation notation defn}
	For $\mu \in \mathbb{W}$ we define the occupation at site $i\in \mathbb{N}$ as
	\be
	\label{eq:occdef}
	\eta^\mu_i = \begin{cases}
		1 & \text{ if $i\in\mu$}\\
		0 & \text{ otherwise}
	\end{cases}.
	\ee
\end{defn}

\begin{defn}
	\label{AB row operator defn}
	A \emph{double-row operator} on $\mathbb{W}$ is defined by its action on co-vectors:
	\begin{equation}
		\label{Row-operator A defn}
		\bra{\mu}A(x|\yalph) := \sum_{\nu\in \mathbb{W}} \text{weight} \left(\begin{tikzpicture}[baseline={([yshift=-.5ex]current bounding box.center)},scale=0.8]
			\draw[lightgray,line width=1.5pt,->] (8,0) -- (1,0) -- (0,0.5) -- (1,1) -- (8,1);
			\draw[lightgray,line width=1.5pt,->] (8,0) -- (7.5,0);
			\draw[blue,fill=blue] (0,0.5) circle (0.1cm);
			\foreach \x in {2,...,7}
			{\draw[lightgray,line width=1.5pt,->] (\x,-0.5) -- (\x,1.5);}
			\node[right] at (8,0) {$0 \leftarrow x$};
			\node[right] at (8,1) {$0 \rightarrow x^{-1}$};
			
			\node[below] at (2,-0.5) {$\eta^\mu_1$};
			\node[below] at (3,-0.5) {$\eta^\mu_2$};
			\node[below] at (4,-0.5) {$\eta^\mu_3$};
			\node[below] at (5,-0.5) {$\cdots$};
			\node[below] at (6,-0.5) {$\cdots$};
			\node[below] at (7,-0.5) {$\cdots$};
			
			\node[below] at (2,-1.2) {$\uparrow$}; 
			\node[below] at (3,-1.2) {$\uparrow$};
			\node[below] at (4,-1.2) {$\uparrow$};  
			
			\node[below] at (2,-1.8) {$y_1$}; 
			\node[below] at (3,-1.8) {$y_2$};
			\node[below] at (4,-1.8) {$y_3$}; 
			
			\node[above] at (2,1.5) {$\eta^\nu_1$};
			\node[above] at (3,1.5) {$\eta^\nu_2$};
			\node[above] at (4,1.5) {$\eta^\nu_3$};
			\node[above] at (5,1.5) {$\cdots$};
			\node[above] at (6,1.5) {$\cdots$};
			\node[above] at (7,1.5) {$\cdots$};
		\end{tikzpicture}\right)
		\bra{\nu},
	\end{equation}
	\begin{equation}
			\label{Row-operator B defn}
			\bra{\mu}\bigdot{B}(z|\yalph) := \sum_{\nu\in \mathbb{W}} \text{weight} \left(\begin{tikzpicture}[baseline={([yshift=-.5ex]current bounding box.center)},scale=0.8]
			\draw[lightgray,line width=1.5pt,->] (8,0) -- (1,0) -- (0,0.5) -- (1,1) -- (8,1);
			\draw[lightgray,line width=1.5pt,->] (8,0) -- (7.5,0);
			\draw[blue,fill=blue] (0,0.5) circle (0.1cm);
			\foreach \x in {2,...,7}
			{\draw[lightgray,line width=1.5pt,->] (\x,-0.5) -- (\x,1.5);
				\draw[black,fill=black] (\x,1) circle (0.1cm);}
			\node[right] at (8,0) {$0 \leftarrow z$};
			\node[right] at (8,1) {$1 \rightarrow z^{-1}$};
			
			\node[below] at (2,-0.5) {$\eta^\mu_1$};
			\node[below] at (3,-0.5) {$\eta^\mu_2$};
			\node[below] at (4,-0.5) {$\eta^\mu_3$};
			\node[below] at (5,-0.5) {$\cdots$};
			\node[below] at (6,-0.5) {$\cdots$};
			\node[below] at (7,-0.5) {$\cdots$};
			
			\node[below] at (2,-1.2) {$\uparrow$}; 
			\node[below] at (3,-1.2) {$\uparrow$};
			\node[below] at (4,-1.2) {$\uparrow$};  
			
			\node[below] at (2,-1.8) {$y_1$}; 
			\node[below] at (3,-1.8) {$y_2$};
			\node[below] at (4,-1.8) {$y_3$}; 
			
			\node[above] at (2,1.5) {$\eta^\nu_1$};
			\node[above] at (3,1.5) {$\eta^\nu_2$};
			\node[above] at (4,1.5) {$\eta^\nu_3$};
			\node[above] at (5,1.5) {$\cdots$};
			\node[above] at (6,1.5) {$\cdots$};
			\node[above] at (7,1.5) {$\cdots$};
		\end{tikzpicture}\right)
		\bra{\nu}.
	\end{equation}
	where $x,z\in \mathbb{C}$ are horizontal spectral parameters and $\yalph=(y_1,y_2,\dots)$ is an infinite collection of vertical spectral parameters. For conciseness, we will often omit the family of vertical parameters from our notation by writing $A(x|\yalph)=A(x)$ and $\bigdot{B}(z|\yalph)=\bigdot{B}(z)$. 
\end{defn}

\begin{prop}
	\label{B left-eigenvalue prop}
	The empty state $\emptyset\in\mathbb{W}$ corresponds to a left-eigenvector of the double-row operator \eqref{Row-operator B defn}
	\begin{equation}
		\bra{\emptyset} \bigdot{B}(z) = h(z) \bra{\emptyset},
	\end{equation}
    where $h(z)$ is given by \eqref{eq:hdef}.
\end{prop}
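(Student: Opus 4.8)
The plan is to evaluate the left-action of $\bigdot{B}(z)$ on the empty co-vector $\bra{\emptyset}$ directly from the graphical definition \eqref{Row-operator B defn}, using the fact that the only surviving path configuration is completely forced by the boundary data. First I would observe that in the co-vector $\bra{\emptyset}$ the bottom horizontal edges carry the occupations $\eta^\emptyset_i = 0$ for all $i$, so no paths enter the double-row from below. The external horizontal edges of the double-row operator $\bigdot{B}(z)$ impose the values $0 \leftarrow z$ on the incoming bottom-right edge and $1 \rightarrow z^{-1}$ on the outgoing top-right edge. The key structural point is that, reading the bottom row from right to left starting with the value $0$ on the rightmost horizontal edge, and with every vertical edge vacant below, every bulk vertex in the bottom row is forced into the empty configuration (weight $1$), so that the edge entering the boundary vertex from the lower arm carries a $0$.

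Next I would analyse the boundary vertex itself. The lower incoming edge is $0$, and the outgoing upper edge must be compatible with the top row, which carries the outgoing value $1$ on its rightmost external edge. Among the four boundary weights in \eqref{Stochastic boundary K-weight table}, the vertex with lower arm $0$ and upper arm $1$ is the second one, with weight $h(z)$; the first, with $0 \to 0$, produces no path on the top row and hence cannot match the forced outgoing value $1$. Thus the boundary vertex must be the $h(z)$ vertex, emitting a single path onto the top row. I would then track this single path along the top row: since every vertical edge is vacant (the top external edges of $\bigdot{B}$ select the vacuum above), the single horizontal path must propagate straight through each top-row vertex and exit at the right as the required $1 \rightarrow z^{-1}$. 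Here I must take care to use the renormalized weights $\bigdot{R}$ (indicated by the black dots on the top row in \eqref{Row-operator B defn}); the relevant configuration is a horizontal path passing through an otherwise empty vertex, whose $\bigdot{R}$-weight is $1$, so every top-row vertex contributes a factor of $1$.

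Having forced the unique configuration, I would collect weights: all bulk vertices (bottom and top rows) contribute $1$, the boundary vertex contributes $h(z)$, and the only output state $\nu$ is again $\emptyset$ (no vertical edge is occupied). Hence $\bra{\emptyset}\bigdot{B}(z) = h(z)\,\bra{\emptyset}$, which is the claim. The main obstacle — really the only subtle point — is confirming that no other configuration contributes: one must check that there is genuinely no way to route a path back down into the bottom row or out through a vertical edge consistent with the empty boundary data, and that the renormalized top-row weight for the straight-through horizontal path is exactly $1$ rather than some $z$-dependent factor. Both follow from the ice rule in the bulk forcing path conservation together with the explicit entries of the $\bigdot{R}$ table, so the argument reduces to a finite case-check that I would present compactly rather than exhaustively.
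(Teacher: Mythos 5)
Your proposal is correct and follows essentially the same route as the paper: with $\mu=\emptyset$ and the fixed external edges $0\leftarrow z$, $1\rightarrow z^{-1}$, the ice rule freezes the bottom row to be empty, the boundary vertex is forced into its weight-$h(z)$ configuration, and the emitted path must travel straight along the top row (each such $\bigdot{R}$-vertex having weight $1$), so the unique surviving configuration returns $\bra{\emptyset}$ with overall weight $h(z)$. The paper states this same forced-configuration argument more tersely, with a picture in place of your explicit case analysis.
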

\begin{proof}
	When the bottom state is empty in \eqref{Row-operator B defn} there is only one possible configuration on the double-row. This single state is depicted as
	\[
	\begin{tikzpicture}[baseline={([yshift=-.5ex]current bounding box.center)},scale=1]
	\draw[gray,dashed,line width=1.5pt,->] (8,0) -- (1,0) -- (0,0.5) -- (1,1) -- (8,1);
	\draw[gray,dashed,line width=1.5pt,->] (8,0) -- (7.5,0);
	\draw[red,line width=2pt,->] (0,0.5) -- (1,1) -- (8,1);
	\draw[blue,fill=blue] (0,0.5) circle (0.1cm);
	\foreach \x in {2,...,7}
	{\draw[gray,dashed,line width=1.5pt,->] (\x,-0.5) -- (\x,1.5);
		\draw[black,fill=black] (\x,1) circle (0.1cm);
	}
	\end{tikzpicture},\]
	which propagates the empty state from below to above the double-row. The weight of all bulk vertices in this picture are 1, while the boundary vertex has weight $h(z)$. 
\end{proof}
A crucial property of the double-row operators of Definition \ref{AB row operator defn} is their algebra of commutation and exchange relations. In order to prove these relations, we must first define a version of the double-row operators with finitely many columns.
\begin{defn}
    For some fixed $N\in\mathbb{N}$, we define the monodromy matrices $\mathcal{T}^{(N)}(x|\yalph)$ and $\bigdot{\mathcal{T}}^{(N)}(z|\yalph)$. The elements of these matrices are double-row row transfer matrices with $N$ columns indexed by $i,j\in\{0,1\}$. For fixed states $\mu,\nu\in\mathbb{N}$ with $\mu_1,\nu_1\leq N$ these matrices are represented as
    \begin{equation}
        \bra{\mu}\mathcal{T}_{i,j}^{(N)}(x|\yalph)\ket{\nu} := \begin{tikzpicture}[baseline={([yshift=-.5ex]current bounding box.center)},scale=0.8]
			\draw[lightgray,line width=1.5pt,->] (7,0) -- (1,0) -- (0,0.5) -- (1,1) -- (7,1);
			\draw[lightgray,line width=1.5pt,->] (7,0) -- (6.5,0);
			\draw[blue,fill=blue] (0,0.5) circle (0.1cm);
			\foreach \x in {2,...,6}
			{\draw[lightgray,line width=1.5pt,->] (\x,-0.5) -- (\x,1.5);}
			\node[right] at (7,0) {$i \leftarrow x$};
			\node[right] at (7,1) {$j \rightarrow x^{-1}$};
			
			\node[below] at (2,-0.5) {$\eta^\mu_1$};
			\node[below] at (3,-0.5) {$\eta^\mu_2$};
			\node[below] at (4,-0.5) {$\cdots$};
			\node[below] at (5,-0.5) {$\cdots$};
            \node[below] at (6,-0.5) {$\eta^\mu_N$};
			
			\node[below] at (2,-1.2) {$\uparrow$}; 
			\node[below] at (3,-1.2) {$\uparrow$};
			\node[below] at (6,-1.2) {$\uparrow$};  
			
			\node[below] at (2,-1.8) {$y_1$}; 
			\node[below] at (3,-1.8) {$y_2$};
			\node[below] at (6,-1.8) {$y_N$}; 
			
			\node[above] at (2,1.5) {$\eta^\nu_1$};
			\node[above] at (3,1.5) {$\eta^\nu_2$};
			\node[above] at (4,1.5) {$\cdots$};
			\node[above] at (5,1.5) {$\cdots$};
			\node[above] at (6,1.5) {$\eta^\nu_N$};
		\end{tikzpicture},
        \end{equation}
        \begin{equation}
            \bra{\mu}\bigdot{\mathcal{T}}_{i,j}^{(N)}(z|\yalph)\ket{\nu} := \begin{tikzpicture}[baseline={([yshift=-.5ex]current bounding box.center)},scale=0.8]
			\draw[lightgray,line width=1.5pt,->] (7,0) -- (1,0) -- (0,0.5) -- (1,1) -- (7,1);
			\draw[lightgray,line width=1.5pt,->] (7,0) -- (6.5,0);
			\draw[blue,fill=blue] (0,0.5) circle (0.1cm);
			\foreach \x in {2,...,6}
			{\draw[lightgray,line width=1.5pt,->] (\x,-0.5) -- (\x,1.5);
            \draw[black,fill=black] (\x,1) circle (0.1cm);}
			\node[right] at (7,0) {$i \leftarrow z$};
			\node[right] at (7,1) {$j \rightarrow z^{-1}$};
			
			\node[below] at (2,-0.5) {$\eta^\mu_1$};
			\node[below] at (3,-0.5) {$\eta^\mu_2$};
			\node[below] at (4,-0.5) {$\cdots$};
			\node[below] at (5,-0.5) {$\cdots$};
            \node[below] at (6,-0.5) {$\eta^\mu_N$};
			
			\node[below] at (2,-1.2) {$\uparrow$}; 
			\node[below] at (3,-1.2) {$\uparrow$};
			\node[below] at (6,-1.2) {$\uparrow$};  
			
			\node[below] at (2,-1.8) {$y_1$}; 
			\node[below] at (3,-1.8) {$y_2$};
			\node[below] at (6,-1.8) {$y_N$}; 
			
			\node[above] at (2,1.5) {$\eta^\nu_1$};
			\node[above] at (3,1.5) {$\eta^\nu_2$};
			\node[above] at (4,1.5) {$\cdots$};
			\node[above] at (5,1.5) {$\cdots$};
			\node[above] at (6,1.5) {$\eta^\nu_N$};
		\end{tikzpicture},
        \end{equation}
        where $x,z\in\mathbb{C}$ are horizontal spectral parameters and $\yalph = (y_1,\dots,y_N)$ is a collection of vertical spectral parameters. As with the definition of the double-row operators, we will omit the family of vertical spectral parameters from our notation by writing $\mathcal{T}^{(N)}(x|\yalph) = \mathcal{T}^{(N)}(x)$ and $\bigdot{\mathcal{T}}^{(N)}(z|\yalph) = \bigdot{\mathcal{T}}^{(N)}(z)$. The elements of these matrices are represented as
        \begin{equation}
            \mathcal{T}^{(N)}(x) = \left(\begin{array}{cc}
               A^{(N)}(x)  & B^{(N)}(x) \\
               C^{(N)}(x)  & D^{(N)}(x)
            \end{array}\right),
            \hspace{0.5cm}
            \bigdot{\mathcal{T}}^{(N)}(z) = \left(\begin{array}{cc}
               \bigdot{A}^{(N)}(z)  & \bigdot{B}^{(N)}(z) \\
               \bigdot{C}^{(N)}(z)  & \bigdot{D}^{(N)}(z)
            \end{array}\right)
        \end{equation}
\end{defn}
For states $\mu,\nu\in \mathbb{W}$, we can recover the infinite column double row operators by 
\begin{align}
    \lim_{N\to\infty}\bra{\mu}A^{(N)}(x|\yalph) \ket{\nu} & = \bra{\mu}A(x|\yalph)\ket{\nu}, \\
    \lim_{N\to\infty}\bra{\mu}\bigdot{B}^{(N)}(z|\yalph) \ket{\nu} & = \bra{\mu}\bigdot{B}(z|\yalph)\ket{\nu},
\end{align}
where the infinite column row-double row operators depend on the infinite collection of vertical parameters $\yalph = (y_1,y_2,\dots)$ and we regard the finite column operators as having dependence on the first $N$ elements of the collection, i.e. $(y_1,\dots,y_N)$.
\begin{prop}
    \label{prop: AA limit condition}
    Fix $x_1,x_2 \in \mathbb{C}$ and assume that there exists $\rho>0$ such that
	\begin{equation}
		\label{eq: AA limit condition}
		\abs{\frac{1-x_1y_k}{1-qx_1y_k}\frac{q(1-x_2/y_k)}{1-qx_2/y_k}}\leq \rho <1,
	\end{equation}
    for all $k\in\mathbb{N}$. Then the following limit holds for states $\mu,\nu\in\mathbb{W}$
    \begin{equation}
        \label{eq:AA N limit}
        \lim_{N\to\infty} \sum_{p\in\{0,1\}} \bra{\mu}\mathcal{T}_{0,p}^{(N)}(x_1)\mathcal{T}_{p,0}^{(N)}(x_2)\ket{\nu} R_{x_1 x_2}(0,p;p,0) = \bra{\mu} A(x_1)A(x_2) \ket{\nu}.
    \end{equation}
    Which has a graphical interpretation
    \begin{equation}
        \label{eq:AA N limit pic}
        \lim_{N\to\infty}\sum_{p\in\{0,1\}}\begin{tikzpicture}[baseline={([yshift=-.5ex]current bounding box.center)},scale=0.7]
    		\draw[lightgray,line width=1.5pt,->] (7,0) -- (1,0) -- (0,0.5) -- (1,1) -- (6,1) -- (7,2);
    		\draw[lightgray,line width=1.5pt,->] (7,0) -- (6.5,0);
    		\draw[lightgray,line width=1.5pt,->] (7,1) -- (6,2) -- (1,2) -- (0,2.5) -- (1,3) -- (7,3);
    		\draw[lightgray,line width=1.5pt,->] (7,1) -- (6.75,1.25);
    		\draw[blue,fill=blue] (0,0.5) circle (0.1cm);
    		\draw[blue,fill=blue] (0,2.5) circle (0.1cm);
    		\foreach \x in {2,...,5}
    		{\draw[lightgray,line width=1.5pt,->] (\x,-0.5) -- (\x,3.5);}
    		\node[right] at (7,0) {$0 \leftarrow x_1$};
    		\node[right] at (7,1) {$0 \leftarrow x_2$};
    		\node[right] at (7,2) {$0 \rightarrow x_1^{-1}$};
    		\node[right] at (7,3) {$0 \rightarrow x_2^{-1}$};
    		\node[below] at (6,1) {$p$};
    		\node[above] at (6,2) {$p$};
    
            \node[above] at (2,3.5) {$\eta_1^\nu$};
                \node[above] at (3,3.5) {$\cdots$};
                \node[above] at (4,3.5) {$\cdots$};
                \node[above] at (5,3.5) {$\eta_N^\nu$};
    
                \node[below] at (2,-0.5) {$\eta_1^\mu$};
                \node[below] at (3,-0.5) {$\cdots$};
                \node[below] at (4,-0.5) {$\cdots$};
                \node[below] at (5,-0.5) {$\eta_N^\mu$};
    	\end{tikzpicture}
     =
     \begin{tikzpicture}[baseline={([yshift=-.5ex]current bounding box.center)},scale=0.7]
    		\draw[lightgray,line width=1.5pt,->] (7,0) -- (1,0) -- (0,0.5) -- (1,1) -- (7,1);
    		\draw[lightgray,line width=1.5pt,->] (7,0) -- (6.5,0);
    		\draw[lightgray,line width=1.5pt,->] (7,2) -- (1,2) -- (0,2.5) -- (1,3) -- (7,3);
    		\draw[lightgray,line width=1.5pt,->] (7,2) -- (6.5,2);
    		\draw[blue,fill=blue] (0,0.5) circle (0.1cm);
    		\draw[blue,fill=blue] (0,2.5) circle (0.1cm);
    		\foreach \x in {2,...,6}
    		{\draw[lightgray,line width=1.5pt,->] (\x,-0.5) -- (\x,3.5);}
    		\node[right] at (7,0) {$0 \leftarrow x_1$};
    		\node[right] at (7,1) {$0 \rightarrow x_1^{-1}$};
    		\node[right] at (7,2) {$0 \leftarrow x_2$};
    		\node[right] at (7,3) {$0 \rightarrow x_2^{-1}$};
    
            \node[above] at (2,3.5) {$\eta_1^\nu$};
                \node[above] at (3,3.5) {$\eta_2^\nu$};
                \node[above] at (4,3.5) {$\cdots$};
                \node[above] at (5,3.5) {$\cdots$};
                \node[above] at (6,3.5) {$\cdots$};
    
                \node[below] at (2,-0.5) {$\eta_1^\mu$};
                \node[below] at (3,-0.5) {$\eta_2^\mu$};
                \node[below] at (4,-0.5) {$\cdots$};
                \node[below] at (5,-0.5) {$\cdots$};
                \node[below] at (6,-0.5) {$\cdots$};
    	\end{tikzpicture}.
    \end{equation}
\end{prop}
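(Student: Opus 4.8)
The plan is to compute the left-hand side of \eqref{eq:AA N limit} by first performing the sum over the auxiliary index $p\in\{0,1\}$, and then analysing the two resulting terms as $N\to\infty$. Recalling that $\mathcal{T}^{(N)}_{0,0}=A^{(N)}$, $\mathcal{T}^{(N)}_{0,1}=B^{(N)}$, $\mathcal{T}^{(N)}_{1,0}=C^{(N)}$, and that the two relevant $R$-weights are $R_{x_1x_2}(0,0;0,0)=1$ and $R_{x_1x_2}(0,1;1,0)=\frac{x_1x_2(1-q)}{1-qx_1x_2}$, the left-hand side is exactly
\[
\bra{\mu}A^{(N)}(x_1)A^{(N)}(x_2)\ket{\nu}
+\frac{x_1x_2(1-q)}{1-qx_1x_2}\,\bra{\mu}B^{(N)}(x_1)C^{(N)}(x_2)\ket{\nu}.
\]
It therefore suffices to show that the first term converges to $\bra{\mu}A(x_1)A(x_2)\ket{\nu}$ and that the second term vanishes, both in the limit $N\to\infty$.

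For the first term I would insert a complete set of intermediate states, $\bra{\mu}A^{(N)}(x_1)A^{(N)}(x_2)\ket{\nu}=\sum_{\lambda}\bra{\mu}A^{(N)}(x_1)\ket{\lambda}\bra{\lambda}A^{(N)}(x_2)\ket{\nu}$, the sum running over subsets $\lambda\subset\{1,\dots,N\}$. Each factor converges entrywise to its infinite-column counterpart by the limits recorded immediately before the proposition, so the content lies in justifying the interchange of $\lim_{N\to\infty}$ with the sum over $\lambda$. I would do this by dominated convergence, the key input being a summable bound on $\abs{\bra{\mu}A^{(N)}(x_1)\ket{\lambda}\bra{\lambda}A^{(N)}(x_2)\ket{\nu}}$, uniform in $N$. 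The crucial geometric fact is that, since $\mu$ and $\nu$ are finite, beyond the largest occupied column all vertical edges carry no path; hence an intermediate $\lambda$ with a part at a large position $\ell$ forces a boundary-created path to travel horizontally straight across every intervening column before turning up, and then (in the $x_2$ block) back to the boundary. The associated weight is the product over these columns of $\frac{1-x_1y_k}{1-qx_1y_k}$ and $\frac{q(1-x_2/y_k)}{1-qx_2/y_k}$, whose modulus per column is bounded by $\rho$ through \eqref{eq: AA limit condition}. Summing the resulting geometric series yields the dominating majorant, and simultaneously shows that $\bra{\mu}A(x_1)A(x_2)\ket{\nu}$ is absolutely convergent.

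For the second term I would argue directly on configurations, as is transparent from \eqref{eq:AA N limit pic}: a nonzero contribution to $\bra{\mu}B^{(N)}(x_1)C^{(N)}(x_2)\ket{\nu}$ requires a path leaving the top row of the $x_1$ double-row on the far right ($j=1$ in $B^{(N)}$) and entering the bottom row of the $x_2$ double-row on the far right ($i=1$ in $C^{(N)}$). Such a path must cross all $N$ columns, contributing a horizontal-straight weight $\frac{1-x_1y_k}{1-qx_1y_k}$ at each column of the $x_1$ block and a rotated horizontal-straight weight $\frac{q(1-x_2/y_k)}{1-qx_2/y_k}$ at each column of the $x_2$ block. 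Only the finitely many columns carrying occupation from $\mu$ or $\nu$ contribute a bounded correction; every remaining column contributes a factor of modulus at most $\rho$ by \eqref{eq: AA limit condition}. Hence this term is $O(\rho^{N})$ up to a fixed constant and vanishes as $N\to\infty$, leaving only $\bra{\mu}A(x_1)A(x_2)\ket{\nu}$.

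The principal obstacle is the dominated-convergence step for the first term: one must turn the heuristic ``paths injected far from the boundary are exponentially suppressed'' into a genuinely uniform, summable majorant, and verify that the per-column decay rate is precisely the expression bounded in \eqref{eq: AA limit condition}. The vanishing of the second term is essentially the same estimate applied to a single long path and should be comparatively routine once the propagation weights are identified.
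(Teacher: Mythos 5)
Your proposal is correct and follows essentially the same route as the paper: the same expansion of the $p$-sum into the $A^{(N)}(x_1)A^{(N)}(x_2)$ and $\frac{x_1x_2(1-q)}{1-qx_1x_2}B^{(N)}(x_1)C^{(N)}(x_2)$ terms, with the $p=1$ term killed by the geometric factor $\prod_{k>\tau}\abs{\frac{1-x_1y_k}{1-qx_1y_k}\frac{q(1-x_2/y_k)}{1-qx_2/y_k}}\leq\rho^{N-\tau}$, $\tau=\max\{\mu_1,\nu_1\}$, coming from the forced horizontal crossing of all columns beyond $\tau$. The only difference is that you justify, via a dominated-convergence argument over intermediate states, why the $p=0$ term converges to $\bra{\mu}A(x_1)A(x_2)\ket{\nu}$ — a step the paper's proof simply asserts — which is a strengthening of the same argument rather than a departure from it.
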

\begin{proof}
    Consider $\mu,\nu\in\mathbb{W}$  to be finite states. We denote their maximum occupation by $\tau=\max\{\mu_1,\nu_1\}$. Let $N$ be an integer satisfying $N>\tau$ which is also independent of $\tau$. By expanding the sum on the left side of \eqref{eq:AA N limit} as
    \begin{equation}
        \label{eq:AA limit proof eq 1}
        \lim_{N\to\infty} \bra{\mu}A^{(N)}(x_1)A^{(N)}(x_2) \ket{\nu} + \frac{x_1x_2(1-q)}{1-qx_1x_2} \lim_{N\to\infty} \bra{\mu}B^{(N)}(x_1)C^{(N)}(x_2)\ket{\nu}.
    \end{equation}
    The first term, which corresponds to $p=0$, has the limit 
    \[\lim_{N\to\infty} \bra{\mu}A^{(N)}(x_1)A^{(N)}(x_2)\ket{\nu} = \bra{\mu} A(x_1)A(x_2) \ket{\nu},\]
    which is our final result. So all that remains to prove is that the second term ($p=1$) in \eqref{eq:AA limit proof eq 1} vanishes under the limit.
    
    Since we are interested in the large $N$ limit, we consider for $N>\tau$ the partition function
    \begin{equation}
        \label{eq:AA limit proof eq 2}
        \bra{\mu}B^{(N)}(x_1)C^{(N)}(x_2) \ket{\nu} = 
        \begin{tikzpicture}[baseline={([yshift=-.5ex]current bounding box.center)},scale=0.7]
    		\draw[lightgray,line width=1.5pt,->] (9,0) -- (1,0) -- (0,0.5) -- (1,1) -- (9,1);
    		\draw[lightgray,line width=1.5pt,->] (9,0) -- (8.5,0);
    		\draw[lightgray,line width=1.5pt,->] (9,2) -- (1,2) -- (0,2.5) -- (1,3) -- (9,3);
    		\draw[lightgray,line width=1.5pt,->] (9,2) -- (8.5,2);
    		\draw[blue,fill=blue] (0,0.5) circle (0.1cm);
    		\draw[blue,fill=blue] (0,2.5) circle (0.1cm);
    		\foreach \x in {2,...,8}
    		{\draw[lightgray,line width=1.5pt,->] (\x,-0.5) -- (\x,3.5);}
    		\node[right] at (9,0) {$0 \leftarrow x_1$};
    		\node[right] at (9,1) {$1 \rightarrow x_1^{-1}$};
    		\node[right] at (9,2) {$1 \leftarrow x_2$};
    		\node[right] at (9,3) {$0 \rightarrow x_2^{-1}$};
    
            \node[above] at (2,3.5) {$\eta_1^\nu$};
                \node[above] at (3,3.5) {$\cdots$};
                \node[above] at (4,3.5) {$\cdots$};
                \node[above] at (5,3.5) {$\eta_\tau^\nu$};
                \node[above] at (6,3.5) {$0$};
                \node[above] at (7,3.5) {$\cdots$};
                \node[above] at (8,3.5){$0$};
    
                \node[below] at (2,-0.5) {$\eta_1^\mu$};
                \node[below] at (3,-0.5) {$\cdots$};
                \node[below] at (4,-0.5) {$\cdots$};
                \node[below] at (5,-0.5) {$\eta_\tau^\mu$};
                \node[below] at (6,-0.5) {$0$};
                \node[below] at (7,-0.5) {$\cdots$};
                \node[below] at (8,-0.5) {$0$};

            \node[below] at (2,-1.2) {$\uparrow$}; 
			\node[below] at (5,-1.2) {$\uparrow$};
			\node[below] at (6,-1.2) {$\uparrow$};  
            \node[below] at (8,-1.2) {$\uparrow$}; 
			
			\node[below] at (2,-1.8) {$y_1$}; 
			\node[below] at (5,-1.8) {$y_\tau$};
			\node[below] at (6,-1.8) {$y_{\tau+1}$}; 
            \node[below] at (8,-1.8) {$y_N$};

            \draw[red,line width=2pt,->] (5.5,1) -- (9,1);
            \draw[red,line width=2pt,<-] (5.5,2) -- (9,2);

            \draw[dotted,line width=1pt,rounded corners=15pt]  (-0.3,-0.2) rectangle ++(5.8,3.4);
    	\end{tikzpicture}.
    \end{equation}
    Since $N>\tau$, it follows that $\eta_i^\mu,\eta_i^\nu=0$ for all $\tau<i\leq N$. This freezes the columns to the right of the rectangle as shown in \eqref{eq:AA limit proof eq 2}. The rectangle itself can be identified as a double row partition with $\tau$ columns. By evaluating the frozen section, \eqref{eq:AA limit proof eq 2} is reduced to 
    \begin{equation}
        \label{eq:AA limit proof eq 3}
        \bra{\mu}B^{(N)}(x_1)C^{(N)}(x_2) \ket{\nu} = \prod_{k=\tau+1}^N \left[\frac{1-x_1y_k}{1-qx_1y_k}\frac{q(1-x_2/y_k)}{1-qx_2/y_k}\right] \bra{\mu}B^{(\tau)}(x_1)C^{(\tau)}(x_2) \ket{\nu}. 
    \end{equation}
    Then with condition \eqref{eq: AA limit condition}, we can bound \eqref{eq:AA limit proof eq 3} as
    \begin{equation}
        \abs{\bra{\mu}B^{(N)}(x_1)C^{(N)}(x_2) \ket{\nu}} \leq \rho^{N-\tau} \abs{\bra{\mu}B^{(\tau)}(x_1)C^{(\tau)}(x_2) \ket{\nu}} ,
    \end{equation}
    so that the $p=1$ term of \eqref{eq:AA limit proof eq 1} vanishes as $N\to\infty$.
\end{proof}
\begin{prop}
    \label{prop: AB limit condition}
    Fix $x,z\in\mathbb{C}$ and assume that there exists $\rho>0$ such that
	\begin{equation}
		\label{AB operator commutation condition 1}
		\abs{\frac{1-xy_k}{1-qxy_k}\frac{q(1-z/y_k)}{1-qz/y_k}}\le\rho<1, \hspace{0.5cm} \abs{\frac{1-xy_k}{1-qxy_k}\frac{1-qzy_k}{1-zy_k}}\leq\rho<1,
	\end{equation}
    for all $k\in\mathbb{N}$. Then the following limit holds
    \begin{multline}
        \label{eq: AB limit condition}
        \lim_{N\to\infty} \sum_{p_1,p_2\in\{0,1\} \atop p_1\geq p_2} \bra{\mu}\mathcal{T}_{0,p_1}^{(N)}(x)\bigdot{\mathcal{T}}_{p_1-p_2,1-p_2}^{(N)}(z) \ket{\nu} R_{xz}(0,p_1;p_1-p_2,p_2) R_{z/x}(p_2,1-p_2;0,1) \\ = \frac{x-z}{x-qz}\bra{\mu} A(x)\bigdot{B}(z) \ket{\nu}.
    \end{multline}
    Which has graphical interpretation
    \begin{equation}
        \lim_{N\to\infty} \sum_{p_1,p_2\in\{0,1\} \atop p_1\geq p_2} \begin{tikzpicture}[baseline={([yshift=-.5ex]current bounding box.center)},scale=0.7]
    		\draw[lightgray,line width=1.5pt,->] (8,0) -- (1,0) -- (0,0.5) -- (1,1) -- (6,1) -- (8,3);
    		\draw[lightgray,line width=1.5pt,->] (8,0) -- (7.5,0);
    		\draw[lightgray,line width=1.5pt,->] (8,1) -- (7,1) -- (6,2) -- (1,2) -- (0,2.5) -- (1,3) -- (7,3) -- (8,2);
    		\draw[lightgray,line width=1.5pt,->] (8,1) -- (7.5,1);
    		\draw[blue,fill=blue] (0,0.5) circle (0.1cm);
    		\draw[blue,fill=blue] (0,2.5) circle (0.1cm);
    		\foreach \x in {2,...,5}
    		{\draw[lightgray,line width=1.5pt,->] (\x,-0.5) -- (\x,3.5);
                \draw[black,fill=black] (\x,3) circle (0.1cm);}
    		\node[right] at (8,0) {$0 \leftarrow x$};
    		\node[right] at (8,1) {$0 \leftarrow z$};
    		\node[right] at (8,2) {$1 \rightarrow z^{-1}$};
    		\node[right] at (8,3) {$0 \rightarrow x^{-1}$};
    		\node[below] at (6,1) {$p_1$};
    		\node[above] at (6,2) {$p_1-p_2$};
    		\node[right] at (6.9,1.8) {$p_2$};
    		\node[above] at (7,3) {$1-p_2$};
    
            \node[above] at (2,3.5) {$\eta_1^\nu$};
                \node[above] at (3,3.5) {$\cdots$};
                \node[above] at (4,3.5) {$\cdots$};
                \node[above] at (5,3.5) {$\eta_N^\nu$};
    
                \node[below] at (2,-0.5) {$\eta_1^\mu$};
                \node[below] at (3,-0.5) {$\cdots$};
                \node[below] at (4,-0.5) {$\cdots$};
                \node[below] at (5,-0.5) {$\eta_N^\mu$};
    	\end{tikzpicture}
        =
        \frac{x-z}{x-qz}\times\begin{tikzpicture}[baseline={([yshift=-.5ex]current bounding box.center)},scale=0.7]
    		\draw[lightgray,line width=1.5pt,->] (7,0) -- (1,0) -- (0,0.5) -- (1,1) -- (7,1);
    		\draw[lightgray,line width=1.5pt,->] (7,0) -- (6.5,0);
    		\draw[lightgray,line width=1.5pt,->] (7,2) -- (1,2) -- (0,2.5) -- (1,3) -- (7,3);
    		\draw[lightgray,line width=1.5pt,->] (7,2) -- (6.5,2);
    		\draw[blue,fill=blue] (0,0.5) circle (0.1cm);
    		\draw[blue,fill=blue] (0,2.5) circle (0.1cm);
    		\foreach \x in {2,...,6}
    		{\draw[lightgray,line width=1.5pt,->] (\x,-0.5) -- (\x,3.5);
             \draw[black,fill=black] (\x,3) circle (0.1cm);}
    		\node[right] at (7,0) {$0 \leftarrow x$};
    		\node[right] at (7,1) {$0 \rightarrow x^{-1}$};
    		\node[right] at (7,2) {$0 \leftarrow z$};
    		\node[right] at (7,3) {$1 \rightarrow z^{-1}$};
    
            \node[above] at (2,3.5) {$\eta_1^\nu$};
                \node[above] at (3,3.5) {$\eta_2^\nu$};
                \node[above] at (4,3.5) {$\cdots$};
                \node[above] at (5,3.5) {$\cdots$};
                \node[above] at (6,3.5) {$\cdots$};
    
                \node[below] at (2,-0.5) {$\eta_1^\mu$};
                \node[below] at (3,-0.5) {$\eta_2^\mu$};
                \node[below] at (4,-0.5) {$\cdots$};
                \node[below] at (5,-0.5) {$\cdots$};
                \node[below] at (6,-0.5) {$\cdots$};
    	\end{tikzpicture}.
    \end{equation}
\end{prop}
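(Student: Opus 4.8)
The plan is to mimic the proof of Proposition~\ref{prop: AA limit condition} almost verbatim, splitting the constrained sum over $(p_1,p_2)$ into one principal summand (which produces the stated right-hand side) and two error summands (which are shown to vanish in the limit). First I would observe that the constraint $p_1\geq p_2$ with $p_1,p_2\in\{0,1\}$ leaves exactly three terms, namely $(p_1,p_2)\in\{(0,0),(1,0),(1,1)\}$, since $p_1-p_2$ must itself be a valid occupation in $\{0,1\}$ for the weight $R_{xz}(0,p_1;p_1-p_2,p_2)$ to be non-zero. For each surviving term I would read off the two scalar $R$-weights directly from the table in Definition~\ref{R-matrix defn}.

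For the principal term $(0,0)$ one computes $R_{xz}(0,0;0,0)=1$ and $R_{z/x}(0,1;0,1)=\frac{1-z/x}{1-qz/x}=\frac{x-z}{x-qz}$, while the monodromy factor is $\bra{\mu}\mathcal{T}^{(N)}_{0,0}(x)\bigdot{\mathcal{T}}^{(N)}_{0,1}(z)\ket{\nu}=\bra{\mu}A^{(N)}(x)\bigdot{B}^{(N)}(z)\ket{\nu}$. Letting $N\to\infty$ and invoking the defining limits for $A^{(N)}$ and $\bigdot{B}^{(N)}$ yields exactly $\frac{x-z}{x-qz}\bra{\mu}A(x)\bigdot{B}(z)\ket{\nu}$; this is both the claimed right-hand side and the origin of the prefactor $\frac{x-z}{x-qz}$.

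It then remains to show the two error terms vanish. The term $(1,0)$ contributes $B^{(N)}(x)\bigdot{D}^{(N)}(z)$ and the term $(1,1)$ contributes $B^{(N)}(x)\bigdot{A}^{(N)}(z)$, each multiplied by a product of two finite $R$-weights that is independent of $N$. I would handle both by the freezing argument. Writing $\tau=\max\{\mu_1,\nu_1\}$ and taking $N>\tau$, all vertical edges in columns $\tau<k\leq N$ are empty, so no horizontal path can turn there; consequently the fixed external horizontal data ($B^{(N)}(x)$ forces the $x$-top line occupied and the $x$-bottom empty, $\bigdot{D}^{(N)}(z)$ forces both $z$-lines occupied, and $\bigdot{A}^{(N)}(z)$ forces both $z$-lines empty) pins the occupations of all four horizontal lines across the entire frozen region. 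The partition function therefore factorizes as $\big(\prod_{k=\tau+1}^{N}w_k\big)$ times a $\tau$-column partition function. The crux is evaluating $w_k$: recalling that the $\bigdot{}$-operators carry dotted ($\bigdot{R}$) vertices on the top row but plain ($R$) vertices on the bottom row, the occupied $x$-top line gives $\frac{1-xy_k}{1-qxy_k}$, and the $z$-lines give the appropriate dotted/plain occupied or empty weights. One finds that the $(1,0)$ per-column weight is precisely the first quantity bounded in \eqref{AB operator commutation condition 1}, and the $(1,1)$ per-column weight is precisely the second. Applying the relevant bound gives $\big|\prod_{k=\tau+1}^{N}w_k\big|\leq\rho^{\,N-\tau}$, so both error terms tend to $0$ as $N\to\infty$, leaving only the principal term.

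I expect the main obstacle to be the weight bookkeeping in the frozen region, and in particular the clean matching of the two error terms to the two separate hypotheses in \eqref{AB operator commutation condition 1}. This matching is what forces the proposition to carry two decay conditions rather than one, and it hinges on the mixed plain-bottom/dotted-top structure of $\bigdot{B}$ together with the contrasting behaviour of $\bigdot{D}$ (both $z$-lines occupied) and $\bigdot{A}$ (both $z$-lines empty). A secondary point requiring care is to verify that the external horizontal boundary data genuinely pins the cut state between columns $\tau$ and $\tau+1$, so that the frozen region factorizes outright rather than contributing a sum over intermediate horizontal occupations.
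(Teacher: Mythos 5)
Your overall strategy (the three-term decomposition of the constrained $(p_1,p_2)$-sum, identifying the $(0,0)$ term with the right-hand side, and a freezing argument for the error terms) is the same as the paper's, and your treatment of the $(0,0)$ and $(1,1)$ terms is correct. The genuine gap is in the $(1,0)$ term, $\bra{\mu}B^{(N)}(x)\bigdot{D}^{(N)}(z)\ket{\nu}$, at exactly the point you dismissed as ``secondary'': the external data does \emph{not} pin the frozen region there, and the partition function does \emph{not} factorize outright. Because $\bigdot{D}^{(N)}(z)$ has \emph{both} $z$-lines occupied at the right edge, the path entering on the lower $z$-line may, at any frozen column $\ell\in\{\tau+1,\dots,N\}$, turn upward across the internal vertical edge of the $z$ double-row and exit to the right along the upper $z$-line; this is compatible with all the empty external vertical edges and with the occupied right edges. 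Consequently the cut state between columns $\tau$ and $\tau+1$ is not determined: it equals $B^{(\tau)}(x)\bigdot{D}^{(\tau)}(z)$ for the straight-through configuration, but $B^{(\tau)}(x)\bigdot{A}^{(\tau)}(z)$ for every turning configuration. The paper's proof (equations \eqref{eq:AB limit proof eq 2}--\eqref{eq:AB limit proof eq 3}) accordingly splits this term into the straight-through product $\prod_{k=\tau+1}^N\bigl[\frac{1-xy_k}{1-qxy_k}\frac{q(1-z/y_k)}{1-qz/y_k}\bigr]$ times $\bra{\mu}B^{(\tau)}(x)\bigdot{D}^{(\tau)}(z)\ket{\nu}$, \emph{plus} a sum over the turning column $\ell$, whose summands mix the two hypotheses: columns $k<\ell$ carry the dotted-empty weight $\frac{1-qzy_k}{1-zy_k}$ (second condition) while columns $k>\ell$ carry $\frac{q(1-z/y_k)}{1-qz/y_k}$ (first condition), with an exceptional unpaired factor at column $\ell$ itself.

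Your per-column bookkeeping therefore undercounts an entire family of lattice configurations, and the claimed bound $\rho^{N-\tau}$ for the $(1,0)$ term does not follow. The correct bound, as in \eqref{eq:AB limit proof eq 4}, is of the form $\rho^{N-\tau}\abs{\bra{\mu}B^{(\tau)}(x)\bigdot{D}^{(\tau)}(z)\ket{\nu}} + (N-\tau)\,\rho^{N-\tau-1}\,C\,\abs{\bra{\mu}B^{(\tau)}(x)\bigdot{A}^{(\tau)}(z)\ket{\nu}}$, where $C$ bounds the exceptional column-$\ell$ factors such as $\abs{\frac{1-q}{1-qz/y_\ell}\frac{1-q}{1-zy_\ell}}$ and the factor $(N-\tau)$ counts the turning positions. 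Two further ingredients are then needed, both supplied in the paper: (i) the hypotheses \eqref{AB operator commutation condition 1} force $qxy_\ell$, $qz/y_\ell$ and $zy_\ell$ to stay uniformly bounded away from $1$, so that $C$ is finite uniformly in $N$; and (ii) the linear growth $(N-\tau)$ is beaten by the geometric decay $\rho^{N-\tau-1}$. This also corrects your final remark about why two conditions appear: it is not a clean matching of one condition per error term, but rather that the turning configurations inside the single $(1,0)$ term require \emph{both} conditions simultaneously.
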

\begin{proof}
    In a similar manner to the proof of Proposition \ref{prop: AA limit condition}, we fix $\mu,\nu\in\mathbb{W}$ with maximum occupation $\tau=\max\{\mu_1,\nu_1\}$. We may write the terms in the sum over $p_1,p_2$ explicitly as
    \begin{multline}
        \label{eq:AB limit proof eq 1}
        \lim_{N\to\infty}\bra{\mu}\left[\frac{x-z}{x-qz}A^{(N)}(x)\bigdot{B}^{(N)}(z) + \frac{xz(1-q)}{1-qxz}\frac{x-z}{x-qz}B^{(N)}(x)\bigdot{D}^{(N)}(z) \right. \\
        +\left. \frac{1-xz}{1-qxz}\frac{x(1-q)}{x-qz} B^{(N)}(x)\bigdot{A}^{(N)}(z)\right]\ket{\nu},
    \end{multline}
    where each of the terms corresponds to $(p_1,p_2)=(0,0),(1,0),(1,1)$ respectively. Each of these three terms are depicted in Figure \ref{fig:AB limit proof}. We will analyse each term individually.
    \begin{figure}
        \begin{tabular}{ccc}
        \quad
             \begin{tikzpicture}[scale=0.6]
            \draw[lightgray,dashed,line width=1.5pt,->] (7,0) -- (1,0) -- (0,0.5) -- (1,1) -- (5,1) -- (7,3);
    		\draw[lightgray,dashed,line width=1.5pt,->] (7,0) -- (6.5,0);
    		\draw[lightgray,dashed,line width=1.5pt,->] (7,1) -- (6,1) -- (5,2) -- (1,2) -- (0,2.5) -- (1,3) -- (6,3) -- (7,2);
    		\draw[lightgray,dashed,line width=1.5pt,->] (7,1) -- (6.5,1);
            \draw[red,line width=2pt,->] (4,3) -- (6,3) -- (7,2);
    		\draw[blue,fill=blue] (0,0.5) circle (0.1cm);
    		\draw[blue,fill=blue] (0,2.5) circle (0.1cm);
    		\foreach \x in {2,...,4}
    		{\draw[lightgray,dashed,line width=1.5pt,->] (\x,-0.5) -- (\x,3.5);
                \draw[black,fill=black] (\x,3) circle (0.1cm);}
    
            \node[above] at (2,3.5) {$\eta_1^\nu$};
                \node[above] at (3,3.5) {$\cdots$};
                \node[above] at (4,3.5) {$\eta_N^\nu$};
    
                \node[below] at (2,-0.5) {$\eta_1^\mu$};
                \node[below] at (3,-0.5) {$\cdots$};
                \node[below] at (4,-0.5) {$\eta_N^\mu$};
             \end{tikzpicture}
             \quad
             &
             \quad
             \begin{tikzpicture}[scale=0.6]
            \draw[lightgray,dashed,line width=1.5pt,->] (7,0) -- (1,0) -- (0,0.5) -- (1,1) -- (5,1) -- (7,3);
    		\draw[lightgray,dashed,line width=1.5pt,->] (7,0) -- (6.5,0);
    		\draw[lightgray,dashed,line width=1.5pt,->] (7,1) -- (6,1) -- (5,2) -- (1,2) -- (0,2.5) -- (1,3) -- (6,3) -- (7,2);
    		\draw[lightgray,dashed,line width=1.5pt,->] (7,1) -- (6.5,1);
            \draw[red,line width=2pt,->] (4,3) -- (6,3) -- (7,2);
            \draw[red,line width=2pt,->] (4,1) -- (5,1) -- (5.5,1.5) -- (5,2) -- (4,2);
    		\draw[blue,fill=blue] (0,0.5) circle (0.1cm);
    		\draw[blue,fill=blue] (0,2.5) circle (0.1cm);
    		\foreach \x in {2,...,4}
    		{\draw[lightgray,dashed,line width=1.5pt,->] (\x,-0.5) -- (\x,3.5);
                \draw[black,fill=black] (\x,3) circle (0.1cm);}
    
            \node[above] at (2,3.5) {$\eta_1^\nu$};
                \node[above] at (3,3.5) {$\cdots$};
                \node[above] at (4,3.5) {$\eta_N^\nu$};
    
                \node[below] at (2,-0.5) {$\eta_1^\mu$};
                \node[below] at (3,-0.5) {$\cdots$};
                \node[below] at (4,-0.5) {$\eta_N^\mu$};
             \end{tikzpicture}
             \quad
             &
             \quad
             \begin{tikzpicture}[scale=0.6]
            \draw[lightgray,dashed,line width=1.5pt,->] (7,0) -- (1,0) -- (0,0.5) -- (1,1) -- (5,1) -- (7,3);
    		\draw[lightgray,dashed,line width=1.5pt,->] (7,0) -- (6.5,0);
    		\draw[lightgray,dashed,line width=1.5pt,->] (7,1) -- (6,1) -- (5,2) -- (1,2) -- (0,2.5) -- (1,3) -- (6,3) -- (7,2);
    		\draw[lightgray,dashed,line width=1.5pt,->] (7,1) -- (6.5,1);
            \draw[red,line width=2pt,->] (4,1) -- (5,1) -- (6.5,2.5) -- (7,2);
    		\draw[blue,fill=blue] (0,0.5) circle (0.1cm);
    		\draw[blue,fill=blue] (0,2.5) circle (0.1cm);
    		\foreach \x in {2,...,4}
    		{\draw[lightgray,dashed,line width=1.5pt,->] (\x,-0.5) -- (\x,3.5);
                \draw[black,fill=black] (\x,3) circle (0.1cm);}
    
            \node[above] at (2,3.5) {$\eta_1^\nu$};
                \node[above] at (3,3.5) {$\cdots$};
                \node[above] at (4,3.5) {$\eta_N^\nu$};
    
                \node[below] at (2,-0.5) {$\eta_1^\mu$};
                \node[below] at (3,-0.5) {$\cdots$};
                \node[below] at (4,-0.5) {$\eta_N^\mu$};
             \end{tikzpicture}
            \quad
        \end{tabular}
        \caption{The terms of the sum \eqref{eq: AB limit condition} corresponding to $(p_1,p_2)=(0,0),(1,0),(1,1)$ respectively.}
        \label{fig:AB limit proof}
    \end{figure}
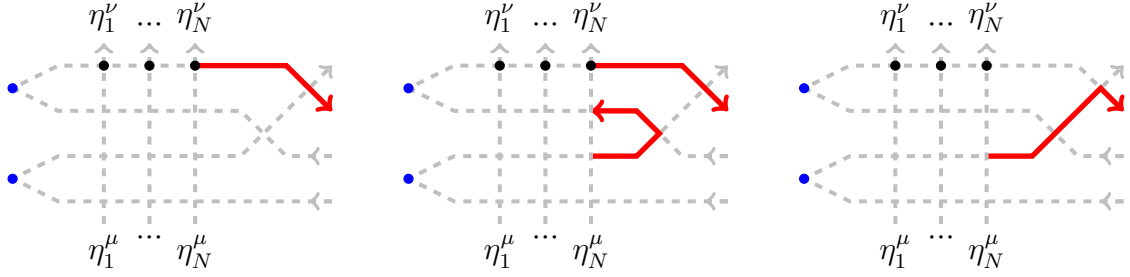

    Firstly, the $p_1=p_2=0$ term has the limit 
    \[\lim_{N\to\infty}\bra{\mu}\frac{x-z}{x-qz}A^{(N)}(x)\bigdot{B}^{(N)}(z)\ket{\nu} = \frac{x-z}{x-qz} \bra{\mu}A(x)\bigdot{B}(z)\ket{\nu},\]
    which is the desired result. So it remains to show that the other terms vanish in the limit. 
    
    Consider now the term associated to $p_1=1,p_2=0$. By virtue of the configurations being finite, this term can be decomposed into two possible configuration types. For some integer $N>\tau$ which is independent of $\tau$, these configurations are summed over
    \begin{multline}
    \label{eq:AB limit proof eq 2}
        \bra{\mu}B^{(N)}(x)\bigdot{D}^{(N)}(z)\ket{\nu} = \begin{tikzpicture}[baseline={([yshift=-.5ex]current bounding box.center)},scale=0.7]
    		\draw[lightgray,line width=1.5pt,->] (11,0) -- (1,0) -- (0,0.5) -- (1,1) -- (11,1);
    		\draw[lightgray,line width=1.5pt,->] (11,0) -- (10.5,0);
    		\draw[lightgray,line width=1.5pt,->] (11,2) -- (1,2) -- (0,2.5) -- (1,3) -- (11,3);
    		\draw[lightgray,line width=1.5pt,->] (11,2) -- (10.5,2);
    		\draw[blue,fill=blue] (0,0.5) circle (0.1cm);
    		\draw[blue,fill=blue] (0,2.5) circle (0.1cm);
    		\foreach \x in {2,...,10}
    		{\draw[lightgray,line width=1.5pt,->] (\x,-0.5) -- (\x,3.5);
            \draw[black,fill=black] (\x,3) circle (0.1cm);}
    		\node[right] at (11,0) {$0 \leftarrow x$};
    		\node[right] at (11,1) {$1 \rightarrow x^{-1}$};
    		\node[right] at (11,2) {$1 \leftarrow z$};
    		\node[right] at (11,3) {$1 \rightarrow z^{-1}$};
            \node[above] at (2,3.5) {$\eta_1^\nu$};
                \node[above] at (3,3.5) {$\cdots$};
                \node[above] at (4,3.5) {$\cdots$};
                \node[above] at (5,3.5) {$\eta_\tau^\nu$};
                \node[above] at (6,3.5) {$0$};
                \node[above] at (7,3.5) {$\cdots$};
                \node[above] at (8,3.5){$\cdots$};
                \node[above] at (9,3.5){$\cdots$};
                \node[above] at (10,3.5){$0$};  
                \node[below] at (2,-0.5) {$\eta_1^\mu$};
                \node[below] at (3,-0.5) {$\cdots$};
                \node[below] at (4,-0.5) {$\cdots$};
                \node[below] at (5,-0.5) {$\eta_\tau^\mu$};
                \node[below] at (6,-0.5) {$0$};
                \node[below] at (7,-0.5) {$\cdots$};
                \node[below] at (8,-0.5) {$\cdots$};
                \node[below] at (9,-0.5) {$\cdots$};
                \node[below] at (10,-0.5) {$0$};
            \node[below] at (2,-1.2) {$\uparrow$}; 
			\node[below] at (5,-1.2) {$\uparrow$};
			\node[below] at (6,-1.2) {$\uparrow$};  
            \node[below] at (10,-1.2) {$\uparrow$}; 		
			\node[below] at (2,-1.8) {$y_1$}; 
			\node[below] at (5,-1.8) {$y_\tau$};
			\node[below] at (6,-1.8) {$y_{\tau+1}$}; 
            \node[below] at (10,-1.8) {$y_N$};
            \draw[red,line width=2pt,->] (5.5,1) -- (11,1);
            \draw[red,line width=2pt,<-] (5.5,2) -- (11,2);
            \draw[red,line width=2pt,->] (5.5,3) -- (11,3);
            \draw[dotted,line width=1pt,rounded corners=15pt]  (-0.3,-0.2) rectangle ++(5.8,3.4);
            \foreach \x in {2,...,10}{\draw[black,fill=black] (\x,3) circle (0.1cm);}
    	\end{tikzpicture}\\
        +\sum_{\ell=\tau+1}^N  \begin{tikzpicture}[baseline={([yshift=-.5ex]current bounding box.center)},scale=0.7]
    		\draw[lightgray,line width=1.5pt,->] (11,0) -- (1,0) -- (0,0.5) -- (1,1) -- (11,1);
    		\draw[lightgray,line width=1.5pt,->] (11,0) -- (10.5,0);
    		\draw[lightgray,line width=1.5pt,->] (11,2) -- (1,2) -- (0,2.5) -- (1,3) -- (11,3);
    		\draw[lightgray,line width=1.5pt,->] (11,2) -- (10.5,2);
    		\draw[blue,fill=blue] (0,0.5) circle (0.1cm);
    		\draw[blue,fill=blue] (0,2.5) circle (0.1cm);
    		\foreach \x in {2,...,10}
    		{\draw[lightgray,line width=1.5pt,->] (\x,-0.5) -- (\x,3.5);}
    		\node[right] at (11,0) {$0 \leftarrow x$};
    		\node[right] at (11,1) {$1 \rightarrow x^{-1}$};
    		\node[right] at (11,2) {$1 \leftarrow z$};
    		\node[right] at (11,3) {$1 \rightarrow z^{-1}$};
            \node[above] at (2,3.5) {$\eta_1^\nu$};
                \node[above] at (3,3.5) {$\cdots$};
                \node[above] at (4,3.5) {$\cdots$};
                \node[above] at (5,3.5) {$\eta_\tau^\nu$};
                \node[above] at (6,3.5) {$0$};
                \node[above] at (7,3.5) {$\cdots$};
                \node[above] at (8,3.5) {$0$};
                \node[above] at (9,3.5) {$\cdots$};
                \node[above] at (10,3.5){$0$};  
                \node[below] at (2,-0.5) {$\eta_1^\mu$};
                \node[below] at (3,-0.5) {$\cdots$};
                \node[below] at (4,-0.5) {$\cdots$};
                \node[below] at (5,-0.5) {$\eta_\tau^\mu$};
                \node[below] at (6,-0.5) {$0$};
                \node[below] at (7,-0.5) {$\cdots$};
                \node[below] at (8,-0.5) {$0$};
                \node[below] at (9,-0.5) {$\cdots$};
                \node[below] at (10,-0.5) {$0$};
            \node[below] at (2,-1.2) {$\uparrow$}; 
			\node[below] at (5,-1.2) {$\uparrow$};
			\node[below] at (6,-1.2) {$\uparrow$};
            \node[below] at (8,-1.2) {$\uparrow$}; 
            \node[below] at (10,-1.2) {$\uparrow$}; 		
			\node[below] at (2,-1.8) {$y_1$}; 
			\node[below] at (5,-1.8) {$y_\tau$};
			\node[below] at (6,-1.8) {$y_{\tau+1}$};
            \node[below] at (8,-1.8) {$y_\ell$};
            \node[below] at (10,-1.8) {$y_N$};
            \draw[red,line width=2pt,->] (5.5,1) -- (11,1);
            \draw[red,line width=2pt,->] (11,2) -- (8,2) -- (8,3) -- (11,3);
            \draw[dotted,line width=1pt,rounded corners=15pt]  (-0.3,-0.2) rectangle ++(5.8,3.4);
            \foreach \x in {2,...,10}{\draw[black,fill=black] (\x,3) circle (0.1cm);}
    	\end{tikzpicture},
    \end{multline}
    where the dotted rectangles can are identified as stacked double row partition functions with $\tau$ columns. The columns attached to the right of these rectangles in \eqref{eq:AB limit proof eq 2} can be explicitly evaluated as 
    \begin{multline}
        \label{eq:AB limit proof eq 3}
        \bra{\mu}B^{(N)}(x)\bigdot{D}^{(N)}(z)\ket{\nu} =  \prod_{k=\tau+1}^N \left[\frac{1-xy_k}{1-qxy_k}\frac{q(1-z/y_k)}{1-qz/y_k}\right] \bra{\mu}B^{(\tau)}(x)\bigdot{D}^{(\tau)}(z)\ket{\nu}\\
         +\prod_{k=\tau+1}^N\frac{1-xy_k}{1-qxy_k}\sum_{\ell=\tau+1}^N\frac{1-q}{1-qz/y_\ell}\frac{1-q}{1-zy_\ell} \prod_{k=\tau+1}^{\ell-1} \frac{1-qzy_k}{1-zy_k}\prod_{k=\ell+1}^N \frac{q(1-z/y_k)}{1-qz/y_k} \bra{\mu}B^{(\tau)}(x)\bigdot{A}^{(\tau)}(z)\ket{\nu}.
    \end{multline}
    Using condition \eqref{eq: AB limit condition}, we can then bound this term effectively as 
    \begin{multline}
        \label{eq:AB limit proof eq 4}
        \abs{\bra{\mu}B^{(N)}(x)\bigdot{D}^{(N)}(z)\ket{\nu}} \leq \rho^{N-\tau} \abs{\bra{\mu}B^{(\tau)}(x)\bigdot{D}^{(\tau)}(z)\ket{\nu}} \\ + \rho^{N-\tau-1} (N-\tau)\max_{\ell\in\{\tau+1,\dots,N\}}\left\{\abs{\frac{1-xy_\ell}{1-qxy_\ell}}\right\}\max_{\ell\in\{\tau+1,\dots,N\}}\left\{\abs{\frac{1-q}{1-qz/y_\ell}\frac{1-q}{1-zy_\ell}}\right\} \abs{\bra{\mu}B^{(\tau)}(x)\bigdot{A}^{(\tau)}(z)\ket{\nu}}.
    \end{multline}
    We note here that the conditions \eqref{AB operator commutation condition 1} imply that, for all $N>\tau$, the points $q x y_\ell,qz/y_\ell,zy_\ell$ are all bounded uniformly away 1 for all $\ell \in \{\tau+1,\dots, N\}$, and hence the maxima in the second term in \eqref{eq:AB limit proof eq 4} remain finite as $N\to\infty$, and therefore both terms in \eqref{eq:AB limit proof eq 4} vanish as $N\to\infty$.

    It remains to show that the third term in \eqref{eq:AB limit proof eq 1} vanishes. This follows similarly as
    \begin{equation}
        \bra{\mu}B^{(N)}(x)\bigdot{A}^{(N)}(z)\ket{\nu} = \prod_{k=\tau+1}^N \left[\frac{1-xy_k}{1-qxy_k}\frac{1-qzy_k}{1-zy_k}\right] \bra{\mu}B^{(\tau)}(x)\bigdot{A}^{(\tau)}(z)\ket{\nu},
    \end{equation}
    which can be bounded using condition \eqref{eq: AB limit condition} as
    \begin{equation}
        \abs{\bra{\mu}B^{(N)}(x)\bigdot{A}^{(N)}(z)\ket{\nu}} \leq \rho^{N-\tau} \abs{\bra{\mu}B^{(\tau)}(x)\bigdot{A}^{(\tau)}(z)\ket{\nu}}.
    \end{equation}
    The limit of this term vanishes also, so we may conclude that only the $p_1=p_2=0$ term remains in the limit $N\to \infty$ which implies the result.
\end{proof}

\begin{prop}
	\label{A commutation prop}
	Fix $x_1,x_2 \in \mathbb{C}$ and assume there exists $\rho>0$ such that
	\begin{equation}
		\label{A operator commutation condition}
		\abs{\frac{1-x_iy_k}{1-qx_iy_k}\frac{q(1-x_j/y_k)}{1-qx_j/y_k}}\leq \rho <1,
	\end{equation}
    for all $i\neq j$ and $k\in\mathbb{N}$. Then the double-row operators from \eqref{Row-operator A defn} commute:
	\begin{equation}
		\label{eq:Acommute}
		A(x_1)A(x_2) = A(x_2) A(x_1).
	\end{equation}
\end{prop}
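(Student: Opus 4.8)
The plan is to reduce the commutativity of the infinite-column operators to a purely finite, algebraic identity for the double-row monodromy matrices $\mathcal{T}^{(N)}$, and then to invoke Proposition \ref{prop: AA limit condition} to pass to the limit $N\to\infty$. By that proposition, under the hypothesis \eqref{A operator commutation condition} one has
\[
\bra{\mu}A(x_1)A(x_2)\ket{\nu}
=
\lim_{N\to\infty}\sum_{p\in\{0,1\}}\bra{\mu}\mathcal{T}^{(N)}_{0,p}(x_1)\mathcal{T}^{(N)}_{p,0}(x_2)\ket{\nu}\,R_{x_1x_2}(0,p;p,0),
\]
and the same identity with $x_1\leftrightarrow x_2$ computes $\bra{\mu}A(x_2)A(x_1)\ket{\nu}$; since the gluing weight depends only on the product $x_1x_2$, the two right-hand sides differ only in the order of the monodromy factors. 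Thus it suffices to prove, for each fixed finite $N$, the exchange identity
\[
\sum_{p}\mathcal{T}^{(N)}_{0,p}(x_1)\mathcal{T}^{(N)}_{p,0}(x_2)\,R_{x_1x_2}(0,p;p,0)
=
\sum_{p}\mathcal{T}^{(N)}_{0,p}(x_2)\mathcal{T}^{(N)}_{p,0}(x_1)\,R_{x_1x_2}(0,p;p,0)
\]
as operators on the finite states of $\Span\mathbb{W}$.

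The engine behind this identity is the reflection equation satisfied by the double-row monodromy matrices themselves. Writing $\mathcal{T}^{(N)}_a(x)$ for the double-row monodromy in the auxiliary space $a\in\{1,2\}$ (the bottom row at rapidity $x$, the boundary $K$-vertex, and the top row at rapidity $x^{-1}$), the claim is that
\[
R_{21}(x_1/x_2)\,\mathcal{T}^{(N)}_1(x_1)\,R_{12}(x_1x_2)\,\mathcal{T}^{(N)}_2(x_2)
=
\mathcal{T}^{(N)}_2(x_2)\,R_{21}(x_1x_2)\,\mathcal{T}^{(N)}_1(x_1)\,R_{12}(x_1/x_2),
\]
i.e.\ that $\mathcal{T}^{(N)}$ obeys the same reflection relation \eqref{Sklyanin reflection equation R,K matrices} as the bare $K$-matrix. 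This is the standard Sklyanin computation carried out graphically: one begins with the left-hand configuration, railroads the crossing vertex $R_{12}(x_1x_2)$ rightward through the two rows of each double-row by repeated application of the Yang--Baxter equation \eqref{Yang-Baxter eq R-matrices}, passes it through the pair of boundary $K$-vertices using \eqref{Sklyanin reflection equation R,K matrices}, and emerges on the far side with the two double-rows interchanged. The $R$-matrix unitarity \eqref{R-matrix unitrarity prop eq} is used to straighten the reversed (top) auxiliary lines after the crossing has been transported across the lattice.

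To recover the desired exchange identity, I take the auxiliary matrix element of this double-row reflection equation in which both incoming and both outgoing auxiliary edges carry the label $0$. The crucial simplification is that the two outer crossing vertices, $R_{21}(x_1/x_2)$ on the left and $R_{12}(x_1/x_2)$ on the right, are each evaluated on an empty pair of edges; by the ice rule one has $R_z(0,0;k,l)=\delta_{k,0}\delta_{l,0}$, so these outer vertices reduce to the identity and drop out. What survives on each side is exactly $\sum_p \mathcal{T}^{(N)}_{0,p}(\,\cdot\,)\mathcal{T}^{(N)}_{p,0}(\,\cdot\,)R_{x_1x_2}(0,p;p,0)$ with the two rapidities in opposite orders, which is the finite-$N$ exchange identity above. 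Passing to $N\to\infty$ via Proposition \ref{prop: AA limit condition} then yields \eqref{eq:Acommute}.

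The principal obstacle is not the algebra — the train argument is routine once the monodromy is assembled — but rather the passage to the infinite-dimensional limit, since a naive product $A(x_1)A(x_2)$ need not converge. The bound \eqref{A operator commutation condition} is precisely what controls the tail of the monodromy beyond the support $\tau=\max\{\mu_1,\nu_1\}$ of the states, and this is exactly the content of Proposition \ref{prop: AA limit condition}; reducing to that proposition at the outset is therefore the cleanest route. A secondary point requiring care is the spectral-parameter bookkeeping in the double-row reflection equation — in particular, verifying that the gluing weight depends only on the product $x_1x_2$, so that $R_{x_1x_2}=R_{x_2x_1}$ and the two finite-$N$ sums become comparable term by term.
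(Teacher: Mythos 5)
Your proposal is correct and is essentially the paper's own proof: the paper likewise reduces to the finite-$N$ monodromy matrices, establishes the finite-$N$ exchange identity by railroading a crossing vertex through the lattice with the Yang--Baxter equation and reflection equation (the appended/removed frozen intertwiners in the paper's pictures are exactly your outer $R_{21}(x_1/x_2)$ and $R_{12}(x_1/x_2)$ trivializing on vacuum auxiliary states by the ice rule), and then passes to $N\to\infty$ via Proposition \ref{prop: AA limit condition}. The only difference is presentational — you phrase the key step as the Sklyanin dressed reflection equation for $\mathcal{T}^{(N)}$ and then project onto the $(0,0;0,0)$ auxiliary component, while the paper performs the equivalent manipulation graphically on that component directly.
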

\begin{proof}
    Let $\mu,\nu\in\mathbb{W}$ with maximum occupation $\tau=\max\{\mu_1,\nu_1\}$. Also let $N$ be an integer satisfying $N\geq\tau$ and consider the double row partition functions with $N$ columns 
    \begin{equation}
        \label{eq: AA inf commutation proof 1}
        \sum_{p\in\{0,1\}} \bra{\mu}\mathcal{T}_{0,p}^{(N)}(x_1)\mathcal{T}_{p,0}^{(N)}(x_2)\ket{\nu} R_{x_1 x_2}(0,p;p,0) = 
        \begin{tikzpicture}[baseline={([yshift=-.5ex]current bounding box.center)},scale=0.7]
    		\draw[lightgray,line width=1.5pt,->] (7,0) -- (1,0) -- (0,0.5) -- (1,1) -- (6,1) -- (7,2);
    		\draw[lightgray,line width=1.5pt,->] (7,0) -- (6.5,0);
    		\draw[lightgray,line width=1.5pt,->] (7,1) -- (6,2) -- (1,2) -- (0,2.5) -- (1,3) -- (7,3);
    		\draw[lightgray,line width=1.5pt,->] (7,1) -- (6.75,1.25);
    		\draw[blue,fill=blue] (0,0.5) circle (0.1cm);
    		\draw[blue,fill=blue] (0,2.5) circle (0.1cm);
    		\foreach \x in {2,...,5}
    		{\draw[lightgray,line width=1.5pt,->] (\x,-0.5) -- (\x,3.5);}
    		\node[right] at (7,0) {$0 \leftarrow x_1$};
    		\node[right] at (7,1) {$0 \leftarrow x_2$};
    		\node[right] at (7,2) {$0 \rightarrow x_1^{-1}$};
    		\node[right] at (7,3) {$0 \rightarrow x_2^{-1}$};
    
            \node[above] at (2,3.5) {$\eta_1^\nu$};
                \node[above] at (3,3.5) {$\cdots$};
                \node[above] at (4,3.5) {$\cdots$};
                \node[above] at (5,3.5) {$\eta_N^\nu$};
    
                \node[below] at (2,-0.5) {$\eta_1^\mu$};
                \node[below] at (3,-0.5) {$\cdots$};
                \node[below] at (4,-0.5) {$\cdots$};
                \node[below] at (5,-0.5) {$\eta_N^\mu$};
    	\end{tikzpicture},
    \end{equation}
    which, due to Proposition \ref{prop: AA limit condition}, has limit $\bra{\mu}A(x_1)A(x_2)\ket{\nu}$ as $N\to\infty$.

    We may then append an additional intertwining vertex to the lattice after the last column. The boundary conditions of this vertex mean that there is only one allowed vertex configuration on the intertwiner so that it can be added at no overall cost to the partition function. We have the \eqref{eq: AA inf commutation proof 1} is equal to 
    \begin{equation}
         \label{eq: AA inf commutation proof 2}
         \begin{tikzpicture}[baseline={([yshift=-.5ex]current bounding box.center)},scale=1]
    		\draw[lightgray,line width=1.5pt,->] (10,0) -- (1,0) -- (0,0.5) -- (1,1) -- (8,1) -- (10,3);
    		\draw[lightgray,line width=1.5pt,->] (10,0) -- (9.5,0);
    		\draw[lightgray,line width=1.5pt,->] (10,1) -- (9,1) -- (8,2) -- (1,2) -- (0,2.5) -- (1,3) -- (9,3) -- (10,2);
    		\draw[lightgray,line width=1.5pt,->] (10,1) -- (9.5,1);
    		\draw[blue,fill=blue] (0,0.5) circle (0.1cm);
    		\draw[blue,fill=blue] (0,2.5) circle (0.1cm);
    		\foreach \x in {2,...,7}
    		{\draw[lightgray,line width=1.5pt,->] (\x,-0.5) -- (\x,3.5);}
    		\node[right] at (10,0) {$0 \leftarrow x_1$};
    		\node[right] at (10,1) {$0 \leftarrow x_2$};
    		\node[right] at (10,2) {$0 \rightarrow x_2^{-1}$};
    		\node[right] at (10,3) {$0 \rightarrow x_1^{-1}$};
    
            \node[above] at (2,3.5) {$\eta_1^\nu$};
                \node[above] at (3,3.5) {$\eta_2^\nu$};
                \node[above] at (4,3.5) {$\eta_3^\nu$};
                \node[above] at (5,3.5) {$\cdots$};
                \node[above] at (6,3.5) {$\cdots$};
                \node[above] at (7,3.5) {$\eta_N^\nu$};
    
                \node[below] at (2,-0.5) {$\eta_1^\mu$};
                \node[below] at (3,-0.5) {$\eta_2^\mu$};
                \node[below] at (4,-0.5) {$\eta_3^\mu$};
                \node[below] at (5,-0.5) {$\cdots$};
                \node[below] at (6,-0.5) {$\cdots$};
                \node[below] at (7,-0.5) {$\eta_N^\mu$};
    	\end{tikzpicture}
    \end{equation}
    We may then repeatedly apply the Yang\textendash Baxter equation \eqref{Yang-Baxter eq R-matrices} to manipulate the diagram. This equation can be applied successively to each column of partition function leading to the relation
	\[ 
	\begin{tikzpicture}[baseline={([yshift=-.5ex]current bounding box.center)},scale=0.7]
		\draw[lightgray,line width=1.5pt,->] (7,0) -- (1,0) -- (0,0.5) -- (1.5,2) -- (6,2) -- (7,3);
		\draw[lightgray,line width=1.5pt,->] (7,0) -- (6.5,0);
		\draw[lightgray,line width=1.5pt,->] (7,1) -- (1.5,1) -- (0,2.5) -- (1,3) -- (6,3) -- (7,2);
		\draw[lightgray,line width=1.5pt,->] (7,1) -- (6.5,1);
		\draw[blue,fill=blue] (0,0.5) circle (0.1cm);
		\draw[blue,fill=blue] (0,2.5) circle (0.1cm);
		\foreach \x in {2,...,5}
		{\draw[lightgray,line width=1.5pt,->] (\x,-0.5) -- (\x,3.5);}
		\node[right] at (7,0) {$0 \leftarrow x_1$};
		\node[right] at (7,1) {$0 \leftarrow x_2$};
		\node[right] at (7,2) {$0 \rightarrow x_2^{-1}$};
		\node[right] at (7,3) {$0 \rightarrow x_1^{-1}$};
        \node[above] at (2,3.5) {$\eta_1^\nu$};
                \node[above] at (3,3.5) {$\cdots$};
                \node[above] at (4,3.5) {$\cdots$};
                \node[above] at (5,3.5) {$\eta_N^\nu$};
                \node[below] at (2,-0.5) {$\eta_1^\mu$};
                \node[below] at (3,-0.5) {$\cdots$};
                \node[below] at (4,-0.5) {$\cdots$};
                \node[below] at (5,-0.5) {$\eta_N^\mu$};
	\end{tikzpicture}
	= \begin{tikzpicture}[baseline={([yshift=-.5ex]current bounding box.center)},scale=0.7]
		\draw[lightgray,line width=1.5pt,->] (7,0) -- (1,0) -- (0,0.5) -- (2,3) -- (7,3);
		\draw[lightgray,line width=1.5pt,->] (7,0) -- (6.5,0);
		\draw[lightgray,line width=1.5pt,->] (7,1) -- (1.5,1) -- (0,2.5) -- (1,3) -- (2,2) -- (7,2);
		\draw[lightgray,line width=1.5pt,->] (7,1) -- (6.5,1);
		\draw[blue,fill=blue] (0,0.5) circle (0.1cm);
		\draw[blue,fill=blue] (0,2.5) circle (0.1cm);
		\foreach \x in {3,...,6}
		{\draw[lightgray,line width=1.5pt,->] (\x,-0.5) -- (\x,3.5);}
		\node[right] at (7,0) {$0 \leftarrow x_1$};
		\node[right] at (7,1) {$0 \leftarrow x_2$};
		\node[right] at (7,2) {$0 \rightarrow x_2^{-1}$};
		\node[right] at (7,3) {$0 \rightarrow x_1^{-1}$};
        \node[above] at (2,3.5) {$\eta_1^\nu$};
                \node[above] at (3,3.5) {$\cdots$};
                \node[above] at (4,3.5) {$\cdots$};
                \node[above] at (5,3.5) {$\eta_N^\nu$};
                \node[below] at (2,-0.5) {$\eta_1^\mu$};
                \node[below] at (3,-0.5) {$\cdots$};
                \node[below] at (4,-0.5) {$\cdots$};
                \node[below] at (5,-0.5) {$\eta_N^\mu$};
	\end{tikzpicture}.
	\]
	At this point the reflection equation \eqref{Sklyanin reflection equation R,K matrices} can be applied, followed by the Yang\textendash Baxter equation to push the intertwining vertices back to the right edge of the partition function. This yields 
	\[
	\begin{tikzpicture}[baseline={([yshift=-.5ex]current bounding box.center)},scale=0.7]
		\draw[lightgray,line width=1.5pt,->] (7,0) -- (2,0) -- (0,2.5) -- (1,3) -- (7,3);
		\draw[lightgray,line width=1.5pt,->] (7,0) -- (6.5,0);
		\draw[lightgray,line width=1.5pt,->] (7,1) -- (2,1) -- (1,0) -- (0,0.5) -- (1.5,2) -- (7,2);
		\draw[lightgray,line width=1.5pt,->] (7,1) -- (6.5,1);
		\draw[blue,fill=blue] (0,0.5) circle (0.1cm);
		\draw[blue,fill=blue] (0,2.5) circle (0.1cm);
		\foreach \x in {3,...,6}
		{\draw[lightgray,line width=1.5pt,->] (\x,-0.5) -- (\x,3.5);}
		\node[right] at (7,0) {$0 \leftarrow x_1$};
		\node[right] at (7,1) {$0 \leftarrow x_2$};
		\node[right] at (7,2) {$0 \rightarrow x_2^{-1}$};
		\node[right] at (7,3) {$0 \rightarrow x_1^{-1}$};
        \node[above] at (2,3.5) {$\eta_1^\nu$};
                \node[above] at (3,3.5) {$\cdots$};
                \node[above] at (4,3.5) {$\cdots$};
                \node[above] at (5,3.5) {$\eta_N^\nu$};
                \node[below] at (2,-0.5) {$\eta_1^\mu$};
                \node[below] at (3,-0.5) {$\cdots$};
                \node[below] at (4,-0.5) {$\cdots$};
                \node[below] at (5,-0.5) {$\eta_N^\mu$};
	\end{tikzpicture}
	= \begin{tikzpicture}[baseline={([yshift=-.5ex]current bounding box.center)},scale=0.7]
		\draw[lightgray,line width=1.5pt,->] (7,0) -- (6.5,0) -- (4.5,2) -- (0.5,2) -- (0,2.5) -- (0.5,3) -- (7,3);
		\draw[lightgray,line width=1.5pt,->] (7,0) -- (6.65,0);
		\draw[lightgray,line width=1.5pt,->] (7,1) -- (6.5,1) -- (5.5,0) -- (0.5,0) -- (0,0.5) -- (0.5,1) -- (4.5,1) -- (5.5,2) -- (7,2);
		\draw[lightgray,line width=1.5pt,->] (7,1) -- (6.65,1);
		\draw[blue,fill=blue] (0,0.5) circle (0.1cm);
		\draw[blue,fill=blue] (0,2.5) circle (0.1cm);
		\foreach \x in {1,...,4}
		{\draw[lightgray,line width=1.5pt,->] (\x,-0.5) -- (\x,3.5);}
		\node[right] at (7,0) {$0 \leftarrow x_1$};
		\node[right] at (7,1) {$0 \leftarrow x_2$};
		\node[right] at (7,2) {$0 \rightarrow x_2^{-1}$};
		\node[right] at (7,3) {$0 \rightarrow x_1^{-1}$};
        \node[above] at (2,3.5) {$\eta_1^\nu$};
                \node[above] at (3,3.5) {$\cdots$};
                \node[above] at (4,3.5) {$\cdots$};
                \node[above] at (5,3.5) {$\eta_N^\nu$};
                \node[below] at (2,-0.5) {$\eta_1^\mu$};
                \node[below] at (3,-0.5) {$\cdots$};
                \node[below] at (4,-0.5) {$\cdots$};
                \node[below] at (5,-0.5) {$\eta_N^\mu$};
	\end{tikzpicture}.
	\]
    At this point, the intertwiner at the bottom-right of the diagram can be removed at no cost the partition function due to the boundary conditions. This yields
    \begin{equation}
    \label{eq: AA inf commutation proof 3}
        \begin{tikzpicture}[baseline={([yshift=-.5ex]current bounding box.center)},scale=1]
    		\draw[lightgray,line width=1.5pt,->] (7,0) -- (1,0) -- (0,0.5) -- (1,1) -- (6,1) -- (7,2);
    		\draw[lightgray,line width=1.5pt,->] (7,0) -- (6.5,0);
    		\draw[lightgray,line width=1.5pt,->] (7,1) -- (6,2) -- (1,2) -- (0,2.5) -- (1,3) -- (7,3);
    		\draw[lightgray,line width=1.5pt,->] (7,1) -- (6.75,1.25);
    		\draw[blue,fill=blue] (0,0.5) circle (0.1cm);
    		\draw[blue,fill=blue] (0,2.5) circle (0.1cm);
    		\foreach \x in {2,...,5}
    		{\draw[lightgray,line width=1.5pt,->] (\x,-0.5) -- (\x,3.5);}
    		\node[right] at (7,0) {$0 \leftarrow x_2$};
    		\node[right] at (7,1) {$0 \leftarrow x_1$};
    		\node[right] at (7,2) {$0 \rightarrow x_2^{-1}$};
    		\node[right] at (7,3) {$0 \rightarrow x_1^{-1}$};
    
            \node[above] at (2,3.5) {$\eta_1^\nu$};
                \node[above] at (3,3.5) {$\cdots$};
                \node[above] at (4,3.5) {$\cdots$};
                \node[above] at (5,3.5) {$\eta_N^\nu$};
    
                \node[below] at (2,-0.5) {$\eta_1^\mu$};
                \node[below] at (3,-0.5) {$\cdots$};
                \node[below] at (4,-0.5) {$\cdots$};
                \node[below] at (5,-0.5) {$\eta_N^\mu$};
    	\end{tikzpicture},
    \end{equation}
    which we recognize as \eqref{eq: AA inf commutation proof 1} with $x_1$ and $x_2$ interchanged. The limit of \eqref{eq: AA inf commutation proof 3} can then be evaluated as $\bra{\mu}A(x_2)A(x_1)\ket{\nu}$ as $N\to\infty$ due to condition \eqref{A operator commutation condition}. Since \eqref{eq: AA inf commutation proof 1} and \eqref{eq: AA inf commutation proof 3} are equal for all $N\geq\tau$, we can conclude that their limits must be equal. This is the result \eqref{eq:Acommute}.
\end{proof}

\begin{prop}
	\label{B commutation prop}
	Given configurations $\mu,\nu\in\mathbb{W}$ let $N$ be an integer $N\geq\max\{\mu_1,\nu_1\}$. For $z_1,z_2$ the double-row operators  with $N$ columns commute
        \begin{equation}
            \label{eq:Bcommute finite}
            \bra{\mu}\bigdot{B}^{(N)}(z_1)\bigdot{B}^{(N)}(z_2)\ket{\nu} = \bra{\mu}\bigdot{B}^{(N)}(z_2)\bigdot{B}^{(N)}(z_1)\ket{\nu}.
        \end{equation}
        This can be extended to the case of infinite columns to obtain the commutation relation of the double-row operators \eqref{Row-operator B defn}
	\begin{equation}
		\label{eq:Bcommute}
		\bigdot{B}(z_1)\bigdot{B}(z_2) = \bigdot{B}(z_2)\bigdot{B}(z_1).
	\end{equation}
\end{prop}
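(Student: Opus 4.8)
The plan is to deduce the finite-column identity \eqref{eq:Bcommute finite} from the reflection equation satisfied by the full double-row monodromy matrix $\bigdot{\mathcal{T}}^{(N)}$, and then to promote it to the infinite-column operators \eqref{eq:Bcommute} by a limiting argument of the type already employed in Propositions \ref{prop: AA limit condition} and \ref{prop: AB limit condition}. The central claim at finite $N$ is that $\bigdot{\mathcal{T}}^{(N)}$ obeys the Sklyanin reflection equation
\begin{equation*}
R_{21}\!\left(z_1/z_2\right)\bigdot{\mathcal{T}}^{(N)}_1(z_1)\,R_{12}(z_1z_2)\,\bigdot{\mathcal{T}}^{(N)}_2(z_2) = \bigdot{\mathcal{T}}^{(N)}_2(z_2)\,R_{21}(z_1z_2)\,\bigdot{\mathcal{T}}^{(N)}_1(z_1)\,R_{12}\!\left(z_1/z_2\right),
\end{equation*}
which is the exact boundary analogue of \eqref{Sklyanin reflection equation R,K matrices} with $K$ replaced by the double-row monodromy, the two auxiliary horizontal spaces carrying $z_1$ and $z_2$.

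First I would establish this identity by the same graphical \emph{train} argument used to prove Proposition \ref{A commutation prop}. One appends a pair of intertwining $R$-vertices on the right of the two stacked double rows --- at no cost, since the boundary values freeze them to a unique configuration --- transports them leftward through every column using the Yang--Baxter equation \eqref{Yang-Baxter eq R-matrices} (in the mixed $R$/$\bigdot R$ form permitted by \eqref{Dotted R-matrix eq}), passes them across the two boundary vertices using the reflection equation \eqref{Sklyanin reflection equation R,K matrices}, and finally removes them on the right using $R$-matrix unitarity (Proposition \ref{R-matrix unitrarity prop}). The renormalization \eqref{Dotted R-matrix eq} of the top-row weights is exactly what makes the spectral parameters of the two rows combine so that these moves close up.

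Next I would read off the auxiliary-space component with both incoming edges empty and both outgoing edges occupied, namely $(i_1,i_2)=(0,0)\mapsto(j_1,j_2)=(1,1)$. Because the bulk vertices conserve paths, the internal sums collapse to a single term on each side: the left-hand side becomes $\bigdot B^{(N)}(z_1)\bigdot B^{(N)}(z_2)$ multiplied by $[R_{21}(z_1/z_2)]_{(1,1)\to(1,1)}\,[R_{12}(z_1z_2)]_{(0,1)\to(0,1)}$, while the right-hand side becomes $\bigdot B^{(N)}(z_2)\bigdot B^{(N)}(z_1)$ multiplied by $[R_{21}(z_1z_2)]_{(1,0)\to(1,0)}\,[R_{12}(z_1/z_2)]_{(0,0)\to(0,0)}$. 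Reading the explicit weights off \eqref{Stochastic 6VM weights table}, the all-occupied and all-empty vertices each contribute $1$, and the two remaining factors are both the vertical-line-through weight evaluated at $z_1z_2$, i.e. $q(1-z_1z_2)/(1-qz_1z_2)$. The common nonzero prefactor cancels, leaving \eqref{eq:Bcommute finite} upon sandwiching between $\bra{\mu}$ and $\ket{\nu}$ with $\mu_1,\nu_1\le N$. The only genuine bookkeeping here is tracking the $R_{12}$ versus $R_{21}$ orderings, but a short check confirms both surviving weights coincide.

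Finally, to obtain \eqref{eq:Bcommute} I would let $N\to\infty$. Writing $\bra{\mu}\bigdot B^{(N)}(z_1)\bigdot B^{(N)}(z_2)\ket{\nu} = \sum_{\lambda}\bra{\mu}\bigdot B^{(N)}(z_1)\ket{\lambda}\bra{\lambda}\bigdot B^{(N)}(z_2)\ket{\nu}$, each individual matrix element converges to its infinite-column counterpart by Definition \ref{AB row operator defn}; the tail of the sum over $\lambda$ is controlled by freezing the columns beyond $\max\{\mu_1,\nu_1\}$ and bounding the resulting geometric product, exactly as in Propositions \ref{prop: AA limit condition} and \ref{prop: AB limit condition}. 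Since \eqref{eq:Bcommute finite} is an exact identity for every $N\ge\max\{\mu_1,\nu_1\}$, passing to the limit on both sides yields \eqref{eq:Bcommute}. I expect the main obstacle to be precisely this last step: the finite-$N$ commutation is essentially the standard Sklyanin-algebra relation and is routine, whereas justifying the interchange of the limit with the intermediate-state summation --- equivalently, the convergence of the product of two semi-infinite double-row operators --- is where the real analytic care is needed.
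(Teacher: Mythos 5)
Your finite-$N$ argument is correct and is, in substance, the same proof as the paper's: the paper never writes down the monodromy reflection equation explicitly, but its manipulation of the diagram $f^{(N)}(z_1,z_2)$ --- appending two intertwiners that are frozen by the right-edge data ($0,0$ incoming, $1,1$ outgoing), transporting them with the Yang--Baxter and reflection equations, and evaluating them again on the far side --- is exactly your component extraction, with all internal sums collapsed by the ice rule. In the paper's conventions the surviving crossing weight is $\frac{1-z_1z_2}{1-qz_1z_2}$ rather than $\frac{q(1-z_1z_2)}{1-qz_1z_2}$ (compare \eqref{eq:BB commutation proof 2} and \eqref{eq:BB commutation proof 4}), but since the same factor appears on both sides and cancels, this convention discrepancy is immaterial.

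Where your proposal goes astray is the final paragraph, and the misjudgment matters for the strength of the statement. You expect the passage to infinitely many columns to be ``where the real analytic care is needed'' and propose to control it by geometric bounds ``exactly as in Propositions \ref{prop: AA limit condition} and \ref{prop: AB limit condition}''. But those bounds are precisely what forces the hypotheses \eqref{A operator commutation condition} and \eqref{AB operator commutation condition} on the spectral parameters in Propositions \ref{A commutation prop} and \ref{AB commutation relation prop}; Proposition \ref{B commutation prop} carries no such hypotheses, and a proof that imports them establishes strictly less than what is claimed. In fact no estimate is required: since each $\bigdot{B}$ row has incoming state $0$ and outgoing state $1$ on its right edge, path conservation forces a path to occupy the top row all the way to the far right, so $\bra{\mu}\bigdot{B}^{(N)}(z)\ket{\lambda}=0$ unless $\lambda_1\le\mu_1$, and for such $\lambda$ every column beyond $\mu_1$ is frozen with weight exactly $1$ --- the bottom-row vertices are empty (stochastic weight $1$) and the top-row vertices are of horizontal-path-through type, which have dotted weight $1$ by \eqref{Dotted R-matrix eq}. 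Consequently $\bra{\mu}\bigdot{B}^{(N)}(z_1)\bigdot{B}^{(N)}(z_2)\ket{\nu}$ is a finite sum of terms each independent of $N$ once $N\geq\max\{\mu_1,\nu_1\}$; it coincides identically with its infinite-column counterpart, so \eqref{eq:Bcommute finite} passes to \eqref{eq:Bcommute} with nothing to check. This exact stabilization --- which the paper flags when it remarks that \eqref{eq:BB commutation proof 2} holds for all $N\geq\tau$ rather than only in the large-$N$ limit --- is the structural reason the $\bigdot{B}$-commutation is unconditional, in contrast to the $A$-operator commutation.
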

\begin{proof}
    Let $\mu,\nu\in\mathbb{W}$ with maximum occupation $\tau=\max\{\mu_1,\nu_1\}$. Also let $N\geq\tau$ and consider the following double row partition functions with $N$ columns
    \begin{equation}
        \label{eq:BB commutation proof 1}
         f^{(N)} (z_1,z_2) :=
         \begin{tikzpicture}[baseline={([yshift=-.5ex]current bounding box.center)},scale=0.8]
			\draw[lightgray,line width=1.5pt,->] (8,0) -- (1,0) -- (0,0.5) -- (1,1) -- (6,1) -- (8,3);
			\draw[lightgray,line width=1.5pt,->] (8,0) -- (7.5,0);
			\draw[lightgray,line width=1.5pt,->] (8,1) -- (7,1) -- (6,2) -- (1,2) -- (0,2.5) -- (1,3) -- (7,3) -- (8,2);
			\draw[lightgray,line width=1.5pt,->] (8,1) -- (7.5,1);
			\draw[blue,fill=blue] (0,0.5) circle (0.1cm);
			\draw[blue,fill=blue] (0,2.5) circle (0.1cm);
			\foreach \x in {2,...,5}
			{\draw[lightgray,line width=1.5pt,->] (\x,-0.5) -- (\x,3.5);
			\draw[black,fill=black] (\x,3) circle (0.1cm);
            \draw[black,fill=black] (\x,1) circle (0.1cm);}
			\node[right] at (8,0) {$0 \leftarrow z_1$};
			\node[right] at (8,1) {$0 \leftarrow z_2$};
			\node[right] at (8,2) {$1 \rightarrow z_2^{-1}$};
			\node[right] at (8,3) {$1 \rightarrow z_1^{-1}$};
            \node[above] at (2,3.5) {$\eta_1^\nu$};
            \node[above] at (3,3.5) {$\cdots$};
            \node[above] at (4,3.5) {$\cdots$};
            \node[above] at (5,3.5) {$\eta_N^\nu$};
            \node[below] at (2,-0.5) {$\eta_1^\mu$};
            \node[below] at (3,-0.5) {$\cdots$};
            \node[below] at (4,-0.5) {$\cdots$};
            \node[below] at (5,-0.5) {$\eta_N^\mu$};
		\end{tikzpicture},
     \end{equation}
    with two intertwining vertices appended to the right of the diagram. We note that these intertwiners are frozen in their own right and can simply be evaluated. For any $N\geq\tau$ this yields
    \begin{equation}
        \label{eq:BB commutation proof 2}
        f^{(N)}(z_1,z_2) = \frac{1-z_1z_2}{1-qz_1z_2} \bra{\mu} \bigdot{B}^{(N)}(z_1)\bigdot{B}^{(N)}(z_2)\ket{\nu}.
    \end{equation}
     It is important to note here that this holds for all $N\geq \tau$ here rather than in just under the large $N$ limit as with the proof of Proposition \ref{A commutation prop}.

     Then following the same procedure as the proof of Proposition \ref{A commutation prop}, we may apply the Yang\textendash Baxter equation \eqref{Yang-Baxter eq R-matrices} and reflection equation \eqref{Sklyanin reflection equation R,K matrices} to manipulate the diagram \eqref{eq:BB commutation proof 1} to obtain
     \begin{equation}
        \label{eq:BB commutation proof 3}
         f^{(N)}(z_1,z_2) = \begin{tikzpicture}[baseline={([yshift=-.5ex]current bounding box.center)},scale=0.8]
    		\draw[lightgray,line width=1.5pt,->] (7,0) -- (6.5,0) -- (4.5,2) -- (0.5,2) -- (0,2.5) -- (0.5,3) -- (7,3);
    		\draw[lightgray,line width=1.5pt,->] (7,0) -- (6.65,0);
    		\draw[lightgray,line width=1.5pt,->] (7,1) -- (6.5,1) -- (5.5,0) -- (0.5,0) -- (0,0.5) -- (0.5,1) -- (4.5,1) -- (5.5,2) -- (7,2);
    		\draw[lightgray,line width=1.5pt,->] (7,1) -- (6.65,1);
    		\draw[blue,fill=blue] (0,0.5) circle (0.1cm);
    		\draw[blue,fill=blue] (0,2.5) circle (0.1cm);
    		\foreach \x in {1,...,4}
    		{\draw[lightgray,line width=1.5pt,->] (\x,-0.5) -- (\x,3.5);
            \draw[black,fill=black] (\x,3) circle (0.1cm);
            \draw[black,fill=black] (\x,1) circle (0.1cm);}
    		\node[right] at (7,0) {$0 \leftarrow z_1$};
    		\node[right] at (7,1) {$0 \leftarrow z_2$};
    		\node[right] at (7,2) {$1 \rightarrow z_2^{-1}$};
    		\node[right] at (7,3) {$1 \rightarrow z_1^{-1}$};
            \node[above] at (1,3.5) {$\eta_1^\nu$};
                \node[above] at (2,3.5) {$\cdots$};
                \node[above] at (3,3.5) {$\cdots$};
                \node[above] at (4,3.5) {$\eta_N^\nu$};
                \node[below] at (1,-0.5) {$\eta_1^\mu$};
                \node[below] at (2,-0.5) {$\cdots$};
                \node[below] at (3,-0.5) {$\cdots$};
                \node[below] at (4,-0.5) {$\eta_N^\mu$};
    	\end{tikzpicture}.
    \end{equation}
    The intertwiners on the right side of \eqref{eq:BB commutation proof 3} are also frozen in their own right and can be evaluated as
    \begin{equation}
        \label{eq:BB commutation proof 4}
        f^{(N)}(z_1,z_2) = \frac{1-z_1z_2}{1-qz_1z_2} \bra{\mu} \bigdot{B}^{(N)}(z_2)\bigdot{B}^{(N)}(z_1)\ket{\nu}.
    \end{equation}
    The result \eqref{eq:Bcommute finite} is obtained by comparing \eqref{eq:BB commutation proof 2} and \eqref{eq:BB commutation proof 4}. Taking the large $N$ limit yields the result \eqref{eq:Bcommute}.
\end{proof}

\begin{prop}
	\label{AB commutation relation prop}
	Fix $x,z\in\mathbb{C}$ and assume there exists $\rho>0$ such that 
    \begin{equation}
		\label{AB operator commutation condition}
		\abs{\frac{1-xy_k}{1-qxy_k}\frac{q(1-z/y_k)}{1-qz/y_k}}\le\rho<1, \hspace{0.5cm} \abs{\frac{1-xy_k}{1-qxy_k}\frac{1-qzy_k}{1-zy_k}}\leq\rho<1,
	\end{equation}
    for all $k\in \mathbb{N}$. Then the double-row operators from Definition \ref{AB row operator defn} obey the exchange relation
	\begin{equation}
        \label{eq: AB exchangr rel}
		A(x) \bigdot{B}(z) = \frac{x-qz}{x-z}\frac{1-xz}{1-qxz} \bigdot{B}(z) A(x).
	\end{equation}	
\end{prop}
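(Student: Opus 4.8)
The plan is to follow the same three-step template used in the proofs of Propositions \ref{A commutation prop} and \ref{B commutation prop}: realise both sides of \eqref{eq: AB exchangr rel} as limits of a single finite-column partition function, and transport a pair of intertwining vertices across the lattice using the Yang--Baxter and reflection equations. First I would introduce the finite-$N$ partition function $g^{(N)}(x,z)$ given by the left-hand diagram of Proposition \ref{prop: AB limit condition}: two stacked double rows, the ordinary ($A$-type) row carrying $x$ along the bottom and the renormalised ($\bigdot{B}$-type) row carrying $z$ along the top, joined at the right edge by the two intertwining vertices weighted by $R_{xz}$ and $R_{z/x}$, with internal edges summed subject to $p_1\geq p_2$. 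By Proposition \ref{prop: AB limit condition}, under the hypotheses \eqref{AB operator commutation condition} one has
\[
\lim_{N\to\infty} g^{(N)}(x,z) = \frac{x-z}{x-qz}\,\bra{\mu} A(x)\bigdot{B}(z)\ket{\nu}.
\]

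Next I would transport the two intertwiners through the diagram exactly as in the proof of Proposition \ref{A commutation prop}. Using the mixed $R/\bigdot{R}$ form of the Yang--Baxter equation \eqref{Yang-Baxter eq R-matrices}, which is legitimate since $\bigdot{R}$ and $R$ differ only by the scalar \eqref{Dotted R-matrix eq}, the intertwiners are dragged column-by-column from the right edge to the left boundary; the Sklyanin reflection equation \eqref{Sklyanin reflection equation R,K matrices} is then applied once to interchange the two boundary $K$-vertices together with the horizontal rails; finally the Yang--Baxter equation pushes the intertwiners back to the right edge. Each move is an equality of partition functions, so $g^{(N)}(x,z)$ is unchanged, but the resulting diagram is now the finite-$N$ partition function for the reversed product, with the renormalised row carrying $z$ at the bottom and the ordinary row carrying $x$ at the top, again closed off on the right by two intertwiners. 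By the analogue of Proposition \ref{prop: AB limit condition} obtained by exchanging the ordinary and renormalised rows — established by the same freezing-and-bounding argument, with the hypotheses \eqref{AB operator commutation condition} guaranteeing convergence — this diagram evaluates in the limit to $\frac{1-xz}{1-qxz}\,\bra{\mu}\bigdot{B}(z)A(x)\ket{\nu}$.

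Equating the two evaluations of $\lim_{N\to\infty}g^{(N)}(x,z)$ and cancelling the common $\bra{\mu}\cdots\ket{\nu}$, valid since $\mu,\nu\in\mathbb{W}$ are arbitrary, yields
\[
\frac{x-z}{x-qz}\,A(x)\bigdot{B}(z) = \frac{1-xz}{1-qxz}\,\bigdot{B}(z)A(x),
\]
which is precisely \eqref{eq: AB exchangr rel} after multiplying through by $\frac{x-qz}{x-z}$. I expect the main obstacle to be the bookkeeping of the scalar prefactors: unlike the $AA$ and $BB$ cases, where the two rows share the same top boundary condition so that the frozen intertwiner weights coincide on both sides and cancel, here the $A$-row closes with $0\to x^{-1}$ while the $\bigdot{B}$-row closes with $1\to z^{-1}$, so the intertwiners are not frozen and the two orderings carry genuinely different prefactors. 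Verifying that these prefactors are exactly $\frac{x-z}{x-qz}$ before the manipulation and $\frac{1-xz}{1-qxz}$ after it — that is, proving the companion limit lemma and checking that the train of moves indeed lands on $\bigdot{B}(z)A(x)$ with the correct renormalisation factors $\tfrac{1-qzy_k}{1-zy_k}$ tracked through the mixed Yang--Baxter steps — is the crux of the argument and the source of the nontrivial factor $\frac{x-qz}{x-z}\frac{1-xz}{1-qxz}$.
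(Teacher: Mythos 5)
Your proposal follows the same architecture as the paper's proof: the same finite-$N$ partition function \eqref{eq:AB commutation proof 1}, the same appeal to Proposition \ref{prop: AB limit condition} to identify its $N\to\infty$ limit with $\frac{x-z}{x-qz}\bra{\mu}A(x)\bigdot{B}(z)\ket{\nu}$, and the same Yang--Baxter/reflection transport of the two intertwiners, so the overall argument is sound. The one place you diverge --- precisely the step you flag as the crux --- is the evaluation of the manipulated diagram. You propose to prove a companion limit lemma (``the analogue of Proposition \ref{prop: AB limit condition} with the rows exchanged'') by a freezing-and-bounding argument, and you assert that ``the intertwiners are not frozen.'' In fact, after the transport they \emph{are} frozen, and this is exactly how the paper extracts the factor: in the resulting diagram \eqref{eq:AB commutation proof 3}, the two external inlets entering the crossings both carry $0$ while the $\bigdot{B}$-row outlet carries $1$, so path conservation forces one crossing into the all-empty configuration (weight $1$) and the other into the horizontal pass-through configuration, whose weight from table \eqref{Stochastic 6VM weights table} at spectral parameter $xz$ is $\frac{1-xz}{1-qxz}$. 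Consequently the factor $\frac{1-xz}{1-qxz}$ appears exactly at every finite $N$, with no convergence analysis; the only remaining limit is that of $\bra{\mu}\bigdot{B}^{(N)}(z)A^{(N)}(x)\ket{\nu}$, which converges trivially because the $\bigdot{B}$-row connects $\mu$ to only finitely many intermediate states. Your route would still succeed --- the companion lemma you want is true --- but upon attempting its proof you would find the internal sum collapses to a single term, i.e.\ it degenerates into the freezing observation; likewise your worry about tracking renormalisation factors through the mixed Yang--Baxter steps is moot, since $\bigdot{R}$ differs from $R$ only by a scalar and the dotted vertices never move off the $\bigdot{B}$-row. So the asymmetry producing the nontrivial factor in \eqref{eq: AB exchangr rel} is not a delicate bookkeeping issue: it is the contrast between a genuinely non-frozen pair of intertwiners before the transport (handled by Proposition \ref{prop: AB limit condition}) and a completely frozen pair after it.
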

\begin{proof}
     Let $\mu,\nu\in\mathbb{W}$ with maximum occupation $\tau=\max\{\mu_1,\nu_1\}$. Also let $N$ be an integer satisfying $N\geq\tau$ and consider the double row partition functions with $N$ columns
     \begin{multline}
        \label{eq:AB commutation proof 1}
         \sum_{p_1,p_2\in\{0,1\} \atop p_1\geq p_2} \bra{\mu}\mathcal{T}_{0,p_1}^{(N)}(x)\bigdot{\mathcal{T}}_{p_1-p_2,1-p_2}^{(N)}(z) \ket{\nu} R_{xz}(0,p_1;p_1-p_2,p_2) R_{z/x}(p_2,1-p_2;0,1)\\
         =\begin{tikzpicture}[baseline={([yshift=-.5ex]current bounding box.center)},scale=0.8]
			\draw[lightgray,line width=1.5pt,->] (8,0) -- (1,0) -- (0,0.5) -- (1,1) -- (6,1) -- (8,3);
			\draw[lightgray,line width=1.5pt,->] (8,0) -- (7.5,0);
			\draw[lightgray,line width=1.5pt,->] (8,1) -- (7,1) -- (6,2) -- (1,2) -- (0,2.5) -- (1,3) -- (7,3) -- (8,2);
			\draw[lightgray,line width=1.5pt,->] (8,1) -- (7.5,1);
			\draw[blue,fill=blue] (0,0.5) circle (0.1cm);
			\draw[blue,fill=blue] (0,2.5) circle (0.1cm);
			\foreach \x in {2,...,5}
			{\draw[lightgray,line width=1.5pt,->] (\x,-0.5) -- (\x,3.5);
			\draw[black,fill=black] (\x,3) circle (0.1cm);}
			\node[right] at (8,0) {$0 \leftarrow x$};
			\node[right] at (8,1) {$0 \leftarrow z$};
			\node[right] at (8,2) {$1 \rightarrow z^{-1}$};
			\node[right] at (8,3) {$0 \rightarrow x^{-1}$};
            \node[above] at (2,3.5) {$\eta_1^\nu$};
            \node[above] at (3,3.5) {$\cdots$};
            \node[above] at (4,3.5) {$\cdots$};
            \node[above] at (5,3.5) {$\eta_N^\nu$};
            \node[below] at (2,-0.5) {$\eta_1^\mu$};
            \node[below] at (3,-0.5) {$\cdots$};
            \node[below] at (4,-0.5) {$\cdots$};
            \node[below] at (5,-0.5) {$\eta_N^\mu$};
		\end{tikzpicture} =: f^{(N)} (x,z).
     \end{multline}
    Due to condition \eqref{AB operator commutation condition} and Proposition \ref{prop: AB limit condition}, the limit of \eqref{eq:AB commutation proof 1} is 
    \begin{equation}
        \label{eq:AB commutation proof 2}
        \lim_{N\to\infty} f^{(N)}(x,z) = \frac{x-z}{x-qz}\bra{\mu} A(x)\bigdot{B}(z) \ket{\nu}
    \end{equation}
    We may manipulate the diagram of \eqref{eq:AB commutation proof 1} to obtain an exchange relation in a similar way to the proof of Proposition \ref{A commutation prop}.

    By following the same steps as in the proof of Proposition \ref{A commutation prop}, we may arrive at the following diagram which is equal to \eqref{eq:AB commutation proof 1} as a partition function
    \begin{equation}
        \label{eq:AB commutation proof 3}
        f^{(N)}(x,z) = \begin{tikzpicture}[baseline={([yshift=-.5ex]current bounding box.center)},scale=0.8]
		\draw[lightgray,line width=1.5pt,->] (7,0) -- (6.5,0) -- (4.5,2) -- (0.5,2) -- (0,2.5) -- (0.5,3) -- (7,3);
		\draw[lightgray,line width=1.5pt,->] (7,0) -- (6.65,0);
		\draw[lightgray,line width=1.5pt,->] (7,1) -- (6.5,1) -- (5.5,0) -- (0.5,0) -- (0,0.5) -- (0.5,1) -- (4.5,1) -- (5.5,2) -- (7,2);
		\draw[lightgray,line width=1.5pt,->] (7,1) -- (6.65,1);
		\draw[blue,fill=blue] (0,0.5) circle (0.1cm);
		\draw[blue,fill=blue] (0,2.5) circle (0.1cm);
		\foreach \x in {1,...,4}
		{\draw[lightgray,line width=1.5pt,->] (\x,-0.5) -- (\x,3.5);
        \draw[black,fill=black] (\x,1) circle (0.1cm);}
		\node[right] at (7,0) {$0 \leftarrow x$};
		\node[right] at (7,1) {$0 \leftarrow z$};
		\node[right] at (7,2) {$1 \rightarrow z^{-1}$};
		\node[right] at (7,3) {$0 \rightarrow x^{-1}$};
        \node[above] at (1,3.5) {$\eta_1^\nu$};
            \node[above] at (2,3.5) {$\cdots$};
            \node[above] at (3,3.5) {$\cdots$};
            \node[above] at (4,3.5) {$\eta_N^\nu$};
            \node[below] at (1,-0.5) {$\eta_1^\mu$};
            \node[below] at (2,-0.5) {$\cdots$};
            \node[below] at (3,-0.5) {$\cdots$};
            \node[below] at (4,-0.5) {$\eta_N^\mu$};
	\end{tikzpicture}
    = \frac{1-xz}{1-qxz} \begin{tikzpicture}[baseline={([yshift=-.5ex]current bounding box.center)},scale=0.8]
		\draw[lightgray,line width=1.5pt,->] (5,2) -- (0.5,2) -- (0,2.5) -- (0.5,3) -- (5,3);
		\draw[lightgray,line width=1.5pt,->] (5,2) -- (4.5,2);
		\draw[lightgray,line width=1.5pt,->] (5,0) -- (4.5,0) -- (0.5,0) -- (0,0.5) -- (0.5,1) -- (5,1);
		\draw[lightgray,line width=1.5pt,->] (5,0) -- (4.5,0);
		\draw[blue,fill=blue] (0,0.5) circle (0.1cm);
		\draw[blue,fill=blue] (0,2.5) circle (0.1cm);
		\foreach \x in {1,...,4}
		{\draw[lightgray,line width=1.5pt,->] (\x,-0.5) -- (\x,3.5);
        \draw[black,fill=black] (\x,1) circle (0.1cm);}
		\node[right] at (5,0) {$0 \leftarrow z$};
		\node[right] at (5,1) {$1 \rightarrow z^{-1}$};
		\node[right] at (5,2) {$0 \leftarrow x$};
		\node[right] at (5,3) {$0 \rightarrow x^{-1}$};
        \node[above] at (1,3.5) {$\eta_1^\nu$};
            \node[above] at (2,3.5) {$\cdots$};
            \node[above] at (3,3.5) {$\cdots$};
            \node[above] at (4,3.5) {$\eta_N^\nu$};
            \node[below] at (1,-0.5) {$\eta_1^\mu$};
            \node[below] at (2,-0.5) {$\cdots$};
            \node[below] at (3,-0.5) {$\cdots$};
            \node[below] at (4,-0.5) {$\eta_N^\mu$};
	\end{tikzpicture}.
    \end{equation}
    The last equality follows from noticing that the intertwining vertices are both frozen by the boundary conditions, so they may be evaluated as a factor and removed from the diagram. The limit of the right diagram yields
    \[\lim_{N\to\infty} f^{(N)}(x,z) = \frac{1-xz}{1-qxz} \bra{\mu} \bigdot{B}(z) A(x) \ket{\nu},\]
    which can be combined with \eqref{eq:AB commutation proof 2} to give the result.
\end{proof}

\subsection{Multi-parameter symmetric functions}
We will now define a partition function which will be central to much the remainder of this work.
\begin{defn}
	\label{G partition function defn}
	Fix two alphabets $(x_1,\dots,x_L)$ and $(z_1,\dots,z_M)$, and configurations $\mu,\nu\in\mathbb{W}$. We define 
	\begin{align}
		\label{G partition function defn A eq}
		G_{\nu/\mu} (x_1,\dots,x_L|\yalph) & = \bra{\mu}A(x_1|\yalph) \cdots A(x_L|\yalph) \ket{\nu},\\
        \label{F partition function defn B eq}
		F_{\mu/\nu} (z_1,\dots,z_M|\yalph) & = \bra{\mu}\bigdot{B}(z_1|\yalph) \cdots \bigdot{B}(z_M|\yalph) \ket{\nu}.
	\end{align}
	The functions \eqref{G partition function defn A eq} and \eqref{F partition function defn B eq} can be represented diagrammatically by stacking double-row operators \eqref{Row-operator A defn} and \eqref{Row-operator B defn} appropriately. We find
	\begin{align}
		\label{G partition function defn picture}
		G_{\nu/\mu} (x_1,\dots,x_L|\yalph) & = 
		\begin{tikzpicture}[baseline={([yshift=-.5ex]current bounding box.center)},scale=0.8]
			\draw[lightgray,line width=1.5pt,->] (8,0) -- (1,0) -- (0,0.5) -- (1,1) -- (8,1);
			\draw[lightgray,line width=1.5pt,->] (8,0) -- (7.5,0);
			\draw[lightgray,line width=1.5pt,->] (8,2) -- (1,2) -- (0,2.5) -- (1,3) -- (8,3);
			\draw[lightgray,line width=1.5pt,->] (8,2) -- (7.5,2);
			\draw[lightgray,line width=1.5pt,->] (8,4) -- (1,4) -- (0,4.5) -- (1,5) -- (8,5);
			\draw[lightgray,line width=1.5pt,->] (8,4) -- (7.5,4);
			\draw[lightgray,line width=1.5pt,->] (8,6) -- (1,6) -- (0,6.5) -- (1,7) -- (8,7);
			\draw[lightgray,line width=1.5pt,->] (8,6) -- (7.5,6);
			\draw[blue,fill=blue] (0,0.5) circle (0.1cm);
			\draw[blue,fill=blue] (0,2.5) circle (0.1cm);
			\draw[blue,fill=blue] (0,4.5) circle (0.1cm);
			\draw[blue,fill=blue] (0,6.5) circle (0.1cm);
			\foreach \x in {2,...,7}
			{\draw[lightgray,line width=1.5pt,->] (\x,-0.5) -- (\x,7.5);}
			\node[right] at (8,0) {$0 \leftarrow x_1$};
			\node[right] at (8,1) {$0 \rightarrow x_1^{-1}$};
			\node[right] at (8,2.5) {$\vdots$};
			\node[right] at (8,4.5) {$\vdots$};
			\node[right] at (8,6) {$0 \leftarrow x_L$};
			\node[right] at (8,7) {$0 \rightarrow x_L^{-1}$};
			\node[above] at (2,7.5) {$\eta^\nu_1$};
			\node[above] at (3,7.5) {$\eta^\nu_2$};
			\node[above] at (4,7.5) {$\eta^\nu_3$};
			\node[above] at (5,7.5) {$\cdots$};
			\node[above] at (6,7.5) {$\cdots$};
			\node[below] at (2,-0.5) {$\eta^\mu_1$};
			\node[below] at (3,-0.5) {$\eta^\mu_2$};
			\node[below] at (4,-0.5) {$\eta^\mu_3$};
			\node[below] at (5,-0.5) {$\cdots$};
			\node[below] at (6,-0.5) {$\cdots$};
			\node[below] at (7,-0.5) {$\cdots$};
			\node[below] at (2,-1.2) {$\uparrow$}; 
			\node[below] at (3,-1.2) {$\uparrow$};
			\node[below] at (4,-1.2) {$\uparrow$};  
			\node[below] at (2,-1.8) {$y_1$}; 
			\node[below] at (3,-1.8) {$y_2$};
			\node[below] at (4,-1.8) {$y_3$}; 
		\end{tikzpicture},
	\end{align}
	and
    \begin{align}
		\label{F partition function defn picture}
		F_{\mu/\nu} (z_1,\dots,z_M|\yalph) & = 
		\begin{tikzpicture}[baseline={([yshift=-.5ex]current bounding box.center)},scale=0.8]
			\draw[lightgray,line width=1.5pt,->] (8,0) -- (1,0) -- (0,0.5) -- (1,1) -- (8,1);
			\draw[lightgray,line width=1.5pt,->] (8,0) -- (7.5,0);
			\draw[lightgray,line width=1.5pt,->] (8,2) -- (1,2) -- (0,2.5) -- (1,3) -- (8,3);
			\draw[lightgray,line width=1.5pt,->] (8,2) -- (7.5,2);
			\draw[lightgray,line width=1.5pt,->] (8,4) -- (1,4) -- (0,4.5) -- (1,5) -- (8,5);
			\draw[lightgray,line width=1.5pt,->] (8,4) -- (7.5,4);
			\draw[lightgray,line width=1.5pt,->] (8,6) -- (1,6) -- (0,6.5) -- (1,7) -- (8,7);
			\draw[lightgray,line width=1.5pt,->] (8,6) -- (7.5,6);
			\draw[blue,fill=blue] (0,0.5) circle (0.1cm);
			\draw[blue,fill=blue] (0,2.5) circle (0.1cm);
			\draw[blue,fill=blue] (0,4.5) circle (0.1cm);
			\draw[blue,fill=blue] (0,6.5) circle (0.1cm);
			\foreach \x in {2,...,7}
			{\draw[lightgray,line width=1.5pt,->] (\x,-0.5) -- (\x,7.5);
            \draw[black,fill=black] (\x,1) circle (0.1cm);
            \draw[black,fill=black] (\x,3) circle (0.1cm);
            \draw[black,fill=black] (\x,5) circle (0.1cm);
            \draw[black,fill=black] (\x,7) circle (0.1cm);}
			\node[right] at (8,0) {$0 \leftarrow z_1$};
			\node[right] at (8,1) {$1 \rightarrow z_1^{-1}$};
			\node[right] at (8,2.5) {$\vdots$};
			\node[right] at (8,4.5) {$\vdots$};
			\node[right] at (8,6) {$0 \leftarrow z_M$};
			\node[right] at (8,7) {$1 \rightarrow z_M^{-1}$};
			\node[above] at (2,7.5) {$\eta^\nu_1$};
			\node[above] at (3,7.5) {$\eta^\nu_2$};
			\node[above] at (4,7.5) {$\eta^\nu_3$};
			\node[above] at (5,7.5) {$\cdots$};
			\node[above] at (6,7.5) {$\cdots$};
			\node[below] at (2,-0.5) {$\eta^\mu_1$};
			\node[below] at (3,-0.5) {$\eta^\mu_2$};
			\node[below] at (4,-0.5) {$\eta^\mu_3$};
			\node[below] at (5,-0.5) {$\cdots$};
			\node[below] at (6,-0.5) {$\cdots$};
			\node[below] at (7,-0.5) {$\cdots$};
			\node[below] at (2,-1.2) {$\uparrow$}; 
			\node[below] at (3,-1.2) {$\uparrow$};
			\node[below] at (4,-1.2) {$\uparrow$};  
			\node[below] at (2,-1.8) {$y_1$}; 
			\node[below] at (3,-1.8) {$y_2$};
			\node[below] at (4,-1.8) {$y_3$}; 
		\end{tikzpicture}.
	\end{align}
 
 We will also, where convenient, omit the family of parameters $\yalph$ from our notation. 
\end{defn}
The primary focus for the remainder of this work will be the partition function depicted in \eqref{G partition function defn picture}. This will ultimately be shown to reduce to describing the behaviour of the ASEP on the half-line with generic open boundary conditions.

In many cases we will be interested in the partition function \eqref{G partition function defn picture} whose bottom state is empty, so that $\mu = \emptyset$. While for the partition function \eqref{F partition function defn picture} we will often be interested in cases when the top state is empty, so that $\nu=\emptyset$. In such cases we will write 
\begin{align*}
G_{\nu/\emptyset}(x_1,\dots,x_L|\yalph) & =: G_\nu(x_1,\dots,x_L) \\
F_{\mu/\emptyset}(z_1,\dots,z_M|\yalph) & =: F_\mu(z_1,\dots,z_M) \\
\end{align*}
\begin{cor}[of Propositions \ref{A commutation prop} and \ref{B commutation prop}]
	\label{G symmetric cor}
	Fix $\mu,\nu\in\mathbb{W}$. Given parameters $x_1,\dots,x_L\in\mathbb{C}$ and $\yalph\in\mathbb{C}^\mathbb{N}$ all satisfying the conditions \eqref{A operator commutation condition}, the partition function $G_{\nu/\mu}(x_1,\dots,x_L|\yalph)$ is symmetric under permuting its $x$-alphabet. Given $z_1,\dots,z_M\in\mathbb{C}$, the partition function $F_{\mu/\nu}(z_1,\dots,z_M|\yalph)$ is symmetric in permuting its $z$-alphabet. 
\end{cor}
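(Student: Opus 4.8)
The plan is to reduce the statement to the pairwise commutation relations already established in Propositions \ref{A commutation prop} and \ref{B commutation prop}, exploiting the fact that the symmetric group $S_L$ is generated by adjacent transpositions. First I would recall the definitions \eqref{G partition function defn A eq} and \eqref{F partition function defn B eq}, which present $G_{\nu/\mu}$ and $F_{\mu/\nu}$ as matrix elements of the ordered operator products $A(x_1)\cdots A(x_L)$ and $\bigdot{B}(z_1)\cdots\bigdot{B}(z_M)$. Since a matrix element is automatically invariant under any permutation of the parameters that leaves the underlying operator product unchanged, it suffices to establish permutation-invariance at the level of these products.

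For the $A$-product, Proposition \ref{A commutation prop} supplies $A(x_i)A(x_{i+1}) = A(x_{i+1})A(x_i)$ for any adjacent pair, provided the hypothesis \eqref{A operator commutation condition} holds for $(x_i,x_{i+1})$; this is guaranteed by the standing assumption that all parameters satisfy \eqref{A operator commutation condition}. Applying this relation in the $i$-th slot realizes the adjacent transposition $s_i=(i,i+1)$ on the ordered list $(x_1,\dots,x_L)$ while leaving the operator product, and hence every matrix element, invariant. Because the $s_i$ generate $S_L$, iterating gives $A(x_1)\cdots A(x_L) = A(x_{\sigma(1)})\cdots A(x_{\sigma(L)})$ for every $\sigma\in S_L$; sandwiching between $\bra{\mu}$ and $\ket{\nu}$ then yields the desired symmetry of $G_{\nu/\mu}$. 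The one bookkeeping point worth isolating is that every adjacent swap occurring in this reduction exchanges two of the original parameters $x_a,x_b$, so imposing \eqref{A operator commutation condition} on all pairs $a\neq b$ is exactly what the argument consumes.

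The argument for $F_{\mu/\nu}$ is identical in structure, now invoking Proposition \ref{B commutation prop} to commute adjacent factors $\bigdot{B}(z_i)\bigdot{B}(z_{i+1})$; here the situation is in fact cleaner, since the finite-column identity \eqref{eq:Bcommute finite} holds exactly for all $N\geq\tau$ with no analytic constraint on the $z_j$, so no convergence condition analogous to \eqref{A operator commutation condition} is needed and the $z$-symmetry of $F_{\mu/\nu}$ follows directly. I do not anticipate any genuine obstacle: the content is entirely carried by the two commutation propositions, and the only step requiring a moment's care is verifying that the pairwise hypotheses persist throughout the generation of $S_L$ by adjacent transpositions, which the remark above settles.
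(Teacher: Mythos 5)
Your proposal is correct and coincides with the paper's intended argument: the corollary is exactly the observation that Propositions \ref{A commutation prop} and \ref{B commutation prop} let one swap adjacent factors in the operator products defining $G_{\nu/\mu}$ and $F_{\mu/\nu}$, and adjacent transpositions generate the full symmetric group. Your two bookkeeping remarks — that condition \eqref{A operator commutation condition} must hold for every pair of the $x$-parameters, and that the $\bigdot{B}$-commutation from \eqref{eq:Bcommute finite} requires no analytic hypothesis on the $z_j$ — are both accurate and consistent with how the corollary is stated in the paper.
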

\begin{prop}[Branching relations]
    \label{branching relations prop}
	The partition functions from Definition \ref{G partition function defn} obey the branching relations
	\begin{align}
        \label{G branching rel}
		G_{\nu/\mu}(x_1,\dots,x_{L+M}) & = \sum_{\kappa} G_{\nu/\kappa}(x_{M+1},\dots,x_{L+M}) G_{\kappa/\mu}(x_{1},\dots,x_{M}), \\
		F_{\mu/\nu}(z_1,\dots,z_{L+M}) & = \sum_{\lambda} F_{\lambda/\nu}(z_{M+1},\dots,z_{L+M}) F_{\mu/\lambda}(z_{1},\dots,z_{M}).
	\end{align}
\end{prop}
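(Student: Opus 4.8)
The plan is to derive both identities directly from the operator presentations \eqref{G partition function defn A eq} and \eqref{F partition function defn B eq} by inserting a resolution of the identity into the product of double-row operators. Since the inner product on $\Span\mathbb{W}$ is orthonormal, $\bra{\mu}\ket{\nu}=\delta_{\mu,\nu}$, the identity on this space admits the resolution $\mathrm{id}=\sum_{\kappa\in\mathbb{W}}\ket{\kappa}\bra{\kappa}$. All the algebraic content of the proposition is the observation that cutting a product of operators with this resolution factorizes a matrix element into a sum of products of matrix elements; graphically, this is nothing more than slicing the lattice picture \eqref{G partition function defn picture} (respectively \eqref{F partition function defn picture}) horizontally between the $M$-th and $(M+1)$-th double-rows and summing over the configuration $\kappa\in\mathbb{W}$ carried by the vertical edges crossing the cut.

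First I would treat the $G$ case. Writing $G_{\nu/\mu}(x_1,\dots,x_{L+M})=\bra{\mu}A(x_1)\cdots A(x_{L+M})\ket{\nu}$ and grouping the first $M$ and last $L$ factors, insertion of $\mathrm{id}=\sum_\kappa\ket{\kappa}\bra{\kappa}$ between $A(x_M)$ and $A(x_{M+1})$ gives
\[
G_{\nu/\mu}(x_1,\dots,x_{L+M})
=\sum_{\kappa\in\mathbb{W}}\bra{\mu}A(x_1)\cdots A(x_M)\ket{\kappa}\,
\bra{\kappa}A(x_{M+1})\cdots A(x_{L+M})\ket{\nu}.
\]
Each factor is again of the form \eqref{G partition function defn A eq}, namely $\bra{\mu}A(x_1)\cdots A(x_M)\ket{\kappa}=G_{\kappa/\mu}(x_1,\dots,x_M)$ and $\bra{\kappa}A(x_{M+1})\cdots A(x_{L+M})\ket{\nu}=G_{\nu/\kappa}(x_{M+1},\dots,x_{L+M})$; since these are scalars their product may be reordered to match \eqref{G branching rel}. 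The $F$ relation is entirely analogous: starting from $F_{\mu/\nu}(z_1,\dots,z_{L+M})=\bra{\mu}\bigdot{B}(z_1)\cdots\bigdot{B}(z_{L+M})\ket{\nu}$, the same insertion produces $\sum_{\lambda}F_{\mu/\lambda}(z_1,\dots,z_M)\,F_{\lambda/\nu}(z_{M+1},\dots,z_{L+M})$, as required.

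I expect the only genuine point to verify is the legitimacy of the insertion in the infinite-dimensional space $\Span\mathbb{W}$, since the intermediate sum over $\kappa$ (respectively $\lambda$) is in general infinite: a single double-row operator already has infinite support, as a path created at the boundary may turn upward at an arbitrarily distant column, so the naive hope that only finitely many $\kappa$ contribute is false. The convergence of the intermediate sum, together with the interchange of this sum with the action of the remaining operators, should be controlled by the same geometric estimates that underlie Propositions \ref{prop: AA limit condition} and \ref{prop: AB limit condition} under the hypothesis \eqref{A operator commutation condition}. Concretely, I would run the factorization first for the finite-column transfer matrices $\mathcal{T}^{(N)}$ and $\bigdot{\mathcal{T}}^{(N)}$, where the cut state ranges over the finitely many occupations of columns $1,\dots,N$ so that the resolution of the identity is a finite sum and the factorization is immediate, and then pass to $N\to\infty$ using the limit arguments already established to recover the relations for the infinite-column operators $A$ and $\bigdot{B}$. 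This reduction to the finite-column setting, and the dominated-convergence-type bound needed to move the limit through the $\kappa$-sum, is the step that requires care and is where the convergence hypotheses genuinely enter.
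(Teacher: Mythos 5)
Your proof is correct and takes essentially the same approach as the paper: the paper's entire proof consists of the observation that one inserts a sum over a complete set of states between the double-row operators in \eqref{G partition function defn A eq} and \eqref{F partition function defn B eq}. Your additional discussion of the convergence of the infinite intermediate sum (via reduction to the finite-column operators $\mathcal{T}^{(N)}$, $\bigdot{\mathcal{T}}^{(N)}$) is a refinement that the paper silently omits, but it does not alter the argument.
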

\begin{proof}
	This can be seen by inserting a sum over a complete set of states between the double-row operators in \eqref{G partition function defn A eq} and \eqref{F partition function defn B eq}.
\end{proof}
\begin{prop}
    \label{prop: G stocahsticity}
    Fix a configuration $\mu\in\mathbb{W}$. The partition function \eqref{G partition function defn A eq} obeys the sum-to-unity property
    \begin{equation}
        \sum_{\nu} G_{\nu/\mu} (x_1,\dots,x_L) = 1.
    \end{equation}
\end{prop}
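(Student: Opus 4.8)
The plan is to peel the operators $A(x_i)$ off one at a time, reducing the whole identity to the sum-to-unity of a \emph{single} double-row operator. Introduce the (formal) vector $\ket{\mathbf 1} := \sum_{\nu \in \mathbb{W}} \ket{\nu}$, so that by \eqref{G partition function defn A eq}
\begin{equation*}
\sum_{\nu} G_{\nu/\mu}(x_1,\dots,x_L) = \bra{\mu} A(x_1) \cdots A(x_L) \ket{\mathbf 1}.
\end{equation*}
The single-operator sum-to-unity \eqref{eq:sum-to-unity kernel}, that is $\sum_{\nu} \bra{\kappa} A(x) \ket{\nu} = 1$ for every $\kappa \in \mathbb{W}$, is exactly the statement $A(x)\ket{\mathbf 1} = \ket{\mathbf 1}$. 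Applying this to $A(x_L)$, then $A(x_{L-1})$, and so on down to $A(x_1)$, each factor acts trivially on $\ket{\mathbf 1}$ and we are left with $\bra{\mu}\ket{\mathbf 1} = \sum_\nu \bra{\mu}\ket{\nu} = \sum_\nu \delta_{\mu,\nu} = 1$, which is the claim. Equivalently, one may induct on $L$ via the branching relation of Proposition \ref{branching relations prop}: taking its first line with a single extra variable and summing over $\nu$, the inductive hypothesis $\sum_\nu G_{\nu/\kappa} = 1$ collapses the right-hand side to $\sum_\kappa G_{\kappa/\mu}(x_1) = 1$, reducing again to the base case.

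It remains to justify the base case $A(x)\ket{\mathbf 1} = \ket{\mathbf 1}$, and this is where the real work sits. I would prove it on the truncated lattice of $N$ columns and then send $N \to \infty$. The double-row network \eqref{Row-operator A defn} is acyclic: vertical edges point upward, the lower horizontal line flows left into the boundary site and the upper horizontal line flows right out of it, so there is no directed cycle. Giving every bulk vertex its weight from \eqref{Stochastic 6VM weights table} (in the bottom row this is after the $90^{\circ}$ rotation, which does not change the sum of a vertex's weight over its outgoing edges) and the boundary vertex its weight from \eqref{Stochastic boundary K-weight table}, each vertex is stochastic in the sense of Propositions \ref{prop:stoch} and \ref{prop: stochasticity boundary}: for fixed incoming edges, summing its weight over the outgoing edges gives $1$. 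Summing the product of weights over all internal edges and all sink edges in reverse topological order then telescopes to $1$, each local summation contributing a factor of $1$.

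The one subtlety — and the main obstacle — is that not every sink edge is actually free: the top-right horizontal output is pinned to the value $0$ by the boundary condition ``$0 \rightarrow x^{-1}$'' in \eqref{Row-operator A defn}, rather than being summed. The configurations that are thereby omitted are precisely those in which a path created at the boundary vertex travels along the upper row all the way to horizontal infinity without ever turning upward; since $\mu$ is finite, this rightward escape is the only mechanism by which mass can leak out of the sum over $\nu$. Its total weight is $h(x)\prod_{k\ge 1}\tfrac{1-xy_k}{1-qxy_k}$, an infinite product of exactly the analytic type that is forced to vanish in the regime where $A(x)$ is a genuine Markov kernel — the single-operator counterpart of the convergence hypotheses \eqref{A operator commutation condition}. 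Under that hypothesis the escape weight vanishes as $N \to \infty$, so the pinned sink absorbs the full remaining mass and the telescoping indeed yields $\sum_\nu \bra{\kappa}A(x)\ket{\nu} = 1$. Establishing this vanishing, and justifying the interchange of the limit $N\to\infty$ with the infinite sum over $\nu$, is the crux of the proof; everything else is the routine bookkeeping of stochastic weights on an acyclic lattice.
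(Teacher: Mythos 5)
Your proposal is correct and, at its core, it is the same argument the paper gives: the paper's entire proof is the one-line appeal to vertex stochasticity (Propositions \ref{prop: stochasticity bulk} and \ref{prop: stochasticity boundary}), which is exactly your telescoping step over the acyclic double-row network. What you add --- and what the paper's proof suppresses --- is genuine: because the top-right horizontal edge is pinned to $0$ rather than summed, the telescoping leaves an escape term, so the sum-to-unity is \emph{not} an unconditional algebraic identity. Your escape weight is exactly right: already for $\mu=\emptyset$, writing $t_k := \frac{1-xy_k}{1-qxy_k}$ and using $\frac{xy_j(1-q)}{1-qxy_j} = 1-t_j$, one computes
\begin{equation*}
\sum_{\nu} A_{\emptyset\to\nu}(x) \;=\; \bigl(1-h(x)\bigr) \;+\; h(x)\sum_{j\ge 1}\Bigl[\prod_{k<j}t_k\Bigr](1-t_j) \;=\; 1 \;-\; h(x)\lim_{J\to\infty}\prod_{k=1}^{J}t_k,
\end{equation*}
so the proposition as stated requires the tail product to vanish, e.g.\ $\abs{t_k}\le\rho<1$ uniformly in $k$ --- the single-operator analogue of \eqref{A operator commutation condition} that you identify. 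This holds in the stochastic regime where the weights are genuine transition probabilities (e.g.\ homogeneous $y_k$ with $0<x,q<1$), but fails for generic complex parameters, where the identity is literally false. Two small refinements to your write-up: your closed formula $h(x)\prod_{k\ge1}t_k$ for the leak is the $\mu=\emptyset$ case --- for general finite $\mu$ the leaked mass is a sum of contributions each carrying such a tail product beyond the last occupied column, so the same uniform bound controls it --- and the interchange of the infinite sum over $\nu$ with the $N\to\infty$ limit (and with the intermediate-state sums when you peel off the operators $A(x_i)$ one at a time) is justified by non-negativity and Tonelli in precisely that regime. In short, your proof is the careful version of the paper's, and the hypothesis you isolate is genuinely needed.
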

\begin{proof}
    This follows from the stochasticity of the bulk and boundary vertices, Propositions \ref{prop: stochasticity bulk} and \ref{prop: stochasticity boundary}.
\end{proof}

\subsection{Cauchy summation identity}
In this section, we use the exchange relation \eqref{eq: AB exchangr rel} to prove an infinite summation identity of Cauchy type between the functions \eqref{G partition function defn A eq} and \eqref{F partition function defn B eq}. This identity is the hint of a deeper orthogonality theory behind these functions that we plan to explore in a future text.
\begin{thm}
	\label{skew Cauchy identity thm}
	Fix the alphabets $(x_1\dots,x_L),(z_1,\dots,z_M)$ and assume there exists $\rho>0$ such that 
	\begin{equation}
		\label{Cauchy identity condition}
		\abs{\frac{1-x_iy_k}{1-qx_iy_k}\frac{q(1-z_j/y_k)}{1-qz_j/y_k}}\leq\rho<1, \hspace{0.5cm} \abs{\frac{1-x_iy_k}{1-qx_iy_k}\frac{1-qz_jy_k}{1-z_jy_k}}\leq\rho<1,
	\end{equation}
	for all $1\leq i\leq L,1\leq j\leq M$ and $k\in\mathbb{N}$. Then the partition functions from Definition \ref{G partition function defn} satisfy the skew Cauchy identity
	\begin{multline}
		\label{skew Cauchy identity eq}
		\sum_{\kappa} G_{\kappa/\mu}(x_1,\dots,x_L) F_{\kappa/\nu}(z_1,\dots,z_M) \\  = \prod_{i=1}^M\prod_{j=1}^L\left[\frac{x_j-qz_i}{x_j-z_i}\frac{1-z_ix_j}{1-qz_ix_j}\right]\sum_{\lambda}F_{\mu/\lambda}(z_1,\dots,z_M)G_{\nu/\lambda}(x_1,\dots,x_L).
	\end{multline}
    where the sum on the left is an infinite sum over $\kappa\in\mathbb{W}$ while the sum on the right is a finite sum is over $\lambda\in\mathbb{W}$ contained within $\mu$. That is, $\lambda_i<\mu_i$ for all $i$ less than the lengths of both $\lambda$ and $\mu$.
\end{thm}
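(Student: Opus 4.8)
The plan is to recognise both sides of \eqref{skew Cauchy identity eq} as matrix elements of one and the same operator product, and to pass between them with the exchange relation \eqref{eq: AB exchangr rel}. Since $G_{\kappa/\mu}=\bra{\mu}A(x_1)\cdots A(x_L)\ket{\kappa}$ and $F_{\kappa/\nu}=\bra{\kappa}\bigdot{B}(z_1)\cdots\bigdot{B}(z_M)\ket{\nu}$ by \eqref{G partition function defn A eq}--\eqref{F partition function defn B eq}, resolving the identity $\sum_{\kappa\in\mathbb{W}}\ket{\kappa}\bra{\kappa}=\mathsf{id}$ in the orthonormal basis of $\Span\mathbb{W}$ repackages the left-hand sum as
\begin{equation*}
\sum_{\kappa} G_{\kappa/\mu}(x_1,\dots,x_L)\,F_{\kappa/\nu}(z_1,\dots,z_M)
= \bra{\mu}A(x_1)\cdots A(x_L)\,\bigdot{B}(z_1)\cdots\bigdot{B}(z_M)\ket{\nu}.
\end{equation*}
Under the hypotheses \eqref{Cauchy identity condition}, each operator is well defined as the $N\to\infty$ limit of its finite-column truncation (cf. Propositions \ref{prop: AA limit condition} and \ref{prop: AB limit condition}), and the same geometric bounds make the $\kappa$-sum absolutely convergent, so the basis resolution is simply the statement that matrix elements compose.

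Next I would transport every $\bigdot{B}(z_i)$ to the left of every $A(x_j)$. The exchange relation of Proposition \ref{AB commutation relation prop}, $A(x)\bigdot{B}(z)=\tfrac{x-qz}{x-z}\tfrac{1-xz}{1-qxz}\,\bigdot{B}(z)A(x)$, has a scalar prefactor, so commuting $\bigdot{B}(z_i)$ leftward through the block $A(x_1)\cdots A(x_L)$ accumulates exactly $\prod_{j=1}^L\tfrac{x_j-qz_i}{x_j-z_i}\tfrac{1-z_ix_j}{1-qz_ix_j}$ while preserving the internal order of the $A$'s; the internal order of the $\bigdot{B}$'s is likewise preserved, and the $A$-$A$ and $\bigdot{B}$-$\bigdot{B}$ commutativity (Propositions \ref{A commutation prop} and \ref{B commutation prop}) guarantees the procedure is unambiguous. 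Running $i=1,\dots,M$ yields
\begin{equation*}
\bra{\mu}A(x_1)\cdots A(x_L)\,\bigdot{B}(z_1)\cdots\bigdot{B}(z_M)\ket{\nu}
= \prod_{i=1}^M\prod_{j=1}^L\left[\frac{x_j-qz_i}{x_j-z_i}\frac{1-z_ix_j}{1-qz_ix_j}\right]\bra{\mu}\bigdot{B}(z_1)\cdots\bigdot{B}(z_M)\,A(x_1)\cdots A(x_L)\ket{\nu}.
\end{equation*}
Resolving the identity once more between the reordered $\bigdot{B}$- and $A$-blocks and reading off \eqref{G partition function defn A eq}--\eqref{F partition function defn B eq} produces $\sum_{\lambda}F_{\mu/\lambda}(z_1,\dots,z_M)G_{\nu/\lambda}(x_1,\dots,x_L)$, which is exactly the right-hand side of \eqref{skew Cauchy identity eq}.

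It remains to verify that this $\lambda$-sum is finite with the stated support. This reduces to a vanishing property of $F_{\mu/\lambda}$: one must show that $\bra{\mu}\bigdot{B}(z_1)\cdots\bigdot{B}(z_M)\ket{\lambda}=0$ unless $\lambda$ is contained in $\mu$ in the sense that its elements do not exceed the maximal element of $\mu$ (entrywise $\lambda_i<\mu_i$), leaving only finitely many admissible $\lambda$. I would establish this by inspecting which top states $\lambda$ receive nonzero weight in a stack of $\bigdot{B}$-rows; the empty case $\bra{\emptyset}\bigdot{B}(z)=h(z)\bra{\emptyset}$ of Proposition \ref{B left-eigenvalue prop} is the base of this support statement.

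The algebraic skeleton — resolve, exchange, resolve — is short, and the genuine work is analytic. The main obstacle I anticipate is justifying that the infinite-column operators may be manipulated as honest elements of $\mathrm{End}(\Span\mathbb{W})$: one must confirm that the matrix element $\bra{\mu}A(x_1)\cdots A(x_L)\bigdot{B}(z_1)\cdots\bigdot{B}(z_M)\ket{\nu}$ converges, that the exchange relation — established for a single pair $A(x),\bigdot{B}(z)$ — remains valid when inserted inside a convergent product, and that Fubini legitimises the double re-summation over $\kappa$ and $\lambda$. I expect all three to follow from the geometric decay furnished by \eqref{Cauchy identity condition}, exactly as in the truncation arguments of Propositions \ref{prop: AA limit condition} and \ref{prop: AB limit condition}, so that the identity can be obtained either directly for the infinite operators or first at finite $N$ and then in the limit.
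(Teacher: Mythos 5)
Your proposal is correct and follows essentially the same route as the paper's proof: write the left-hand side as $\bra{\mu}A(x_1)\cdots A(x_L)\bigdot{B}(z_1)\cdots\bigdot{B}(z_M)\ket{\nu}$, move the $\bigdot{B}$'s through the $A$'s via the exchange relation of Proposition \ref{AB commutation relation prop} to accumulate the scalar prefactor, and read off the right-hand side. The extra care you take over convergence of the infinite-column operators and the finiteness of the $\lambda$-sum is sound and goes slightly beyond what the paper spells out, but it does not constitute a different argument.
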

\begin{proof}
	We begin by writing the left-hand side of \eqref{skew Cauchy identity eq} in double-row operator notation as
	\[\sum_{\kappa} G_{\kappa/\mu}(x_1,\dots,x_L) F_{\kappa/\nu}(z_1,\dots,z_M) = \bra{\mu}A(x_1)\cdots A(x_L) \bigdot{B}(z_1)\cdots \bigdot{B}(z_M)\ket{\nu}.\]
	From here we may commute the $A$ operators through the $\bigdot{B}$ operators using Proposition \ref{AB commutation relation prop}. Each commutation will generate a multiplicative rational factor; collecting all of these, we have 
	\begin{equation}
        \label{eq: skew Cauchy proof 1}
        \bra{\mu}A(x_1)\cdots A(x_L) \bigdot{B}(z_1)\cdots \bigdot{B}(z_M)\ket{\nu} = \prod_{i=1}^M\prod_{j=1}^L\left[\frac{x_j-qz_i}{x_j-z_i}\frac{1-z_ix_j}{1-qz_ix_j}\right] \bra{\mu} \bigdot{B}(z_1)\cdots \bigdot{B}(z_M)A(x_1)\cdots A(x_L) \ket{\nu}.
    \end{equation}
	The right side of this may be recognized as the right-hand side of \eqref{skew Cauchy identity eq}
\end{proof}
\begin{cor}
	\label{Cauchy identity cor}
	With the same set of assumptions as in Theorem \ref{skew Cauchy identity thm}, one has the following summation identity 
	\begin{equation}
		\label{Cauchy identity eq}
		\sum_{\kappa} G_\kappa(x_1,\dots,x_L) F_{\kappa/\nu}(z_1,\dots,z_M) = \prod_{i=1}^M h(z_i) \prod_{i=1}^M\prod_{j=1}^L\left[\frac{x_j-qz_i}{x_j-z_i}\frac{1-z_ix_j}{1-qz_ix_j}\right]G_\nu(x_1,\dots,x_L).
	\end{equation}
\end{cor}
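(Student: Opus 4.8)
The plan is to obtain the identity as a direct specialization of the skew Cauchy identity of Theorem~\ref{skew Cauchy identity thm}, taking the bottom configuration $\mu=\varnothing$. First I would set $\mu=\varnothing$ on both sides of \eqref{skew Cauchy identity eq}. On the left-hand side the summand becomes $G_{\kappa/\varnothing}(x_1,\dots,x_L)\,F_{\kappa/\nu}(z_1,\dots,z_M)$, which equals $G_\kappa(x_1,\dots,x_L)\,F_{\kappa/\nu}(z_1,\dots,z_M)$ under the abbreviation $G_\kappa=G_{\kappa/\varnothing}$ introduced in the statement; this already reproduces the left side of \eqref{Cauchy identity eq}.

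Next I would examine the right-hand side of \eqref{skew Cauchy identity eq}. The crucial observation is that the sum over $\lambda$ runs only over subsets contained within $\mu$, so when $\mu=\varnothing$ the sole admissible choice is $\lambda=\varnothing$ and the entire sum collapses to the single term
\[
\prod_{i=1}^M\prod_{j=1}^L\left[\frac{x_j-qz_i}{x_j-z_i}\frac{1-z_ix_j}{1-qz_ix_j}\right] F_{\varnothing/\varnothing}(z_1,\dots,z_M)\,G_{\nu/\varnothing}(x_1,\dots,x_L),
\]
in which $G_{\nu/\varnothing}=G_\nu$ is already the desired factor.

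It then remains only to evaluate the scalar $F_{\varnothing/\varnothing}(z_1,\dots,z_M)=\bra{\varnothing}\bigdot{B}(z_1)\cdots\bigdot{B}(z_M)\ket{\varnothing}$. For this I would invoke Proposition~\ref{B left-eigenvalue prop}, which gives $\bra{\varnothing}\bigdot{B}(z)=h(z)\bra{\varnothing}$. Applying this relation $M$ times from the left peels off a factor $h(z_i)$ at each step and leaves $\prod_{i=1}^M h(z_i)\,\bra{\varnothing}\ket{\varnothing}=\prod_{i=1}^M h(z_i)$, using $\bra{\varnothing}\ket{\varnothing}=1$. Substituting back reproduces exactly the right-hand side of \eqref{Cauchy identity eq}, completing the argument. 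There is no genuine obstacle: the substance of the corollary lies entirely in Theorem~\ref{skew Cauchy identity thm} together with the left-eigenvalue Proposition~\ref{B left-eigenvalue prop}, and the only point demanding attention is verifying that the constraint on the summation range of $\lambda$ forces $\lambda=\varnothing$ once $\mu=\varnothing$.
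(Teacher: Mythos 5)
Your proposal is correct and follows essentially the same route as the paper: both specialize Theorem \ref{skew Cauchy identity thm} to $\mu=\varnothing$ and invoke Proposition \ref{B left-eigenvalue prop} to produce the factor $\prod_{i=1}^M h(z_i)$. The only cosmetic difference is that the paper applies the left-eigenvalue relation at the operator level inside \eqref{eq: skew Cauchy proof 1}, whereas you collapse the $\lambda$-sum to $\lambda=\varnothing$ and evaluate $F_{\varnothing/\varnothing}$ explicitly; these are the same computation.
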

\begin{proof}
    This is the $\mu=\emptyset$ case of Theorem \ref{skew Cauchy identity thm}. Indeed, following the same steps as in the previous proof we use the fact, due to Proposition \ref{B left-eigenvalue prop}, that 
    \[\bra{\emptyset}\bigdot{B}(z_1) \cdots \bigdot{B}(z_M) = \prod_{i=1}^M h(z_i) \bra{\emptyset}\]
    in \eqref{eq: skew Cauchy proof 1}. This recovers precisely \eqref{Cauchy identity eq}.
\end{proof}
\begin{remark}
    A further specification of Corollary \ref{Cauchy identity cor} appears later in the text in Section \ref{sec: Cauchy identity Pfaffian} where $\nu=\emptyset$.
\end{remark}

\begin{remark}
    Subject to the positivity, Propositions \ref{branching relations prop} and \ref{prop: G stocahsticity} motivate the understanding of $G_{\nu/\mu}$ as the propagator of a discrete-time Markov process from initial state $\mu$ to state $\nu$. In order to define an appropriate probability measure, the spectral parameters $(x_1,\dots,x_L)$ and $Y=(y_1,y_2,\dots)$ need to be fixed so that the weights from table \eqref{Stochastic 6VM weights table} and \eqref{Stochastic boundary K-weight table} are all real, non-negative and less than or equal to 1.
    
    We may also regard the symmetric function $F$ as an observable of this Markov process; indeed, the left-side of \eqref{Cauchy identity eq} can be interpreted as the formal definition of the expectation value of the observable $F_{\kappa/\nu}$ with respect to the discrete measure $G_\kappa$. Provided that the right-hand side of \eqref{Cauchy identity eq} can be evaluated explicitly, Corollary \ref{Cauchy identity cor} then provides a systematic method for evaluating the expectation value of the observable $F_{\kappa/\nu}$.
\end{remark}

\subsection{Recursion relations}
In this section we demonstrate a series of recursion relations for the symmetric function \eqref{G partition function defn A eq} which will prove important in deriving a formula for the function. These relations follow from the unitary of the $R$ and $K$-matrices (Propositions \ref{R-matrix unitrarity prop} and \ref{K-matrix unitrarity prop}) and the $R$-matrix factorization (Proposition \ref{lm:Rfactor}). 
\begin{prop}
	\label{Row operator identities}
	We have the following relations for the row-operator \eqref{Row-operator A defn}:
	\begin{align}
		A(0) & = 0, \label{A(0) row-operator identity}\\
		A(\pm 1) & = \mathsf{id}, \label{A(1) row-operator identity}
	\end{align}
	where $\mathsf{id}$ is the identity within $\mathrm{End}(\Span\mathbb{W})$. Further, for fixed $x\in \mathbb{C}$, if there exists $\rho>0$ such that
	\begin{equation}
		\label{A operator recur condition}
		\abs{\frac{1-xy_k}{1-qxy_k}\frac{q(xy_k-1)}{xy_k-q}}\leq\rho<1
	\end{equation}
	for all $j\in\mathbb{N}$, then it holds that
	\begin{equation}
		A(x)A\left(x^{-1}\right) = \mathsf{id}. \label{A(x)A(1/x) row-operator identity}
	\end{equation}
\end{prop}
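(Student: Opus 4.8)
The three statements are of increasing difficulty, and I would treat them separately, drawing in each case on the factorization and unitarity relations collected above.

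The identity $A(0)=0$ I would establish by direct inspection of the vertex weights at $x=0$. At this value both the rotated bottom-row vertices and the top-row vertices carry spectral ratio $z=0$, at which the weight $z(1-q)/(1-qz)$ vanishes. In the top row this is the weight of the vertex in which a horizontal path turns upward onto a vertical line, while in the rotated bottom row it is the weight of the vertex in which a vertical path turns onto the horizontal line. Because every path in the model moves up and to the right, these are the only mechanisms by which a path can be deposited on either horizontal line of the double row; since for $A$ the external horizontal edges are both empty, both horizontal lines must be vacant in any configuration of nonzero weight. Consequently the boundary vertex is forced into its empty state, whose weight is $1-h(0)=0$ because $h(0)=1$ from \eqref{eq:hdef}. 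Hence every surviving configuration carries a vanishing factor and $A(0)=0$.

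For $A(\pm 1)=\mathsf{id}$ the key observation is that $h(\pm1)=0$, so by \eqref{Stochastic boundary K-weight table} the boundary matrix degenerates to $K(\pm1)=\mathsf{id}$: the boundary reflects each incoming state to itself. At $x=\pm1$ the bottom-row and top-row vertices on a given vertical line $j$ carry the reciprocal ratios $x/y_j$ and $xy_j$ (whose product is $x^2=1$). With the boundary acting trivially, the bottom and top lines are joined into a single bent line, and the innermost column against the boundary is precisely a vertex paired with its reciprocal-ratio reverse closed by a cap --- the configuration annihilated by $R$-matrix unitarity (Proposition \ref{R-matrix unitrarity prop}). Applying this relation collapses that column, forcing $\eta^\mu_1=\eta^\nu_1$ and exposing the next column; iterating column by column (only finitely many being non-trivial) leaves the vertical lines threading straight through, so that $\bra{\mu}A(\pm1)\ket{\nu}=\prod_{j}\delta_{\eta^\mu_j,\eta^\nu_j}=\delta_{\mu,\nu}$.

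The relation $A(x)A(x^{-1})=\mathsf{id}$ is the substantive one, and I would prove it through the finite-column machinery. First I note that condition \eqref{A operator recur condition} is exactly \eqref{eq: AA limit condition} specialised to $x_1=x$, $x_2=x^{-1}$, so Proposition \ref{prop: AA limit condition} represents $\bra{\mu}A(x)A(x^{-1})\ket{\nu}$ as the $N\to\infty$ limit of the finite-$N$ partition function with the intertwiner $R_{xx^{-1}}=R_1$ appended. By the factorization of Proposition \ref{lm:Rfactor}, this intertwiner degenerates into a cap joining the two middle rows of the resulting four-row stack; on each vertical line these rows carry the reciprocal ratios $xy_j$ and $x^{-1}/y_j$. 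I would then run the same zipper as above to unzip the middle two rows column by column via $R$-unitarity, which leaves the two outer rows joined at the left by the composite boundary $K(x)K(x^{-1})$. This composite is trivial by $K$-matrix unitarity (Proposition \ref{K-matrix unitrarity prop}), so the two outer rows --- which again carry reciprocal ratios $x/y_j$ and $x^{-1}y_j$ --- form a trivial-boundary double row that collapses by a second application of the zipper, exactly as in the $A(\pm1)$ case. The net result is $\delta_{\mu,\nu}$ at every $N\geq\tau=\max\{\mu_1,\nu_1\}$, whence the limit yields $A(x)A(x^{-1})=\mathsf{id}$.

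The main obstacle lies entirely in this last part: one must arrange the four-row stack so that the two successive unzippings (first of the middle pair, then of the outer pair) and the intervening use of $K$-unitarity are geometrically consistent, and one must invoke Proposition \ref{prop: AA limit condition} to make the infinite product $A(x)A(x^{-1})$ well defined in the first place --- this is the sole role of the convergence hypothesis \eqref{A operator recur condition}. Each individual algebraic collapse is a direct application of the quoted factorization and unitarity relations.
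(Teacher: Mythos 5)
Your proofs of \eqref{A(1) row-operator identity} and \eqref{A(x)A(1/x) row-operator identity} are correct and essentially identical to the paper's: for $x=\pm1$ the boundary trivializes ($h(\pm1)=0$, so $K(\pm1)=\mathsf{id}$) and repeated $R$-unitarity collapses the double row; for the inversion relation the paper likewise starts from the finite-$N$ stack of Proposition \ref{prop: AA limit condition} with $x_1=x$, $x_2=x^{-1}$ (whose hypothesis is exactly \eqref{A operator recur condition}), uses the factorization of Proposition \ref{lm:Rfactor} to turn the intertwiner into a cap, unzips the middle pair of rows by $R$-unitarity, cancels the composite boundary $K(x)K(x^{-1})$ by Proposition \ref{K-matrix unitrarity prop}, and then unzips the outer pair, obtaining $\delta_{\mu,\nu}$ at every finite $N\geq\tau$ before passing to the limit.

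Your argument for \eqref{A(0) row-operator identity}, however, contains a wrong step. You assert that at $x=0$ the vanishing of the weight $z(1-q)/(1-qz)$ eliminates every mechanism by which a path can be deposited on either horizontal line. This is true for the bottom row, where (after the $90^\circ$ rotation) that weight governs the vertical-to-horizontal turn; but on the top row the same weight governs the opposite move, a horizontal path turning up onto a vertical line, i.e. an \emph{exit} rather than a deposit. Paths can still enter the top row at $x=0$: the vertex in which a vertical path turns right onto the horizontal line has weight $(1-q)/(1-qz)\to 1-q\neq 0$, and the boundary vertex can inject a path with weight $h(0)=1$. So "both horizontal lines must be vacant" does not follow as you argue it. The conclusion is nonetheless true and the repair is easy, using the very fact you cited in the wrong role: any path that does enter the top row can never leave it upward (that exit weight vanishes) and cannot run off to the right (only finitely many edges may be occupied, equivalently the external right edges are vacant), so it cannot terminate and no such configuration is admissible. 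The top row is therefore vacant after all, the boundary vertex sits in its empty state, and its weight $1-h(0)=0$ annihilates every configuration. For comparison, the paper finishes the other way around: since the empty boundary configuration already has weight $1-h(0)=0$, the boundary must inject, and the injected path then has nowhere to go; either route gives $A(0)=0$.
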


\begin{proof} 
        We will proceed with the proof of each identity separately. In each proof, we will consider arbitrary finite configurations $\mu,\nu\in\mathbb{W}$. 
	\begin{enumerate}[label=\textbf{(\roman*)}]
		\item \textbf{Proof of \eqref{A(0) row-operator identity}.} It is sufficient here to consider the partition function 
		\begin{equation}
			\label{A one row identity proof f}
			f(x) = \bra{\mu}A(x)\ket{\nu} = \begin{tikzpicture}[baseline={([yshift=-.5ex]current bounding box.center)},scale=0.8]
				\draw[lightgray,line width=1.5pt,->] (8,0) -- (1,0) -- (0,0.5) -- (1,1) -- (8,1);
				\draw[lightgray,line width=1.5pt,->] (8,0) -- (7.5,0);
				\draw[blue,fill=blue] (0,0.5) circle (0.1cm);
				\foreach \x in {2,...,7}
				{\draw[lightgray,line width=1.5pt,->] (\x,-0.5) -- (\x,1.5);}
				\node[right] at (8,0) {$0 \leftarrow x$};
				\node[right] at (8,1) {$0 \rightarrow x^{-1}$};
                \node[above] at (2,1.5) {$\eta^\nu_1$};
			\node[above] at (3,1.5) {$\eta^\nu_2$};
			\node[above] at (4,1.5) {$\eta^\nu_3$};
			\node[above] at (5,1.5) {$\cdots$};
			\node[above] at (6,1.5) {$\cdots$};
			\node[below] at (2,-0.5) {$\eta^\mu_1$};
			\node[below] at (3,-0.5) {$\eta^\mu_2$};
			\node[below] at (4,-0.5) {$\eta^\mu_3$};
			\node[below] at (5,-0.5) {$\cdots$};
			\node[below] at (6,-0.5) {$\cdots$};
			\node[below] at (7,-0.5) {$\cdots$};
			\end{tikzpicture},
		\end{equation}
		where we will be interested in the case $x=0$. We note here that all contributions from the weights which make up \eqref{A one row identity proof f} are non-singular at $x=0$. In this case, the vertex configuration with weight $z(1-q)/(1-qz)=0$ from \eqref{Stochastic 6VM weights table} cannot appear. This means that the lower row of \eqref{A one row identity proof f} cannot carry any occupations on any horizontal edges. 
		
		This will mean that the only possible boundary vertex configuration will be that of weight $h(x) = 1$. Since the weight $z(1-q)/(1-qz)=0$ will not appear on the upper row, each horizontal edge on the upper row must be occupied.
		
		However, this is not permitted since the boundary conditions on the right edge must be vacant. This means that there is no possible path configuration in \eqref{A one row identity proof f}. It follows that $f(0)=0$, which gives the result.
		\item \textbf{Proof of \eqref{A(1) row-operator identity}.} We will again consider \eqref{A one row identity proof f}, this time with $x=\pm1$. In this case the function $h(\pm1)=0$ so that the boundary vertex must have both entry and exits either both occupied or both vacant. Both of these configurations carry weight 1. 
		
		This is equivalent to the fact that that $K(\pm1)$ is the identity matrix. We are then left with 
		\[
		f(\pm1) =
		\begin{tikzpicture}[baseline={([yshift=-.5ex]current bounding box.center)},scale=0.8]
			\draw[lightgray,line width=1.5pt,-] (8,0) -- (1.5,0);
			\draw[lightgray,line width=1.5pt,-] (1.5,0) arc (270:90:0.5);
			\draw[lightgray,line width=1.5pt,->]	(1.5,1) -- (8,1);
			\draw[lightgray,line width=1.5pt,->] (8,0) -- (7.5,0);
			\foreach \x in {2,...,7}
			{\draw[lightgray,line width=1.5pt,->] (\x,-0.5) -- (\x,1.5);}
			\node[right] at (8,0) {$0$};
			\node[right] at (8,1) {$0$};
                \node[above] at (2,1.5) {$\eta^\nu_1$};
			\node[above] at (3,1.5) {$\eta^\nu_2$};
			\node[above] at (4,1.5) {$\eta^\nu_3$};
			\node[above] at (5,1.5) {$\cdots$};
			\node[above] at (6,1.5) {$\cdots$};
            \node[above] at (7,1.5) {$\cdots$};
			\node[below] at (2,-0.5) {$\eta^\mu_1$};
			\node[below] at (3,-0.5) {$\eta^\mu_2$};
			\node[below] at (4,-0.5) {$\eta^\mu_3$};
			\node[below] at (5,-0.5) {$\cdots$};
			\node[below] at (6,-0.5) {$\cdots$};
			\node[below] at (7,-0.5) {$\cdots$};
		\end{tikzpicture}
        =
        \begin{tikzpicture}[baseline={([yshift=-.5ex]current bounding box.center)},scale=0.8]
			\draw[lightgray,line width=1.5pt,->] (8,0) arc (270:90:0.5);
			\foreach \x in {2,...,7}
			{\draw[lightgray,line width=1.5pt,->] (\x,-0.5) -- (\x,1.5);}
			\node[right] at (8,0) {$0$};
			\node[right] at (8,1) {$0$};
                \node[above] at (2,1.5) {$\eta^\nu_1$};
			\node[above] at (3,1.5) {$\eta^\nu_2$};
			\node[above] at (4,1.5) {$\eta^\nu_3$};
			\node[above] at (5,1.5) {$\cdots$};
			\node[above] at (6,1.5) {$\cdots$};
            \node[above] at (7,1.5) {$\cdots$};
			\node[below] at (2,-0.5) {$\eta^\mu_1$};
			\node[below] at (3,-0.5) {$\eta^\mu_2$};
			\node[below] at (4,-0.5) {$\eta^\mu_3$};
			\node[below] at (5,-0.5) {$\cdots$};
			\node[below] at (6,-0.5) {$\cdots$};
			\node[below] at (7,-0.5) {$\cdots$};
		\end{tikzpicture},
		\]
        where we have made repeated use of the unitarity condition of $R$-matrices (Proposition \ref{R-matrix unitrarity prop}), to produce the final equality. From here we may conclude that $f(\pm1)=1$ which gives the result.
		
		\item \textbf{Proof of \eqref{A(x)A(1/x) row-operator identity}.} Let $\tau=\max\{\mu_1,\nu_1\}$ and let $N$ be an integer satisfying $N\geq\tau$. Then for $x\neq 0$, consider 
        \begin{equation}
            \label{eq: G-recur proof 1}
            \sum_{p\in\{0,1\}} \bra{\mu}\mathcal{T}_{0,p}^{(N)}(x)\mathcal{T}_{p,0}^{(N)}\left(x^{-1}\right)\ket{\nu} R_{x/x}(0,p,p,0) = 
        \begin{tikzpicture}[baseline={([yshift=-.5ex]current bounding box.center)},scale=0.7]
    		\draw[lightgray,line width=1.5pt,->] (7,0) -- (1,0) -- (0,0.5) -- (1,1) -- (6,1) -- (7,2);
    		\draw[lightgray,line width=1.5pt,->] (7,0) -- (6.5,0);
    		\draw[lightgray,line width=1.5pt,->] (7,1) -- (6,2) -- (1,2) -- (0,2.5) -- (1,3) -- (7,3);
    		\draw[lightgray,line width=1.5pt,->] (7,1) -- (6.75,1.25);
    		\draw[blue,fill=blue] (0,0.5) circle (0.1cm);
    		\draw[blue,fill=blue] (0,2.5) circle (0.1cm);
    		\foreach \x in {2,...,5}
    		{\draw[lightgray,line width=1.5pt,->] (\x,-0.5) -- (\x,3.5);}
    		\node[right] at (7,0) {$0 \leftarrow x$};
    		\node[right] at (7,1) {$0 \leftarrow x^{-1}$};
    		\node[right] at (7,2) {$0 \rightarrow x^{-1}$};
    		\node[right] at (7,3) {$0 \rightarrow x$};
    
            \node[above] at (2,3.5) {$\eta_1^\nu$};
                \node[above] at (3,3.5) {$\cdots$};
                \node[above] at (4,3.5) {$\cdots$};
                \node[above] at (5,3.5) {$\eta_N^\nu$};
    
                \node[below] at (2,-0.5) {$\eta_1^\mu$};
                \node[below] at (3,-0.5) {$\cdots$};
                \node[below] at (4,-0.5) {$\cdots$};
                \node[below] at (5,-0.5) {$\eta_N^\mu$};
    	\end{tikzpicture},
        \end{equation}
        which is equal to $\bra{\mu}A(x)A\left(x^{-1}\right)\ket{\nu}$ in the limit $N\to\infty$ from Proposition \ref{prop: AA limit condition}. Due to the factorization property of the $R$-matrix (Proposition \ref{lm:Rfactor}), we may recognize that the intertwining vertex in \eqref{eq: G-recur proof 1} is the identity. We may simplify \eqref{eq: G-recur proof 1} as 
        \begin{equation}
            \label{eq: G-recur proof 2}
            \begin{tikzpicture}[baseline={([yshift=-.5ex]current bounding box.center)},scale=0.7]
    		\draw[lightgray,line width=1.5pt] (6,0) -- (1,0) -- (0,0.5) -- (1,1) -- (5.5,1);
    		\draw[lightgray,line width=1.5pt,->] (6,0) -- (5.5,0);
            \draw[lightgray,line width=1.5pt] (5.5,1) arc (-90:90:0.5);
    		\draw[lightgray,line width=1.5pt,->] (5.5,2) -- (1,2) -- (0,2.5) -- (1,3) -- (6,3);
    		\draw[blue,fill=blue] (0,0.5) circle (0.1cm);
    		\draw[blue,fill=blue] (0,2.5) circle (0.1cm);
    		\foreach \x in {2,...,5}
    		{\draw[lightgray,line width=1.5pt,->] (\x,-0.5) -- (\x,3.5);}
    		\node[right] at (6,0) {$0 \leftarrow x$};
    		\node[right] at (6,3) {$0 \rightarrow x$};

            \node[above] at (2,3.5) {$\eta_1^\nu$};
                \node[above] at (3,3.5) {$\cdots$};
                \node[above] at (4,3.5) {$\cdots$};
                \node[above] at (5,3.5) {$\eta_N^\nu$};
    
                \node[below] at (2,-0.5) {$\eta_1^\mu$};
                \node[below] at (3,-0.5) {$\cdots$};
                \node[below] at (4,-0.5) {$\cdots$};
                \node[below] at (5,-0.5) {$\eta_N^\mu$};
    	\end{tikzpicture}
        = \hspace{0.2cm}\begin{tikzpicture}[baseline={([yshift=-.5ex]current bounding box.center)},scale=0.7]
			\draw[lightgray,line width=1.5pt,->] (6,0) -- (1,0) -- (0,0.5) -- (0,2.5) -- (1,3) -- (6,3);
			\draw[blue,fill=blue] (0,0.5) circle (0.1cm);
			\draw[blue,fill=blue] (0,2.5) circle (0.1cm);
			\foreach \x in {2,...,5}
			{\draw[lightgray,line width=1.5pt,->] (\x,-0.5) -- (\x,3.5);}
			\node[right] at (6,0) {$0 \leftarrow x$};
			\node[right] at (6,3) {$0 \rightarrow x$};

            \node[above] at (2,3.5) {$\eta_1^\nu$};
                \node[above] at (3,3.5) {$\cdots$};
                \node[above] at (4,3.5) {$\cdots$};
                \node[above] at (5,3.5) {$\eta_N^\nu$};
    
                \node[below] at (2,-0.5) {$\eta_1^\mu$};
                \node[below] at (3,-0.5) {$\cdots$};
                \node[below] at (4,-0.5) {$\cdots$};
                \node[below] at (5,-0.5) {$\eta_N^\mu$};
		\end{tikzpicture},
        \end{equation}
        where we have use the unitarity property of the $R$-matrix (Proposition \ref{R-matrix unitrarity prop}) over the $N$ columns to obtain the second diagram. Within this diagram, we may use the unitarity property of the $K$-matrix (Proposition \ref{K-matrix unitrarity prop}) to simplify \eqref{eq: G-recur proof 2} as
        \begin{equation}
            \begin{tikzpicture}[baseline={([yshift=-.5ex]current bounding box.center)},scale=0.8]
			\draw[lightgray,line width=1.5pt,-] (6,0) -- (1.5,0);
			\draw[lightgray,line width=1.5pt,-] (1.5,0) arc (270:90:0.5);
			\draw[lightgray,line width=1.5pt,->]	(1.5,1) -- (6,1);
			\draw[lightgray,line width=1.5pt,->] (6,0) -- (5.5,0);
			\foreach \x in {2,...,5}
			{\draw[lightgray,line width=1.5pt,->] (\x,-0.5) -- (\x,1.5);}
			\node[right] at (6,0) {$0\leftarrow x$};
			\node[right] at (6,1) {$0\rightarrow x$};
                \node[above] at (2,1.5) {$\eta^\nu_1$};
			\node[above] at (3,1.5) {$\cdots$};
			\node[above] at (4,1.5) {$\cdots$};
			\node[above] at (5,1.5) {$\eta^\nu_N$};
			\node[below] at (2,-0.5) {$\eta^\mu_1$};
			\node[below] at (3,-0.5) {$\cdots$};
			\node[below] at (4,-0.5) {$\cdots$};
			\node[below] at (5,-0.5) {$\eta^\mu_N$};

		\end{tikzpicture},
        \end{equation}
        from which we may again use $R$-matrix unitarity to remove all vertices. This shows that \eqref{eq: G nu empty picture proof 1} is equal to $\bra{\mu}\ket{\nu}$, which yields the result under the limit $N\to\infty$.
        
	\end{enumerate}
\end{proof}
The relations in Proposition~\ref{Row operator identities} for the row-operators lead to recursion relations for the partition function \eqref{G partition function defn A eq}.
\begin{cor}
	\label{cor:recursionsG}
	The partition function $G_{\nu/\mu}(x_1,\dots,x_L|\yalph)$ from \eqref{G partition function defn A eq} satisfies the following recursion relations
    \begin{align}
        G_{\nu/\mu} (x_1,\dots,x_L)\Big|_{x_i=0} & = 0,\\
        G_{\nu/\mu} (x_1,\dots,x_L)\Big|_{x_i=\pm 1}  & = G_{\nu/\mu} (x_1,\dots,\hat{x}_i,\dots,x_L),\\
        \label{eq: G x=1/x recur}
        G_{\nu/\mu} (x_1,\dots,x_L)\Big|_{x_j=1/x_k} & = G_{\nu/\mu} (x_1,\dots,\hat{x}_j,\dots,\hat{x}_k,\dots,x_L),
    \end{align}
    for all $1\leq i\leq L$ and $1\leq j<k\leq L$. Here, $\hat{x}_i$ means that $x_i$ is omitted from the alphabet $(x_1,\dots,x_L)$.
\end{cor}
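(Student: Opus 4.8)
The plan is to read off all three recursions directly from the operator representation \eqref{G partition function defn A eq}, namely $G_{\nu/\mu}(x_1,\dots,x_L) = \bra{\mu}A(x_1)\cdots A(x_L)\ket{\nu}$, by combining the single-operator identities of Proposition \ref{Row operator identities} with the symmetry of $G$ established in Corollary \ref{G symmetric cor}. The essential observation is that, for fixed finite $\mu,\nu\in\mathbb{W}$, each $G_{\nu/\mu}$ is a finite sum of rational functions in $(x_1,\dots,x_L)$, so that all the operator relations---though stated under convergence hypotheses such as \eqref{A operator commutation condition} and \eqref{A operator recur condition}---are genuine rational-function identities once they are known to hold on any nonempty open subset of parameter space. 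Consequently, I may freely reorder the operators $A(x_i)$ and then specialize a spectral parameter, provided the specialization point is not a pole of $G$.

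First I would treat the vanishing at $x_i=0$. By Corollary \ref{G symmetric cor} the product $A(x_1)\cdots A(x_L)$ may be rearranged so that $A(x_i)$ sits at the left end, giving $G_{\nu/\mu}=\bra{\mu}A(x_i)\,A(x_1)\cdots\widehat{A(x_i)}\cdots A(x_L)\ket{\nu}$ as an identity of rational functions. Since the bulk and boundary weights are non-singular at $x_i=0$, the function $G$ is regular there, and applying \eqref{A(0) row-operator identity}, namely $\bra{\mu}A(0)=0$, collapses the entire expression to zero. The identity at $x_i=\pm1$ is handled identically: after moving $A(x_i)$ to an end and noting regularity (indeed $h(\pm1)=0$), the relation \eqref{A(1) row-operator identity}, $A(\pm1)=\mathsf{id}$, simply deletes that factor, leaving $\bra{\mu}A(x_1)\cdots\widehat{A(x_i)}\cdots A(x_L)\ket{\nu}=G_{\nu/\mu}(x_1,\dots,\hat{x}_i,\dots,x_L)$.

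For the final recursion at $x_j=1/x_k$ I would first use the symmetry to bring $A(x_j)$ and $A(x_k)$ into adjacent positions, say $\cdots A(x_j)A(x_k)\cdots$. Setting $x_j=x_k^{-1}$ and invoking commutativity once more, this adjacent pair equals $A(x_k^{-1})A(x_k)=A(x_k)A(x_k^{-1})$, which by \eqref{A(x)A(1/x) row-operator identity} is the identity operator on $\Span\mathbb{W}$. The two factors therefore cancel and one is left with the product over the remaining $L-2$ operators, i.e. $G_{\nu/\mu}(x_1,\dots,\hat{x}_j,\dots,\hat{x}_k,\dots,x_L)$, as claimed.

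The main obstacle is purely one of rigor rather than computation: all three operator identities of Proposition \ref{Row operator identities}, as well as the commutativity underlying the reordering, are proven only under convergence conditions on the $y_k$, and the specialization values $x_i=0,\pm1,x_k^{-1}$ need not satisfy these conditions. I would dispose of this by the rational-function argument above---each matrix element is, for fixed finite $\mu,\nu$, a ratio of polynomials in the spectral parameters, so an identity valid on an open set persists under analytic continuation and may be evaluated at the special points. The only genuine requirement is to check that $G$ has no pole at the relevant specialization, which holds for generic $\yalph,a,c$ since the vertex weights in \eqref{Stochastic 6VM weights table} and \eqref{Stochastic boundary K-weight table} remain finite there.
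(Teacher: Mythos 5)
Your proof is correct and takes essentially the same approach as the paper: the paper's own proof of this corollary consists precisely of citing the row-operator identities \eqref{A(0) row-operator identity}, \eqref{A(1) row-operator identity}, \eqref{A(x)A(1/x) row-operator identity} of Proposition \ref{Row operator identities} together with the symmetry of $G_{\nu/\mu}$ in its alphabet (Corollary \ref{G symmetric cor}), exactly as you do. Your additional analytic-continuation paragraph supplies rigor that the paper leaves implicit; the only slight inaccuracy is that for general $\mu,\nu$ the matrix element $\bra{\mu}A(x_1)\cdots A(x_L)\ket{\nu}$ is an infinite convergent sum of rational functions rather than a finite one, so the continuation argument is better justified by local analyticity on the (open) convergence domain than by appeal to finiteness or rationality.
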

\begin{proof}
	These follow from the row-operator identities,\eqref{A(0) row-operator identity},\eqref{A(1) row-operator identity},\eqref{A(x)A(1/x) row-operator identity}, in Proposition \ref{Row operator identities} as well as the fact that $G_{\nu/\mu}$ is symmetric in its alphabet.
\end{proof}

\begin{cor}
    Let $\mu,\nu\in\mathbb{W}$ and $x_1,\dots,x_L\in\mathbb{C}\backslash\{0\}$. The partition function \eqref{G partition function defn A eq} satisfies the following unitarity property:
    \begin{equation}
        \sum_{\kappa} G_{\nu/\kappa} \left(x_1^{-1},\dots,x_L^{-1}\right) G_{\kappa/\mu}(x_1,\dots,x_L) = \delta_{\mu,\nu}.
    \end{equation}
\end{cor}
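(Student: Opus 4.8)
The plan is to recognise the left-hand sum as a single skew function evaluated on a doubled alphabet, and then to collapse it using the inversion recursion \eqref{eq: G x=1/x recur}. First I would apply the branching relation of Proposition~\ref{branching relations prop} to the combined alphabet $(x_1,\dots,x_L,x_1^{-1},\dots,x_L^{-1})$. Indeed, splitting this alphabet into its first and second halves gives
\[
G_{\nu/\mu}\!\left(x_1,\dots,x_L,x_1^{-1},\dots,x_L^{-1}\right)
= \sum_{\kappa} G_{\nu/\kappa}\!\left(x_1^{-1},\dots,x_L^{-1}\right) G_{\kappa/\mu}\!\left(x_1,\dots,x_L\right),
\]
which is precisely the summation on the left-hand side of the claimed identity. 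Equivalently, at the level of operators this is nothing but the insertion of the resolution of identity $\sum_{\kappa}\ket{\kappa}\bra{\kappa}=\mathsf{id}$ between the two products $A(x_1)\cdots A(x_L)$ and $A(x_1^{-1})\cdots A(x_L^{-1})$.

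Next I would peel off the variables one pair at a time using the recursion \eqref{eq: G x=1/x recur}, which applies to any reciprocal pair $x_j=1/x_k$ among the arguments. Applying it to the pair $(x_1,x_1^{-1})$ present in the doubled alphabet removes both, leaving $G_{\nu/\mu}(x_2,\dots,x_L,x_2^{-1},\dots,x_L^{-1})$; iterating $L$ times strips off every variable and leaves $G_{\nu/\mu}$ with the empty alphabet, which by the definition of the partition function equals the inner product $\bra{\mu}\ket{\nu}=\delta_{\mu,\nu}$. At the operator level the same collapse is realised by commuting the $A$'s past one another (Proposition~\ref{A commutation prop}, equation~\eqref{eq:Acommute}) so as to bring each $A(x_i)$ beside $A(x_i^{-1})$, and then applying the unitarity $A(x_i)A(x_i^{-1})=\mathsf{id}$ of \eqref{A(x)A(1/x) row-operator identity}.

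The main obstacle is reconciling the convergence hypotheses with the bare assumption $x_1,\dots,x_L\in\mathbb{C}\setminus\{0\}$ in the statement. The operator identities \eqref{eq:Acommute} and \eqref{A(x)A(1/x) row-operator identity}, together with the symmetry of Corollary~\ref{G symmetric cor} that licenses the reordering, were each established only on the subdomain of spectral parameters where the relevant bounds \eqref{A operator commutation condition}, \eqref{A operator recur condition} hold, which guarantee absolute convergence of the infinite-column limits. To extend the final identity to all nonzero $x_i$, I would use that for fixed $\mu,\nu$ the quantity $G_{\nu/\mu}$ is a rational function of the spectral parameters --- the lattice freezes to empty weight-$1$ vertices sufficiently far from the boundary, so the partition function is a finite sum of rational functions. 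Both sides of the claimed identity are therefore rational in $(x_1,\dots,x_L)$, and an identity of rational functions that holds on a nonempty open set holds identically; this upgrades the result from the convergence subdomain to all $x_i\in\mathbb{C}\setminus\{0\}$.
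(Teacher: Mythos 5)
Your proof matches the paper's essentially verbatim: the paper's own one-line proof cites exactly your three ingredients — the branching relation \eqref{G branching rel} applied to the doubled alphabet, the inversion recursion \eqref{eq: G x=1/x recur} used to strip reciprocal pairs, and the evaluation of $G_{\nu/\mu}$ on the empty alphabet as $\delta_{\mu,\nu}$. Your closing analytic-continuation paragraph goes beyond the paper (which is silent on convergence); be aware only that its justification is loose, since for $L\geq 2$ the quantity $G_{\nu/\mu}$ is an \emph{infinite} sum over intermediate configurations (paths may wander arbitrarily far to the right between double rows), not a finite sum of rational functions — though this extra step is not part of the paper's argument in any case.
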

\begin{proof}
    The proof follows from the branching relation \eqref{G branching rel}, the recursion relation \eqref{eq: G x=1/x recur}, and the fact that $G_{\nu/\mu}$ evaluated on an empty alphabet is equal to $\delta_{\mu,\nu}$.
\end{proof}

\subsection{Empty initial conditions}
\begin{lm}
	\label{G nu picture lemma}
	Let $(x_1,\dots,x_L)$ be an alphabet satisfying conditions \eqref{A operator commutation condition} for all $i\neq j$. The evaluation of the symmetric function indexed by a single configuration $G_\nu=G_{\nu/\emptyset}$ from \eqref{G partition function defn A eq} reduces to the following partition function
	\begin{equation}
		\label{G nu picture lemma eq}
		G_\nu(x_1,\dots,x_L|\yalph) = 
		\begin{tikzpicture}[baseline={([yshift=-.5ex]current bounding box.center)},scale=0.8]
			\draw[lightgray,line width=1.5pt,->] (-4,0) -- (-4,1) -- (7,1);
			\draw[lightgray,line width=1.5pt,->] (-3,0) -- (-3,2) -- (7,2);
			\draw[lightgray,line width=1.5pt,->] (-2,0) -- (-2,3) -- (7,3);
			\draw[lightgray,line width=1.5pt,->] (-1,0) -- (-1,4) -- (7,4);
			\draw[lightgray,line width=1.5pt,->] (0,0) -- (0,5) -- (7,5);
			
			\draw[lightgray,line width=1.5pt,->] (-4,0) -- (-4,0.5);
			\draw[lightgray,line width=1.5pt,->] (-3,0) -- (-3,0.5);
			\draw[lightgray,line width=1.5pt,->] (-2,0) -- (-2,0.5);
			\draw[lightgray,line width=1.5pt,->] (-1,0) -- (-1,0.5);
			\draw[lightgray,line width=1.5pt,->] (0,0) -- (0,0.5);
			
			\foreach \x in {1,...,6}
			{\draw[lightgray,line width=2pt,->] (\x,0) -- (\x,6);
				\node[below] at (\x,0) {$0$};}
			
			\draw[blue,fill=blue] (-4,1) circle (0.1cm);
			\draw[blue,fill=blue] (-3,2) circle (0.1cm);
			\draw[blue,fill=blue] (-2,3) circle (0.1cm);
			\draw[blue,fill=blue] (-1,4) circle (0.1cm);
			\draw[blue,fill=blue] (0,5) circle (0.1cm);
			
			\node[below] at (-4,0) {$0$};
			\node[below] at (-3,0) {$0$};
			\node[below] at (-2,0) {$0$};
			\node[below] at (-1,0) {$0$};
			\node[below] at (0,0) {$0$};
			
			\node[below] at (-4,-0.5) {$\uparrow$};
			\node[below] at (-3,-0.5) {$\uparrow$};
			\node[below] at (0,-0.5) {$\uparrow$};
			
			\node[below] at (-4,-1.1) {$x_1$};
			\node[below] at (-3,-1.1) {$x_2$};
			\node[below] at (-2,-1.1) {$\cdots$};
			\node[below] at (-1,-1.1) {$\cdots$};
			\node[below] at (0,-1.1) {$x_L$};
			
			\node[below] at (1,-0.5) {$\uparrow$};
			\node[below] at (2,-0.5) {$\uparrow$};
			\node[below] at (3,-0.5) {$\uparrow$};
			
			\node[below] at (1,-1.1) {$y_1$};
			\node[below] at (2,-1.1) {$y_2$};
			\node[below] at (3,-1.1) {$y_3$};
			
			\node[below] at (4,-1.1) {$\cdots$};
			\node[below] at (5,-1.1) {$\cdots$};
			\node[below] at (6,-1.1) {$\cdots$};
			
			\node[right] at (7,1) {$0 \rightarrow x_1^{-1}$};
			\node[right] at (7,2) {$0 \rightarrow x_2^{-1}$};
			\node[right] at (7,3) {$0$};
			\node[right] at (7,4) {$0$};
			\node[right] at (7,5) {$0 \rightarrow x_L^{-1}$};
			
			\node[right] at (7.9,3) {$\vdots$};
			\node[right] at (7.9,4) {$\vdots$};
			
			\node[above] at (1,6) {$\eta^\nu_1$};
			\node[above] at (2,6) {$\eta^\nu_2$};
			\node[above] at (3,6) {$\eta^\nu_3$};
			\node[above] at (4,6) {$\cdots$};
			\node[above] at (5,6) {$\cdots$};   
			\node[above] at (6,6) {$\cdots$};   
		\end{tikzpicture}
		.\end{equation}
\end{lm}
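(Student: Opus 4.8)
The plan is to follow the \emph{train argument} used in the proof of Proposition~\ref{A commutation prop}, working throughout with the finite-column truncations of the double-row operators and passing to the limit $N\to\infty$ at the end. Concretely, I would first represent $G_\nu = \bra{\emptyset}A(x_1)\cdots A(x_L)\ket{\nu}$ as a partition function on the first $N$ vertical $y$-lines (with $N \geq \nu_1$), so that all columns beyond $\nu_1$ are frozen empty; the convergence of this truncation to the infinite picture is controlled exactly as in Proposition~\ref{prop: AA limit condition} under the hypotheses \eqref{A operator commutation condition}. The crucial structural feature is that $\mu = \emptyset$, so the entire bottom boundary of the lattice is vacant, and this is precisely what allows the rotated picture to carry empty incoming edges along its whole lower edge.

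The heart of the argument is a \emph{geometric rotation} of the reflecting horizontal $x$-lines into the vertical staircase of \eqref{G nu picture lemma eq}, which I would carry out one double-row at a time, processing them from the boundary outward. In the spirit of \eqref{eq: AA inf commutation proof 2}, I would insert auxiliary intertwining $R$-vertices between consecutive double-rows (these may be added at no cost since the relevant external edges are frozen), transport them through the bulk using the Yang--Baxter equation \eqref{Yang-Baxter eq R-matrices}, and turn the corner at the boundary using the reflection equation \eqref{Sklyanin reflection equation R,K matrices}. The net effect is to bend each reflecting line of spectral parameter $x_i$ so that its incoming branch becomes a vertical segment, which then crosses the outgoing branches of parameter $x_j^{-1}$ of the lines below it, with the $K$-vertex relocated to the corner of the bend. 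Since the crossing of a vertical line of parameter $x_i$ with a horizontal line of parameter $x_j^{-1}$ is governed by the ratio $x_i/x_j^{-1}=x_ix_j$, these crossings acquire precisely the weights $z \mapsto x_i x_j$ of the triangular vertices, while the crossings of the outgoing $x_i^{-1}$ branches with the $y$-lines are carried along unchanged and continue to feed the top state $\nu$.

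The remaining bookkeeping is to discharge the auxiliary intertwiners: wherever a frozen boundary edge pins an $R$-vertex it is removed using $R$-matrix factorization (Proposition~\ref{lm:Rfactor}) or unitarity (Proposition~\ref{R-matrix unitrarity prop}), and any leftover $K$-vertices collapse via Proposition~\ref{K-matrix unitrarity prop}; the empty bottom boundary $\mu=\emptyset$ is exactly what guarantees that no obstructing paths survive this cleanup. Iterating over all $L$ double-rows assembles the full staircase, and taking $N\to\infty$ yields \eqref{G nu picture lemma eq}. I expect the main obstacle to be the careful choreography of the Yang--Baxter and reflection moves needed to realise each bend, in particular verifying that a reflected line turns its corner without disturbing the lines already rotated, and confirming that the induced self-crossings of the $x$-lines reproduce the triangular region with exactly the spectral assignment $z\mapsto x_ix_j$ rather than some other combination.
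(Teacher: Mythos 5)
Your overall strategy --- append intertwining $R$-vertices, drag them with the Yang--Baxter equation, turn the corner with the reflection equation, and freeze away the empty sectors before taking $N\to\infty$ --- is exactly the route the paper takes, and your geometric bookkeeping (the crossings of the bent incoming $x_i$ branches with the outgoing $x_j^{-1}$ branches acquiring spectral parameter $x_ix_j$) is correct. However, there is one genuine gap: the claim that the auxiliary intertwiners ``may be added at no cost since the relevant external edges are frozen.'' This is false at finite $N$ for precisely the crossings you need. An $R$-vertex joining an \emph{incoming} line $x_j$ to an \emph{outgoing} line $x_i^{-1}$ at the right edge of the truncated lattice has only its two external edges fixed to $0$; the ice rule then merely forces its two lattice-side edges to carry a common occupation $p\in\{0,1\}$, and the $p=1$ configurations --- a path leaving the lattice along the $x_i^{-1}$ branch and re-entering along the $x_j$ branch --- contribute with nonzero weight for every finite $N$. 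These are exactly the $\bra{\mu}B^{(N)}(\cdot)C^{(N)}(\cdot)\ket{\nu}$-type terms of Proposition \ref{prop: AA limit condition}; only a crossing between two \emph{outgoing} branches is genuinely frozen, which is the special case exploited when appending the extra vertex in the proof of Proposition \ref{A commutation prop}, and it does not cover the incoming-times-outgoing crossings that build the staircase.

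This is precisely why the paper defines the finite-$N$ object \eqref{eq: G nu empty picture proof 1} with an explicit sum over \emph{all} occupations $p_1,\dots,p_{2L-2}$ of the appended triangle's edges: with that sum included, the Yang--Baxter and reflection moves give an exact identity at finite $N$ with the rotated diagram \eqref{eq: G nu empty picture proof 2}, and a separate limiting argument --- this is where the conditions \eqref{A operator commutation condition} really enter, not merely in controlling the truncation of $G_\nu$ as you suggest --- shows that every term with some $p_i=1$ is suppressed like $\rho^{N-\tau}$, so that $\lim_{N\to\infty}g_\nu^{(N)}=G_\nu$. Your outline is repaired by making this two-step structure explicit: (i) include the sum over intertwiner edge states so that the diagrammatic identity holds exactly at finite $N$, and (ii) invoke the decay estimate to collapse that sum to the all-empty term in the limit. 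With this fix (and noting that no appeal to $K$-matrix unitarity is needed here --- the cleanup in the paper is pure freezing of empty sectors), the rest of your choreography goes through as in the paper.
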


\begin{proof}
    Given an integer $N\geq\nu_1$, consider the $N$ column version of the double-row picture \eqref{G partition function defn picture}. We may append a triangular arrangement of $R$-matrices onto the right-hand side of the lattice to obtain the following object
    \begin{equation}
        \label{eq: G nu empty picture proof 1}
        g_\nu^{(N)}(x_1,\dots,x_L) :=
		\sum_{p_1,p_2,\dots\in\{0,1\}}
		\begin{tikzpicture}[baseline={([yshift=-.5ex]current bounding box.center)},scale=0.8]
			\draw[lightgray,line width=1.5pt,->] (10,0) -- (6,0) -- (1,0) -- (0,0.5) -- (1,1) -- (6,1) -- (9,4) -- (10,4);
			\draw[lightgray,line width=1.5pt,->] (10,0) -- (9.5,0);
			\draw[lightgray,line width=1.5pt,->] (10,1) -- (7,1) -- (6,2) -- (1,2) -- (0,2.5) -- (1,3) -- (6,3) -- (8,5) -- (10,5);
			\draw[lightgray,line width=1.5pt,->] (10,1) -- (9.5,1);
			\draw[lightgray,line width=1.5pt,->] (10,2) -- (8,2) -- (6,4) -- (1,4) -- (0,4.5) -- (1,5) -- (6,5) -- (7,6) -- (10,6);
			\draw[lightgray,line width=1.5pt,->] (10,2) -- (9.5,2);
			\draw[lightgray,line width=1.5pt,->] (10,3) -- (9,3) -- (6,6) -- (1,6) -- (0,6.5) -- (1,7) -- (6,7) -- (10,7);
			\draw[lightgray,line width=1.5pt,->] (10,3) -- (9.5,3);
			\draw[blue,fill=blue] (0,0.5) circle (0.1cm);
			\draw[blue,fill=blue] (0,2.5) circle (0.1cm);
			\draw[blue,fill=blue] (0,4.5) circle (0.1cm);
			\draw[blue,fill=blue] (0,6.5) circle (0.1cm);
			\foreach \x in {2,...,5}
			{\draw[lightgray,line width=1.5pt,->] (\x,-0.5) -- (\x,7.5);}
			\node[right] at (10,0) {$0 \leftarrow x_1$};
			\node[right] at (10,4) {$0 \rightarrow x_1^{-1}$};
			\node[right] at (10,3) {$0 \leftarrow x_L$};
			\node[right] at (10,7) {$0 \rightarrow x_L^{-1}$};
			\node[right] at (10,1) {$0$};
			\node[right] at (10,2) {$0$};
			\node[right] at (10,5) {$0$};
			\node[right] at (10,6) {$0$};
                \node[right] at (10.9,1) {$\vdots$};
			\node[right] at (10.9,2) {$\vdots$};
                \node[right] at (10.9,5) {$\vdots$};
			\node[right] at (10.9,6) {$\vdots$};
   
            \node[below] at (6,1) {$p_1$};
            \node[above] at (6,2) {$p_2$};
            \node[above] at (6,6) {$p_{2L-2}$};
            
            \node[above] at (2,7.5) {$\eta^\nu_1$};
            \node[above] at (3,7.5) {$\cdots$};
			\node[above] at (4,7.5) {$\cdots$};
			\node[above] at (5,7.5) {$\eta^\nu_N$};
            \node[below] at (2,-0.5) {$0$};
            \node[below] at (3,-0.5) {$0$};
			\node[below] at (4,-0.5) {$0$};
			\node[below] at (5,-0.5) {$0$};   
		\end{tikzpicture},
    \end{equation}
    where the sum is over all occupations of the edges of the appended triangle. Absorbing the sum into the notation of the partition function, we can apply the Yang\textendash Baxter equation to move the intertwiners to be adjacent to the boundary. This is depicted as
    \begin{equation}
        \label{eq: G nu empty picture proof 2}
        g_\nu^{(N)}(x_1,\dots,x_L)
        \begin{tikzpicture}[baseline={([yshift=-.5ex]current bounding box.center)},scale=0.8]
			\draw[lightgray,line width=1.5pt,->] (9,0) -- (1,0) -- (0,0.5) -- (4,4) -- (9,4);
			\draw[lightgray,line width=1.5pt,->] (9,0) -- (8.5,0);
			\draw[lightgray,line width=1.5pt,->] (9,1) -- (2,1) -- (0,2.5) -- (3,5) -- (9,5);
			\draw[lightgray,line width=1.5pt,->] (9,1) -- (8.5,1);
			\draw[lightgray,line width=1.5pt,->] (9,2) -- (3,2) -- (0,4.5) -- (2,6) -- (9,6);
			\draw[lightgray,line width=1.5pt,->] (9,2) -- (8.5,2);
			\draw[lightgray,line width=1.5pt,->] (9,3) -- (4,3) -- (0,6.5) -- (1,7) -- (9,7);
			\draw[lightgray,line width=1.5pt,->] (9,3) -- (8.5,3);
			\draw[blue,fill=blue] (0,0.5) circle (0.1cm);
			\draw[blue,fill=blue] (0,2.5) circle (0.1cm);
			\draw[blue,fill=blue] (0,4.5) circle (0.1cm);
			\draw[blue,fill=blue] (0,6.5) circle (0.1cm);
			\foreach \x in {5,...,8}
			{\draw[lightgray,line width=1.5pt,->] (\x,-0.5) -- (\x,7.5);}
			\node[right] at (9,0) {$0 \leftarrow x_1$};
			\node[right] at (9,4) {$0 \rightarrow x_1^{-1}$};
			\node[right] at (9,3) {$0 \leftarrow x_L$};
			\node[right] at (9,7) {$0 \rightarrow x_L^{-1}$};
			\node[right] at (9,1) {$\vdots$};
			\node[right] at (9,2) {$\vdots$};
			\node[right] at (9,5) {$\vdots$};
			\node[right] at (9,6) {$\vdots$};

            \node[above] at (5,7.5) {$\eta^\nu_1$};
            \node[above] at (6,7.5) {$\cdots$};
			\node[above] at (7,7.5) {$\cdots$};
			\node[above] at (8,7.5) {$\eta^\nu_N$};
            \node[below] at (5,-0.5) {$0$};
            \node[below] at (6,-0.5) {$\cdots$};
			\node[below] at (7,-0.5) {$\cdots$};
			\node[below] at (8,-0.5) {$0$}; 
		\end{tikzpicture}.
    \end{equation}
    We note that the left-moving sector of the vertical columns is frozen with no occupations. As a result, these vertices may be evaluated to 1 and removed from the diagram. In the limit $N\to\infty$ this results in the desired diagram \eqref{G nu picture lemma eq}.
    
    Now directly consider the large $N$ limit of \eqref{eq: G nu empty picture proof 1}. Due to the conditions \eqref{A operator commutation condition} the only term which survives this limit is the one with $p_1=\cdots=p_{2L-2}=0$. This forces all $p_i=0$ so that the sum of \eqref{eq: G nu empty picture proof 1} collapses into a single term where all intertwining vertices are equal to 1. This yields
    \begin{equation}
        \lim_{N\to\infty} g_\nu^{(N)}(x_1,\dots,x_L) = \lim_{N\to\infty} \bra{\emptyset}A^{(N)}(x_1) \cdots A^{(N)}(x_L) \ket{\nu} = G_\nu(x_1,\dots,x_L).
    \end{equation}
\end{proof}

The diagram in \eqref{G nu picture lemma eq} leads us to the following definition and theorem.
\begin{defn} 
	We refer to the following partition function, with generic boundary parameters, as the \emph{triangular partition function}:
	\begin{equation}
		\label{Z triangular partition function defn eq}
		Z_m(x_1\dots,x_m) = 
		\begin{tikzpicture}[baseline={([yshift=-.5ex]current bounding box.center)},scale=1]
			\draw[lightgray,line width=1.5pt,->] (1,0) -- (1,1) -- (6,1);
			\draw[lightgray,line width=1.5pt,->] (2,0) -- (2,2) -- (6,2);
			\draw[lightgray,line width=1.5pt,->] (3,0) -- (3,3) -- (6,3);
			\draw[lightgray,line width=1.5pt,->] (4,0) -- (4,4) -- (6,4);
			\draw[lightgray,line width=1.5pt,->] (5,0) -- (5,5) -- (6,5);
			
			\draw[lightgray,line width=1.5pt,->] (1,0) -- (1,0.5);
			\draw[lightgray,line width=1.5pt,->] (2,0) -- (2,0.5);
			\draw[lightgray,line width=1.5pt,->] (3,0) -- (3,0.5);
			\draw[lightgray,line width=1.5pt,->] (4,0) -- (4,0.5);
			\draw[lightgray,line width=1.5pt,->] (5,0) -- (5,0.5);
			
			\draw[blue,fill=blue] (5,5) circle (0.1cm);
			\draw[blue,fill=blue] (4,4) circle (0.1cm);
			\draw[blue,fill=blue] (3,3) circle (0.1cm);
			\draw[blue,fill=blue] (2,2) circle (0.1cm);
			\draw[blue,fill=blue] (1,1) circle (0.1cm);
			
			\node[right] at (6,1) {$0 \rightarrow x_1^{-1}$};
			\node[right] at (6,2) {$0 \rightarrow x_2^{-1}$};
			\node[right] at (6,3) {$0$};
			\node[right] at (6,4) {$0$};
			\node[right] at (6,5) {$0 \rightarrow x_m^{-1}$};
			
			\node[right] at (6.9,3) {$\vdots$};
			\node[right] at (6.9,4) {$\vdots$};
			
			\node[below] at (1,-0.4) {$\uparrow$};
			\node[below] at (2,-0.4) {$\uparrow$};
			\node[below] at (5,-0.4) {$\uparrow$};
			
			\node[below] at (1,-0.9) {$x_1$};
			\node[below] at (2,-0.9) {$x_2$};
			\node[below] at (3,-0.9) {$\cdots$};
			\node[below] at (4,-0.9) {$\cdots$};
			\node[below] at (5,-0.9) {$x_m$};
			
			\node[below] at (1,0) {$0$};
			\node[below] at (2,0) {$0$};
			\node[below] at (3,0) {$0$};
			\node[below] at (4,0) {$0$};
			\node[below] at (5,0) {$0$};
		\end{tikzpicture}.
	\end{equation}
\end{defn}

\begin{thm}
	\label{G=Z empty set thm}
	When both the initial and final configurations are empty, i.e. $\mu=\nu=\emptyset$, the symmetric function \eqref{G partition function defn A eq} reduces to the triangular partition function \eqref{Z triangular partition function defn eq}
	\begin{equation}
		G_{\emptyset} (x_1,\dots,x_L) = Z_L(x_1,\dots,x_L).
	\end{equation}
    In particular, $G_\emptyset$ does not depend on the collection of vertical spectral parameters $Y$.
\end{thm}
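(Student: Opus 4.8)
The plan is to obtain the statement as an immediate specialization of Lemma \ref{G nu picture lemma} to the case $\nu=\emptyset$, followed by a freezing argument that trivializes the semi-infinite strip of vertical $y$-columns. First I would set $\nu=\emptyset$ in \eqref{G nu picture lemma eq}: every top occupation $\eta^\nu_i$ then vanishes, so that $G_\emptyset(x_1,\dots,x_L)$ equals the partition function of the lattice in \eqref{G nu picture lemma eq} whose entire top boundary is empty. It is convenient to split this lattice into two regions: the left triangular region, in which the rapidity lines $x_1,\dots,x_L$ rise, reflect off their boundary vertices and cross one another, and the right semi-infinite strip consisting of the vertical $y$-columns through which the (now horizontal) $x$-lines pass.

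The key step is to show that the strip of $y$-columns is frozen. With $\nu=\emptyset$, three of its four boundaries carry no paths: the bottom boundary is empty by construction, the top boundary is empty because $\eta^\nu_i=0$, and the right boundary is empty because each $x$-line exits as $0\rightarrow x_i^{-1}$. Inspecting the six admissible vertices of \eqref{Stochastic 6VM weights table}, every path segment travels weakly upward and weakly rightward; there is no vertex routing a path downward or to the left. Consequently any path that entered the strip along a horizontal $x$-line would be forced to leave through either the top or the right boundary of the strip, both of which are empty --- a contradiction. Hence no path occupies the strip, every vertex there is the empty vertex of weight $1$, and the strip contributes an overall factor of $1$. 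In particular, since the parameters $Y=(y_1,y_2,\dots)$ enter $G_\emptyset$ only through the vertices of this strip, the $Y$-independence asserted in the theorem follows at once.

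Deleting the frozen strip leaves precisely the triangular lattice \eqref{Z triangular partition function defn eq}: line $x_i$ ascends to height $i$, reflects at its boundary vertex and travels rightward, crossing the vertical segment of every line $x_j$ with $j>i$. Under the rapidity assignments inherited from the double-row construction, the crossing of the horizontal segment of $x_i$ (carrying rapidity $x_i^{-1}$) with the vertical segment of $x_j$ (carrying rapidity $x_j$) has spectral parameter $z=x_j/x_i^{-1}=x_ix_j$, matching the weight prescription $z\mapsto x_ix_j$ in the definition of $Z_L$. The surviving partition function is therefore exactly $Z_L(x_1,\dots,x_L)$, which gives the claimed identity; the number of $y$-columns is immaterial, so the $N\to\infty$ limit built into Lemma \ref{G nu picture lemma} is taken trivially. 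The one point demanding genuine care is the freezing argument: one must verify rigorously that monotonicity of the paths, together with the three empty boundaries of the strip, leaves no room for a nontrivial configuration. The spectral-parameter bookkeeping is then routine.
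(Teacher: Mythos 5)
Your proposal is correct and takes essentially the same route as the paper's own proof: specialize Lemma \ref{G nu picture lemma} to $\nu=\emptyset$, observe that the semi-infinite strip of $y$-columns is frozen empty (hence contributes weight $1$ and carries all the $Y$-dependence), and conclude that only the triangular partition function \eqref{Z triangular partition function defn eq} survives. Your explicit monotonicity argument for the freezing (paths travel weakly up and to the right, and three boundaries of the strip are empty) is a careful spelling-out of what the paper states in one line.
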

\begin{proof}
	This result is immediate from diagram \eqref{G nu picture lemma eq} in Lemma \ref{G nu picture lemma} when we set $\nu$ to be empty. This causes there to be no occupations on any of columns or rows past the point of $y_i$-dependence. This means that the entire bulk on the right evaluates to 1 and may be removed without effect on the partition function evaluation.
\end{proof}

\section{Triangular partition function}
In this section we turn our attention to the triangular partition function \eqref{Z triangular partition function defn eq}. An important feature of the six-vertex model with generic open boundary conditions is that the partition function $G_\varnothing(x_1,\ldots,x_L)$ is non-trivial. Theorem~\ref{G=Z empty set thm} tells us that this partition function is equal to the triangular partition function. This function plays an analogous role to the domain wall partition function \cite{izergin_partition_1987} of the six-vertex model on a square geometry. In fact, this function directly generalizes a partition function related to diagonally symmetric alternating-sign matrices introduced in \cite{kuperberg_symmetry_2002}.

We first note that for any $m \in \mathbb{N}$, the partition function $Z_m(x_1,\ldots,x_m)$ is symmetric and satisfies the recursion relations of Corollary~\ref{cor:recursionsG}. It is instructive to derive these properties directly from \eqref{Z triangular partition function defn eq} and we will do so below. Moreover, the triangular partition function is a rational function in its alphabet where the degree of numerator and denominator can be easily established. All these properties together completely determine $Z_m(x_1,\ldots,x_m)$ and we shall find several closed formulas for it.

\subsection{Properties and recursion relations}
\begin{prop}
    \label{Z symmetric function prop}
    The triangular partition function $Z_m(x_1,\dots,x_m)$ from \eqref{Z triangular partition function defn eq} is symmetric in $(x_1,\dots,x_m)$.
\end{prop}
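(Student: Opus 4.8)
The plan is to prove invariance of $Z_m(x_1,\dots,x_m)$ under each adjacent transposition $x_i \leftrightarrow x_{i+1}$; since these generate the symmetric group $S_m$, full symmetry follows. The quickest route is to note that Theorem~\ref{G=Z empty set thm} identifies $Z_L$ with $G_\emptyset(x_1,\dots,x_L)$, which is symmetric in its alphabet by Corollary~\ref{G symmetric cor}. However, as the surrounding text suggests, I would instead give a direct argument from the picture \eqref{Z triangular partition function defn eq}, running in close parallel to the proof of Proposition~\ref{A commutation prop}. The key observation is that in \eqref{Z triangular partition function defn eq} each parameter $x_i$ labels a single bent rapidity line: it enters vertically at the foot of column $i$, reflects off the boundary vertex at the diagonal site $(i,i)$, and leaves horizontally along row $i$. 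The lines for $x_i$ and $x_{i+1}$ interact with one another only through the single bulk crossing at $(i+1,i)$, carrying spectral parameter $z = x_i x_{i+1}$, together with their two reflections; every crossing with a spectator line $x_k$ carries the manifestly index-symmetric weight $z = x_i x_k$ and so is simply relabelled under the swap.

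The steps, in order, would be as follows. First I would attach an intertwining $R$-matrix $R(x_{i+1}/x_i)$ to the incoming (bottom) ends of the two adjacent lines $x_i$ and $x_{i+1}$. Exactly as in the proof of Proposition~\ref{A commutation prop}, the frozen boundary conditions on these external edges (both empty) force a unique configuration on the intertwiner, whose weight is $1$, so it may be inserted at no cost to the partition function. Next I would drag this intertwiner through the lattice: the Yang--Baxter equation \eqref{Yang-Baxter eq R-matrices} moves it past the single bulk crossing of the two lines (whose weight $z=x_ix_{i+1}$ plays the role of the product intertwiner $R_{12}(xy)$), while the Sklyanin reflection equation \eqref{Sklyanin reflection equation R,K matrices} carries it past the two boundary reflections at $(i,i)$ and $(i+1,i+1)$. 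A final application of the Yang--Baxter equation pushes the intertwiner out to the horizontal (outgoing) ends, where it is again frozen by the empty external conditions and contributes weight $1$, so it may be removed. The diagram that remains is precisely \eqref{Z triangular partition function defn eq} with $x_i$ and $x_{i+1}$ interchanged, establishing $Z_m(\dots,x_i,x_{i+1},\dots) = Z_m(\dots,x_{i+1},x_i,\dots)$.

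The main obstacle is the geometric bookkeeping peculiar to the triangular (rather than rectangular) lattice: the two exchanged lines have unequal lengths, since column $i$ turns at height $i$ while column $i+1$ runs up to height $i+1$, and the crossing at $(i+1,i)$ sits between the two reflections. I would need to check that the intertwiner can be routed so that it meets the bulk crossing and the two reflections in an order for which the hypotheses of \eqref{Yang-Baxter eq R-matrices} and \eqref{Sklyanin reflection equation R,K matrices} apply verbatim, and that every bulk vertex — including all spectator crossings with the columns $x_k$, $k\neq i,i+1$ — emerges with exactly the weight demanded by the swapped labelling. Since the bulk weights depend on the pair $(x_i,x_j)$ only through the symmetric combination $x_i x_j$, this is essentially automatic away from the two active lines, and the only genuine content is the local rearrangement governed by the reflection equation, which is the same mechanism already used in Proposition~\ref{A commutation prop}. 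I emphasise that no convergence hypothesis enters here, because \eqref{Z triangular partition function defn eq} is a finite partition function and the resulting statement is an identity of rational functions in $(x_1,\dots,x_m)$.
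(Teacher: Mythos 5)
Your proposal is correct and follows essentially the same route as the paper's own proof: insert an intertwining $R$-matrix at the bottom of the two adjacent lines (frozen to weight $1$ by the empty external edges), drag it through the lattice via the Yang--Baxter equation \eqref{Yang-Baxter eq R-matrices} and the reflection equation \eqref{Sklyanin reflection equation R,K matrices}, and remove it at the right edge where it is again frozen to weight $1$, leaving the diagram with $x_i$ and $x_{i+1}$ interchanged. Your additional observations — that adjacent transpositions suffice, that spectator crossings are handled automatically since bulk weights depend only on the product $x_ix_k$, and that no convergence hypotheses are needed for this finite rational identity — are all sound and consistent with the paper's argument.
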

\begin{proof}
    Considering $Z_m(x_1,\dots,x_m)$, the symmetry can be seen by inserting an intertwining vertex at the bottom of the diagram \eqref{Z triangular partition function defn eq} to cross the adjacent lines with spectral parameters $x_i$ and $x_{i+1}$. This can be done at no overall cost to the partition function since the boundary conditions enforce that the only possible vertex configuration is the all-empty one, which carries weight 1.

    Using repeated applications of the Yang\textendash Baxter equation \eqref{Yang-Baxter eq R-matrices} and reflection equation \eqref{Sklyanin reflection equation R,K matrices}, the intertwining $R$-matrix can be pulled through to the right hand side of the lattice between the $i$-th and $(i+1)$-th lines of the lattice. In the process of shifting this intertwiner, the $x_i$ and $x_{i+1}$ spectral parameters swap positions. The intertwiner can then be removed from the right hand side of the lattice at no overall cost, again enforced by the boundary conditions, leaving us with \[Z_m(x_1,\dots,x_i,x_{i+1},\dots,x_m)=Z_m(x_1,\dots,x_{i+1},x_i,\dots,x_m),\]
    which generates symmetry over the whole alphabet.
\end{proof}
\begin{prop}
	\label{Z polynomial prop}
	The function defined by
	\begin{equation}
        \label{Z polynomial eq}
		\widetilde{Z}_m(x_1,\dots,x_m) := \prod_{i=1}^m \left[(a-x_i)(c-x_i)\right] \prod_{1\leq i<j\leq m} (1-q x_i x_j) \cdot Z_m(x_1,\dots,x_m)
	\end{equation}
	is a symmetric polynomial of degree at most $m+1$ in each of the variables $x_1,\dots,x_m$.
\end{prop}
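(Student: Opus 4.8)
The plan is to read off both polynomiality and the degree bound directly from the definition of $Z_m$ in \eqref{Z triangular partition function defn eq} as a finite sum over path configurations, each weighted by a product of bulk weights \eqref{Stochastic 6VM weights table} and boundary weights \eqref{Stochastic boundary K-weight table}. The two prefactors in \eqref{Z polynomial eq} are tailored to cancel exactly the denominators that can appear: a bulk vertex crossing lines $i$ and $j$ carries $z\mapsto x_ix_j$ and thus has denominator $1-qx_ix_j$, while the boundary vertex on line $i$ has denominator $(a-x_i)(c-x_i)$, since each of $1-h(x_i)$, $h(x_i)$, $-h(x_i)/(ac)$ and $1+h(x_i)/(ac)$ is, by \eqref{eq:hdef}, a rational function with exactly this denominator.

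First I would record the combinatorics of the triangular geometry: there is exactly one boundary vertex per line, and exactly one bulk vertex for each unordered pair $\{i,j\}$ (the crossing of the vertical segment of one line with the horizontal segment of the other). Hence in any single configuration each factor $1-qx_ix_j$ and $(a-x_i)(c-x_i)$ can occur at most once in the denominator. It follows that the least common denominator of all summands divides $\prod_{i}(a-x_i)(c-x_i)\prod_{i<j}(1-qx_ix_j)$, so multiplication by this product clears every denominator and $\widetilde Z_m$ is a polynomial.

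For the degree bound I would distribute the prefactor over the vertices, writing each summand of $\widetilde Z_m$ as
\[
\prod_{i=1}^m\bigl[(a-x_i)(c-x_i)\,w^{\mathrm{bd}}_i\bigr]\prod_{1\le i<j\le m}\bigl[(1-qx_ix_j)\,w^{\mathrm{bulk}}_{ij}\bigr],
\]
where each bracketed factor is now a genuine polynomial obtained as the numerator of a single weight. A one-line inspection of the two tables gives $\deg_{x_i}\bigl[(a-x_i)(c-x_i)w^{\mathrm{bd}}_i\bigr]\le 2$ and $\deg_{x_i}\bigl[(1-qx_ix_j)w^{\mathrm{bulk}}_{ij}\bigr]\le 1$ (in every allowed case this factor is either $1-qx_ix_j$ itself or a numerator of degree $\le 1$ in $z=x_ix_j$). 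Fixing $x_k$, it appears in exactly one boundary factor and in exactly $m-1$ bulk factors, so every summand, and therefore $\widetilde Z_m$, has degree at most $2+(m-1)=m+1$ in $x_k$. Symmetry is inherited from Proposition \ref{Z symmetric function prop} together with the evident symmetry of the prefactor.

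The only delicate point, which I would treat as the main obstacle, is the bookkeeping in the second paragraph: one must be sure that no denominator factor is ever raised to a power greater than one within a single configuration. This is exactly where the triangular shape is used, since each pair of lines crosses at most once and each line bears a single boundary vertex; the multiset of denominators of any term is therefore squarefree and divides the chosen prefactor. Once this is established, polynomiality and the sharp per-variable degree follow from the elementary degree inspection above.
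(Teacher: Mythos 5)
Your proposal is correct and follows essentially the same route as the paper's proof: clear the denominators configuration by configuration using the fact that each line carries one boundary vertex (denominator $(a-x_i)(c-x_i)$, numerator degree $\le 2$) and each pair of lines crosses exactly once (denominator $1-qx_ix_j$, numerator degree $\le 1$ in each variable), giving degree at most $2+(m-1)=m+1$ per variable. Your explicit verification that the denominator multiset of any single configuration is squarefree is a point the paper's two-sentence proof leaves implicit, but it is the same argument.
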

\begin{proof}
    The pre-factors remove all possible denominators of bulk and boundary vertex weights of any lattice configuration, so that we can conclude that $\tilde{Z}_m$ is a polynomial in all $x_1,\dots,x_m$.
    Once the denominators are removed, the vertex on the boundary with argument $x_i$ contributes a factor of power 2. Each of the $m-1$ bulk vertices also contribute a factor of power of at most 1, giving the leading order power of $m+1$.
\end{proof}
\begin{prop}
	\label{Z trianglular recursion prop Z}
	The triangular partition function \eqref{Z triangular partition function defn eq} both satisfies, and is completely determined by, the following recursion relations
	\begin{align}
		Z_m (x_1,\ldots,x_m) \Big|_{x_i=0} &= 0,\label{Z recur x=0} \\
		Z_m (x_1,\ldots,x_m) \Big|_{x_i=\pm 1} &=  Z_{m-1}  (x_1,\ldots,\hat{x}_i,\ldots,x_m),\label{Z recur x=1}\\
		Z_m (x_1,\ldots,x_m) \Big|_{x_i=1/x_j} &= Z_{m-2} (x_1,\ldots,\hat{x}_i,\ldots,\hat{x}_j,\ldots,x_m).\label{Z recur x1=1/x2}
	\end{align}
\end{prop}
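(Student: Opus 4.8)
The plan is to establish the three recursion relations \eqref{Z recur x=0}, \eqref{Z recur x=1}, \eqref{Z recur x1=1/x2} directly from the lattice definition \eqref{Z triangular partition function defn eq}, and then argue that they (together with symmetry and the polynomial degree bound of Proposition~\ref{Z polynomial prop}) uniquely pin down the function. By symmetry (Proposition~\ref{Z symmetric function prop}) it suffices to treat a convenient choice of index; I would specialise the outermost variable $x_m$, whose line in \eqref{Z triangular partition function defn eq} is the longest and meets the most boundary vertices, so that its specialisations are the easiest to read off pictorially.

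\textbf{Proof of \eqref{Z recur x=0}.} First I would set $x_m=0$. In the bulk weights \eqref{Stochastic 6VM weights table} the argument on the $m$-th row is $z\mapsto x_i x_m = 0$, so every weight reduces to its $z=0$ value; in particular the weight $z(1-q)/(1-qz)$ vanishes at $z=0$. Tracing the forced configuration from the boundary vertex along this row, the vanishing of that weight obstructs the only path pattern that is consistent with the empty right and bottom boundary conditions on the top line, exactly as in the proof of \eqref{A(0) row-operator identity} in Proposition~\ref{Row operator identities}. Hence no admissible configuration survives and $Z_m|_{x_m=0}=0$.

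\textbf{Proof of \eqref{Z recur x=1}.} Next I would set $x_m=\pm1$. The associated boundary vertex then has weight $h(\pm1)=0$ by \eqref{eq:hdef}, so that $K(\pm1)=\mathsf{id}$ and the boundary vertex on the $m$-th diagonal simply transmits its incoming occupation unchanged. Reading the resulting frozen line and applying $R$-matrix unitarity (Proposition~\ref{R-matrix unitrarity prop}) along the row — precisely the mechanism used in the proof of \eqref{A(1) row-operator identity} — collapses the top row and its diagonal, leaving the triangular partition function on the remaining $m-1$ variables, which is $Z_{m-1}(x_1,\dots,\hat{x}_m,\dots)$.

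\textbf{Proof of \eqref{Z recur x1=1/x2} and uniqueness.} For the third relation I would set $x_i=1/x_j$ and exploit the Yang--Baxter and reflection equations: inserting an intertwining $R$-matrix $R_{x_i x_j}$ between the two relevant lines and using $R$-matrix factorization (Proposition~\ref{lm:Rfactor}) at the value $x_i x_j=1$, the two diagonals fuse so that the pair of lines detaches from the lattice, yielding $Z_{m-2}(x_1,\dots,\hat{x}_i,\dots,\hat{x}_j,\dots)$; this is the same cancellation that underlies \eqref{eq: G x=1/x recur} in Corollary~\ref{cor:recursionsG}. For the uniqueness claim I would argue by Lagrange interpolation: Proposition~\ref{Z polynomial prop} tells us $\widetilde{Z}_m$ is a symmetric polynomial of degree at most $m+1$ in each variable, and the relations \eqref{Z recur x=0}--\eqref{Z recur x1=1/x2} fix its values at enough specialisation points (and, via the degree bound and symmetry, determine the remaining freedom) to force a unique answer once the base case is known. \textbf{The main obstacle} I anticipate is the bookkeeping in \eqref{Z recur x1=1/x2}: one must carefully track how the reflection equation re-routes paths through the boundary when two diagonals are made to cross, and verify that no residual weight factor is produced, rather than any genuine conceptual difficulty.
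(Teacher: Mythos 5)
Your overall strategy coincides with the paper's: prove the three specialisations directly on the lattice (using symmetry to move the specialised variable to a convenient position), then use the degree bound of Proposition~\ref{Z polynomial prop} so that the $m+2$ specialisation points per variable determine $Z_m$ by interpolation and induction on $m$. Two of your three steps are essentially sound, with gaps of precision: in \eqref{Z recur x=0} you never state the key boundary fact $h(0)=1$, i.e.\ that the \emph{empty} boundary vertex has weight $1-h(0)=0$ and is therefore forced to emit a path. The all-empty configuration is perfectly consistent with the boundary conditions, so the vanishing of the bulk weight $z(1-q)/(1-qz)$ alone does not "obstruct the only consistent path pattern"; it is the boundary factor that kills the empty configuration, and the bulk factor that kills everything else. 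In \eqref{Z recur x1=1/x2} the essential ingredient you leave implicit is $K$-matrix unitarity (Proposition~\ref{K-matrix unitrarity prop}), which cancels the two boundary vertices once the existing crossing with $z=x_ix_j=1$ factorizes (no intertwiner needs to be inserted when the two lines are adjacent); after that cancellation the two leftover rows freeze empty against the $0$ boundaries.

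The genuine error is in your proof of \eqref{Z recur x=1}: $R$-matrix unitarity cannot be "precisely the mechanism used in the proof of \eqref{A(1) row-operator identity}" here, because that mechanism depends on the double-row geometry. In $A(\pm 1)$, setting $K(\pm1)=\mathsf{id}$ joins the lower and upper rows into a single U-turned line, so every column crosses that line \emph{twice}, with spectral parameters $\pm y_j$ and $\pm 1/y_j$ whose product is $1$; unitarity cancels these pairs. In the triangular partition function there are no such pairs: any two lines cross exactly once (column $k$ meets row $j$ only when $k>j$), and the row segment of your chosen line $x_m$ crosses nothing at all, so there is literally no pair of vertices to which Proposition~\ref{R-matrix unitrarity prop} could apply. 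What actually proves \eqref{Z recur x=1} is a freezing argument, as in the paper: $h(\pm1)=0$ makes the boundary vertex transmit its occupation unchanged; row $m$ runs straight into the $0$ right boundary, forcing the top of column $m$ to be $0$; any path entering column $m$ would have to exit either through that top edge or by turning right into the $0$ boundary, so column $m$ carries no paths and every vertex on it is the weight-$1$ empty vertex, whence the whole line can be deleted, leaving $Z_{m-1}$. (The paper instead freezes the bottom line $i=1$, which is even simpler since column $1$ has no crossings.) The fix is short, but the unitarity step as you wrote it would fail.
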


\begin{proof}
    Because of Proposition~\ref{Z polynomial prop}, the triangular partition function $Z_m$ is defined by a polynomial of degree $m+1$ in each variable. The $m+2$ recursions for each variable in the statement of Proposition~\ref{Z trianglular recursion prop Z} therefore completely determine the rational function $Z_m$.

    We now prove the recursion relations individually.
    \begin{enumerate}[label=\textbf{(\roman*)}]
        \item \textbf{Proof of \eqref{Z recur x=0}.} We make use of the symmetry of the partition function and consider
        \[Z_m\left(x_i,x_1,\dots,\hat{x}_i,\dots,x_m\right)\Big|_{x_i=0},\]
        so that the boundary vertex corresponding to the parameter $x_i$ is at the bottom-left-most position. Setting $x_i=0$ forces the bottom-left boundary vertex to generate a path with weight $h(0)=1$, where we recall the definition of $h(x)$ in \eqref{eq:hdef}. Since $x_i=0$, the bottom-right weight in \eqref{Stochastic 6VM weights table} vanishes and hence this path cannot turn to any vertical edge on any vertex on the bottom line in the diagram \eqref{Z triangular partition function defn eq}. And so this path must proceed to the right hand side of the lattice. However, due to the imposed boundary conditions there cannot be any occupations on the external boundary edges on the right hand side and so we conclude that there are no allowed configurations when $x_i=0$ and hence that the partition function is equal to zero. 
        \item \textbf{Proof of \eqref{Z recur x=1}.} Again making use of symmetry we consider \[Z_m\left(x_i,x_1,\dots,\hat{x}_i,\dots,x_m\right)\Big|_{x_i=\pm1}.\]
        This forces the boundary vertex at the bottom-left of the diagram in \eqref{Z triangular partition function defn eq} to generate no paths and be of weight $1-h(\pm1)=1$. This causes there to be no occupations along any edges of the bottom line, and hence this line may be removed at no cost leaving us with $Z_{m-1}\left(x_1,\dots,\hat{x}_i,\dots,x_m\right)$.
        \item \textbf{Proof of \eqref{Z recur x1=1/x2}.} Again using symmetry we consider
        \[Z_m\left(x_i,x_j,x_1,\dots,\hat{x}_i,\dots,\hat{x}_j,\dots,x_m\right)\Big|_{x_i=1/x_j}.\]
        Using the $R$-matrix factorization, Lemma \ref{lm:Rfactor}, the partition function becomes 
         \[\begin{tikzpicture}[baseline={([yshift=-.5ex]current bounding box.center)},scale=0.8]
         \begin{scope}[shift={(0,6)},rotate around={-90:(0,0)}]
			\draw[lightgray,line width=1.5pt,-] (6,1) -- (5,1) -- (5,1.3);
            \draw[lightgray,line width=1.5pt,-] (5,1.3) arc (0:90:0.7);
            \draw[lightgray,line width=1.5pt,->] (5,1.3) arc (0:45:0.7);
            \draw[lightgray,line width=1.5pt,->] (4.3,2) -- (4,2) -- (4,6);
            \draw[lightgray,line width=1.5pt,-] (6,2) -- (5.7,2);
			\draw[lightgray,line width=1.5pt,-] (5.7,2) arc (270:180:0.7);
            \draw[lightgray,line width=1.5pt,->] (5,2.7) -- (5,6);
			\draw[lightgray,line width=1.5pt,->] (6,3) -- (3,3) -- (3,6);
			\draw[lightgray,line width=1.5pt,->] (6,4) -- (2,4) -- (2,6);
			\draw[lightgray,line width=1.5pt,->] (6,5) -- (1,5) -- (1,6);
			
			\draw[lightgray,line width=1.5pt,->] (6,1) -- (5.5,1);
			\draw[lightgray,line width=1.5pt,->] (5.7,2) arc (270:235:0.7);
			\draw[lightgray,line width=1.5pt,->] (6,3) -- (5.5,3);
			\draw[lightgray,line width=1.5pt,->] (6,4) -- (5.5,4);
			\draw[lightgray,line width=1.5pt,->] (6,5) -- (5.5,5);
			
			\draw[blue,fill=blue] (5,1) circle (0.1cm);
			\draw[blue,fill=blue] (4,2) circle (0.1cm);
			\draw[blue,fill=blue] (3,3) circle (0.1cm);
			\draw[blue,fill=blue] (2,4) circle (0.1cm);
			\draw[blue,fill=blue] (1,5) circle (0.1cm);
        \end{scope}

			\node[right] at (6,1) {$0$};
			\node[right] at (6,2) {$0$};
			\node[right] at (6,3) {$0$};
			\node[right] at (6,4) {$0$};
			\node[right] at (6,5) {$0$};

            \node[below] at (1,-0.5) {$\uparrow$};
            \node[below] at (2,-0.5) {$\uparrow$};
            \node[below] at (3,-0.5) {$\uparrow$};
            \node[below] at (4,-0.5) {$\uparrow$};
            \node[below] at (5,-0.5) {$\uparrow$};
            
            \node[below] at (1,-1.1) {$x_j^{-1}$};
            \node[below] at (2,-1.3) {$x_j$};
            \node[below] at (3,-1.3) {$x_1$};
            \node[below] at (4,-1.3) {$\cdots$};
            \node[below] at (5,-1.3) {$x_m$};
   
			\node[below] at (1,0) {$0$};
			\node[below] at (2,0) {$0$};
			\node[below] at (3,0) {$0$};
			\node[below] at (4,0) {$0$};
			\node[below] at (5,0) {$0$};
		\end{tikzpicture}\quad
        =\quad
        \begin{tikzpicture}[baseline={([yshift=-.5ex]current bounding box.center)},scale=0.8]
         \begin{scope}[shift={(0,6)},rotate around={-90:(0,0)}]
            \draw[lightgray,line width=1.5pt,->]  (4,2) -- (4,6);
            \draw[lightgray,line width=1.5pt,->] (5,2) -- (5,6);
			\draw[lightgray,line width=1.5pt,->] (6,3) -- (3,3) -- (3,6);
			\draw[lightgray,line width=1.5pt,->] (6,4) -- (2,4) -- (2,6);
			\draw[lightgray,line width=1.5pt,->] (6,5) -- (1,5) -- (1,6);
			
			\draw[lightgray,line width=1.5pt,->] (6,3) -- (5.5,3);
			\draw[lightgray,line width=1.5pt,->] (6,4) -- (5.5,4);
			\draw[lightgray,line width=1.5pt,->] (6,5) -- (5.5,5);
		  
			\draw[blue,fill=blue] (3,3) circle (0.1cm);
			\draw[blue,fill=blue] (2,4) circle (0.1cm);
			\draw[blue,fill=blue] (1,5) circle (0.1cm);
        \end{scope}
			
			\node[left] at (2,1) {$0$};
			\node[left] at (2,2) {$0$};

            \node[below] at (3,-0.5) {$\uparrow$};
            \node[below] at (4,-0.5) {$\uparrow$};
            \node[below] at (5,-0.5) {$\uparrow$};
            
            \node[below] at (3,-1.1) {$x_1$};
            \node[below] at (4,-1.1) {$\cdots$};
            \node[below] at (5,-1.1) {$x_m$};
   
			\node[below] at (3,0) {$0$};
			\node[below] at (4,0) {$0$};
			\node[below] at (5,0) {$0$};
			
			\node[right] at (6,1) {$0$};
			\node[right] at (6,2) {$0$};
			\node[right] at (6,3) {$0$};
			\node[right] at (6,4) {$0$};
			\node[right] at (6,5) {$0$};
		\end{tikzpicture}.\]

        Here, we have used the unitarity of the $K$-matrix (Proposition \ref{K-matrix unitrarity prop}) to obtain the second diagram. We note here that the two bottom rows in the second diagram are completely frozen with no occupations and total weight 1, so that they can be removed at no cost to the partition function. This yields the result.
    \end{enumerate}
\end{proof}

\begin{remark}
Another set of recursions can be derived for the numerator \eqref{Z polynomial eq} of $Z_m$,
\begin{align}
\widetilde{Z}_m (x_1,\ldots,x_m) |_{x_i=1/qx_j} &= -a c (1 - q)q^{m-3} (1-x_j^2)(1-1/q^2x_j^2)\prod_{k=1 \above 0pt k\neq i,j}^m  (1-x_k/qx_j) (1-x_k x_j)  \nonumber \\
&\times \widetilde{Z}_{m-2} (x_1,\ldots,\hat{x}_i,\ldots,\hat{x}_j,\ldots,x_m).
\label{recur3}
\end{align}
These relations follow from the observation that the bottom-right vertex in diagram \eqref{Z triangular partition function defn eq} completely freezes when $qx_1x_m=1$, and as a consequence so does the bottom row and right-most column, leaving a partition function of size $m-2$ multiplied by the pre-factors in \eqref{recur3} that arise from the weights of the frozen vertices. By symmetry a similar result follows for $qx_ix_j=1$ for any $i$ and $j$.
\end{remark}

\subsection{Solution to recursion relations} 

This section provides solutions to the recursion relations of Proposition \ref{Z trianglular recursion prop Z} which in turn provide closed form solutions to the triangular partition function.

Let $Z_m^K$ be the partition function corresponding to the off-diagonal boundary conditions, which can be realized by setting $a=-c=1$, set
     \begin{align}
         \label{eq:Zkup}
         Z^K_m(x_1,\dots,x_m)
         :=Z_m(x_1,\dots,x_m)|_{a=-c=1}.
     \end{align}
     This partition function admits a Pfaffian formula due to Kuperberg \cite{kuperberg_symmetry_2002}:
     \begin{align}
     \label{eq:KPf}
     Z^K_m(x_1,\dots,x_m) &=
     \prod_{i=1}^m x_i
     \prod_{1\leq i<j\leq m} \frac{1-x_i x_j}{x_i-x_j}
     \Pf\left(M(x_i,x_j) \right)_{1\leq i,j\leq m},
     \\
     \label{eq:MKup}
         M(x_i,x_j) &= \frac{(1-q)(x_i-x_j)}{(1-x_i x_j)(1-q x_i x_j)}.
     \end{align}
We note that when $m$ is odd the partition function $Z_m^K$ vanishes.
\begin{thm}
\label{thm:ZsubsetK}
The triangular partition function \eqref{Z triangular partition function defn eq} with the general boundary weights can be expressed as,
    \begin{align}
        Z_m(x_1,\dots,x_m) = H_m(x)  
        \sum_{r=0}^{ \lfloor m/2 \rfloor} \left(\frac{1}{ -ac}\right)^{r}  \sum_{S \subseteq [1,m] \atop |S|=2r} \prod_{i \in S} \frac{h(x_i)}{1-h(x_i)}  \prod_{i \in S \atop j \in \comp{S}} \frac{1-x_ix_j}{x_i-x_j} 
        Z^K_{2r}(x_S),
        \label{eq:Zsubset}
     \end{align}
     where 
\be
\label{eq:Hdef}
H_m(x) := \prod_{i=1}^m (1-h(x_i)),\qquad h(x) = \frac{ac(1-x^2)}{(a-x)(c-x)}.
\ee
\end{thm}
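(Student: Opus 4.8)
The plan is to prove the identity by showing that the right-hand side of \eqref{eq:Zsubset}, which I will denote $R_m(x_1,\dots,x_m)$, satisfies the same defining data as $Z_m$: after clearing denominators it has the polynomial structure of Proposition \ref{Z polynomial prop}, and it obeys the three recursion relations of Proposition \ref{Z trianglular recursion prop Z}, which by that proposition determine $Z_m$ uniquely. I would run this as an induction on $m$, with inductive hypothesis $R_k=Z_k$ for all $k<m$, and use crucially that the Kuperberg partition function $Z^K_{2r}$, being the specialization $a=-c=1$ of $Z_{2r}$, obeys the very same recursions \eqref{Z recur x=0}--\eqref{Z recur x1=1/x2}. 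The base case is $R_1=1-h(x_1)=Z_1$, read off directly from the single empty boundary vertex.

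The two easier recursions come out as follows. At $x_i=0$ one has $h(0)=1$, so $1-h(x_i)=0$: every term of $R_m$ with $i\notin S$ is killed by the factor $(1-h(x_i))$ inside $H_m(x)$, while for $i\in S$ that factor is cancelled by $\frac{h(x_i)}{1-h(x_i)}$ and the vanishing is instead supplied by $Z^K_{2r}(x_S)\big|_{x_i=0}=0$; hence $R_m|_{x_i=0}=0$. At $x_i=\pm1$ one has $h(\pm1)=0$, so now the $i\in S$ terms vanish through the prefactor $h(x_i)$, and the surviving $i\notin S$ terms reproduce $R_{m-1}$ once one checks that the extra cross-factors $\prod_{k\in S}\frac{1-x_kx_i}{x_k-x_i}\big|_{x_i=\pm1}$ collapse to $1$ (they equal $(-1)^{|S|}=1$ at $x_i=1$ and $1$ at $x_i=-1$). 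By the inductive hypothesis $R_{m-1}=Z_{m-1}$, matching \eqref{Z recur x=1}.

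The recursion \eqref{Z recur x1=1/x2} at $x_i=1/x_j$ is the heart of the argument. Splitting the subset sum according to how $\{i,j\}$ meets $S$, the mixed terms $i\in S,\,j\in\comp S$ (and vice versa) are annihilated by the cross-factor $\frac{1-x_ix_j}{x_i-x_j}\big|_{x_i=1/x_j}=0$. The terms with $i,j\in\comp S$ and with $i,j\in S$ survive; in both, the extra cross-factors involving the removed variables telescope to $1$ at $x_i=1/x_j$, and $Z^K_{2r}(x_S)\big|_{x_i=1/x_j}=Z^K_{2r-2}(x_{S\setminus\{i,j\}})$ is the Kuperberg case of \eqref{Z recur x1=1/x2}. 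Pairing the two survivors over a fixed $S'=S\setminus\{i,j\}$, their combined weight is governed by the algebraic identity
\begin{equation}
(1-h(x_i))(1-h(x_j))-\frac{h(x_i)h(x_j)}{ac}\bigg|_{x_i=1/x_j}=1,
\end{equation}
which I would verify by the substitution $h(1/x)=\frac{ac(x^2-1)}{(ax-1)(cx-1)}$ and comparison of numerator and denominator after clearing. Here the $i,j\in S$ survivor carries the extra factor $\frac{h(x_i)h(x_j)}{-ac}$ coming from $r=r'+1$, while the $i,j\in\comp S$ survivor carries $(1-h(x_i))(1-h(x_j))$; the identity collapses their sum precisely into the $R_{m-2}(x_1,\dots,\hat x_i,\dots,\hat x_j,\dots,x_m)$ term, which equals $Z_{m-2}$ by induction.

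The main obstacle is not the recursions but the analytic input required to invoke uniqueness: I must show that $R_m$, multiplied by the clearing factor of \eqref{Z polynomial eq}, is a genuine symmetric polynomial of degree at most $m+1$ in each variable. The subtlety is that the individual summands carry spurious poles at $x_i=x_j$ for $i\in S$, $j\in\comp S$, coming from the cross-factors $\frac{1}{x_i-x_j}$, which the clearing factor does not remove. These poles must cancel across the subset sum, the residue of a term at $x_i=x_j$ matching that of the term with the roles of $i$ and $j$ interchanged, via $\frac{1-x_jx_i}{x_j-x_i}=-\frac{1-x_ix_j}{x_i-x_j}$ together with the symmetry of $Z^K$. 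Establishing this regularity—equivalently, the symmetry of $R_m$—is the one genuinely combinatorial step; once it is secured, fixing all but one variable and comparing the two degree-$(m+1)$ polynomials $\widetilde R_m$ and $\widetilde Z_m$ at the $m+2$ points $x_1\in\{0,\pm1,1/x_2,\dots,1/x_m\}$ forces $R_m=Z_m$.
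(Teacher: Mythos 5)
Your proposal is correct, and it reaches the result by a genuinely more direct route than the paper. Both arguments rest on the same uniqueness principle: by Propositions \ref{Z polynomial prop} and \ref{Z trianglular recursion prop Z}, a symmetric rational function with the denominator $\prod_i(a-x_i)(c-x_i)\prod_{i<j}(1-qx_ix_j)$, numerator degree at most $m+1$ per variable, and the three recursions is forced to equal $Z_m$. The difference is in how the candidate is verified. The paper never checks the subset sum \eqref{eq:Zsubset} directly: it first proves the shuffle-exponential formula $Z_{2m}=\tfrac{1}{m!}Z_2^{*m}$, $Z_{2m+1}=\tfrac{1}{m!}Z_1*Z_2^{*m}$ (Proposition \ref{prop:Z-shuffle}), where all the computational content is localized in properties of $Z_1,Z_2$ alone (notably $Z_2(x,1/x)=1$) together with nilpotency and associativity of the shuffle product; Theorem \ref{thm:ZsubsetK} then falls out algebraically by a binomial expansion of $Z_2^{*m}$ in the shuffle algebra, using $(Z_2^K)^{*r}=r!\,Z^K_{2r}$. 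You instead verify the recursions on the subset sum itself, which requires three inputs the paper's route avoids handling separately: the recursions satisfied by the Kuperberg specialization $Z^K_{2r}$ (correctly, only the $x_i=0$ and $x_i=1/x_j$ ones — $Z^K$ does \emph{not} satisfy the $x_i=\pm1$ recursion, and your argument rightly never invokes it, since at $x_i=\pm 1$ the surviving terms have $i\in\comp{S}$), the telescoping of cross-factors at the special points, and the pairing identity $(1-h(x))(1-h(1/x))-h(x)h(1/x)/(ac)=1$, which is exactly the paper's $Z_2(x,1/x)=1$ in disguise. Your approach buys self-containedness and avoids introducing the shuffle formalism; the paper's buys a cleaner verification (the spurious-pole/degree analysis is done once for $Z_2^{*m}$ rather than across the subset sum) and reuses the same machinery for the alternative formula of Theorem \ref{Z triangular prod sum}.

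Two routine points you assert but should write out: the degree bound at infinity (each summand of $R_m$ is $O(1)$ as $x_1\to\infty$, since $h(x_1)\to-ac$, each cross-factor $\tfrac{1-x_1x_l}{x_1-x_l}\to-x_l$, and $Z^K_{2r}$ stays bounded because the linear growth of its prefactor $\prod_i x_i$ is compensated by the decay $M(x_1,x_j)=O(1/x_1)$ of the unique Pfaffian entry involving $x_1$), and the induction needs both base cases $R_0=Z_0=1$ and $R_1=Z_1$, since the step uses $R_{m-1}=Z_{m-1}$ and $R_{m-2}=Z_{m-2}$ simultaneously. Neither is a gap, just bookkeeping to be completed.
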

The proof of Theorem~\ref{thm:ZsubsetK} is presented in the next section using shuffle algebra techniques.

\begin{cor}
    \label{Z sum Kuperberg intergral}
    Expression \eqref{eq:Zsubset} for $Z_m$ as a sum over subsets can routinely be converted to a contour integration over a family of contours $\mathcal{L}_i$ which all enclose each pole at $x_1,\ldots,x_m$ but omit all other singularities of the integrand
    \begin{multline}
        Z_m(x_1,\ldots,x_m)= H_m(x) \sum_{r=0}^{m} \frac{1}{r!} \left(\frac{1}{ -ac}\right)^{r/2} \oint_{\mathcal{L}_1} \frac{\dd v_1}{2\pi\ii} \cdots \oint_{\mathcal{L}_r} \frac{\dd v_{r}}{2\pi\ii} \prod_{1\le i< j\le r} \frac{v_j-v_i}{1-v_iv_j} \\
        \times\prod_{i=1}^{r} \left(\frac{v_i h(v_i)}{(1-v_i^2)(1-h(v_i))} \prod_{j=1}^m \frac{1-v_ix_j}{v_i-x_j} \right)\Pf \left( M (v_k,v_\ell) \right)_{1\le k,\ell \le r}.
        \label{eq:Zintegral}
    \end{multline}
\end{cor}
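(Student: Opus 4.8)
The plan is to evaluate the $r$-fold contour integral in \eqref{eq:Zintegral} by residues and to match the outcome term-by-term against the sum over subsets in \eqref{eq:Zsubset}. First I would observe that, as a function of each $v_i$, the only singularities of the integrand enclosed by $\mathcal{L}_i$ are the simple poles at $v_i = x_1,\dots,x_m$ produced by the factor $\prod_{j}(v_i-x_j)^{-1}$; the remaining singularities (at $v_i = \pm 1$, at the zeros of $1-h(v_i)$, at the poles $v_i=a,c$ of $h(v_i)$, and at $v_iv_j=1$, $qv_iv_j=1$ coming from the cross terms and from $M$) are excluded by the prescription that $\mathcal{L}_i$ encircles only $x_1,\dots,x_m$. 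Hence each iterated integral collapses into a sum of residues at $v_i = x_{\phi(i)}$, indexed by maps $\phi\colon\{1,\dots,r\}\to\{1,\dots,m\}$.

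Second, I would argue that only injective $\phi$ survive. If two variables land on the same pole $x_k$, the antisymmetric prefactor $\prod_{i<j}\frac{v_j-v_i}{1-v_iv_j}$ vanishes at $v_i=v_j=x_k$, annihilating that residue (for generic $x_k$ with $x_k^2\ne 1$). Moreover the full integrand is symmetric in $(v_1,\dots,v_r)$: under a transposition the Vandermonde-type factor $\prod_{i<j}\frac{v_j-v_i}{1-v_iv_j}$ and the Pfaffian $\Pf(M(v_k,v_\ell))$ each change sign (the former through the numerator $\prod(v_j-v_i)$, the latter because $M$ is antisymmetric), so their product, and therefore the whole integrand, is invariant. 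Consequently the sum over injective $\phi$ with fixed image $S$ comprises $r!$ identical contributions, and the prefactor $1/r!$ reduces the integral to a single sum over subsets $S\subseteq[1,m]$ with $|S|=r$.

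Third, I would carry out the residue at $v_i=x_{\phi(i)}$ explicitly. The single-variable part yields $\frac{x_{\phi(i)}h(x_{\phi(i)})}{1-h(x_{\phi(i)})}\prod_{k\ne\phi(i)}\frac{1-x_{\phi(i)}x_k}{x_{\phi(i)}-x_k}$, the denominator $1-v_i^2$ being exactly cancelled by the residue $1-x_{\phi(i)}^2$ of $\frac{1-v_ix_{\phi(i)}}{v_i-x_{\phi(i)}}$. Collecting these over $i\in S$ and splitting the inner product into indices in $S$ and in $\comp{S}$ produces $\prod_{i\in S}\frac{x_ih(x_i)}{1-h(x_i)}$ and $\prod_{i\in S,\,j\in\comp{S}}\frac{1-x_ix_j}{x_i-x_j}$, together with a symmetric product $\prod_{i<k\in S}\big(\tfrac{1-x_ix_k}{x_i-x_k}\big)^2$ and an overall sign $(-1)^{\binom{r}{2}}$. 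The evaluated cross-factor $\prod_{i<k\in S}\frac{x_k-x_i}{1-x_ix_k}$ then removes one power of $\frac{1-x_ix_k}{x_i-x_k}$ together with the accompanying sign, leaving precisely $\prod_{i\in S}x_i\cdot\prod_{i<k\in S}\frac{1-x_ix_k}{x_i-x_k}\cdot\Pf(M(x_i,x_k))_{i,k\in S}=Z^K_{|S|}(x_S)$ by \eqref{eq:KPf}. Matching $(1/(-ac))^{r/2}$ with $(1/(-ac))^{|S|/2}$, and noting that odd $r$ contributes a vanishing Pfaffian (consistent with only even subset sizes in \eqref{eq:Zsubset}), then reproduces \eqref{eq:Zsubset} exactly.

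The main obstacle will be the sign bookkeeping in the last step: one must verify that the $(-1)^{\binom{r}{2}}$ arising from the coincident-index products in the single-variable residues, the sign hidden in the evaluated Vandermonde factor $\prod_{i<k\in S}(x_k-x_i)/(1-x_ix_k)$, and the antisymmetry of $\Pf(M)$ conspire to leave no residual sign and to collapse the squared factors $\big(\tfrac{1-x_ix_k}{x_i-x_k}\big)^2$ down to the single power demanded by \eqref{eq:KPf}. Once this cancellation is confirmed, the identification of the integral formula with \eqref{eq:Zsubset} is immediate.
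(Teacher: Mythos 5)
Your proposal is correct and is precisely the ``routine'' residue computation that the paper has in mind (it offers no explicit proof of this corollary): evaluate the iterated integrals at the simple poles $v_i=x_j$, use the vanishing of the factor $\prod_{i<j}(v_j-v_i)/(1-v_iv_j)$ to kill repeated poles, cancel the $1/r!$ against the $r!$ injective assignments with a fixed image $S$, and match the resulting expression with \eqref{eq:Zsubset} via \eqref{eq:KPf}. The sign bookkeeping you flag as the main obstacle in fact works out exactly as you describe — the $(-1)^{\binom{r}{2}}$ from the coincident-index products cancels against the sign of the evaluated Vandermonde-type factor, leaving the single power of $\prod_{i<k\in S}(1-x_ix_k)/(x_i-x_k)$ required by \eqref{eq:KPf} — so no gap remains.
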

\begin{cor}
\label{cor:triangle-freeze}
For all $m \geq 1$, one has that
\begin{align}
\label{c to inf triangle}
\lim_{c \rightarrow \infty}
Z_m(x_1,\dots,x_m)
=
\lim_{c \rightarrow \infty}
\prod_{i=1}^m
(1-h(x_i))
=
\prod_{i=1}^m
\frac{x_i(1-ax_i)}{x_i-a}.
\end{align}
\end{cor}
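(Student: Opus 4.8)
The plan is to read the result straight off the subset expansion of Theorem~\ref{thm:ZsubsetK}. The crucial point is that the prefactor $H_m(x)=\prod_{i=1}^m(1-h(x_i))$ in \eqref{eq:Zsubset} is already the candidate limit (before sending $c\to\infty$), so by \eqref{eq:Hdef} the entire statement reduces to showing that the sum over $r$ in \eqref{eq:Zsubset} collapses to its $r=0$ term as $c\to\infty$. Thus I would first write $Z_m(x_1,\dots,x_m)=H_m(x)\bigl(1+\sum_{r\ge1}(\cdots)\bigr)$, where the isolated $1$ comes from the $r=0$ contribution: the empty subset $S=\varnothing$ makes every product in the summand empty and sets $Z^K_0=1$.

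Next I would show that each $r\ge1$ summand vanishes in the limit. Inspecting \eqref{eq:Zsubset}, the only genuinely $c$-dependent factors in a given summand are the overall coefficient $(-ac)^{-r}$ and the product $\prod_{i\in S}\frac{h(x_i)}{1-h(x_i)}$; the Kuperberg factor $Z^K_{2r}(x_S)$ and the cross product $\prod_{i\in S,\,j\in\comp{S}}\frac{1-x_ix_j}{x_i-x_j}$ carry no dependence on $c$. A short computation from \eqref{eq:hdef} (holding $a$ fixed) gives the finite limits $h(x)\to \frac{a(1-x^2)}{a-x}$ and $1-h(x)\to \frac{x(1-ax)}{x-a}$, so $\frac{h(x)}{1-h(x)}$ tends to a finite limit for generic $x$. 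Since $(-ac)^{-r}=(-a)^{-r}c^{-r}$ with $a\neq0$ fixed, each $r\ge1$ summand is $O(c^{-r})$ and hence vanishes. Therefore $\lim_{c\to\infty}\bigl(1+\sum_{r\ge1}(\cdots)\bigr)=1$, and because both factors have finite limits, $\lim_{c\to\infty}Z_m(x_1,\dots,x_m)=\lim_{c\to\infty}H_m(x)$, which is the first claimed equality.

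It then remains only to record the elementary identity $\lim_{c\to\infty}(1-h(x))=\frac{x(1-ax)}{x-a}$ computed above, which upon taking the product over $i$ yields $\prod_{i=1}^m\frac{x_i(1-ax_i)}{x_i-a}$ and establishes the second equality. The one point needing a little care is the main (and still minor) obstacle: $\frac{h(x)}{1-h(x)}$ degenerates at the special values $x\in\{0,1/a\}$, where $1-h(x)\to0$. Since every object in sight is a rational function of $(x_1,\dots,x_m)$, I would prove the identity for generic $x$ and extend by rationality, noting for instance that at $x_i=0$ both sides already vanish (the left by \eqref{Z recur x=0}, the right because $1-h(0)=0$). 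As an independent sanity check one may also argue directly on the lattice \eqref{Z triangular partition function defn eq}: as $c\to\infty$ the third boundary weight $-h(x)/ac\to0$ while the others remain finite, so every configuration containing a created path is suppressed by a positive power of $c^{-1}$, leaving only the empty configuration whose weight is exactly $\prod_{i=1}^m(1-h(x_i))$.
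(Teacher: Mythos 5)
Your proposal is correct and follows essentially the same route as the paper: the paper's proof likewise observes that in the subset formula \eqref{eq:Zsubset} the limit $c\to\infty$ kills every term with $r\geq 1$ (via the factor $(-ac)^{-r}$), leaving only the $r=0$ term $H_m(x)$, and the paper even gives as an alternative exactly your lattice-level ``sanity check'' that the third boundary weight $-h(x)/ac$ vanishes so the partition function freezes to the empty configuration. Your extra care about the degenerate points $x_i\in\{0,1/a\}$ and the extension by rationality is a reasonable refinement but not a departure from the paper's argument.
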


\begin{proof}
Examining the sum-over-subsets formula \eqref{eq:Zsubset}, it is easily verified that the limit $c\to\infty$ eliminates all terms in the sum over $r$ except that corresponding to $r=0$. The claim \eqref{c to inf triangle} is then immediate.

Alternatively, one may prove \eqref{c to inf triangle} directly from the definition of the partition function \eqref{Z triangular partition function defn eq}, by noting that the $c \to \infty$ limit causes the third vertex in the table \eqref{Stochastic boundary K-weight table} to vanish. Since the boundary vertices then only have the option to inject (but never eject) paths, and no paths exit the partition function \eqref{Z triangular partition function defn eq} via its right-outgoing edges, it follows that the whole partition function is frozen as a product of empty vertices. The factorization \eqref{c to inf triangle} follows trivially.
\end{proof}

The sum over subsets \eqref{eq:Zsubset} can be compactly written in terms of Pfaffians in various ways. A particularly elegant expression is the following single Pfaffian expression for $Z_m$. \footnote{After completion of this work we became aware that an analogous formula was very recently also presented in \cite{behrend2023diagonally}.}
\begin{thm}
\label{thm:ZPfaff}
    When $m$ is even, the triangular partition function \eqref{Z triangular partition function defn eq} with the generic open boundary weights can be expressed in terms of a Pfaffian,
    \label{Z triangular partition function single Pfaffian thm}
    \begin{equation}
        \label{Z single Pfaffian them eq}
        Z_m(x_1,\dots,x_m) = \prod_{1\leq i<j\leq m} \frac{1-x_ix_j}{x_i-x_j} \cdot \pf \left(\frac{x_i-x_j}{1-x_ix_j} Q(x_i,x_j) \right)_{1\leq i,j\leq m},
    \end{equation}
    where $Q$ is a symmetric function in two variables given by
    \begin{equation}
        \label{eq: Q Pfaffian kernel}
        Q(x_i,x_j) = (1-h(x_i))(1-h(x_j)) - \frac{h(x_i)h(x_j)}{ac} \frac{(1-q)x_ix_j}{1-qx_ix_j}.  
    \end{equation}
\end{thm}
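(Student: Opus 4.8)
The plan is to derive the single Pfaffian \eqref{Z single Pfaffian them eq} directly from the sum-over-subsets formula \eqref{eq:Zsubset} of Theorem~\ref{thm:ZsubsetK}, by recognizing its right-hand side as the expansion of a Pfaffian of a sum of two antisymmetric matrices. First I would split the kernel \eqref{eq: Q Pfaffian kernel}: using $M(x_i,x_j)$ from \eqref{eq:MKup}, the matrix entry factorizes as $\frac{x_i-x_j}{1-x_ix_j}Q(x_i,x_j) = A_{ij}+B_{ij}$ with
\[
A_{ij} = (1-h(x_i))(1-h(x_j))\frac{x_i-x_j}{1-x_ix_j},
\qquad
B_{ij} = -\frac{x_ix_j\,h(x_i)h(x_j)}{ac}\,M(x_i,x_j),
\]
both manifestly antisymmetric; the point of the split is that the second summand of $Q$ reproduces exactly Kuperberg's kernel $M$ after absorbing the Cauchy factor $\tfrac{x_i-x_j}{1-x_ix_j}$.

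The central tool is the Pfaffian addition formula: for antisymmetric $2n\times2n$ matrices,
\[
\Pf(A+B)=\sum_{\substack{S\subseteq\{1,\dots,2n\}\\ |S|\ \mathrm{even}}}\varepsilon(S)\,\Pf\!\big(A_{\comp S}\big)\,\Pf\!\big(B_S\big),
\]
where $A_{\comp S}$ and $B_S$ are the principal submatrices on $\comp S$ and $S$, and $\varepsilon(S)=\pm1$ is the sign of the permutation returning $(\comp S,S)$ to increasing order. I would then evaluate the two block Pfaffians in closed form. Since $A=DCD$ with $D=\diag(1-h(x_i))$ and $C_{ij}=\frac{x_i-x_j}{1-x_ix_j}$, the Schur Pfaffian identity $\Pf(C_T)=\prod_{i<j,\,i,j\in T}\frac{x_i-x_j}{1-x_ix_j}$ gives $\Pf(A_{\comp S})=\prod_{i\in\comp S}(1-h(x_i))\prod_{i<j\in\comp S}\frac{x_i-x_j}{1-x_ix_j}$. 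For the $B$-block with $|S|=2r$, pulling the diagonal factors $x_ih(x_i)$ and the per-entry constant $-1/(ac)$ out of the Pfaffian yields $\Pf(B_S)=(-ac)^{-r}\prod_{i\in S}x_ih(x_i)\,\Pf(M)_S$, and Kuperberg's evaluation \eqref{eq:KPf} turns $\Pf(M)_S$ into $Z^K_{2r}(x_S)$ up to its explicit Cauchy prefactor.

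Finally I would assemble these, multiply by the global prefactor $\prod_{1\le i<j\le m}\frac{1-x_ix_j}{x_i-x_j}$ of \eqref{Z single Pfaffian them eq}, and match term-by-term with \eqref{eq:Zsubset}. The internal products $\prod_{i<j\in S}$ and $\prod_{i<j\in\comp S}$ cancel against the corresponding factors of the global prefactor, leaving only the mixed pairs; the diagonal weights recombine via $\prod_{i\in\comp S}(1-h(x_i))\prod_{i\in S}h(x_i)=H_m(x)\prod_{i\in S}\frac{h(x_i)}{1-h(x_i)}$, reproducing $H_m(x)$ from \eqref{eq:Hdef} and the ratios appearing in \eqref{eq:Zsubset}. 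The main obstacle is the sign bookkeeping: I must show that the shuffle sign $\varepsilon(S)$, together with the orderings implicit in the two Schur-Pfaffian products, converts the magnitude-ordered product of mixed Cauchy factors into the ordered product $\prod_{i\in S,\,j\in\comp S}\frac{1-x_ix_j}{x_i-x_j}$ of \eqref{eq:Zsubset}. As an independent check that sidesteps the sign analysis entirely, one can instead verify that the right-hand side of \eqref{Z single Pfaffian them eq} obeys the defining recursions of Proposition~\ref{Z trianglular recursion prop Z} and the degree bound of Proposition~\ref{Z polynomial prop}, which together determine $Z_m$ uniquely; there the delicate step is the specialization $x_i=1/x_j$, where the pole of the kernel forces a Pfaffian cofactor computation.
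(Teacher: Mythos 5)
Your proposal takes essentially the same route as the paper: its proof of Theorem~\ref{thm:ZPfaff} consists precisely of combining the sum-over-subsets formula of Theorem~\ref{thm:ZsubsetK} with the Pfaffian summation identity of Lemma~\ref{lm:PFsum}, which is exactly the kernel decomposition and addition formula you describe. The details you supply (the Schur Pfaffian evaluation of the $A$-block, Kuperberg's formula \eqref{eq:KPf} for the $B$-block, and the sign bookkeeping) are the steps the paper leaves implicit in its one-line proof.
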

\begin{proof}
Theorem~\ref{thm:ZPfaff} follows from Theorem~\ref{thm:ZsubsetK} and the Pfaffian summation identity \eqref{eq:PFsum} in Lemma~\ref{lm:PFsum}.
\end{proof}
\begin{remark}
    In order to obtain an odd-sized solution from \eqref{Z single Pfaffian them eq} we would write, for $m=2\ell$,
    \[Z_{2\ell-1}(x_1,\dots,x_{2\ell-1}):=Z_{2\ell}(x_1,\dots,x_{2l-1},1),\]
    which makes use of the recursion relation \eqref{Z recur x=1}.
\end{remark}
We note that the Pfaffian kernel \eqref{eq: Q Pfaffian kernel} bears some resemblance to the one appearing in a refined Littlewood identity for spin Hall–Littlewood symmetric rational functions \cite{Gavrilova2023}, though is quite different due to the boundary factors.
\subsubsection{Cauchy summation identity revisited}
\label{sec: Cauchy identity Pfaffian}
\begin{cor}
\label{cor:cauchy-pfaff}
    Fix alphabets $(x_1,\dots,x_L),(z_1,\dots,z_M)$ and assume that there exists $\rho>0$ such that conditions \eqref{Cauchy identity condition} hold. Then the following Cauchy summation identity holds
    \begin{multline}
    \label{cauchy-cor}
        \sum_{\kappa} G_\kappa(x_1,\dots,x_L) F_{\kappa}(z_1,\dots,z_M) = \prod_{i=1}^M h(z_i) \prod_{i=1}^M\prod_{j=1}^L\left[\frac{x_j-qz_i}{x_j-z_i}\frac{1-z_ix_j}{1-qz_ix_j}\right]\\
       \times \prod_{1\leq i<j\leq L} \frac{1-x_ix_j}{x_i-x_j} \cdot \pf \left(\frac{x_i-x_j}{1-x_ix_j} Q(x_i,x_j) \right)_{1\leq i,j\leq L},
    \end{multline} 
    where $Q$ is given by \eqref{eq: Q Pfaffian kernel}.
\end{cor}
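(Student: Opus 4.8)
The plan is to obtain \eqref{cauchy-cor} as a direct specialization of the Cauchy identity already established in Corollary \ref{Cauchy identity cor}, followed by substitution of the Pfaffian evaluation of the empty partition function; this is precisely the ``further specification'' foreshadowed in the Remark immediately after Corollary \ref{Cauchy identity cor}. First I would take equation \eqref{Cauchy identity eq} and set $\nu=\emptyset$. Under the conventions fixed just above Corollary \ref{G symmetric cor}, the skew function on the left collapses to $F_{\kappa/\emptyset}=F_\kappa$, while the factor $G_\nu$ on the right becomes $G_\emptyset$. Since the hypotheses \eqref{Cauchy identity condition} are exactly those of Theorem \ref{skew Cauchy identity thm}, and hence of Corollary \ref{Cauchy identity cor}, the absolute convergence of the infinite sum over $\kappa$ on the left is inherited without any new analytic input.

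The second step is to rewrite $G_\emptyset(x_1,\dots,x_L)$ using the results of the preceding section. By Theorem \ref{G=Z empty set thm} one has $G_\emptyset(x_1,\dots,x_L)=Z_L(x_1,\dots,x_L)$, the triangular partition function of \eqref{Z triangular partition function defn eq}; in particular the left-hand side of \eqref{cauchy-cor}, read as an expectation with respect to the stochastic measure $G_\kappa$, becomes independent of the vertical alphabet $\yalph$ after this reduction. I would then invoke the single-Pfaffian formula of Theorem \ref{thm:ZPfaff}, valid for $L$ even, to write
\[
Z_L(x_1,\dots,x_L) = \prod_{1\leq i<j\leq L} \frac{1-x_ix_j}{x_i-x_j} \cdot \pf\left(\frac{x_i-x_j}{1-x_ix_j}\,Q(x_i,x_j)\right)_{1\leq i,j\leq L},
\]
with $Q$ given by \eqref{eq: Q Pfaffian kernel}. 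Substituting this expression into the specialized identity produces exactly \eqref{cauchy-cor}, completing the derivation for even $L$; for odd $L$ one first applies the convention $Z_{2\ell-1}(x_1,\dots,x_{2\ell-1})=Z_{2\ell}(x_1,\dots,x_{2\ell-1},1)$ recorded in the Remark following Theorem \ref{thm:ZPfaff} before reading off the Pfaffian.

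Since every ingredient is already in place, there is no genuine obstacle in this proof: it is purely a matter of assembling Corollary \ref{Cauchy identity cor}, Theorem \ref{G=Z empty set thm} and Theorem \ref{thm:ZPfaff}. The only point demanding minor care is the bookkeeping of the parity of $L$ together with the empty-state conventions, so that the Pfaffian of the correct size appears and the prefactors $\prod_{i} h(z_i)$ and the double product over the rational factors $\tfrac{x_j-qz_i}{x_j-z_i}\tfrac{1-z_ix_j}{1-qz_ix_j}$ are transcribed verbatim from \eqref{Cauchy identity eq}.
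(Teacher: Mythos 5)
Your proposal is correct and follows exactly the paper's own route: specialize Corollary \ref{Cauchy identity cor} to $\nu=\emptyset$, identify $G_\emptyset$ with $Z_L$ via Theorem \ref{G=Z empty set thm}, and substitute the Pfaffian evaluation of Theorem \ref{thm:ZPfaff}. If anything, you are slightly more careful than the paper's one-line proof, since you explicitly address the case of odd $L$ through the convention $Z_{2\ell-1}(x_1,\dots,x_{2\ell-1})=Z_{2\ell}(x_1,\dots,x_{2\ell-1},1)$, a point the paper leaves implicit.
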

\begin{proof}
    The proof follows by using Theorem \ref{thm:ZPfaff} in the Cauchy summation identity of Corollary \ref{Cauchy identity cor}.
\end{proof}
We note here the parallel of \eqref{cauchy-cor} to the refined Cauchy identity of Macdonald polynomials from \cite{KirillovNoumi}, which is expressed as the product of the Macdonald Cauchy kernel and the Izergin\textendash Korepin determinant in \cite{warnaar2008}.

\subsection{Shuffle-exponential generating function}
\label{ssec:shuffle}
The partition function $Z_m$ and its generating function
\begin{align}
    \label{eq:Zgf}
    Z(v):= \sum_{m=0}^\infty v^m Z_m,
\end{align}
can both be conveniently written in terms of a \emph{shuffle product}. 
\begin{defn}\label{def:sh}
Let $f(x_1\dots x_k)$ and  $g(x_1\dots x_\ell)$ be two symmetric rational functions. We define the \emph{shuffle product} $f*g$ to be the symmetric rational function given by
\begin{align}
    \label{eq:sp}
    f * g = \sum_{\substack{S\subseteq [1,k+\ell]\\ |S|=k} } f(x_S) g(x_{S^c}) 
    \prod_{\substack{i\in S\\ j\in \comp{S}}}\frac{1-x_i x_j}{x_i-x_j}.
\end{align}
The identity with respect to the shuffle product is the rational symmetric function $1$ in zero number of arguments and $f*1=1*f=f$. Further, for any rational function $f(x_1\dots x_k)$ we define the \emph{shuffle power} and the \emph{shuffle exponential}, $\exp_*$, by
\begin{align}
    \label{def:sexp}
    f^{*j} := \underbrace{f*f*\cdots *f}_{\text{$j$ times}},
    \qquad
    \exp_*(f) := 1 + f + \tfrac{1}{2!} f^{*2} + \tfrac{1}{3!} f^{*3} +\cdots.
\end{align}
\end{defn}
From Definition \ref{def:sh} it follows that the shuffle product of $f(x_1\dots x_k)$ and  $g(x_1\dots x_l)$ is commutative unless both $k$ and $l$ are odd
\begin{align}
    \label{eq:sh_com}
    f*g = (-1)^{k l} g*f.
\end{align}
It can also be easily shown that this shuffle product is associative $(f*g)*h=f*(g*h)$. This shuffle product can be used to construct an algebra of functions and constitutes a convenient notation.
\begin{prop}\label{prop:Z-shuffle}
    Consider the first three triangular partition functions $Z_0,Z_1$ and $Z_2$. These can be explicitly calculated from the the diagram \eqref{Z triangular partition function defn eq} as
\begin{align}\label{eq:Z012}
    Z_0=1,\qquad 
    Z_1 = 1- h(x_1),
    \qquad
    Z_2 = \left(1- h(x_1)\right)\left(1- h(x_2)\right)
    -\frac{h(x_1) h(x_2)}{a c} \frac{(1-q)x_1 x_2 }{1-q x_1 x_2},
\end{align}    
    which are rational symmetric functions in $0,1$ and $2$ arguments $x$ respectively. The generating function $Z(v)$ takes form
    \begin{align}
        \label{eq:Zexp}
        Z(v) 
        =\exp_*(v^2 Z_2 + v Z_1). 
    \end{align}
This exponential formula is equivalent to
\begin{align}
    \label{eq:ZL_sh}
    Z_{2m} = \frac{1}{m!} Z_2^{*m},
    \qquad
    Z_{2m+1} = \frac{1}{m!} Z_1*Z_2^{*m}.
\end{align}
\end{prop}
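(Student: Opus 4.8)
The plan is to treat the two assertions of the proposition separately: the explicit evaluations \eqref{eq:Z012}, and the generating-function identity \eqref{eq:Zexp} (equivalently the shuffle-power formulas \eqref{eq:ZL_sh}). For \eqref{eq:Z012} I would simply enumerate the admissible lattice configurations of the diagram \eqref{Z triangular partition function defn eq} for $m=0,1,2$: for $m=0$ the lattice is empty and $Z_0=1$; for $m=1$ the single boundary vertex is forced into its empty state of weight $1-h(x_1)$; and for $m=2$ there are only a handful of configurations on the two-line lattice whose weighted sum yields the stated $Z_2$.

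The equivalence of \eqref{eq:Zexp} and \eqref{eq:ZL_sh} is then a purely formal manipulation of the shuffle exponential, resting on two facts drawn from Definition~\ref{def:sh}. First, $Z_2$ has an even number of arguments, so by \eqref{eq:sh_com} it is central and commutes with every factor under $*$. Second, $Z_1*Z_1=0$: this is immediate from \eqref{eq:sp}, since the two size-one subsets contribute $Z_1(x_1)Z_1(x_2)(1-x_1x_2)(\tfrac{1}{x_1-x_2}+\tfrac{1}{x_2-x_1})=0$, and it is in any case forced by \eqref{eq:sh_com}. Expanding $\exp_*(v^2 Z_2+vZ_1)$ and discarding every term containing two or more factors of $Z_1$ then collapses the series to $\sum_m \tfrac{v^{2m}}{m!}Z_2^{*m}+\sum_m\tfrac{v^{2m+1}}{m!}Z_1*Z_2^{*m}$, which is exactly \eqref{eq:ZL_sh}. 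It therefore suffices to prove \eqref{eq:ZL_sh}.

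To establish \eqref{eq:ZL_sh} I would argue by strong induction on $m$ using uniqueness. By Propositions~\ref{Z symmetric function prop}, \ref{Z polynomial prop} and \ref{Z trianglular recursion prop Z}, the function $Z_m$ is the unique symmetric rational function whose renormalization \eqref{Z polynomial eq} is a polynomial of degree at most $m+1$ in each variable and which obeys the recursions \eqref{Z recur x=0}--\eqref{Z recur x1=1/x2}. Writing $W_m$ for the right-hand side of \eqref{eq:ZL_sh} (which is manifestly symmetric), it remains to verify that $W_m$ lies in the same class and satisfies the same recursions; the base cases $W_0,W_1,W_2$ coincide with \eqref{eq:Z012}. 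The heart of the argument is a set of Leibniz rules describing how a specialization $x_n\mapsto\xi$ interacts with the shuffle product, obtained by evaluating the Cauchy kernel $\tfrac{1-x_ix_j}{x_i-x_j}$ at the special points. At $\xi=0$ the kernels are regular and each building block vanishes ($Z_1(0)=0$ and $Z_2(0,\cdot)=0$, since $h(0)=1$), so every term of $W_m$ dies and $W_m|_{x_n=0}=0$. At $\xi=\pm1$ the kernels trivialize to $\pm1$, so $x_n\mapsto\pm1$ acts as a graded (odd) derivation $E_\pm$ on the shuffle algebra with $E_\pm(Z_1)=Z_1(\pm1)=1$ and $E_\pm(Z_2)=Z_2(\pm1,\cdot)=Z_1$ (using $h(\pm1)=0$); applied to the generating function, the centrality of $Z_2$ together with $Z_1^{*2}=0$ yields $E_\pm(W(v))=vW(v)$, which is precisely the recursion $W_m|_{x_n=\pm1}=W_{m-1}$. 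Finally, at $\xi=1/x_{j}$ the kernel $\tfrac{1-x_ix_j}{x_i-x_j}$ vanishes, so every term vanishes unless the indices $n$ and $j$ lie in a common block; as blocks have size at most two, this must be the $Z_2$-block $\{x_n,x_j\}$. Using the identity $Z_2(1/x,x)=1$ and the fact that the remaining cross-kernels attached to that block collapse to $1$, one obtains $W_m|_{x_n=1/x_j}=W_{m-2}$.

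The main obstacle I anticipate is the $x_n=1/x_j$ recursion. It hinges on the nontrivial identity $Z_2(1/x,x)=1$, which I would verify by a direct computation with $h$, reducing it to the rational identity $h(x)+h(1/x)=\tfrac{ac-1}{ac}\,h(x)h(1/x)$, and on controlling the pole/zero balance: one must confirm that no surviving building block or kernel develops a pole at $x_ix_j=1$ that could compensate the vanishing Cauchy kernel in the unmatched terms. A secondary technical point, needed to invoke the uniqueness of Proposition~\ref{Z trianglular recursion prop Z}, is to confirm the degree bound of Proposition~\ref{Z polynomial prop} for $W_m$; this I would obtain from a stability lemma for the shuffle product, showing that the spurious $x_i=x_j$ poles introduced by the kernels cancel in the symmetric sum and that the pole orders at $x_i=a,c$ and $x_ix_j=1/q$ remain within the prescribed bounds.
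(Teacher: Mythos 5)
Your proposal is correct and follows essentially the same route as the paper: reduce \eqref{eq:Zexp} to \eqref{eq:ZL_sh} via the parity/nilpotency properties \eqref{eq:sh_com} of the shuffle product, then prove \eqref{eq:ZL_sh} by showing the shuffle powers satisfy the degree bound of Proposition \ref{Z polynomial prop} (controlling the denominator and the $x_\ell\to\infty$ limit) together with the recursions of Proposition \ref{Z trianglular recursion prop Z}, which determine $Z_m$ uniquely. The only cosmetic difference is that you package the $x_i=\pm1$ specialization as an odd derivation acting on the generating function, whereas the paper checks the recursions term-by-term; both rest on exactly the same special evaluations $Z_1(0)=Z_2(0,x)=0$, $Z_1(\pm1)=1$, $Z_2(\pm1,x)=Z_1(x)$, $Z_2(x,1/x)=1$, and the vanishing of the Cauchy kernel at $x_ix_j=1$.
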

\begin{proof}
First we note that the two terms in the exponent in \eqref{eq:Zexp} commute with each other and $Z_1^{*k}=0$ for $k>1$ due to \eqref{eq:sh_com}. Applying definitions \eqref{def:sexp} to the generating function \eqref{eq:Zexp} leads to \eqref{eq:ZL_sh}. Let us examine the expression for $Z_{2m}$ given by \eqref{eq:ZL_sh}. Computing $Z_2^{*m}$ produces a rational function with the (minimal) denominator
\begin{align}\label{eq:denZ}
    \prod_{i=1}^{2m}(a-x_i)(c-x_i) \prod_{1\leq i<j\leq 2m} (1-q x_i x_j).
\end{align}
This denominator is a polynomial of degree $2m+1$ in the individual $x_l$, $l=1,\dots,2m$. Let us fix $l$ and show that the limit $x_l\rightarrow \infty$ of $Z_{2}^{*m}$ exists. By writing $Z_2^{*m}$ using \eqref{def:sexp} and \eqref{eq:sp} we can see that $Z_2^{*m}$ is of the form
\begin{align}\label{eq:ZX}
    Z_2^{*m} = \sum \cdots Z_2(x_{i_1},x_{i_2})  \cdots  Z_2(x_{j_1},x_{j_2}) \cdots 
    \times \prod_{a,b=1,2}\frac{1-x_{i_a} x_{j_b}}{x_{i_a} - x_{j_b}} 
    \times \cdots ,
\end{align}
therefore in each summand the dependence on $x_\ell$ is of the form
\begin{align*}
    Z_2(x_\ell,x_{a}) \prod_{b} \frac{1-x_\ell x_b}{x_\ell-x_b},
\end{align*}
where $a$ and $b$ are some indices not equal to $l$. Computing $x_\ell\rightarrow \infty$ in both of these factors shows that this limit exists. Therefore $Z_2^{*m}$ is given by a ratio of a polynomial of degree at most $2m+1$ in $x_\ell$ and the polynomial in \eqref{eq:denZ}. This implies that in order to prove \eqref{eq:ZL_sh} for $Z_{2m}$ we need to show that $ Z_{2}^{*m}/m!$ satisfies the recursion relations of Proposition~\ref{Z trianglular recursion prop Z}. 

The specializations given in \eqref{Z recur x=0} and \eqref{Z recur x=1} follow from
\begin{align}
    \label{eq:x0}
    &Z_1(0)=0,\qquad\quad Z_2(x,0)=Z_2(0,x)=0,\\
    \label{eq:x1}
    &Z_1(1)=Z_0=1,
    \quad Z_2(x,1)=Z_2(1,x)=Z_1(x).
\end{align}
We have \eqref{eq:ZL_sh} satisfies \eqref{Z recur x=0} due to \eqref{eq:x0} and it satisfies \eqref{Z recur x=1} due to \eqref{eq:x1}. Consider next \eqref{Z recur x1=1/x2} and set $x_k = 1/x_\ell$ in $Z_2^{*m}$ as written in \eqref{eq:ZX} for any distinct $k, \ell =1\dots 2m$. In each term of the sum in \eqref{eq:ZX} the arguments $x_1\dots x_{2m}$ are distributed between various factors $Z_2$. Considering a generic summand we encounter two cases: either $k\in\{i_1,i_2\}$ and $\ell \in \{j_1,j_2\}$ or $k,\ell\in\{i_1,i_2\}$. In the first case the contribution is zero because of the factor which is explicitly written in \eqref{eq:ZX} and in the second case we compute 
\begin{multline*}
    \frac{1}{m!}Z_{2}^{*m}|_{x_k=1/x_\ell} = 
    \frac{1}{m!}
    \sum \cdots Z_2\left(x_{\ell},\frac{1}{x_{\ell}}\right)  \cdots  Z_2(x_{j_1},x_{j_2}) \cdots 
    \times \prod_{b=1,2}
    \frac{1-x_\ell x_{j_b}}{x_\ell - x_{j_b}} 
    \frac{1-\frac{1}{x_\ell} x_{j_b}}{\frac{1}{x_\ell} - x_{j_b}} 
    \times \cdots \\
    =\frac{1}{(m-1)!} \left(Z_2^{*(m-1)}\right)(\dots,\hat{x}_k,\dots,\hat{x}_\ell,\dots),
\end{multline*}
where we noted that $Z_2(x_l,1/x_l)=1$ and the explicitly written rational function is also equal to 1. There are in total $m$ different summands for which $k,\ell\in\{i_1,i_2\}$. All these summands are equal to each other and to the symmetric function $Z_2^{*(m-1)}$ which depends on $x_1,\dots, x_{2m}$ with $x_k,x_\ell$ omitted. These computations show that $Z_{2m}$ given by \eqref{eq:ZL_sh} satisfies the conditions of Proposition~\ref{Z trianglular recursion prop Z}. The case of $Z_{2m+1}$ can be proven analogously. 
\end{proof}
\begin{cor}
The summation formula for $Z_m$ given in Theorem \ref{thm:ZsubsetK} holds as a consequence of \eqref{eq:ZL_sh}. 
\end{cor}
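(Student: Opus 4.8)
The plan is to start from the shuffle-exponential presentation \eqref{eq:Zexp}, namely $Z(v)=\exp_*(vZ_1+v^2Z_2)$ (equivalent to \eqref{eq:ZL_sh}), and to split the exponent so that the Kuperberg content and the scalar boundary content of $Z_2$ are exponentiated separately. Using \eqref{eq:Z012} together with \eqref{eq:Zkup}--\eqref{eq:MKup}, I would write $Z_2=A-B$, where $A(x_1,x_2)=(1-h(x_1))(1-h(x_2))$ is the purely boundary piece and $B(x_1,x_2)=\tfrac{1}{ac}h(x_1)h(x_2)Z^K_2(x_1,x_2)$ is the Kuperberg piece. Both $A$ and $B$ are symmetric functions of two variables, hence even, so by \eqref{eq:sh_com} they shuffle-commute with each other and with the odd one-variable function $Z_1=1-h(x_1)$. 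The usual binomial argument for a commuting pair then gives the factorization
\begin{equation*}
Z(v)=\exp_*\!\big(vZ_1+v^2A\big)\,*\,\exp_*\!\big(-v^2B\big),
\end{equation*}
and I would treat each factor in turn.

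For the second factor, the weights $h$ and the constant $\tfrac{1}{ac}$ depend only on single variables, so they pull out of every shuffle product; one finds $B^{*k}=(ac)^{-k}\big(\prod_{i=1}^{2k}h(x_i)\big)(Z^K_2)^{*k}$. The crucial input here is that the Kuperberg partition function obeys the \emph{same} shuffle-exponential relation: specializing \eqref{eq:ZL_sh} to $a=-c=1$, where $Z_1$ vanishes and $Z_2=Z^K_2$ by \eqref{eq:Z012}, yields $\tfrac{1}{k!}(Z^K_2)^{*k}=Z^K_{2k}$. Consequently $\exp_*(-v^2B)=\sum_{k\ge0}v^{2k}(-ac)^{-k}\big(\prod_{i=1}^{2k}h(x_i)\big)Z^K_{2k}$.

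The heart of the argument, and the step I expect to be the main obstacle, is the identity $\exp_*(vZ_1+v^2A)=\sum_{n\ge0}v^n\prod_{i=1}^n(1-h(x_i))$. Since $Z_1$ and $A$ are built from the single-variable factor $u(x)=1-h(x)$, these factors pull out of all shuffle products, reducing the claim to the parameter-free statements $\mathbf{1}_2^{*j}=j!$ and $\mathbf{1}_1*\mathbf{1}_{2j}=1$ for the constant functions $\mathbf{1}_k\equiv1$ in $k$ variables. Being independent of $a,c,q$, these may be verified at any convenient specialization. I would take the limit $c\to\infty$: there $B\to0$, so that $Z(v)\to\exp_*(vZ_1+v^2A)$, while the \emph{direct} evaluation in Corollary~\ref{cor:triangle-freeze} (the one obtained straight from the partition function \eqref{Z triangular partition function defn eq}, not from \eqref{eq:Zsubset}) gives $Z_n\to\prod_{i=1}^n\tfrac{x_i(1-ax_i)}{x_i-a}=\prod_{i=1}^n\lim_{c\to\infty}(1-h(x_i))$ by \eqref{c to inf triangle}. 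Matching coefficients of $v^n$ forces the constant-function combinations to equal $1$, which establishes the identity for all parameters. (Alternatively, one could prove $\mathbf{1}_2^{*j}=j!$ and $\mathbf{1}_1*\mathbf{1}_{2j}=1$ directly as rational identities by induction on $j$ via a residue/degree argument, but the specialization route is shorter.)

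Finally I would assemble the two factors. Writing $P(v)=\sum_n v^n\prod_{i=1}^n(1-h(x_i))$ and $Q(v)=\sum_k v^{2k}(-ac)^{-k}\big(\prod_{i=1}^{2k}h(x_i)\big)Z^K_{2k}$, the definition \eqref{eq:sp} of the shuffle product gives that the coefficient of $v^m$ in $P*Q$ equals
\begin{equation*}
\sum_{r=0}^{\lfloor m/2\rfloor}\Big(\tfrac{1}{-ac}\Big)^{r}\sum_{\substack{S\subseteq[1,m]\\|S|=2r}}\prod_{i\in S}h(x_i)\prod_{j\in\comp S}\big(1-h(x_j)\big)\prod_{\substack{i\in S\\ j\in\comp S}}\frac{1-x_ix_j}{x_i-x_j}\,Z^K_{2r}(x_S),
\end{equation*}
where $S$ indexes the variables routed to the $Q$-factor. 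Pulling out $H_m(x)=\prod_{i=1}^m(1-h(x_i))$ and rewriting $\prod_{i\in S}h(x_i)\prod_{j\in\comp S}(1-h(x_j))=H_m(x)\prod_{i\in S}\tfrac{h(x_i)}{1-h(x_i)}$ reproduces precisely \eqref{eq:Zsubset}, thereby deriving Theorem~\ref{thm:ZsubsetK} from \eqref{eq:ZL_sh}.
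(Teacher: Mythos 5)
Your proof is correct, and its skeleton is the same as the paper's: the paper likewise splits $Z_2$ from \eqref{eq:Z012} into the two shuffle-commuting even summands $(1-h(x_1))(1-h(x_2))$ and $-\tfrac{1}{ac}h(x_1)h(x_2)Z^K_2(x_1,x_2)$, expands $Z_{2m}=\tfrac{1}{m!}Z_2^{*m}$ by the binomial theorem, invokes $(Z^K_2)^{*r}=r!\,Z^K_{2r}$ exactly as you do, and then matches the resulting shuffle product of the two pieces against \eqref{eq:Zsubset}. Your generating-function packaging $Z(v)=\exp_*(vZ_1+v^2A)*\exp_*(-v^2B)$ is a cosmetic reorganization of that binomial expansion, though it has the small merit of treating the odd case uniformly, which the paper's displayed computation (even case only) leaves implicit. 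Where you genuinely add something is at the step the paper waves through with ``can be computed'': evaluating the boundary power $\bigl((1-h(x_1))(1-h(x_2))\bigr)^{*(m-r)}$ requires the parameter-free identities $\mathbf{1}_2^{*j}=j!$ and $\mathbf{1}_1*\mathbf{1}_{2j}=1$ for the constant functions $\mathbf{1}_k\equiv 1$, which the paper never proves (they are also asserted without proof as \eqref{eq:X} in the proof of Theorem \ref{Z triangular prod sum}). You isolate these identities and prove them by comparing the $c\to\infty$ limit of \eqref{eq:ZL_sh} with the direct vertex-freezing evaluation in Corollary \ref{cor:triangle-freeze}, and you are right to insist on the direct proof of that corollary rather than the one via \eqref{eq:Zsubset}, since the latter would be circular; because $\prod_i\lim_{c\to\infty}\bigl(1-h(x_i)\bigr)$ is not identically zero as a rational function, the cancellation that extracts $\mathbf{1}_2^{*j}=j!$ is legitimate. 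With those identities in hand, your final subset bookkeeping for $P*Q$ reproduces \eqref{eq:Zsubset} verbatim, so your argument is complete --- arguably more complete than the paper's own proof, which it otherwise mirrors.
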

\begin{proof}
We note that $Z_2$ in \eqref{eq:Z012} is given by a sum of two terms and therefore \eqref{eq:ZL_sh} can be expanded using the binomial theorem
\begin{align*}
    Z_{2m}&=\frac{1}{m!}
    \left((1-h(x_1))(1 -h(x_2)) +
\frac{-1}{a c}h(x_1)h(x_2)Z_2^K(x_1,x_2)\right)^{*m}\\
&=
\sum_{r=0}^m 
\left(\frac{-1}{a c}\right)^r
\frac{1}{(m-r)! r!}
\left((1-h(x_1))(1 -h(x_2))\right)^{*(m-r)}
*
\left(h(x_1)h(x_2)Z_2^K(x_1,x_2)\right)^{*r}.
\end{align*}
The two terms given by the shuffle powers $*(m-r)$ and $*r$ can be computed. For example, the second term is computed by observing that
$(Z_{2}^K)^{*r} = r! Z_{2r}^K$ as a consequence of \eqref{eq:ZL_sh} and \eqref{eq:Zkup}. After this
we can write the shuffle product of these two terms using \eqref{def:sh} and match the outcome with \eqref{eq:Zsubset}.

\end{proof}

\subsection{Alternative form of solution to recursion relations}
Theorem~\ref{Z triangular prod sum} below contains an alternative explicit expression for the triangular partition function $Z_m(x_1,\ldots,x_m)$ in terms of subset-sums over factorized expressions and valid for both $m$ even and odd. We first define $S$ by
\be
\label{eq:Sdef}
S(x_i,x_j) = \frac{x_i-x_j}{1-x_i x_j},
\ee
and let
\be
\begin{split}
	Q^{\mathrm{e}} (x_i,x_{j}) &= S(x_i,x_j) + \frac{u^2 q^{1/2}}{ac}\, x_i x_j\, h(x_i)h(x_j)\, S\left(q^{1/2}x_i,q^{1/2}x_j \right),\\
    Q^{\mathrm{o}}(x_i,x_{j}) &= x_ix_j S(x_i,x_j) + \frac{u^2}{q^{1/2}ac} h(x_i)h(x_j) S\left(q^{1/2}x_i,q^{1/2}x_j\right),
\end{split}
\ee
where $u$ is a generating parameter. Furthermore, we define the following functions in terms of Pfaffians
\begin{align}
	Z^\mathrm{e}_{2m} (u;x_1,\dots,x_{2m}) & = \prod_{1\leq i<j\leq 2m} \frac{1-x_i x_j}{x_i-x_j} \cdot \pf \left( Q^{\mathrm{e}}_{2m}(x_i,x_{j}) \right)_{1\le i,j \le 2m},
 \label{eq:Z even Pfaffian def}
 \\[2mm]
	Z^\mathrm{o}_{2m-1} (u;x_1,\dots,x_{2m-1}) & = \prod_{1\leq i<j\leq 2m-1} \frac{1-x_i x_j}{x_i-x_j} \cdot \pf \begin{pmatrix}
    \left(Q^{\mathrm{o}}_{2m-1}(x_i,x_{j})\right)_{1\le i<j\le 2m-1} & (-u h(x_i))_{1\le i \le 2m-1} \\
    (u h(x_j))_{1\le j \le 2m-1} & 0
    \end{pmatrix}.
\end{align}
We also set  
\be
    \begin{split}
	Z^\mathrm{e}_{2m-1} (u;x_1,\dots,x_{2m-1}) & =  Z^\mathrm{e}_{2m} (u;x_1,\dots,x_{2m-1},1),\\
	Z^\mathrm{o}_{2m} (u;x_1,\dots,x_{2m}) & = Z^\mathrm{o}_{2m+1} (u;x_1,\dots,x_{2m},1).
	\end{split}
 \label{eq:Z cross 1}
 \ee
Using these definitions we can now state the following theorem and corollary.
\begin{thm}
	\label{Z triangular prod sum}
	The triangular partition function \eqref{Z triangular partition function defn eq} is recovered by $Z_m(x_1,\dots,x_m) = Z_m(u=1;x_1,\dots,x_m)$ for $m\in\mathbb{N}$ with 
 \be
 \label{eq:z=Ze+Zo}
Z_m(u;x_1,\dots,x_m) = Z^\mathrm{e}_{m} (u;x_1,\dots,x_{m}) + Z^\mathrm{o}_{m} (u;x_1,\dots,x_{m}). 
 \ee
Furthermore, the partition function with generating parameter, $Z_m(u;x_1,\dots,x_m)$, can be written as
	\begin{equation}
		\label{Z triangular partition function S sum}
		Z_m(u;x_1,\dots,x_m) = \sum_{S\subseteq [1,m]} (-u)^\abs{S} g_S (x) \prod_{i\in S} h(x_i) \prod_{i\in S}\prod_{j\in\comp{S}} \frac{x_ix_j-1}{x_i-x_j} \prod_{1\leq i<j\leq m \atop i,j\in S} \frac{1-x_ix_j}{1-q x_i x_j},
    	\end{equation}
	where $r_S=\left\lfloor |S|/2\right\rfloor$ and
    \[g_S (x) = \frac{q^{r_S^2}}{(ac)^{r_S}} \begin{cases}
	\displaystyle	\prod_{i\in S} x_i, & \abs{S}\text{ is even} \\[4mm]
		\displaystyle	 \prod_{i\in\comp{S}} x_i, & \abs{S}\text{ is odd}
	\end{cases}.\]
 \end{thm}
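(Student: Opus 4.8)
The plan is to treat $Z_m(u;x_1,\dots,x_m)$, which is \emph{defined} by the Pfaffian sum $Z^{\mathrm e}_m+Z^{\mathrm o}_m$ of \eqref{eq:z=Ze+Zo}, and to establish the two assertions separately. First I would prove the ``Furthermore'' claim, namely that this Pfaffian sum equals the subset-sum \eqref{Z triangular partition function S sum} for \emph{all} $u$; this is a purely algebraic Pfaffian identity. Then I would deduce that at $u=1$ the resulting subset-sum reproduces the triangular partition function by checking that it satisfies the recursion relations of Proposition~\ref{Z trianglular recursion prop Z}, which together with the degree bound of Proposition~\ref{Z polynomial prop} determine $Z_m$ uniquely.

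For the equivalence of the two forms (even $m=2m'$), the tool is the elementary Pfaffian splitting identity: writing the kernel $Q^{\mathrm e}=A+B$ with $A(x_i,x_j)=S(x_i,x_j)$ the ``bare'' term and $B(x_i,x_j)=\tfrac{u^2q^{1/2}}{ac}x_ix_j\,h(x_i)h(x_j)\,S(q^{1/2}x_i,q^{1/2}x_j)$ the ``decorated'' term, every perfect matching has each edge with both endpoints either bare or decorated, so there are no cross contributions and
\[
\Pf(A+B)=\sum_{S\subseteq[1,2m'],\ |S|\ \mathrm{even}}\varepsilon(S)\,\Pf\!\left(A|_{\comp{S}}\right)\Pf\!\left(B|_{S}\right),
\]
with $\varepsilon(S)$ the shuffle sign. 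The bare block evaluates by the product formula $\Pf\big(S(x_i,x_j)\big)_{\comp S}=\prod_{i<j\in\comp S}S(x_i,x_j)$, which cancels the within-$\comp S$ part of the overall prefactor $\prod_{i<j}S(x_i,x_j)^{-1}$ in \eqref{eq:Z even Pfaffian def}. In the decorated block the per-vertex factor $x_ih(x_i)$ and the constant $\tfrac{u^2q^{1/2}}{ac}$ pull out as a Pfaffian gauge transformation, leaving $\Pf\big(S(q^{1/2}x_i,q^{1/2}x_j)\big)_S=\prod_{i<j\in S}\tfrac{q^{1/2}(x_i-x_j)}{1-qx_ix_j}$. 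Collecting the $q$-powers $q^{r/2}$ (constant) and $q^{r(2r-1)/2}$ (product) assembles into $q^{r^2}$ with $r=|S|/2$, the $(ac)^{-r}$ and $u^{|S|}$ appear, the within-$S$ prefactor converts $\prod_{i<j\in S}\tfrac{x_i-x_j}{1-qx_ix_j}$ into the factor $\prod_{i<j\in S}\tfrac{1-x_ix_j}{1-qx_ix_j}$ of \eqref{Z triangular partition function S sum}, and the cross prefactor gives $\prod_{i\in S,\,j\in\comp S}\tfrac{x_ix_j-1}{x_i-x_j}$; the residual signs combine $u^{|S|}$ into $(-u)^{|S|}$ and reproduce $g_S$ in the even case. (An analogue of Lemma~\ref{lm:PFsum} may be invoked to package this resummation.)

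For odd $m$ the same mechanism applies to the bordered Pfaffian, where the extra row/column $\mp u\,h(x_i)$ allows one vertex of $S$ to be matched against the border, producing the terms with $|S|$ odd and, after the analogous bookkeeping, the odd-case value $g_S=\tfrac{q^{r_S^2}}{(ac)^{r_S}}\prod_{i\in\comp S}x_i$. With the subset-sum \eqref{Z triangular partition function S sum} in hand, I would then set $u=1$ and verify the three recursions. At $x_i=\pm1$ one has $h(\pm1)=0$, so every $S\ni i$ drops out and the remaining sum collapses to the $(m-1)$-variable subset-sum, giving \eqref{Z recur x=1}; at $x_i=1/x_j$ the factor $\tfrac{1-x_ix_j}{1-qx_ix_j}$ (if $i,j\in S$) or the cross factor $\tfrac{x_ix_j-1}{x_i-x_j}$ (if exactly one of $i,j\in S$) vanishes, leaving only $i,j\in\comp S$ and hence the $(m-2)$-variable subset-sum, giving \eqref{Z recur x1=1/x2}. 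Finally, clearing denominators as in Proposition~\ref{Z polynomial prop} and confirming the cancellation of the spurious poles at $x_i=x_j$ shows the candidate has the correct polynomial degree, so Lagrange interpolation forces equality with $Z_m$.

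The main obstacle is the precise sign and parity bookkeeping in the Pfaffian expansion: tracking $\varepsilon(S)$ together with the cross-term sign $(-1)^{|S|\,|\comp S|}$ so that $u^{|S|}$ becomes exactly $(-u)^{|S|}$, and, most delicately, the bordered-Pfaffian computation in the odd case that must yield $\prod_{i\in\comp S}x_i$ rather than $\prod_{i\in S}x_i$ in $g_S$. On the recursion side the subtle point is \eqref{Z recur x=0}: since $h(0)=1$ the boundary does not freeze, and the vanishing of $Z_m|_{x_i=0}$ must instead be produced by a term-by-term cancellation between the contributions of $S\ni i$ and $S\setminus\{i\}$, which I would organise via an involution on subsets.
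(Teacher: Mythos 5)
Your first step --- the equivalence of \eqref{eq:z=Ze+Zo} with the subset sum \eqref{Z triangular partition function S sum} --- is exactly the paper's: your ``splitting identity'' is Lemma \ref{lm:PFsum}, the evaluation of the bare block is Stembridge's factorization \eqref{eq:PfaffStembridge}, and the even/bordered bookkeeping you outline is the intended computation. Your second step, however, is a genuinely different route. The paper never verifies the recursions of Proposition \ref{Z trianglular recursion prop Z} on the subset sum; instead it rewrites \eqref{Z triangular partition function S sum} at $u=1$ as shuffle convolutions $\sum_j V_{m-j}*W_j^{\mathrm{e/o}}$ of functions that factorize into shuffle powers, computes the generating function $Z(u=1;v)$, and matches it with $\exp_*(v^2Z_2+vZ_1)$, which is the generating function \eqref{eq:Zexp} of the triangular partition functions by Proposition \ref{prop:Z-shuffle}. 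Your direct verification is viable in principle: the checks at $x_i=\pm1$ and $x_i=1/x_j$ work as you describe, and your instinct about $x_i=0$ is right --- since $h(0)=1$, the vanishing \eqref{Z recur x=0} comes from a pairwise cancellation between $S\ni i$ and $S\setminus\{i\}$, which indeed holds only at $u=1$.

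The gap is the degree bound, and your justification of it would fail. Uniqueness via Proposition \ref{Z trianglular recursion prop Z} requires Proposition \ref{Z polynomial prop}: after multiplication by $\prod_i(a-x_i)(c-x_i)\prod_{i<j}(1-qx_ix_j)$, the candidate must be a polynomial of degree at most $m+1$ in each variable. Cancellation of the spurious poles at $x_i=x_j$ gives polynomiality but \emph{not} this bound. In fact, individual terms of \eqref{Z triangular partition function S sum} exceed it: for $\ell\in S$ with $|S|=2r$ even, the factor $h(x_\ell)(a-x_\ell)(c-x_\ell)=ac(1-x_\ell^2)$ has degree $2$ and $g_S$ contributes an extra $x_\ell$, so the cleared term has degree $m+2$ in $x_\ell$ (similarly for $\ell\in\comp{S}$ with $|S|$ odd). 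This is precisely the subtlety the paper flags in the remark after \eqref{eq:suffle VW id}: the apparent degree of \eqref{Z triangular partition function S sum} is higher than expected and must be argued down. To close the gap you need to show that the coefficient of $x_\ell^{m+2}$ cancels in the sum; it does, again only at $u=1$, by the same involution $S\leftrightarrow S\setminus\{\ell\}$ you invoke for $x_i=0$: using $h(x_\ell)\to-ac$ and $\prod_{j\in S\setminus\{\ell\}}(1-x_\ell x_j)/(1-qx_\ell x_j)\to q^{1-2r}$ as $x_\ell\to\infty$, the leading coefficients of the paired terms are $-u^{2r}q^{(r-1)^2}(ac)^{1-r}C$ and $u^{2r-1}q^{(r-1)^2}(ac)^{1-r}C$ with a common factor $C$, summing to zero at $u=1$. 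With this lemma added, your induction goes through and yields a legitimate alternative to the paper's argument. Note that this extra cancellation is exactly what the shuffle route buys the authors: each shuffle summand of $Z_2^{*m}$ individually has a finite limit as $x_\ell\to\infty$, so there the degree bound holds term by term rather than only after cross-term cancellation.
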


\begin{remark}
The recursions \eqref{recur3} appear in \eqref{Z triangular partition function S sum} as residues of the simple poles at $x_k=1/qx_\ell$.
\end{remark}

\begin{proof}[Proof of Theorem~\ref{Z triangular prod sum}]
The equivalence of \eqref{eq:z=Ze+Zo} and \eqref{Z triangular partition function S sum} follows in a straightforward manner from the Pfaffian definitions of $Z_m^{\mathrm{e}}$ and  $Z_m^{\mathrm{o}}$, the Pfaffian identity \eqref{lm:PFsum} and from the fact that the Pfaffian of $S$ factorizes \cite{stembridge_nonintersecting_1990}, 
\be
\pf \left(S(x_i,x_j)\right)_{1\le i,j\le 2m} = \prod_{1\le i < j\le 2m} \frac{x_i-x_j}{1-x_i x_j}.
\label{eq:PfaffStembridge}
\ee
Next we need to show that $Z_m(x_1,\dots,x_m) = Z_m(u=1;x_1,\dots,x_m)$. We do that by computing the generating function
$$
Z(u;v) := \sum_{m=0}^{\infty}v^m Z_m(u;x_1,\dots,x_m)
$$
at $u=1$ with $Z_m(u=1;x_1,\dots,x_m)$ given by \eqref{Z triangular partition function S sum}. We will show that this generating function is equal to the generating function of $Z_m(x_1,\dots,x_m)$ \eqref{eq:Zexp} from Proposition \ref{prop:Z-shuffle}. Using the definition of the shuffle product \eqref{def:sh} we rewrite \eqref{Z triangular partition function S sum} with $u=1$ as
\begin{align}
\begin{split}
    \label{eq:susbet_shuffle}
    &Z_{2m}(u=1;x_1,\dots,x_{2m})=\sum_{j=0}^{2m} V_{2m-j}*W^{\mathrm{e}}_{j},
\\
    &Z_{2m+1}(u=1;x_1,\dots,x_{2m+1})=\sum_{j=0}^{2m+1} V_{2m+1-j}*W^{\mathrm{o}}_{j},
\end{split}
\end{align}
where we introduced symmetric functions $V_{m}=V_{m}(x_1,\dots,x_m)$
\begin{align}
\begin{split}
    \label{eq:VV}
    &V_{2m}= \frac{q^{m^2}}{(a c)^{m}} \prod_{i=1}^{2m} x_i h(x_i) \prod_{1\leq i<j\leq 2m} \frac{1- x_i x_j}{1-q x_i x_j}, \\
    &V_{2m+1}=-\frac{q^{m^2}}{(a c)^{m}} \prod_{i=1}^{2m+1} h(x_i) \prod_{1\leq i<j\leq 2m+1} \frac{1- x_i x_j}{1-q x_i x_j} ,
    \end{split}
\end{align}
and $W^{\mathrm{e}/\mathrm{o}}_{m}=W^{\mathrm{e}/\mathrm{o}}_{m}(x_1,\dots,x_m)$ are defined by
\begin{align}
    \label{eq:WW}
    W^\mathrm{e}_{2m}=1,
    \qquad 
    W^\mathrm{e}_{2m+1}=- x_1\cdots \,x_{2m+1},
    \qquad
    W^\mathrm{o}_{2m}= x_1\cdots \, x_{2m},
    \qquad
    W^\mathrm{o}_{2m+1}=1.
\end{align}
All of these functions also factorize with respect to the shuffle product
\begin{align}\label{eq:X}
    X_{2m} = \frac{1}{m!} X_2^{*m},
    \qquad
    X_{2m+1} = \frac{1}{m!} X_1*X_2^{*m},
    \qquad 
    \text{for}\quad X=V,W^\mathrm{e},W^\mathrm{o}.
\end{align}
From these formulas it follows that the generating functions of $V,W^\mathrm{e},W^\mathrm{o}$ can be expressed in terms of the shuffle exponential \eqref{def:sexp}. We compute the generating function $Z(u=1;v)$
\begin{align}\label{eq:Z_ev_odd}
    Z(u=1;v) = \sum_{m=0}^\infty v^{2m}Z_{2m}(u=1;x_1,\dots,x_{2m})+\sum_{m=0}^\infty v^{2m+1}Z_{2m+1}(u=1;x_1,\dots,x_{2m+1}),
\end{align}
using \eqref{eq:susbet_shuffle} and by representing each function $V_k,W_j^{\mathrm{e}/\mathrm{o}}$ in the form \eqref{eq:X}. The first summand in \eqref{eq:Z_ev_odd}
is computed as follows 
\begin{align}
     \sum_{m=0}^\infty v^{2m}Z_{2m}(u=1;x_1,\dots,x_{2m}) &=  \left(1+v^2 V_1*W_1^\mathrm{e}\right) *\text{exp}_* \left( v^2\left( V_2+W_2^\mathrm{e} \right)\right)\nonumber\\
    &= \text{exp}_* \left( v^2\left( V_2+W_2^\mathrm{e}+V_1*W_1^\mathrm{e} \right)\right) = 
    \text{exp}_* \left( v^2 Z_2\right),
    \label{eq:Zev_VW}
\end{align}
where $Z_2=Z_2(x_1,x_2)$ in the last expression is the triangular partition function for two sites. In \eqref{eq:Zev_VW} the second equality is due to the nilpotency of the shuffle product and the third equality is a consequence of the identity
\begin{equation}
    \label{eq:suffle VW id}
    V_2+W_2^\mathrm{e}+V_1*W_1^\mathrm{e} = Z_2.
\end{equation}
Let us remark that the numerator on the right hand side of \eqref{eq:suffle VW id} is a polynomial of degree $3$ in each $x_i$ while on the left hand side some terms have numerators which are polynomials of degree $4$ in individual $x_i$. In the above equation it is easy to check that the degree $4$ terms cancel on the left hand side. This phenomenon manifests itself if one tries to evaluate the degrees produced by the formula  \eqref{Z triangular partition function S sum}. The apparent degree is higher than expected and, in order to show the connection with $Z_m$, it is required to argue that \eqref{Z triangular partition function S sum} actually produces the correct degree. 

In the next step we calculate the generating function of the second term in \eqref{eq:Z_ev_odd}
\begin{align}
    \sum_{m=0}^\infty v^{2m+1}Z_{2m+1}(u=1;x_1,\dots,x_{2m+1})
    &=v \left(V_1+W_1^\mathrm{o}\right)*\text{exp}_* \left( v^2\left(V_2+W_2^\mathrm{o} \right)\right)
    \nonumber\\
    &=v Z_1*\text{exp}_* \left( v^2\left(Z_2 + Z_1*x_1 \right)\right)
    =v Z_1*\text{exp}_* \left( v^2 Z_2\right) 
    \label{eq:Zodd_VW}
\end{align}
where $Z_1=Z_1(x_1)$ and $Z_2=Z_2(x_1,x_2)$ in the second line are the triangular partition functions for one and two sites. In the second equality in \eqref{eq:Zodd_VW} we used 
\begin{align*}
    V_1+W_1^\mathrm{o} = Z_1 
    \qquad
    V_2+W_2^\mathrm{o}  = Z_2+Z_1*x_1
\end{align*}
and the last equality of \eqref{eq:Zodd_VW} is due to the nilpotency $Z_1^{*n}=0$, $n>1$. By combining \eqref{eq:Zodd_VW} with \eqref{eq:Zev_VW} in \eqref{eq:Z_ev_odd} we obtain the full generating function
\begin{align*}
        Z(u=1;v) 
        =\exp_*(v^2 Z_2 + v Z_1). 
\end{align*}
which coincides with \eqref{eq:Zexp} and therefore proves the statement of the Theorem. 
\end{proof}

\section{Integral formula for initially empty symmetric function}
\label{Vertex propagator section}
The central objects of this work are the two symmetric functions of Definition \ref{G partition function defn}. Theorem \ref{G=Z empty set thm} shows that the function $G_{\nu/\mu}$ reduces to the triangular partition function when both bottom and top configurations are empty. The previous section demonstrates how even when both conditions are empty this symmetric function is highly non-trivial. In this section we provide more insight into this behaviour by providing two equivalent evaluations of $G_{\nu/\mu}$ for arbitrary $\nu$ from an empty $\mu=\emptyset$. The form of this function leads to a striking conjecture on the orthogonality of the dual family $F_\kappa$.

\subsection{Subset formula}
\begin{thm}
    \label{G nu thm}
	Let the state on the bottom be empty while the arbitrary state on top $\nu = (\nu_1,\dots,\nu_n)$ consist of $n$ occupations at positions finitely far from the origin. Assume that that $L\geq n$ and there exists $\rho>0$ such that 
	\[\abs{\frac{1-x_iy_k}{1-qx_iy_k}\frac{q(1-x_j/y_k)}{1-qx_j/y_k}}\leq \rho <1,\]
    for all $1\leq i\neq j\leq L,k\in\mathbb{N}$. Then the partition function \eqref{G partition function defn A eq} is calculated explicitly as
	\begin{multline}
		\label{G nu sum expression thm eq}
		G_\nu (x_1,\dots,x_L|\yalph) = \sum_{K \subseteq [L] \atop |K|=n} Z_{L-n} (x_{\bar{k}_1},\dots,x_{\bar{k}_{L-n}}) \prod_{i\in K} h(x_i) \\
		\times \prod_{i\in K} \prod_{j\in \comp{K}} \left[\frac{x_j-qx_i}{x_j-x_i}\frac{1-x_i x_j}{1-q x_i x_j}\right] \prod_{\substack{1\leq i<j\leq L \\ i,j\in K}} \frac{1-x_{i} x_{j}}{1- q x_{i} x_{j}} \\
		\times \sum_{\sigma \in S_n} \prod_{1\leq i<j\leq n} \frac{x_{k_{\sigma(j)}}-q x_{k_{\sigma(i)}}}{x_{k_{\sigma(j)}}-x_{k_{\sigma(i)}}} \prod_{i=1}^n \left[  \frac{(1-q)x_{k_{\sigma(i)}}y_{\nu_i}}{1-q x_{k_{\sigma(i)}}y_{\nu_i}} \prod_{j=1}^{\nu_i-1}\frac{1- x_{k_{\sigma(i)}}y_j}{1- q x_{k_{\sigma(i)}}y_j}  \right].
	\end{multline}
	The outer sum is over subsets $K = \{k_1,\dots,k_n\}$ of $[L]=\{1,\dots,L\}$ with $n$ elements, whose complement is denoted $\comp{K}=\{\bar{k}_1,\dots,\bar{k}_{L-n}\}$.
\end{thm}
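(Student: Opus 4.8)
The plan is to evaluate $G_\nu$ starting from the lattice representation furnished by Lemma~\ref{G nu picture lemma}, and to process the vertical quantum space by a Drinfeld (factorizing $F$-matrix) twist, in direct analogy with the computation of refined six-vertex partition functions in \cite{wheeler_refined_2016}. First I would exploit the empty initial condition together with the ice rule to localize the path content of \eqref{G nu picture lemma eq}: since no path enters from the bottom, every path in the bulk is injected by one of the $L$ boundary vertices, and since the top state $\nu$ has $n=|\nu|$ occupied sites while the right edges of the $x$-region are frozen empty, exactly $n$ boundary vertices must fire. Summing over which rows fire produces the outer sum over subsets $K\subseteq[L]$ with $|K|=n$, and each firing boundary vertex contributes its weight $h(x_i)$, giving the prefactor $\prod_{i\in K} h(x_i)$.

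The next task is to decouple the two regions of the lattice. Using the symmetry of $G_\nu$ (Corollary~\ref{G symmetric cor}) to group the rows indexed by $\comp{K}$ together, and repeatedly applying the Yang--Baxter and reflection equations to drag the $K$-rows past them, the rows that contribute no net path to $\nu$ reassemble into a self-contained triangular lattice which, by (the proof of) Theorem~\ref{G=Z empty set thm}, equals $Z_{L-n}(x_{\comp{K}})$. Each crossing of a $K$-row over a $\comp{K}$-row, and of two $K$-rows, deposits a rational weight read directly from the vertex table \eqref{Stochastic 6VM weights table}; collecting these yields the cross product $\prod_{i\in K,\,j\in\comp{K}}[\cdots]$ and the within-$K$ factor $\prod_{i<j\in K}\frac{1-x_ix_j}{1-qx_ix_j}$.

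It then remains to evaluate the $y$-region, now a rectangular lattice in which $n$ horizontal lines with spectral parameters $x_{k_1},\dots,x_{k_n}$ enter from the left and the $n$ paths exit at the top at the prescribed columns $\nu_1>\cdots>\nu_n$. This is precisely the sub-partition function diagonalized by the factorizing $F$-matrix on $V_{y_1}\otimes V_{y_2}\otimes\cdots$: in the $F$-basis the operator transporting the injected paths becomes lower-triangular with factorized entries, so the configuration sum collapses onto the off-shell Bethe wavefunction $\sum_{\sigma\in S_n}\prod_{i<j}\frac{x_{k_{\sigma(j)}}-qx_{k_{\sigma(i)}}}{x_{k_{\sigma(j)}}-x_{k_{\sigma(i)}}}\prod_{i}[\cdots]$, in which $\prod_{j=1}^{\nu_i-1}\frac{1-x_{k_{\sigma(i)}}y_j}{1-qx_{k_{\sigma(i)}}y_j}$ records transmission past the first $\nu_i-1$ columns and $\frac{(1-q)x_{k_{\sigma(i)}}y_{\nu_i}}{1-qx_{k_{\sigma(i)}}y_{\nu_i}}$ the turn into column $\nu_i$. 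Assembling the three factors gives \eqref{G nu sum expression thm eq}.

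The main obstacle is the twist bookkeeping: proving rigorously that the $F$-matrix factorizes the $y$-transport into the stated permutation sum, and tracking \emph{every} rational prefactor generated when separating the two regions, which is where sign and ordering errors most easily creep in. One must also control the passage from the infinite-column lattice to the finite expression via the hypothesis $\rho<1$ and the finite-$N$ truncation of Proposition~\ref{prop: AA limit condition}. A cleaner though non-constructive alternative, which is the route actually adopted for the companion integral formula, would be to verify that the right-hand side of \eqref{G nu sum expression thm eq} is symmetric, has the expected degree in each $x_i$, and satisfies the recursions of Corollary~\ref{cor:recursionsG} (vanishing at $x_i=0$, and reduction at $x_i=\pm1$ and $x_j=1/x_k$); combined with a uniqueness statement of the type established in Proposition~\ref{Z trianglular recursion prop Z}, this would pin down $G_\nu$, the only delicate point being the degree control in the presence of the $\yalph$-alphabet.
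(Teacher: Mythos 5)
Your primary (Drinfeld-twist) route is the one the authors acknowledge they originally followed, but your sketch of it breaks at its very first step. With $\mu=\varnothing$ it is \emph{not} true that exactly $n$ boundary vertices fire: the boundary weights \eqref{Stochastic boundary K-weight table} also permit a boundary vertex to \emph{absorb} a path (the third vertex, weight $-h(x)/ac$), so a configuration contributing to $G_\nu$ may contain $n+k$ injections compensated by $k$ absorptions --- this is precisely the mechanism that makes $G_\varnothing=Z_L$ non-trivial in the first place. Consequently the outer sum over $K$ in \eqref{G nu sum expression thm eq} is not a configuration-by-configuration classification of "which rows inject the surviving paths", and the factor $Z_{L-n}(x_{\comp{K}})$ (itself a full triangular partition function with injections and absorptions among the rows of $\comp{K}$) cannot be obtained by reassembling the "non-firing" rows: a path injected at a row of $K$ may perfectly well be absorbed at a row of $\comp{K}$, and vice versa. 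The subset decomposition is an algebraic identity produced only after non-trivial resummation (this is what the twist accomplishes), not a splitting of the configuration sum; likewise your claim that each Yang--Baxter crossing "deposits" a single rational factor ignores that crossings generate sums over internal edge states unless boundary conditions freeze them.

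Your fallback verification route has a different, and equally fatal, gap --- and seeing how the paper avoids it is instructive. You propose to pin down $G_\nu$ by the $x$-recursions of Corollary \ref{cor:recursionsG} plus degree control, as was done for $Z_m$ in Proposition \ref{Z trianglular recursion prop Z}. But those recursions supply a bounded number of interpolation conditions in each variable $x_i$ (roughly $L+2$: the values at $x_i=0$, $x_i=\pm1$, and $x_i=1/x_j$), whereas the degree of $G_\nu$ in $x_i$ after clearing denominators grows linearly with $\nu_1$, since each of the first $\nu_1-1$ columns contributes a factor $(1-x_iy_j)/(1-qx_iy_j)$. For $\nu_1$ large the interpolation problem is therefore underdetermined; the "delicate point" you flag is exactly where the argument collapses. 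The paper's actual proof runs the uniqueness argument in the variable $y_{\nu_1}$ instead: Lemma \ref{G nu recursions lemma} shows that $G_\nu(x_1,\dots,x_L|Y^{-1})$ (vertical parameters inverted for convenience) is meromorphic in $y_{\nu_1}$ with only simple poles at $y_{\nu_1}=qx_i$, that the residue at $y_{\nu_1}=qx_1$ is an explicit multiple of $G_{(\nu_2,\dots,\nu_n)}(x_2,\dots,x_L|Y^{-1})$ (reducing both $n$ and $L$ by one), that the function vanishes as $y_{\nu_1}\to\infty$, and that $G_\varnothing=Z_L$. One then checks the same properties for the right-hand side of \eqref{G nu sum expression thm eq} and concludes by induction on $n=|\nu|$: the difference is meromorphic in $y_{\nu_1}$ with all residues vanishing (by the inductive hypothesis), hence entire and bounded, hence constant, hence zero by the behaviour at infinity. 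If you wish to salvage your verification plan, replace the $x$-recursions by this $y$-analyticity package; the $x$-recursions alone can never determine $G_\nu$ for general $\nu$.
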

We will now prepare for the proof of this important result. Instead of proving that the partition function with empty initial condition \eqref{G nu picture lemma eq} is equal the rational function \eqref{G nu sum expression thm eq}, it is convenient to invert the family of vertical spectral parameters, with $\yalph^{-1} = \left(y_1^{-1},y_2^{-1},\dots\right)$, by considering the diagram from \eqref{G nu picture lemma eq}
\begin{equation}
        \label{G nu picture 1/y eq}
        G_\nu\left(x_1,\dots,x_L|\yalph^{-1}\right) = 
        \begin{tikzpicture}[baseline={([yshift=-.5ex]current bounding box.center)},scale=0.8]
			\draw[lightgray,line width=1.5pt,->] (-4,0) -- (-4,1) -- (7,1);
			\draw[lightgray,line width=1.5pt,->] (-3,0) -- (-3,2) -- (7,2);
			\draw[lightgray,line width=1.5pt,->] (-2,0) -- (-2,3) -- (7,3);
			\draw[lightgray,line width=1.5pt,->] (-1,0) -- (-1,4) -- (7,4);
			\draw[lightgray,line width=1.5pt,->] (0,0) -- (0,5) -- (7,5);

			\draw[lightgray,line width=1.5pt,->] (-4,0) -- (-4,0.5);
			\draw[lightgray,line width=1.5pt,->] (-3,0) -- (-3,0.5);
			\draw[lightgray,line width=1.5pt,->] (-2,0) -- (-2,0.5);
			\draw[lightgray,line width=1.5pt,->] (-1,0) -- (-1,0.5);
			\draw[lightgray,line width=1.5pt,->] (0,0) -- (0,0.5);

            \foreach \x in {1,...,6}
			{\draw[lightgray,line width=2pt,->] (\x,0) -- (\x,6);
            \node[below] at (\x,0) {$0$};}

			\draw[blue,fill=blue] (-4,1) circle (0.1cm);
			\draw[blue,fill=blue] (-3,2) circle (0.1cm);
			\draw[blue,fill=blue] (-2,3) circle (0.1cm);
			\draw[blue,fill=blue] (-1,4) circle (0.1cm);
			\draw[blue,fill=blue] (0,5) circle (0.1cm);

            \node[below] at (-4,0) {$0$};
            \node[below] at (-3,0) {$0$};
            \node[below] at (-2,0) {$0$};
            \node[below] at (-1,0) {$0$};
            \node[below] at (0,0) {$0$};

            \node[below] at (-4,-0.5) {$\uparrow$};
            \node[below] at (-3,-0.5) {$\uparrow$};
            \node[below] at (0,-0.5) {$\uparrow$};
            
            \node[below] at (-4,-1.1) {$x_1$};
            \node[below] at (-3,-1.1) {$x_2$};
            \node[below] at (-2,-1.1) {$\cdots$};
            \node[below] at (-1,-1.1) {$\cdots$};
            \node[below] at (0,-1.1) {$x_L$};

            \node[below] at (1,-0.5) {$\uparrow$};
            \node[below] at (2,-0.5) {$\uparrow$};
            \node[below] at (3,-0.5) {$\uparrow$};

            \node[below] at (1,-1.1) {$y_1^{-1}$};
            \node[below] at (2,-1.1) {$y_2^{-1}$};
            \node[below] at (3,-1.1) {$y_3^{-1}$};

            \node[below] at (4,-1.1) {$\cdots$};
            \node[below] at (5,-1.1) {$\cdots$};
            \node[below] at (6,-1.1) {$\cdots$};

            \node[right] at (7,1) {$0 \rightarrow x_1^{-1}$};
            \node[right] at (7,2) {$0 \rightarrow x_2^{-1}$};
            \node[right] at (7,3) {$0$};
            \node[right] at (7,4) {$0$};
            \node[right] at (7,5) {$0 \rightarrow x_L^{-1}$};

            \node[right] at (7.9,3) {$\vdots$};
            \node[right] at (7.9,4) {$\vdots$};

            \node[above] at (1,6) {$\eta^\nu_1$};
			\node[above] at (2,6) {$\eta^\nu_2$};
			\node[above] at (3,6) {$\eta^\nu_3$};
			\node[above] at (4,6) {$\cdots$};
			\node[above] at (5,6) {$\cdots$};   
            \node[above] at (6,6) {$\cdots$};   
		\end{tikzpicture}
    .\end{equation}
    Which we will show is equal to the following rational function formula, which is equivalent to \eqref{G nu sum expression thm eq} with inverted vertical spectral parameters
    \begin{multline}
		\label{G nu 1/y eq}
		\mathfrak{G}_\nu \left(x_1,\dots,x_L|\yalph^{-1}\right) = \sum_{K \subseteq [L] \atop |K|=n} Z_{L-n} (x_{\bar{k}_1},\dots,x_{\bar{k}_{L-n}}) \prod_{i\in K} h(x_i) \\
		\times \prod_{i\in K} \prod_{j\in \comp{K}} \left[\frac{x_j-qx_i}{x_j-x_i}\frac{1-x_i x_j}{1-q x_i x_j}\right] \prod_{\substack{1\leq i<j\leq L \\ i,j\in K}} \frac{1-x_{i} x_{j}}{1- q x_{i} x_{j}} \\
		\times \sum_{\sigma \in S_n} \prod_{1\leq i<j\leq n} \frac{x_{k_{\sigma(j)}}-q x_{k_{\sigma(i)}}}{x_{k_{\sigma(j)}}-x_{k_{\sigma(i)}}} \prod_{i=1}^n \left[  \frac{(1-q)x_{k_{\sigma(i)}}/y_{\nu_i}}{1-q x_{k_{\sigma(i)}}/y_{\nu_i}} \prod_{j=1}^{\nu_i-1}\frac{1- x_{k_{\sigma(i)}}/y_j}{1- q x_{k_{\sigma(i)}}/y_j}  \right].
	\end{multline}
     Before we present the proof of Theorem \ref{G nu thm}, we will need some important properties of the partition function which largely follow from Lemma \ref{G nu picture lemma}.
    \begin{lm}
        \label{G nu recursions lemma}
        The partition function from Definition \ref{G partition function defn} with empty initial condition satisfies the following properties. We note that for configuration $\nu\in\mathbb{W}$ with at least one occupation, The coordinate $\nu_1\in\mathbb{N}$ denotes the right-most occupation in $\nu$.
        \begin{enumerate}[label=\textbf{(\roman*)}]
            \item $G_\nu \left(x_1,\dots,x_L|\yalph^{-1}\right)$ is a meromorphic function in $y_{\nu_1}$. Its poles are all simple and occur at the points $y_{\nu_1} = q x_i$ for $1 \leq i \leq L$.
            \item $G_\nu \left(x_1,\dots,x_L|\yalph^{-1}\right)$ is symmetric in its alphabet $(x_1,\dots,x_L)$.
            \item The residue of $G_\nu \left(x_1,\dots,x_L|\yalph^{-1}\right)$ at its simple pole $y_{\nu_1} = qx_1$ is given by
            \begin{multline}
                \label{G nu 1/y residue eval}
                {\rm Res}_{y_{\nu_1}=qx_1} \Big[ G_\nu \left(x_1,\dots,x_L|\yalph^{-1}\right) \Big]
                =
                (1-q) x_1 h(x_1)
                \prod_{j=2}^{L}
                \frac{x_j-q x_1}{x_j-x_1}
                \frac{1-x_1 x_j}{1-q x_1 x_j}
                \\ \times
                \prod_{j=1}^{\nu_1-1}
                \frac{y_j-x_1}{y_j-q x_1}
                G_{(\nu_2,\dots,\nu_n)} \left(x_2,\dots,x_{L}|\yalph^{-1}\right).
            \end{multline}
            \item The limit in $y_{\nu_1}$ is 
            \begin{equation}
                \lim_{y_{\nu_1} \rightarrow \infty} G_\nu \left(x_1,\dots,x_L|\yalph^{-1}\right) = 0.
            \end{equation}
            \item When the coordinate is empty
            \begin{equation}
                G_\emptyset \left(x_1,\dots,x_L|\yalph^{-1}\right) = Z_L(x_1,\dots,x_L).
            \end{equation}
            
        \end{enumerate}
    \end{lm}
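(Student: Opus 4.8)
The plan is to read all five properties directly off the staircase partition function \eqref{G nu picture 1/y eq} of Lemma \ref{G nu picture lemma}, which is a finite weighted sum over lattice-path configurations and hence a rational function in each spectral parameter. Two of the claims are essentially immediate. Property \textbf{(v)} is just Theorem \ref{G=Z empty set thm}: when $\nu=\emptyset$ no path is injected into the bulk, so $G_\emptyset=Z_L$, and since that theorem also asserts independence of $Y$ we may freely replace $\yalph$ by $\yalph^{-1}$. Property \textbf{(ii)} is the $\mu=\emptyset$ case of Corollary \ref{G symmetric cor}, the symmetry being unaffected by inverting the vertical alphabet.

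For \textbf{(i)} and \textbf{(iv)} the key structural observation is that, because $\nu_1$ is the right-most occupation and the bottom boundary of every column is empty, all columns strictly to the right of $\nu_1$ are frozen empty (weight $1$) and every path reaching column $\nu_1$ must enter it horizontally and turn upward exactly once before exiting the top. Consequently the only vertices whose weight depends on $y_{\nu_1}$ lie in the $\nu_1$-st column, with arguments of the form $z=x_i/y_{\nu_1}$; each contributes at most the single denominator $1-qx_i/y_{\nu_1}$, and since the $x_i$ are distinct no configuration can produce a repeated factor $1-qx_1/y_{\nu_1}$. This gives simple poles located exactly at $y_{\nu_1}=qx_i$, $1\le i\le L$, proving \textbf{(i)}. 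For \textbf{(iv)}, the unique turning vertex in column $\nu_1$ carries the weight $z(1-q)/(1-qz)$ from table \eqref{Stochastic 6VM weights table} with $z=x_i/y_{\nu_1}$; as $y_{\nu_1}\to\infty$ one has $z\to 0$, so this factor vanishes while every other weight stays finite, and each configuration contributes $0$ in the limit.

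The heart of the lemma is the residue \textbf{(iii)}. By the symmetry \textbf{(ii)} it suffices to evaluate the residue at $y_{\nu_1}=qx_1$, the general pole following by relabelling. This pole is produced solely by the turning vertex in column $\nu_1$ on the $x_1$-row, so taking the residue localises the sum onto configurations in which the path accounting for the occupation $\nu_1$ is injected by the boundary vertex on the $x_1$-row, propagates horizontally through columns $1,\dots,\nu_1-1$, turns upward at column $\nu_1$, and exits the top. The prefactor in \eqref{G nu 1/y residue eval} then assembles from: the boundary injection weight $h(x_1)$; the residue of the turning weight, $\res_{y_{\nu_1}=qx_1}\frac{(x_1/y_{\nu_1})(1-q)}{1-qx_1/y_{\nu_1}}=(1-q)x_1$; the horizontal transmission weights $\frac{1-x_1/y_j}{1-qx_1/y_j}=\frac{y_j-x_1}{y_j-qx_1}$ accumulated across columns $j=1,\dots,\nu_1-1$; and a final factor $\prod_{j=2}^{L}\frac{x_j-qx_1}{x_j-x_1}\frac{1-x_1x_j}{1-qx_1x_j}$ arising from peeling the frozen $x_1$ double-row past the remaining rows. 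What is left is exactly the staircase \eqref{G nu picture 1/y eq} built on $(x_2,\dots,x_L)$ with top configuration $(\nu_2,\dots,\nu_n)$, i.e. $G_{(\nu_2,\dots,\nu_n)}(x_2,\dots,x_L|\yalph^{-1})$.

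I expect this peeling step to be the main obstacle: once the residue has frozen the $x_1$-path, one must disentangle the interaction of the $x_1$ double-row from the rows $x_2,\dots,x_L$ using the $R$-matrix factorization of Proposition \ref{lm:Rfactor} together with repeated applications of the Yang--Baxter equation (Proposition \ref{prop:YB}) and the reflection equation \eqref{Sklyanin reflection equation R,K matrices}, and show that this produces precisely the stated product $\prod_{j=2}^{L}\frac{x_j-qx_1}{x_j-x_1}\frac{1-x_1x_j}{1-qx_1x_j}$ with no residual $y$-dependence. The careful bookkeeping of the frozen horizontal and vertical edges, in the spirit of the freezing arguments used in Propositions \ref{prop: AA limit condition} and \ref{A commutation prop}, is what makes this delicate. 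Together, parts \textbf{(i)}, \textbf{(iii)}, \textbf{(iv)} and \textbf{(v)} set up the induction on the number of occupations $n$ that drives the proof of Theorem \ref{G nu thm}.
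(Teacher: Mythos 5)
Parts \textbf{(i)}, \textbf{(ii)}, \textbf{(iv)} and \textbf{(v)} of your proposal are correct and follow the same route as the paper: read the analytic structure off the staircase picture \eqref{G nu picture 1/y eq}, cite Corollary \ref{G symmetric cor} for symmetry, and cite Theorem \ref{G=Z empty set thm} for the empty case. Your localization step in \textbf{(iii)} is also the paper's: the residue at $y_{\nu_1}=qx_1$ isolates configurations in which the boundary vertex of the $x_1$-row injects a path that runs horizontally to column $\nu_1$ and turns up, and the factors $h(x_1)$, $(1-q)x_1$ and $\prod_{j=1}^{\nu_1-1}\frac{y_j-x_1}{y_j-qx_1}$ are assembled exactly as in the paper.

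However, there is a genuine gap in how you produce the remaining factor $\prod_{j=2}^{L}\frac{x_j-qx_1}{x_j-x_1}\frac{1-x_1x_j}{1-qx_1x_j}$. You attribute it to ``peeling the frozen $x_1$ double-row past the remaining rows'' via the Yang--Baxter equation (Proposition \ref{prop:YB}) and the reflection equation \eqref{Sklyanin reflection equation R,K matrices}, and you explicitly leave this step as an unresolved obstacle. No such algebraic manipulation is needed, and it is not clear it could even be formulated: after taking the residue, the $x_1$-line is a frozen path configuration, not an operator that can be commuted through the other rows by exchange relations. In the staircase picture the factor is read off directly from frozen vertex weights, in two pieces. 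First, on its way to the $y$-columns the frozen horizontal path on row $x_1$ crosses the vertical segments of the lines $x_2,\dots,x_L$ in the triangular region \eqref{Z triangular partition function defn eq}; each such crossing carries a path entering and exiting horizontally with no vertical occupation, i.e.\ the weight $\frac{1-z}{1-qz}$ of table \eqref{Stochastic 6VM weights table} at $z=x_1x_j$, giving $\prod_{j=2}^{L}\frac{1-x_1x_j}{1-qx_1x_j}$. Second, after the turn the path must go straight up column $\nu_1$ (it cannot turn right, since $\nu_1$ is the rightmost occupation and the right boundary is empty, so there would be no admissible exit), crossing rows $x_2,\dots,x_L$ with the weight $\frac{q(1-z)}{1-qz}$ at $z=x_j/y_{\nu_1}$, which at $y_{\nu_1}=qx_1$ equals $\frac{x_j-qx_1}{x_j-x_1}$. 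The same ``no admissible exit'' argument shows that no path of the residual lattice can cross column $\nu_1$ horizontally, so that column decouples and what remains is exactly $G_{(\nu_2,\dots,\nu_n)}\left(x_2,\dots,x_L|\yalph^{-1}\right)$ — a decoupling step your proposal also does not address. Once these two observations are made, the residue formula \eqref{G nu 1/y residue eval} follows by multiplying weights, with no recourse to integrability identities.
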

    \begin{proof}
        We will demonstrate the properties diagrammatically on the partition function\eqref{G nu picture 1/y eq}. 
        \begin{enumerate}[label=\textbf{(\roman*)}]
            \item From the diagram, the only dependence on $y_{\nu_1}$ is from the $\nu_1$'th column. The weights which contribute to the partition function from this column will be from \eqref{Stochastic 6VM weights table} with $z=x_i/y_{\nu_1}$, where $i$ corresponds to the rows $1\leq i\leq L$. All of the vertex configurations carry weights which are either entire functions of $y_{\nu_1}$ or are analytic except at the isolated point $y_{\nu_1}=qx_i$. Since these weights contribute the only dependence on $y_{\nu_1}$ we can conclude that $G_\nu\left(x_1,\dots,x_L|\yalph^{-1}\right)$ is a meromorphic function for all $y_{\nu_1}$ with possible singularities at the isolated points $y_{\nu_1}=qx_i$ for $1\leq i \leq L$.

            Each global path configuration on \eqref{G nu picture 1/y eq} is will feature a weight from each vertex in the $\nu_1$'th column at most once. Since each weight generates at most a simple pole at $y_{\nu_1}=qx_i$, we can conclude that the partition function will be a sum of rational functions with simple poles at $y_{\nu_1}=qx_i$. Therefore the poles at these points will be simple.
            
            \item This property follows from Corollary \ref{G symmetric cor}.
            \item Observing the boundary conditions, the vertex in the $\nu_1$'th column and first row has two possible vertex configurations. these are shown in the table below.
            \begin{align*}
                	\begin{tabular}{cc}
                    	\qquad
                    	\begin{tikzpicture}
                    	\draw[gray,dashed,line width=1pt,-] (-1,0) -- (1,0);
                    	\draw[gray,dashed,line width=1pt,-] (0,-1) -- (0,1);
                    	\end{tikzpicture}
                    	\qquad
                    	&
                    	\qquad
                    	\begin{tikzpicture}
                    	\draw[gray,dashed,line width=1pt,-] (0,-1) -- (0,0) -- (1,0);
                    	\draw[red,line width=2pt,->] (-1,0) -- (0,0) -- (0,1);
                    	\end{tikzpicture}
                	\qquad
                	\end{tabular} 
                \end{align*}
            Since the all-empty configuration on the left has weight 1, lattice configurations where this vertex is empty will have a partition function contribution which are analytic at $y_{\nu_1}=qx_1$. When the other weight is involved the contribution will have a simple pole at $y_{\nu_1}=qx_1$. By taking the residue of the whole partition function at the point $y_{\nu_1}=qx_1$ we isolate contributions where this vertex is non-empty. Such configurations are depicted in the following diagram 
            \begin{equation}
                \label{G nu pic recursions proof diagram}
                \begin{tikzpicture}[baseline={([yshift=-.5ex]current bounding box.center)},scale=0.8]
                    \draw[gray,dashed,line width=1.5pt,->] (-5,-1) -- (-5,0) -- (7,0);
        			\draw[gray,dashed,line width=1.5pt,->] (-4,-1) -- (-4,1) -- (7,1);
        			\draw[gray,dashed,line width=1.5pt,->] (-3,-1) -- (-3,2) -- (7,2);
        			\draw[gray,dashed,line width=1.5pt,->] (-2,-1) -- (-2,3) -- (7,3);
        			\draw[gray,dashed,line width=1.5pt,->] (-1,-1) -- (-1,4) -- (7,4);
        			\draw[gray,dashed,line width=1.5pt,->] (0,-1) -- (0,5) -- (7,5);

                    \foreach \x in {1,...,6}
        			{\draw[gray,dashed,line width=1.5pt,->] (\x,-1) -- (\x,6);
                    }
        
                    \draw[red,line width=2pt,->] (-5,0) -- (6,0) -- (6,6);
        
                    \draw[blue,fill=blue] (-5,0) circle (0.1cm);
        			\draw[blue,fill=blue] (-4,1) circle (0.1cm);
        			\draw[blue,fill=blue] (-3,2) circle (0.1cm);
        			\draw[blue,fill=blue] (-2,3) circle (0.1cm);
        			\draw[blue,fill=blue] (-1,4) circle (0.1cm);
        			\draw[blue,fill=blue] (0,5) circle (0.1cm);
        
                     \draw[dotted,line width=1pt,rounded corners=15pt]  (-4.5,0.5) rectangle ++(10,4.9);
        
                    \node[below] at (-5,-1) {$\uparrow$};
                    \node[below] at (-4,-1) {$\uparrow$};
                    \node[below] at (0,-1) {$\uparrow$};
                    
                    \node[below] at (-5,-1.5) {$x_1$};
                    \node[below] at (-4,-1.5) {$x_2$};
                    \node[below] at (-2,-1.5) {$\cdots$};
                    \node[below] at (0,-1.5) {$x_{L}$};
        
                    \node[below] at (1,-1) {$\uparrow$};
                    \node[below] at (2,-1) {$\uparrow$};
                    \node[below] at (3,-1) {$\uparrow$};
                    \node[below] at (6,-1) {$\uparrow$};
        
                    \node[below] at (1,-1.5) {$y_1^{-1}$};
                    \node[below] at (2,-1.5) {$y_2^{-1}$};
                    \node[below] at (3,-1.5) {$y_3^{-1}$};
                    \node[below] at (6,-1.5) {$y_{\nu_1}^{-1}$};
        
                    \node[below] at (4.5,-1.5) {$\cdots$};
        
                    \node[right] at (7,0) {$\rightarrow x_1^{-1}$};
                    \node[right] at (7,1) {$\rightarrow x_2^{-1}$};
                    \node[right] at (7,5) {$\rightarrow x_{L}^{-1}$};
        
                    \node[right] at (7.9,3) {$\vdots$};
        
                    \node[above] at (6,6) {$\nu_1$};   
        		\end{tikzpicture}.
            \end{equation}
            By fixing the configuration at this vertex, we really freeze the contribution along the whole line associated with spectral parameter $x_1$ and the line associated with vertical spectral parameter $y_{\nu_1}$. This freezing passes on the empty boundary conditions below the first line the below the second line. Likewise it enforces the empty conditions on the right of the $\nu_1$'th column to the $(\nu_1-1)$'th column.

            After removing the frozen contribution when taking the residue, what is left in the rectangle in \eqref{G nu pic recursions proof diagram} is that of the same partition function \eqref{G nu picture 1/y eq} with a $n-1$ coordinates $(\nu_2,\dots,\nu_n)$ and $L-1$ rows with spectral parameters $(x_2,\dots,x_L)$. 

            Taking this residue can be written as
            \begin{multline}
                {\rm Res}_{y_{\nu_1}=qx_1} \Big[ G_\nu \left(x_1,\dots,x_L|\yalph^{-1}\right) \Big]
                = h(x_1) \prod_{j=2}^n \frac{1-x_1x_j}{1-qx_1x_j}\prod_{j=1}^{\nu_1-1} \frac{y_j-x_1}{y_j-qx_1}  G_{(\nu_2,\dots,\nu_n)} \left(x_2,\dots,x_{L}|\yalph^{-1}\right)\\
                \times\lim_{y_{\nu_1}\to qx_1} \left[ (y_{\nu_1}-q x_1)\frac{x_1(1-q)}{y_{\nu_1}-qx_1}\prod_{j=2}^n \frac{q(y_{\nu_1}-x_j)}{y_{\nu_1}-qx_j}\right],
            \end{multline}
            which can be easily manipulated to take the form of \eqref{G nu 1/y residue eval}. We note here we can include the entire inverted alphabet $\yalph^{-1}$ and remove it from the evaluation of the limit since the smaller partition function will only have explicit dependence on $y_j$ for $1\leq j\leq \nu_2$.
            \item From the table of weights \eqref{Stochastic 6VM weights table}, the configuration
            \[ \begin{tikzpicture}
                \draw[gray,dashed,line width=1pt,-] (0,-1) -- (0,0) -- (1,0);
	           \draw[red,line width=2pt,->] (-1,0) -- (0,0) -- (0,1);
            \end{tikzpicture}\]
            has weight $(1-q)x_i/(y_{\nu_1}-qx_i)$ when the horizontal and vertical spectral parameters are $x_i^{-1}$ and $y_{\nu_1}^{-1}$ respectively. In the limit $y_{\nu_1}\to\infty$ this weight is equal to zero and will not occur with the $\nu_1$'th column of the partition function. 
            
            However when observing the boundary conditions of \eqref{G nu picture 1/y eq}, we note that there is a path exit through the top of the $\nu_1$'th column while there are no occupations on the bottom entry or to the right of this column. Consequently, this weight must appear in the $\nu_1$'th column exaclty once for any configuration to provide a non-zero contribution. 
            
            Meanwhile, all other vertex configurations within \eqref{Stochastic 6VM weights table} will not diverge under the same the limit. This is sufficient to conclude that the limit of the whole partition function will evaluate to zero.
            
            \item This property follows from Theorem \ref{G=Z empty set thm}. 
        \end{enumerate}
    \end{proof}
    We will now proceed with the proof of the theorem.
    \begin{proof}[Proof of Theorem \ref{G nu thm}]
        Let us write \eqref{G nu 1/y eq} in the more compact form
        \begin{align}
        \label{compact}
        \mathfrak{G}_\nu \left(x_1,\dots,x_L|\yalph^{-1}\right)
        =
        \sum_{K \subseteq [L] \atop |K|=n}
        Z_{L-n}\left(x_{\comp{K}}\right)
        h(x_K)
        \Delta(x_K|x_{\comp{K}})
        \Phi_{\nu}\left(x_K| \yalph^{-1}\right),
        \end{align}
        where we have defined
        \begin{align*}
        h(x_K) & = \prod_{i\in K} h(x_i), \\
        \Delta(x_K|x_{\comp{K}}) &
        =
        \prod_{i\in K} \prod_{j\in \comp{K}} \left[\frac{x_j-qx_i}{x_j-x_i}\frac{1-x_i x_j}{1-q x_i x_j}\right] \prod_{\substack{1\leq i<j\leq L \\ i,j\in K}} \frac{1-x_{i} x_{j}}{1- q x_{i} x_{j}},
        \\ \\
        \Phi_{\nu}\left(x_K|\yalph^{-1}\right) &
        =
        \sum_{\sigma \in S_n} 
        \prod_{1\leq i<j\leq n} 
        \frac{x_{k_{\sigma(j)}}-q x_{k_{\sigma(i)}}}{x_{k_{\sigma(j)}}-x_{k_{\sigma(i)}}} 
        \prod_{i=1}^n \left[  \frac{(1-q)x_{k_{\sigma(i)}}/y_{\nu_i}}{1-q x_{k_{\sigma(i)}}/y_{\nu_i}} \prod_{j=1}^{\nu_i-1}\frac{1- x_{k_{\sigma(i)}}/y_j}{1- q x_{k_{\sigma(i)}}/y_j}  \right].
        \end{align*}
        We shall begin by proving that \eqref{compact} obeys the same set of properties as $G_\nu \left(x_1,\dots,x_L|\yalph^{-1}\right)$ from \eqref{G nu picture 1/y eq}. These are the properties in Lemma \ref{G nu recursions lemma} which provide a recursive construction for the formula \eqref{G nu 1/y eq} with an initial condition, and so completely define the formula for \eqref{G nu picture 1/y eq} through an inductive argument.
        
        \begin{enumerate}[label=\textbf{(\roman*)}]
        
        \item $\mathfrak{G}_\nu \left(x_1,\dots,x_L|\yalph^{-1}\right)$ is a meromorphic function in $y_{\nu_1}$. Its poles are all simple and occur at the points $y_{\nu_1} = q x_i$, $1 \leq i \leq L$. This property is immediate from the formula for $\Phi_{\nu}\left(x_K|\yalph^{-1}\right)$, which is the only place where $\mathfrak{G}_\nu \left(x_1,\dots,x_L|\yalph^{-1}\right)$ has dependence on the family $\yalph$.
        
        \item $\mathfrak{G}_\nu \left(x_1,\dots,x_L|\yalph^{-1}\right)$ is symmetric in its alphabet $(x_1,\dots,x_L)$. This is manifest from the form \eqref{compact} of $\mathfrak{G}_\nu \left(x_1,\dots,x_L|\yalph^{-1}\right)$.
        
        \item The residue of $\mathfrak{G}_\nu \left(x_1,\dots,x_L|\yalph^{-1}\right)$ at its simple pole $y_{\nu_1} = qx_1$ is given by
        \begin{multline*}
        {\rm Res}_{y_{\nu_1}=qx_1} \Big[ \mathfrak{G}_\nu \left(x_1,\dots,x_L|\yalph^{-1}\right) \Big]
        =
        (1-q) x_1 h(x_1)
        \prod_{j=2}^{L}
        \frac{x_j-q x_1}{x_j-x_1}
        \frac{1-x_1 x_j}{1-q x_1 x_j}
        \\ \times
        \prod_{j=1}^{\nu_1-1}
        \frac{y_j-x_1}{y_j-q x_1}
        \mathfrak{G}_{(\nu_2,\dots,\nu_n)} \left(x_2,\dots,x_{L}|\yalph^{-1}\right).
        \end{multline*}
        This is easily seen by computing
        \begin{align*}
            {\rm Res}_{y_{\nu_1}=qx_1} \Big[ \Phi_{\nu}\left(x_K|\yalph^{-1}\right) \Big]
            =
            \begin{cases}
                (1-q)x_1 \displaystyle{\prod_{j \in K \backslash \{1\}}}
                \frac{x_j - qx_1}{x_j - x_1}
                \displaystyle{\prod_{j=1}^{\nu_1-1}}
                \frac{y_j-x_1}{y_j-q x_1}
                \Phi_{(\nu_2,\dots,\nu_n)}\left(x_{K\backslash \{1\}}|\yalph^{-1}\right) & \text{ if } 1\in K\\
                \quad
                \\
                0 & \text{ if } 1\notin K
            \end{cases},
        \end{align*}
        and noting that for $1 \in K$, we have
        \begin{align*}
        \Delta(x_K|x_{\comp{K}})
        =
        \prod_{j \in \comp{K}}
        \frac{x_j - qx_1}{x_j - x_1}
        \prod_{j=2}^{L}
        \frac{1-x_1 x_j}{1-q x_1 x_j}
        \Delta(x_{K\backslash \{1\}} |x_{\comp{K}}),
        \qquad\
        h(x_K) = h(x_1) h(x_{K \backslash\{1\}}).
        \end{align*}
        
        \item $\mathfrak{G}_\nu \left(x_1,\dots,x_L|\yalph^{-1}\right) \rightarrow 0$ as $y_{\nu_1} \rightarrow \infty$. This follows by computing this limit directly on $\Phi_{\nu}\left(x_K|\yalph^{-1}\right)$.
        
        \item $\mathfrak{G}_{\varnothing} \left(x_1,\dots,x_L|\yalph^{-1}\right) = Z_L(x_1,\dots,x_L)$. This is simply the $n=0$ case of the formula \eqref{compact}.
        
        \end{enumerate}
        
        We have shown that $\mathfrak{G}_\nu \left(x_1,\dots,x_L|\yalph^{-1}\right)$ obeys the same set of properties as $G_\nu \left(x_1,\dots,x_L|\yalph^{-1}\right)$ does according to Lemma \ref{G nu recursions lemma}. It remains to show that these properties imply the equality of the two objects; we do this by induction on the length of $\nu$. To that end, define the function
        \begin{align*}
        \mathfrak{Z}_{\nu}(x_1,\dots,x_L|\yalph) = \mathfrak{G}_\nu \left(x_1,\dots,x_L|\yalph^{-1}\right) - G_\nu \left(x_1,\dots,x_L|\yalph^{-1}\right).
        \end{align*}
        By construction, $\mathfrak{Z}_{\varnothing}(x_1,\dots,x_L|\yalph)=0$. It follows that there exists an integer $m \geq 0$ such that $\mathfrak{Z}_{\mu}(x_1,\dots,x_L|\yalph)=0$ for all strict partitions $\mu = (\mu_1>\cdots>\mu_m)$ of length $m$ (with $L$ being arbitrary); this is our inductive hypothesis. 
        
        Now let $\lambda = (\lambda_1>\cdots>\lambda_{m+1})$ be a strict partition of length $m+1$. We know that $\mathfrak{Z}_{\lambda}(x_1,\dots,x_L|\yalph)$ is a meromorphic function in $y_{\lambda_1}$, its poles are all simple, and it vanishes as $y_{\lambda_1} \rightarrow \infty$. However, from the recursion relation obeyed by $\mathfrak{G}_{\lambda}\left(x_1,\dots,x_L|\yalph^{-1}\right)$ and $G_{\lambda}\left(x_1,\dots,x_L|\yalph^{-1}\right)$, as well as the inductive hypothesis, all poles have vanishing residue. This means that $\mathfrak{Z}_{\lambda}(x_1,\dots,x_L|\yalph)$ is entire and bounded in $y_{\lambda_1}$ and therefore constant. This constant must be zero in view of the known $y_{\lambda_1} \rightarrow \infty$ behaviour. It follows that $\mathfrak{Z}_{\lambda}(x_1,\dots,x_L|\yalph) = 0$ for all strict partitions $\lambda = (\lambda_1>\cdots>\lambda_{m+1})$ of length $m+1$, and the inductive step of the proof is complete.
    \end{proof}
\subsection{Integral formula}
Here we will present the sum over subset expression \eqref{G nu sum expression thm eq} as an equivalent nested integral formula. 
\begin{defn}
    \label{defn: nested contours}
    Fix an alphabet $(x_1,\dots,x_L)\in\mathbb{C}^L$. We denote by $\mathcal{C}_1,\dots,\mathcal{C}_n$ a collection of positively oriented closed complex contours satisfying 
    \begin{itemize}
        \item For all $1\leq i<j\leq n$, we have that $q\mathcal{C}_j$ lies completely outside the interior of $\mathcal{C}_i$, where $q\mathcal{C}_j$ denotes the image of $\mathcal{C}_j$ under multiplication by $q$. In addition, if $1$ is within the interior of $\mathcal{C}_i$ we also require that $\mathcal{C}_i$ is completely contained in the interior of $\mathcal{C}_j$.
        \item For all $1\leq i\leq L$, the contour $\mathcal{C}_i$ surrounds all points $x_j$ and does not surround the points $qx_j,q^{-1}x_j^{-1},q^{-1}y_k^{-1},a,c$ for all $1\leq j\leq L$ and $k\in\mathbb{N}$. 
    \end{itemize}
\end{defn}
Examples of contours satisfying the conditions of Definition \ref{defn: nested contours} are shown in Figure \ref{fig:contour diagram}. These conditions allow for some freedom with contour choice. In particular we may choose all contours to be equal, $\mathcal{C}_i=:\mathcal{C}$ for all $i$,  provided that the contours neither lie upon nor enclose $1\in\mathbb{C}$. 

Provided that the integrand considered has no singularity at 1, the contours may surround 1 if we choose that they are nested. That is, for all $1\leq i\leq n-1$ the contour $\mathcal{C}_i$ is completely contained within the interior of $\mathcal{C}_{i+1}$. This choice is convenient as it allows us to choose elements of our alphabet, $x_i$, to be arbitrarily close to 1.

\begin{thm}
    \label{thm: G nu inetgral formula}
	The partition function expression \eqref{G nu sum expression thm eq} can be expressed as the following $n$-fold integral:  
	\begin{multline}
        \label{eq: G nu integral forumla thm}
	    G_\nu(x_1,\dots,x_L|\yalph) = \oint_{\mathcal{C}_1}\frac{\dd w_1}{2\pi\ii} \cdots \oint_{\mathcal{C}_n}\frac{\dd w_n}{2\pi\ii} Z_{L+n} \left(x_1,\dots,x_L,w_1^{-1},\dots,w_n^{-1}\right) \\
        \times\prod_{i=1}^n\prod_{j=1}^L \left[\frac{q w_i-x_j}{w_i-x_j}\frac{1-w_ix_j}{1-qw_ix_j}\right] \prod_{1\leq i<j\leq n} \left[\frac{w_j-w_i}{qw_j-w_i}\frac{1-qw_iw_j}{1- w_iw_j}\right] \\ \times \prod_{i=1}^n \left[\frac{ac\left(q w_i^2-1\right)}{(w_i-a)(w_i-c)}\frac{y_{\nu_i}}{1-q w_i y_{\nu_i}}\prod_{j-1}^{\nu_i-1} \frac{1-w_iy_j}{1-qw_iy_j}\right],
	\end{multline}
    where the contours $\mathcal{C}_1,\dots,\mathcal{C}_n$ satisfy the conditions of Definition \ref{defn: nested contours}.
\end{thm}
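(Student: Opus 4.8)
The plan is to evaluate the $n$-fold integral on the right-hand side of \eqref{eq: G nu integral forumla thm} by iterated residues and to check that the outcome reproduces, term by term, the sum-over-subsets expression \eqref{G nu sum expression thm eq}; since the latter equals $G_\nu(x_1,\dots,x_L|\yalph)$ by Theorem \ref{G nu thm}, this establishes the claim. First I would locate the singularities of the integrand that are enclosed by the contour $\mathcal{C}_i$ in the variable $w_i$. Using Definition \ref{defn: nested contours}, every candidate pole except $w_i=x_j$ is excluded: the factors $\prod_j(1-qw_ix_j)^{-1}$, $(1-qw_iy_{\nu_i})^{-1}$ and $\prod_j(1-qw_iy_j)^{-1}$ have poles at the excluded points $q^{-1}x_j^{-1}$ and $q^{-1}y_k^{-1}$, while $[(w_i-a)(w_i-c)]^{-1}$ has poles at the excluded points $a,c$. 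The only remaining poles are the simple ones at $w_i=x_j$ coming from $\prod_j(w_i-x_j)^{-1}$. A key preliminary point is that $Z_{L+n}(x_1,\dots,x_L,w_1^{-1},\dots,w_n^{-1})$ is regular at $w_i=x_j$: there the arguments $x_j$ and $w_i^{-1}=x_j^{-1}$ are reciprocal, so by the recursion \eqref{Z recur x1=1/x2} the partition function is finite and collapses to a smaller one.

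Next I would compute the iterated residues. Integrating $w_i$ over $\mathcal{C}_i$ localizes it at some $w_i=x_k$. The numerators $w_j-w_i$ in the product $\prod_{1\le i<j\le n}$ force the selected indices to be pairwise distinct, so the residues are indexed by injections $\{1,\dots,n\}\hookrightarrow\{1,\dots,L\}$, equivalently by a subset $K$ of size $n$ together with a permutation $\sigma\in S_n$. The $q$-nesting of the contours is what guarantees that, in the prescribed order of integration, the auxiliary poles at $w_i=qw_j$ arising from $\prod_{i<j}(qw_j-w_i)^{-1}$ do not cross the contours and hence contribute nothing; this is the half-space analogue of the Borodin--Corwin--Petrov contour argument. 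Taking the residue at $w_i=x_{k_{\sigma(i)}}$, the factor $\frac{qw_i-x_j}{w_i-x_j}$ produces a residue proportional to $x_{k_{\sigma(i)}}$ together with $\prod_{j\neq k_{\sigma(i)}}\frac{qx_{k_{\sigma(i)}}-x_j}{x_{k_{\sigma(i)}}-x_j}$, the boundary factor $\frac{ac(qw_i^2-1)}{(w_i-a)(w_i-c)}$ evaluated at $x_{k_{\sigma(i)}}$ contributes (after combining with the reduction of $Z_{L+n}$) the factor $h(x_{k_{\sigma(i)}})$, and $Z_{L+n}$ reduces by $n$ successive applications of \eqref{Z recur x1=1/x2} to $Z_{L-n}(x_{\comp{K}})$.

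Finally I would collect the contributions. The prefactors indexed by $i\in K$ and $j\in\comp{K}$ should assemble into the factor $\Delta$ of \eqref{compact}, namely the products $\frac{x_j-qx_i}{x_j-x_i}\frac{1-x_ix_j}{1-qx_ix_j}$ together with $\prod_{1\le i<j\le L,\;i,j\in K}\frac{1-x_ix_j}{1-qx_ix_j}$, while the surviving $y$-dependent factors $\frac{y_{\nu_i}}{1-qw_iy_{\nu_i}}\prod_{j=1}^{\nu_i-1}\frac{1-w_iy_j}{1-qw_iy_j}$ and the cross terms, after the substitution $w_i=x_{k_{\sigma(i)}}$, reproduce precisely the symmetrization sum $\Phi_\nu$ over $\sigma\in S_n$ in \eqref{G nu sum expression thm eq}. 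Summing over all subsets $K$ then matches \eqref{G nu sum expression thm eq} exactly.

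The hardest part will be the algebraic bookkeeping in the last two steps: verifying that the $w_i$-prefactors collapse to exactly $h(x_{k_{\sigma(i)}})$ and to the precise $\Delta$- and $\Phi_\nu$-factors (rather than to spurious residual rational factors), and rigorously justifying that the $q$-nested contours discard the $w_i=qw_j$ poles. This is the unwieldy computation alluded to in the introduction. A cleaner, if less explicit, alternative — which I would fall back on if the direct matching becomes intractable — is to verify that the integral on the right-hand side of \eqref{eq: G nu integral forumla thm} itself satisfies the five characterizing properties of Lemma \ref{G nu recursions lemma}: meromorphicity with simple poles in $y_{\nu_1}$ at $y_{\nu_1}=qx_i$, symmetry in $(x_1,\dots,x_L)$, the residue recursion \eqref{G nu 1/y residue eval}, vanishing as $y_{\nu_1}\to\infty$, and the $n=0$ reduction to $Z_L$. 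Here the residue recursion would follow by deforming $y_{\nu_1}$ to the pole $qx_1$, picking up the residue of the $w_1$-integrand, and identifying the remaining $(n-1)$-fold integral with the contour formula for $\mathfrak{G}_{(\nu_2,\dots,\nu_n)}$; the induction underlying Lemma \ref{G nu recursions lemma} then closes the argument.
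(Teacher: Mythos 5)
Your primary strategy --- evaluating the $n$-fold integral by iterated residues at $w_i = x_j$, using the contour conditions of Definition \ref{defn: nested contours} to exclude every other pole, applying the recursion \eqref{Z recur x1=1/x2} to reduce $Z_{L+n}$ to $Z_{L-n}(x_{\comp{K}})$, and indexing the residues by a subset $K$ together with a permutation $\sigma\in S_n$ so as to match \eqref{G nu sum expression thm eq} --- is exactly the paper's own proof of this theorem, including the role of the $q$-nested contours in discarding the $w_i=qw_j$ poles and of the factors $w_j-w_i$ in forcing distinct residues. The proposal is correct; the one bookkeeping caveat (which you already flag) is that the factor $h(x_{k_{\sigma(i)}})$ emerges from the algebraic rearrangement of the cross terms as in \eqref{G nu integral proof 1}, i.e.\ from writing $\frac{ac(qw_i^2-1)}{(w_i-a)(w_i-c)} = h(w_i)\,\frac{qw_i^2-1}{1-w_i^2}$, rather than from the reduction of $Z_{L+n}$ itself.
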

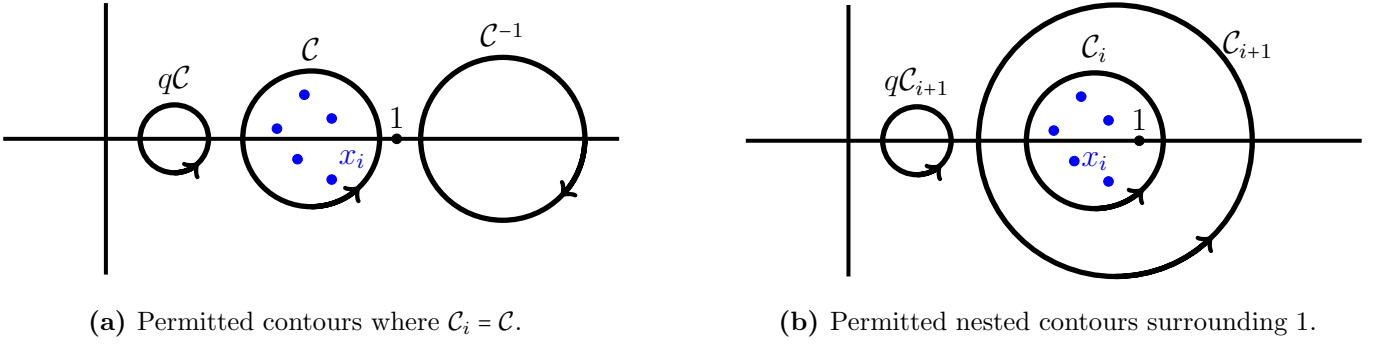
\begin{figure}
    \centering
    \begin{subfigure}{0.45\textwidth}
        \[\begin{tikzpicture}[baseline={([yshift=-.5ex]current bounding box.center)},scale=0.9]
            \draw[black,line width=1.5pt] (-1.5,0) -- (7.5,0);
            \draw[black,line width=1.5pt] (0,-2) -- (0,2);
            \draw[black,line width=2pt,->] (3,-1) arc (-90:315:1);
            \node[above] at (3,1) {$\textcolor{black}{\mathcal{C}}$};
            \draw[black,line width=2pt,->] (1,-0.5) arc (-90:315:0.5);
            \node[above] at (1,0.5) {$\textcolor{black}{q\mathcal{C}}$};
            \draw[black,line width=2pt,->] (7,0) arc (360:-45:1.2);
            \node[above] at (5.8,1.2) {$\textcolor{black}{\mathcal{C}^{-1}}$};
            \draw[black,fill=black] (4.25,0) circle (0.07);
            \node[above] at (4.25,0) {$1$};
            \draw[blue,fill=blue] (3.3,0.3) circle (0.07);
            \draw[blue,fill=blue] (2.5,0.15) circle (0.07);
            \draw[blue,fill=blue] (2.9,0.65) circle (0.07);
            \draw[blue,fill=blue] (2.8,-0.3) circle (0.07);
            \draw[blue,fill=blue] (3.3,-0.6) circle (0.07);
            \node at (3.6,-0.3) {\textcolor{blue}{$x_i$}};
        \end{tikzpicture}\]
        \label{fig: homog contour}
        \caption{Permitted contours where $\mathcal{C}_i=\mathcal{C}$.}
    \end{subfigure}
    \hspace{\fill}
    \begin{subfigure}{0.45\textwidth}
        \[\begin{tikzpicture}[baseline={([yshift=-.5ex]current bounding box.center)},scale=0.9]
            \draw[black,line width=1.5pt] (-1.5,0) -- (7.5,0);
            \draw[black,line width=1.5pt] (0,-2) -- (0,2);
            \draw[black,line width=2pt,->] (3.6,-1) arc (-90:315:1);
            \node[above] at (3.6,1) {$\textcolor{black}{\mathcal{C}_i}$};
            \draw[black,line width=2pt,->] (1,-0.5) arc (-90:315:0.5);
            \node[above] at (1,0.5) {$\textcolor{black}{q\mathcal{C}_{i+1}}$};
            \draw[black,line width=2pt,->] (3.9,-2) arc (-90:315:2);
            \node[above,right] at (5.314,1.414) {$\textcolor{black}{\mathcal{C}_{i+1}}$};
            \draw[black,fill=black] (4.25,0) circle (0.07);
            \node[above] at (4.25,0) {$1$};
            \draw[blue,fill=blue] (3.8,0.3) circle (0.07);
            \draw[blue,fill=blue] (3,0.15) circle (0.07);
            \draw[blue,fill=blue] (3.4,0.65) circle (0.07);
            \draw[blue,fill=blue] (3.3,-0.3) circle (0.07);
            \draw[blue,fill=blue] (3.8,-0.6) circle (0.07);
            \node at (3.6,-0.3) {\textcolor{blue}{$x_i$}};
        \end{tikzpicture}\]
        \caption{Permitted nested contours surrounding 1.}
        \label{fig: inhomog contour}
    \end{subfigure}
    \caption{Diagrams depicting arrangements of contours allowed by Defintion \ref{defn: nested contours}.}
    \label{fig:contour diagram}
\end{figure}
\begin{proof} 
    The idea of the proof is to replace each $x_i$ for $i\in K$ in the sum in \eqref{G nu sum expression thm eq} by an auxiliary variable $w_i$ that will be integrated over a contour surrounding simple poles at all $x_1,\ldots x_L$. The sum over $K$ then dictates which $n$ of the $L$ possible residues are evaluated, whilst the sum over $\sigma\in S_n$ dictates the order in which the residues are evaluated for a given $K$.

    For this to work, only the residues at $w_i=x_j$ in \eqref{eq: G nu integral forumla thm} should be evaluated. All other poles in \eqref{eq: G nu integral forumla thm} therefore need to be excluded from the contours, and this is guaranteed by Definition~\ref{defn: nested contours}. Firstly, it is obvious that the explicit poles in \eqref{eq: G nu integral forumla thm} at $a$, $c$, $q^{-1}y_j^{-1}$ and $q^{-1}x_j^{-1}$ need to lie outside each contour. Secondly, the poles at $q x_j$ also need to be avoided because these cause singularities in $Z_{L+n}$, see below. Moreover, for $1\le i < j \le n$, the factors of the form
    \[
\frac{w_j-w_i}{qw_j-w_i} \frac{1-qw_iw_j}{1-w_iw_j}, 
    \]
    produce potential residues which will be avoided when $q\mathcal{C}_j$ lies outisde the interior of $\mathcal{C}_i$.
    
    In order to reproduce the triangular partition function $Z_{L-n}$ that appears in the summand of \eqref{G nu sum expression thm eq} we extend this function to $Z_{L+n}$ in the combined alphabet $\left(x_1,\ldots,x_L,w_1^{-1},\ldots w_n^{-1}\right)$. During the evaluation of the residue of the simple pole at each $w_i=x_j$, the recursion relation \eqref{Z recur x1=1/x2} ensures that we re-obtain $Z_{L-n}$ in the complement alphabet of $x$-variables of \eqref{G nu sum expression thm eq}. Furthermore, according to Proposition~\ref{Z polynomial prop}, the rational function $Z_{L+n} \left(x_1,\dots,x_L,w_1^{-1},\dots,w_n^{-1}\right)$ has poles at $w_i=a^{-1}$, $w_i=c^{-1}$ and $w_i=qx_j$ and so has no singularities at $w_i=x_j$ that could affect the residue evaluation. 
    
    In order to proceed, we rewrite the following factor that occurs in \eqref{G nu sum expression thm eq},
    \begin{multline}
        \label{G nu integral proof 1}
        \prod_{i\in K} \prod_{j\in \comp{K}} \left[\frac{x_j-qx_i}{x_j-x_i}\frac{1-x_i x_j}{1-q x_i x_j}\right] = \prod_{i\in K} \prod_{j\in \comp{K}}\frac{1}{x_i-x_j}\prod_{i\in K}\prod_{j=1}^L \frac{(qx_i-x_j)(1-x_ix_j)}{1-qx_ix_j} \\
        \times \prod_{\substack{i\neq j \\ i,j\in K}} \frac{1-qx_ix_j}{(qx_i-x_j)(1-x_ix_j)} \prod_{i\in K} \frac{qx_i^2-1}{(1-q)x_i\left(1-x_i^2\right)}.
    \end{multline}
    Incorporating this we notice that for each $K$ the second line in \eqref{G nu sum expression thm eq} is manifestly symmetric in the variables $x_i$ for $i \in K$. This allows us to replace each $x_i$ for $i\in K$ with $w_i$, i.e. the right hand side of \eqref{G nu integral proof 1} is replaced by
    \be
    \prod_{i,j=1 \atop i\neq j}^n (w_i-w_j) \prod_{i=1}^n\prod_{j=1}^L \frac{(qw_i-x_j)(1-w_ix_j)}{(w_i-x_j)(1-qw_ix_j)}
        \prod_{i,j=1 \atop i\neq j}^n \frac{1-qw_iw_j}{(qw_i-w_j)(1-w_iw_j)} \prod_{i=1}^n \frac{qw_i^2-1}{(1-q)w_i\left(1-w_i^2\right)},
    \ee    
    and each $x_{\sigma(i)}$ and  $x_{\sigma(j)}$ in \eqref{G nu sum expression thm eq} is similarly replaced by $w_i$ and $w_j$ respectively. Simplifying and cancelling common factors we thus obtain the integrand of \eqref{eq: G nu integral forumla thm}. Finally we note that the factors $w_j-w_i$ in \eqref{eq: G nu integral forumla thm} ensure that after evaluating the residue of $w_i$ at $x_k$ the singularity at $w_j=x_k$ is removable for all $j\neq i$, and hence that the residue for each $w_i$ is evaluated at a different simple pole.
\end{proof}

\subsection{An orthogonality conjecture}

In this section we examine an interesting implication of the Cauchy identity \eqref{cauchy-cor} and integral formula \eqref{eq: G nu integral forumla thm} when $c \to \infty$. Our starting point is the observation made in Corollary \ref{cor:triangle-freeze} regarding the $c \to \infty$ limit of the triangular partition function \eqref{Z triangular partition function defn eq} which we recall now:
\begin{align}
\label{c to inf triangle 2}
\lim_{c \rightarrow \infty}
Z_m(x_1,\dots,x_m)
=
\lim_{c \rightarrow \infty}
\prod_{i=1}^m
(1-h(x_i))
=
\prod_{i=1}^m
\frac{x_i(1-ax_i)}{x_i-a}.
\end{align}
Making use of Corollary \ref{cor:triangle-freeze} in the Cauchy identity \eqref{cauchy-cor}, it reads
\begin{multline}
    \label{cauchy-cor-cinf}
    \lim_{c \to \infty}
        \sum_{\kappa} G_\kappa(x_1,\dots,x_L) F_{\kappa}(z_1,\dots,z_M) = \prod_{i=1}^M \frac{a (1-z_i^2)}{a-z_i} \prod_{i=1}^M\prod_{j=1}^L\left[\frac{x_j-qz_i}{x_j-z_i}\frac{1-z_ix_j}{1-qz_ix_j}\right] \prod_{i=1}^L
\frac{x_i(1-ax_i)}{x_i-a}.
    \end{multline}
    A similar application of Corollary \ref{cor:triangle-freeze} inside the integral formula \eqref{eq: G nu integral forumla thm} yields, after redistribution of factors in the integrand,
    \begin{multline}
        \label{eq: G nu integral forumla thm-cinf}
	    \lim_{c \to\infty} G_\nu(x_1,\dots,x_L) = \prod_{i=1}^L
\frac{x_i(1-ax_i)}{x_i-a} \oint_{\mathcal{C}_1}\frac{\dd w_1}{2\pi\ii} \cdots \oint_{\mathcal{C}_n}\frac{\dd w_n}{2\pi\ii} \prod_{i=1}^n\prod_{j=1}^L \left[\frac{q w_i-x_j}{w_i-x_j}\frac{1-w_ix_j}{1-qw_ix_j}\right] \\
        \times \prod_{1\leq i<j\leq n} \left[\frac{w_j-w_i}{qw_j-w_i}\frac{1-qw_iw_j}{1- w_iw_j}\right] \prod_{i=1}^n \left[\frac{a\left(1-q w_i^2\right)}{w_i(1-a w_i)}\frac{y_{\nu_i}}{1-q w_i y_{\nu_i}}\prod_{j-1}^{\nu_i-1} \frac{1-w_iy_j}{1-qw_iy_j}\right].
	\end{multline}
One may then recognize the right hand side of \eqref{cauchy-cor-cinf} (under the substitutions $M \mapsto n$ and $z_i \mapsto w_i$) as being embedded within the integrand of \eqref{eq: G nu integral forumla thm-cinf}, leading to the equality
\begin{multline}
        \label{eq: G nu integral forumla thm-cinf2}
	    \lim_{c \to\infty} G_\nu(x_1,\dots,x_L) = \sum_{\kappa}
     \lim_{c \to\infty} G_\kappa(x_1,\dots,x_L) \oint_{\mathcal{C}_1}\frac{\dd w_1}{2\pi\ii} \cdots \oint_{\mathcal{C}_n}\frac{\dd w_n}{2\pi\ii} \lim_{c \to\infty} F_{\kappa}(w_1,\dots,w_n) \\
        \times \prod_{1\leq i<j\leq n} \left[\frac{w_j-w_i}{qw_j-w_i}\frac{1-qw_iw_j}{1- w_iw_j}\right] \prod_{i=1}^n \left[\frac{\left(w_i-a\right) \left(1-q w_i^2\right)}{w_i(1-a w_i)(1-w_i^2)} \frac{y_{\nu_i}}{1-q w_i y_{\nu_i}}\prod_{j-1}^{\nu_i-1} \frac{1-w_iy_j}{1-qw_iy_j}\right],
	\end{multline}
where it is necessary to assume the convergence constraints
\begin{equation}
		\label{Cauchy identity condition-2}
		\abs{\frac{1-x_iy_k}{1-qx_iy_k}\frac{q(1-w_j/y_k)}{1-qw_j/y_k}}\leq\rho<1, \hspace{0.5cm} \abs{\frac{1-x_iy_k}{1-qx_iy_k}\frac{1-qw_jy_k}{1-w_jy_k}}\leq\rho<1
	\end{equation}
in order to introduce the infinite sum over $\kappa$\footnote{More precisely, we have used the fact that this convergence is uniform with $(x_1,\dots,x_L)$ and $(w_1,\dots,w_n)$ ranging over compact subsets of $\mathbb{C}$, which is necessary to be able to switch the order of integration and summation.}. The contours $\mathcal{C}_1,\dots,\mathcal{C}_n$ are as previously, but we must now examine which poles of the function $\lim_{c\to\infty} F_{\kappa}(w_1,\dots,w_n|Y)$ they enclose (noting that the points $w_j=x_i$ are no longer singularities of the integrand). 

To deduce this, we make some assumptions concerning the parameters $q$ and $Y = (y_1,y_2,\dots)$\footnote{It is later possible to relax these constraints, since we ultimately derive \eqref{F-int-orthog-2}, which is an identity of rational functions in $q$ and $Y$ that holds when these parameters take values in certain compact subsets of $\mathbb{C}$; it must therefore hold generally.}. We note that one way to satisfy the constraints \eqref{Cauchy identity condition-2} is to assume that $|q|$ is arbitrarily small. This renders the first constraint in \eqref{Cauchy identity condition-2} trivial, while the second one becomes equivalent to establishing the bound $|x_i-y_k^{-1}| < \rho |w_j -y_k^{-1}|$ for all $1 \leq i \leq L$, $1 \leq j \leq n$, $k \in \mathbb{N}$; the latter constraint is satisfied if the points $y_k^{-1}$ are arbitrarily close to the points $x_i$, and are thereby enclosed by the integration contours. 

In summary, we may replace the contours $\mathcal{C}_1,\dots,\mathcal{C}_n$ in \eqref{eq: G nu integral forumla thm-cinf2} by a single contour $\mathcal{C}$ that encloses all points $Y^{-1} = (y_1^{-1},y_2^{-1},\dots)$ and no other singularities of the integrand, and such that $q\cdot \mathcal{C}$ is disjoint from the interior of $\mathcal{C}$. Reading off the coefficient of $\lim_{c \to\infty} G_\nu(x_1,\dots,x_L)$ on both sides of the resulting equation, we arrive at the following conjecture:
\begin{conj}
\label{conj:F-orth}
Fix a finite subset $\nu = \{\nu_1 > \cdots > \nu_n \geq 1\}$, and a second finite subset $\kappa$ whose cardinality satisfies $|\kappa| \leq n$. Then one has that
\begin{multline}
\label{F-int-orthog-2}
\oint_{\mathcal{C}}\frac{\dd w_1}{2\pi\ii} 
\cdots 
\oint_{\mathcal{C}}\frac{\dd w_n}{2\pi\ii}
\prod_{1\leq i<j\leq n}
\left[\frac{w_j-w_i}{qw_j-w_i}\frac{1-qw_iw_j}{1- w_iw_j}\right]
\\
\times
\prod_{i=1}^{n}
\left[
\frac{w_i-a}{w_i(1-a w_i)}
\frac{1-q w_i^2}{1-w_i^2}
\frac{y_{\nu_i}}{1-q w_i y_{\nu_i}}\prod_{j-1}^{\nu_i-1} \frac{1-w_iy_j}{1-qw_iy_j}
\right]
\lim_{c\to\infty}
F_{\kappa}(w_1,\dots,w_n)
=
\delta_{\kappa,\nu},
\end{multline}
where the contour $\mathcal{C}$ is a small, positively oriented circle surrounding the points $y_j^{-1}$, $j \geq 1$ and no other singularities of the integrand, such that $q\cdot \mathcal{C}$ is disjoint from the interior of $\mathcal{C}$.
\end{conj}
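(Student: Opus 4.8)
The plan is to deduce the orthogonality relation \eqref{F-int-orthog-2} by combining the Cauchy summation identity of Corollary \ref{cor:cauchy-pfaff} with the integral formula of Theorem \ref{thm: G nu inetgral formula}, both specialized to the freezing limit $c \to \infty$. The guiding observation is that in this limit the triangular partition function factorizes completely (Corollary \ref{cor:triangle-freeze}), with the effect that the Cauchy kernel on the right-hand side of \eqref{cauchy-cor} reappears verbatim inside the integrand of \eqref{eq: G nu integral forumla thm}. Recognizing this embedded kernel allows one to rewrite the integral formula for $\lim_{c\to\infty} G_\nu$ as an expansion over the family $\{\lim_{c\to\infty} G_\kappa\}$, whose coefficients are exactly the integrals appearing on the left of \eqref{F-int-orthog-2}; comparing with the trivial expansion of $\lim_{c\to\infty}G_\nu$ against itself then forces these coefficients to equal $\delta_{\kappa,\nu}$.

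Concretely, I would proceed in four steps. First, apply Corollary \ref{cor:triangle-freeze} to both \eqref{cauchy-cor} and \eqref{eq: G nu integral forumla thm}, producing the frozen Cauchy identity \eqref{cauchy-cor-cinf} and the frozen integral formula \eqref{eq: G nu integral forumla thm-cinf}. Second, identify the right-hand side of \eqref{cauchy-cor-cinf} --- under the substitutions $M \mapsto n$ and $z_i \mapsto w_i$ --- inside the integrand of \eqref{eq: G nu integral forumla thm-cinf}, and replace it by the infinite sum $\sum_{\kappa} \lim_{c\to\infty} G_\kappa \lim_{c\to\infty} F_\kappa$; this interchange of summation and integration is legitimate by the uniform convergence guaranteed under the constraints \eqref{Cauchy identity condition-2}, valid for instance when $|q|$ is small and the two alphabets range over compact sets, and yields \eqref{eq: G nu integral forumla thm-cinf2}. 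Third, analyze the contours: in the small-$|q|$ regime the first bound in \eqref{Cauchy identity condition-2} is trivial while the second forces the points $y_k^{-1}$ to lie inside the integration contours, so that the nested contours $\mathcal{C}_1,\dots,\mathcal{C}_n$ of Definition \ref{defn: nested contours} may be collapsed to a single small circle $\mathcal{C}$ enclosing the $y_j^{-1}$ and no other singularity. Fourth, extract the coefficient of $\lim_{c\to\infty} G_\nu(x_1,\dots,x_L)$ on each side of the resulting identity.

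The hard part is precisely this final coefficient comparison: it is valid only if the frozen functions $\{\lim_{c\to\infty} G_\kappa(x_1,\dots,x_L)\}$, as $\kappa$ ranges over finite subsets of $\mathbb{N}$, are linearly independent, and it is this independence that I cannot presently establish --- hence the result remains conjectural. To close the gap I would attempt to prove independence directly, for example by using the subset formula \eqref{G nu sum expression thm eq} to show that the transition matrix from $\{G_\kappa\}$ to some manifestly independent family (monomials in suitable symmetric coordinates, or triangular partition functions in an augmented alphabet) is invertible and essentially triangular, or by exhibiting for each $\kappa$ a degeneration of the $x$-alphabet in which $G_\kappa$ strictly dominates all competitors. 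A final, more routine point is the analytic bookkeeping: once \eqref{F-int-orthog-2} is established as an identity of rational functions in $q$ and $Y$ on the compact small-$|q|$ subsets used above, it extends to generic parameters by analytic continuation, so the temporary constraints can be dropped.
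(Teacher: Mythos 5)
Your proposal follows essentially the same route as the paper's own derivation: freeze the triangular partition function at $c\to\infty$ via Corollary \ref{cor:triangle-freeze}, recognize the right-hand side of \eqref{cauchy-cor-cinf} embedded in the integrand of \eqref{eq: G nu integral forumla thm-cinf}, interchange the sum over $\kappa$ with the integration under the constraints \eqref{Cauchy identity condition-2} (small $|q|$, compact alphabets), collapse the nested contours to a single small circle $\mathcal{C}$ around the points $y_j^{-1}$, and match coefficients of $\lim_{c\to\infty}G_\nu(x_1,\dots,x_L)$. You also correctly identify exactly the gap the paper itself acknowledges --- the unproven linear independence of the family $\lim_{c\to\infty}G_\kappa(x_1,\dots,x_L)$ --- which is precisely why the statement is presented as a conjecture rather than a theorem.
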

Although we have given an essentially complete formulation of \eqref{F-int-orthog-2}, it remains conjectural as we have not established that the functions $\lim_{c \to\infty} G_\kappa(x_1,\dots,x_L)$ are linearly independent. Conjecture \ref{conj:F-orth} has been extensively tested, and we plan to return to its proof in a later text.

\section{Open ASEP on the half-line}
\label{ASEP construction section}
In this section we demonstrate an important reduction of the partition function $G_{\nu/\mu}$ defined by \eqref{G partition function defn A eq}. In section \ref{sec:row-ops + sym func} it was demonstrated that $G_{\nu/\mu}$ can describe a discrete-time Markov process of interacting particles on a half-line with both creation and annihilation occurring at the origin. A continuous-time limit of this propagator will recover the dynamics of the \emph{asymmetric simple exclusion process} (ASEP) on the half-line with open boundary conditions.

\subsection{Markov generator}
We will consider the continuous-time asymmetric simple exclusion process on the semi-infinite half-line $\mathbb{N}:=\mathbb{Z}_{>0}$. As with the vertex model, we only consider configurations with a finite yet varying number of particles. Our conventions for ASEP coordinates follow that of Section \ref{sec:row-operators}. These configurations are indexed by the random variable $\nu = (\nu_1(t),\dots,\nu_n(t)) \in \mathbb{W}$ where $\nu_1 > \nu_2 > \cdots > \nu_n$ for some finite $n \geq 0$. We may also regard the number of particles $n$ as a random variable. 

These configurations evolve according to the following bulk transition rates 
\begin{align*}
	\nu \mapsto (\nu_1,\dots,\nu_i+1,\dots,\nu_n) & \text{ at rate $1$ if } \nu_{i-1}>\nu_i +1, \\
	\nu \mapsto (\nu_1,\dots,\nu_i-1,\dots,\nu_n) & \text{ at rate $q$ if } \nu_{i+1}<\nu_i -1.
\end{align*}
If we have the case where $\nu_{i-1} = \nu_i +1$ then this process is excluded (occurs at rate zero). We also have the boundary transition rates 
\begin{align*}
(\nu_1,\dots,\nu_n) \mapsto (\nu_1,\dots,\nu_n,1) & \text{ at rate $\alpha$ if } \nu_{n}>1 \text{ or } n=0, \\(\nu_1,\dots,\nu_n) \mapsto (\nu_1,\dots,\nu_{n-1}) & \text{ at rate $\gamma$ if } \nu_{n}=1.
\end{align*}
The bulk and boundary dynamics are depicted in Figure \ref{ASEP dynamics figure}.
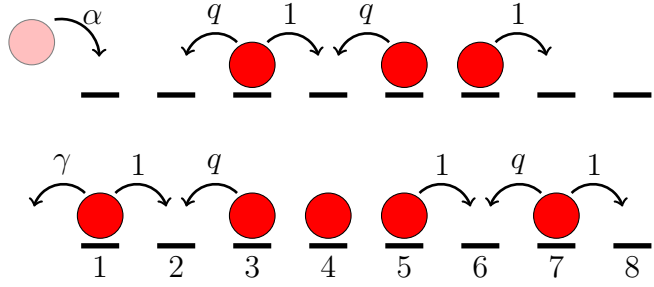
\begin{figure}
	\begin{center}
		\begin{tikzpicture}
			\foreach \x in {1,...,8}
			{\draw[line width=2pt] (\x-1,2) -- (\x-0.5,2);}

			\draw[black,fill=red] (2.25,2.4) circle (0.3);
			\draw[black,fill=red] (4.25,2.4) circle (0.3);
			\draw[black,fill=red] (5.25,2.4) circle (0.3);
			\draw[gray,fill=pink] (-0.65,2.7) circle (0.3);
			\draw[line width=1pt,->] (-0.35,3) arc (100:0:0.5);
			\node[above] at (0.15,2.8) {$\alpha$};
			\draw[line width=1pt,->] (2.05,2.7) arc (45:165:0.4);
			\node[above] at (1.75,2.8) {$q$};
			\draw[line width=1pt,->] (2.45,2.7) arc (135:15:0.4);
			\node[above] at (2.75,2.8) {$1$};
			\draw[line width=1pt,->] (4.05,2.7) arc (45:165:0.4);
			\node[above] at (3.75,2.8) {$q$};
			\draw[line width=1pt,->] (5.45,2.7) arc (135:15:0.4);
			\node[above] at (5.75,2.8) {$1$};

			\foreach \x in {1,...,8}
			{\draw[line width=2pt] (\x-1,0) -- (\x-0.5,0);
				\node[below] at (\x-0.75,0) {$\x$};}
			\draw[black,fill=red] (0.25,0.4) circle (0.3);
			\draw[black,fill=red] (2.25,0.4) circle (0.3);
			\draw[black,fill=red] (3.25,0.4) circle (0.3);
			\draw[black,fill=red] (4.25,0.4) circle (0.3);
			\draw[black,fill=red] (6.25,0.4) circle (0.3);
			\draw[line width=1pt,->] (0.05,0.7) arc (45:165:0.4);
			\node[above] at (-0.25,0.8) {$\gamma$};
			\draw[line width=1pt,->] (0.45,0.7) arc (135:15:0.4);
			\node[above] at (0.75,0.8) {$1$};
			\draw[line width=1pt,->] (2.05,0.7) arc (45:165:0.4);
			\node[above] at (1.75,0.8) {$q$};
			\draw[line width=1pt,->] (4.45,0.7) arc (135:15:0.4);
			\node[above] at (4.75,0.8) {$1$};
			\draw[line width=1pt,->] (6.05,0.7) arc (45:165:0.4);
			\node[above] at (5.75,0.8) {$q$};
			\draw[line width=1pt,->] (6.45,0.7) arc (135:15:0.4);
			\node[above] at (6.75,0.8) {$1$};
		\end{tikzpicture}
	\end{center}
	\caption{Dynamics of the ASEP on the half-line $\mathbb{N}$.}
    \label{ASEP dynamics figure}
\end{figure}
For fixed $\nu\in\mathbb{W}$, the Markov generator for this process is expressed as an operator $\mathscr{L}$ which acts on functions $f:\mathbb{W}\to\mathbb{C}$ given by
\begin{multline}
    \label{Half-line ASEP generator}
    \mathscr{L}[f](\nu) = (1-\eta^\nu_1)\left(\gamma f(\nu\cup\{1\})-\alpha f(\nu)\right) + \eta^\nu_1\left(\alpha f(\nu\setminus\{1\})-\gamma f(\nu)\right) \\
    + \sum_{s=1}^\infty \left[\eta^\nu_s(1-\eta^\nu_{s+1})+q\eta^\nu_{s+1}(1-\eta^\nu_{s})\right]\left(f(\nu^{s,s+1})-f(\nu)\right),
\end{multline}
where $\nu^{s,s+1}\in\mathbb{W}$ is the configuration obtained by the interchange of occupations at sites $s$ and $s+1$. We have also used the notation $\eta^\nu_j$ as the occupation of $\nu$ at site $j$ defined in Definition \ref{Occupation notation defn}. Using the Markov generator we can arrive at the evolution equation of the ASEP on the half-line
\begin{equation}
	\label{Half-line ASEP evolution equation}
	\frac{\dd}{\dd t} \psi_t(\nu) = \mathscr{L}[\psi_t](\nu),
\end{equation}
whose solution, $\psi_t:\mathbb{W} \to \mathbb{R}$, is an eigenfunction of the Markov generator indexed by continuous-time parameter $t\geq 0$. The rest of this section is devoted to the presentation a solution to \eqref{Half-line ASEP evolution equation} for an initially empty configuration with partially open boundary parameters.

\subsection{Transition probability from the vertex model}
\label{ASEP formulas section}
In this section we outline a method to obtain solutions to the half-line ASEP equations of motion \eqref{Half-line ASEP evolution equation} via a reduction of the symmetric function \eqref{G partition function defn A eq}. An explicit evaluation is provided for the case of an empty initial configuration of particles following from the results of Section \ref{Vertex propagator section}.

Let $\mu,\nu\in \mathbb{W}$ be arbitrary particle configurations. We will denote $\mathbb{P}_t(\mu\to\nu)$ as a solution to \eqref{Half-line ASEP evolution equation}, indexed by a time parameter $t\geq0$, subject to the initial condition 
\[\mathbb{P}_0(\mu\to\nu) = \delta_{\mu,\nu}.\]
That is, a probability of being in configuration $\nu$ at time $t$ after being initially in $\mu$. We refer to this as the \emph{transition probability}. The transition probability can be expressed as a formal solution to the evolution equation in terms of the Markov generator. This is given as
\begin{equation}
    \label{eq: transition prob exponential}
    \mathbb{P}_t(\mu\to\nu) = \bra{\mu}\exp(t\mathscr{L})\ket{\nu},
\end{equation}
where the exponential is regarded as the formal operator exponential. We now proceed by outlining the specialization required to recover the ASEP transition probability from the half-space six-vertex model.
\begin{prop}
    \label{ASEP cts time limit thm}
    Fix configurations $\mu,\nu\in\mathbb{W}$ and time $t\geq0$. The ASEP half-line transition probability is recovered as the limit of the symmetric function
	\begin{equation}
        \label{ASEP cts time limit}
		\mathbb{P}_t(\mu\to\nu) = \lim_{L\to \infty} G_{\nu/\mu}(x_1\dots,x_L|\yalph)\Big|_{y_j=1,\ x_i=1-(1-q)t/(2L)},
	\end{equation}
    where we have specified the spectral parameters $x_i = 1-(1-q)t/(2L)$ and vertical parameters $y_j=1$ for all $1\leq i\leq L$ and $j\in\mathbb{N}$ prior to taking the limit $L\to\infty$. The limit \eqref{ASEP cts time limit} holds provided that we choose boundary Markov rates as
	\be
        \alpha = \frac{a c(1-q)}{(1-a)(1-c)}, \qquad \gamma = - \frac{1-q}{(1-a)(1-c)},
        \label{eq:a2alpha}
    \ee
	where we are free to choose $a,c$ so that both $\alpha,\gamma \geq 0$ while we restrict $q\geq 0$. 
\end{prop}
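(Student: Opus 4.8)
The plan is to recognise the specialised partition function as the $L$-th power of a single Markov kernel, and then to show that in the stated scaling this kernel is an infinitesimal perturbation of the identity whose first-order part is exactly the ASEP generator $\mathscr{L}$. Concretely, setting all $y_j=1$ and $x_i=x_L:=1-(1-q)t/(2L)$, the commutativity of the $A$ operators and the operator form \eqref{G partition function defn A eq} give
\[
G_{\nu/\mu}(x_L,\dots,x_L|\yalph)=\bra{\mu}A(x_L)^{L}\ket{\nu}.
\]
By Proposition \ref{Row operator identities} we have $A(1)=\mathsf{id}$, and by \eqref{eq:sum-to-unity kernel} together with the positivity of the weights in the regime where $\alpha,\gamma\ge 0$, each $A(x_L)$ is a genuine stochastic matrix. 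The goal is therefore to establish the first-order expansion
\[
\bra{\mu}A(x_L)\ket{\nu}=\delta_{\mu,\nu}+\frac{t}{L}\,\bra{\mu}\mathscr{L}\ket{\nu}+O(L^{-2}),
\]
uniformly for $\mu,\nu$ in any fixed finite region of $\mathbb{W}$, and then to pass to the limit $A(x_L)^{L}\to\exp(t\mathscr{L})$.

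For the first-order expansion I would expand all vertex weights around $x=1$ (equivalently $z=x\to1$), writing $\epsilon:=(1-q)t/(2L)$ so that $x_L=1-\epsilon$. Since each matrix element is a finite sum of products of weights (only finitely many columns are active for finite $\mu,\nu$), the order-$\epsilon$ contributions come from configurations in which exactly one event occurs. The boundary weights \eqref{Stochastic boundary K-weight table} are governed by $h$, and a direct computation gives $h(1)=0$ with $h'(1)=-2ac/((a-1)(c-1))$, hence
\[
h(1-\epsilon)=\frac{2ac}{(a-1)(c-1)}\,\epsilon+O(\epsilon^{2})=\frac{\alpha t}{L}+O(L^{-2}),
\qquad
-\frac{h(1-\epsilon)}{ac}=\frac{\gamma t}{L}+O(L^{-2}),
\]
precisely under the identification \eqref{eq:a2alpha}; these two terms supply the boundary creation (rate $\alpha$) and annihilation (rate $\gamma$) of the generator. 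The bulk hopping terms arise from the weights $\tfrac{1-z}{1-qz}\to\tfrac{\epsilon}{1-q}$ and $\tfrac{q(1-z)}{1-qz}\to\tfrac{q\epsilon}{1-q}$, whose ratio $1:q$ reproduces the right/left bulk rates; the extra factor of $2$ that converts $\epsilon/(1-q)=t/(2L)$ into the rate-$1$ contribution $t/L$ is furnished by the double-row structure, in which a single nearest-neighbour hop can be realised on either the lower or the upper row. Collecting the diagonal (escape) and off-diagonal (gain) contributions and matching them term by term against \eqref{Half-line ASEP generator} then yields the displayed first-order identity.

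Finally, writing $A(x_L)=\mathsf{id}+\tfrac{t}{L}\mathscr{L}+R_L$ with $\|R_L\|=O(L^{-2})$ on the relevant finite blocks, one concludes $A(x_L)^{L}\to\exp(t\mathscr{L})$ and hence, via \eqref{eq: transition prob exponential},
\[
\lim_{L\to\infty}G_{\nu/\mu}(x_L,\dots,x_L|\yalph)=\bra{\mu}\exp(t\mathscr{L})\ket{\nu}=\mathbb{P}_t(\mu\to\nu).
\]
The main obstacle is making this last limit rigorous on the infinite-dimensional space $\Span\mathbb{W}$: since $\mathscr{L}$ is unbounded one cannot simply exponentiate a norm estimate. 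The clean route is a Trotter--Kurtz / Markov-chain-approximation argument, using that each $A(x_L)$ is stochastic (hence an $\ell^{\infty}$-contraction), that particles injected at the boundary propagate at finite speed so that the process started from a finite configuration stays, with high probability, on a bounded set up to time $t$, and that the finitely supported functions form a core on which the generators converge. Controlling the accumulation of the $O(L^{-2})$ errors over $L$ steps, uniformly in the growing number of active columns, is the delicate analytic point; a secondary technical task is the careful enumeration in the first-order expansion ensuring that no spurious order-$\epsilon$ term (for instance a simultaneous boundary event and bulk hop) survives.
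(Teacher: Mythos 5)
Your overall route is the same as the paper's: write the specialised partition function as $\bra{\mu}A(x_L)^L\ket{\nu}$, establish the one-step expansion $A(x_L)=\mathsf{id}+\tfrac{t}{L}\mathscr{L}+O(L^{-2})$ by expanding bulk and boundary weights to first order, and pass to $\exp(t\mathscr{L})$. Your boundary computation ($h(1-\epsilon)=\alpha t/L+O(L^{-2})$, $-h/(ac)=\gamma t/L+O(L^{-2})$ under \eqref{eq:a2alpha}) agrees exactly with the paper's expanded weight tables, and your extra care about the limit $A(x_L)^L\to e^{t\mathscr{L}}$ (stochasticity as an $\ell^\infty$-contraction, finite propagation speed, core argument) goes beyond the paper, which simply invokes the formal operator exponential.

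There is, however, a genuine flaw in your account of the bulk rates, and it would derail the term-by-term matching that you (like the paper) defer to. The double-row wire is \emph{directed}: horizontal path segments in the lower row move only leftward (toward the boundary), and in the upper row only rightward. So a right hop can never be ``realised on the lower row'' and a left hop never on the upper row; your stated mechanism for the factor of $2$ is false. The correct mechanism is that each elementary transition is realised by exactly \emph{two} lattice trajectories, distinguished by whether the particle leaves its column via the straight vertical step or via a zigzag detour (one step left in the lower row, up, then back right in the upper row); e.g.\ a right hop gets $\approx t/(2L)$ from each of these, summing to the rate-$1$ contribution $t/L$, with the rightward motion occurring in the upper row in \emph{both} cases. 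Relatedly — and this is where a literal implementation of your picture fails — the diagonal (``stay'') amplitude is \emph{not} carried by the straight-through configuration: passing straight up costs $\approx t/(2L)$ in the lower row times $\approx qt/(2L)$ in the upper row, i.e.\ $O(L^{-2})$. The $1-O(L^{-1})$ diagonal comes entirely from the zigzag trajectory (left, up, right, up), whose weight is $\approx\bigl(1-\tfrac{t}{2L}\bigr)^2\bigl(1-\tfrac{qt}{2L}\bigr)^2\approx 1-(1+q)\tfrac{t}{L}$, correctly encoding the escape rate $1+q$; for adjacent particles the zigzags mesh through the weight-$1$ crossing vertices, which is what produces the exclusion rule. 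Once the enumeration is organised around this zigzag structure, your expansion does come out and the rest of your argument stands.
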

We will first demonstrate a degeneration of the double-row 
\begin{lm}
    Fix $\epsilon>0$ let $x=1-(1-q)\epsilon$ and $y_j=1$ for all $j\in\mathbb{N}$. Then the double-row operator \eqref{AB row operator defn} is given by
    \begin{equation}
        \label{eq: A row-op ASEP limit}
        A(x|Y)\Big|_{x=1-(1-q)\epsilon,y_j=1} = 1 + 2\epsilon \mathscr{L} + O\left(\epsilon^2\right).
    \end{equation}
\end{lm}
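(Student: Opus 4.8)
The plan is to Taylor expand the double-row partition function defining $A(x|Y)$ in $\epsilon$ about its value at $\epsilon = 0$, where $x = 1$ and $A(1|Y) = \mathsf{id}$ by \eqref{A(1) row-operator identity}. Since $y_j = 1$ for all $j$, every bulk vertex in both rows carries spectral parameter $z = x = 1 - (1-q)\epsilon$, so by the factorization \eqref{eq:Rfactor} each bulk vertex degenerates at $\epsilon = 0$ to the pure \emph{swap} that interchanges horizontal and vertical occupations. First I would record the first-order expansions of the six weights in \eqref{Stochastic 6VM weights table}: the two straight-through weights become $\tfrac{1-z}{1-qz} = \epsilon + O(\epsilon^2)$ and $\tfrac{q(1-z)}{1-qz} = q\epsilon + O(\epsilon^2)$, the two turning weights become $1 - \epsilon + O(\epsilon^2)$ and $1 - q\epsilon + O(\epsilon^2)$, while the empty and doubly-occupied weights remain exactly $1$. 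For the boundary vertex I would expand $h(x)$ from \eqref{eq:hdef} using $1 - x^2 = 2(1-q)\epsilon + O(\epsilon^2)$, giving $h(x) = \tfrac{2ac(1-q)}{(a-1)(c-1)}\epsilon + O(\epsilon^2) = 2\alpha\epsilon + O(\epsilon^2)$ under the parametrization \eqref{eq:a2alpha}; since $\alpha/(ac) = -\gamma$, the four boundary weights of \eqref{Stochastic boundary K-weight table} become $1 - 2\alpha\epsilon$, $\ 2\alpha\epsilon$, $\ 2\gamma\epsilon$ and $1 - 2\gamma\epsilon$ to first order.

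Because the weight of any lattice configuration is a product of local weights and the $\epsilon = 0$ contribution is $\mathsf{id}$, I would compute the $O(\epsilon)$ part of $\bra{\mu} A(x) \ket{\nu}$ as a sum over single first-order deviations from the all-swap background. Differentiating one bulk weight produces, at a single column, a straight-through vertex (of order $\epsilon$) in place of a swap; tracing the swap background through the double row then shows that one such defect shifts exactly one occupation by one lattice site, i.e. a nearest-neighbour hop respecting the exclusion rule, with the right hop (weight $\epsilon$) carrying rate $1$ and the left hop (weight $q\epsilon$) carrying rate $q$, in agreement with the bulk rates of \eqref{Half-line ASEP generator}. The factor $2$ in \eqref{eq: A row-op ASEP limit} arises because each column is traversed twice, once on the lower row and once on the upper row, so each admissible hop collects two equal first-order contributions.

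The boundary transitions come from the first-order part of the boundary weights: injection of a particle at site $1$ occurs with weight $2\alpha\epsilon$ and ejection with weight $2\gamma\epsilon$, reproducing the $\alpha$ and $\gamma$ terms of \eqref{Half-line ASEP generator} (again doubled by the two rows). Finally, the diagonal element $\bra{\mu} A(x) \ket{\mu}$ receives its first-order correction from expanding the turning weights $1 - \epsilon$, $1 - q\epsilon$ and the boundary weights $1 - 2\alpha\epsilon$, $1 - 2\gamma\epsilon$ along the unique all-swap configuration; summing these yields precisely $-2\epsilon$ times the total outgoing rate from $\mu$, which is the diagonal part of $\mathscr{L}$. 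Collecting the off-diagonal and diagonal first-order contributions assembles $2\epsilon\,\mathscr{L}$.

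The main obstacle is the bookkeeping of the swap background: one must verify that a single straight-through defect in the doubled, boundary-reflected swap pattern produces exactly one nearest-neighbour transposition (and vanishes when the target is occupied), that the two rows contribute identically, and that configurations with two or more straight-through vertices, or a bulk defect combined with a boundary deviation, enter only at order $\epsilon^2$. Since $\mu,\nu$ are finite and every column beyond $\max\{\mu_1,\nu_1\}$ is frozen empty with weight exactly $1$, the expansion involves only finitely many nontrivial vertices and all remainders are uniformly $O(\epsilon^2)$, so both the truncation and the identification with $\mathscr{L}$ are legitimate.
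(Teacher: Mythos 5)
Your proposal is correct and takes essentially the same approach as the paper: expand the bulk weights and the boundary weights (via $h(x)=2\alpha\epsilon+O\!\left(\epsilon^2\right)$ and $-h(x)/(ac)=2\gamma\epsilon+O\!\left(\epsilon^2\right)$ under \eqref{eq:a2alpha}) to first order in $\epsilon$, and then match the resulting double-row partition function against the action of the generator \eqref{Half-line ASEP generator}. The paper compresses the matching step into a single sentence ("calculating for specific configurations \ldots and matching"), whereas you make explicit the all-swap background, the single-defect analysis giving the hops with rates $1$ and $q$, the doubling from the two rows, and the diagonal term --- precisely the computation the paper's proof leaves implicit.
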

\begin{proof}
    To demonstrate this, we choose $x_i = 1-(1-q)\epsilon,\ y_j=1$ and observe that the weights from \eqref{Stochastic 6VM weights table} with horizontal parameters $x_i^{-1}$ and vertical parameters $y_j$ acquire the following form. 
\begin{align}
    \label{epsilon bulk weights}
	\begin{tabular}{ccc}
		\qquad
		\begin{tikzpicture}
			\draw[gray,dashed,line width=1pt,-] (-1,0) -- (1,0);
			\draw[gray,dashed,line width=1pt,-] (0,-1) -- (0,1);
		\end{tikzpicture}
		\qquad
		&
		\qquad
		\begin{tikzpicture}
			\draw[gray,dashed,line width=1pt,-] (-1,0) -- (1,0);
			\draw[red,line width=2pt,->] (0,-1) -- (0,1);
		\end{tikzpicture}
		\qquad
		&
		\qquad
		\begin{tikzpicture}
			\draw[gray,dashed,line width=1pt,-] (0,1) -- (0,0) -- (-1,0);
			\draw[red,line width=2pt,->] (0,-1) -- (0,0) -- (1,0);
		\end{tikzpicture}
		\qquad
		\\
		\qquad
		$1$
		\qquad
		& 
		\qquad
		$q\epsilon + O\left(\epsilon^2\right)$
		\qquad
		& 
		\qquad
		$1-q\epsilon + O\left(\epsilon^2\right)$
		\qquad
		\\
        \quad
        \\
		\qquad
		\begin{tikzpicture}
			\draw[gray,dashed,line width=1pt,-] (-1,0) -- (1,0);
			\draw[gray,dashed,line width=1pt,-] (0,-1) -- (0,1);
			\draw[red,line width=2pt,->] (-1,0) -- (-0.2,0) -- (0,0.2) -- (0,1);
			\draw[red,line width=2pt,<-] (1,0) -- (0.2,0) -- (0,-0.2) -- (0,-1);
		\end{tikzpicture}
		\qquad
		&
		\qquad
		\begin{tikzpicture}
			\draw[red,line width=2pt,->] (-1,0) -- (1,0);
			\draw[gray,dashed,line width=1pt,-] (0,-1) -- (0,1);
		\end{tikzpicture}
		\qquad
		&
		\qquad
		\begin{tikzpicture}
			\draw[gray,dashed,line width=1pt,-] (0,-1) -- (0,0) -- (1,0);
			\draw[red,line width=2pt,->] (-1,0) -- (0,0) -- (0,1);
		\end{tikzpicture}
		\qquad
		\\
		\qquad
		$1$
		\qquad
		& 
		\qquad
		$\epsilon + O\left(\epsilon^2\right)$
		\qquad
		&
		\qquad
		$1-\epsilon + O\left(\epsilon^2\right)$
		\qquad
		\\
	\end{tabular} 
\end{align}
This table represents the weights of the upper row in the double row of \eqref{Row-operator A defn}. The weights of the lower row can be obtained by rotating these vertices. The boundary weights have a similar form.
\begin{align}
	\label{epsilon K weights}
	\begin{tabular}{cccc}
		\qquad
		\begin{tikzpicture}[baseline={([yshift=-.5ex]current bounding box.center)}]
			\draw[gray,dashed,line width=1pt,-] (1,0) -- (0,1) -- (1,2);
			\draw[blue,fill=blue] (0,1) circle (0.1cm);
		\end{tikzpicture}
		\qquad
		&
		\qquad
		\begin{tikzpicture}[baseline={([yshift=-.5ex]current bounding box.center)}]
			\draw[gray,dashed,line width=1pt,-] (1,0) -- (0,1);
			\draw[red,line width=2pt,->] (0,1) -- (1,2);
			\draw[blue,fill=blue] (0,1) circle (0.1cm);
		\end{tikzpicture}
		\qquad
		&
		\qquad
		\begin{tikzpicture}[baseline={([yshift=-.5ex]current bounding box.center)}]
			\draw[gray,dashed,line width=1pt,-] (0,1) -- (1,2);
			\draw[red,line width=2pt,-] (1,0) -- (0,1);
			\draw[red,line width=2pt,->] (1,0) -- (0.5,0.5);
			\draw[blue,fill=blue] (0,1) circle (0.1cm);
		\end{tikzpicture}
		\qquad
		&
		\qquad
		\begin{tikzpicture}[baseline={([yshift=-.5ex]current bounding box.center)}]
			\draw[gray,dashed,line width=1pt,-] (0,1) -- (1,2);
			\draw[red,line width=2pt,->] (1,0) -- (0,1) -- (1,2);
			\draw[red,line width=2pt,->] (1,0) -- (0.5,0.5);
			\draw[blue,fill=blue] (0,1) circle (0.1cm);
		\end{tikzpicture}
		\qquad
        \\
        \quad
		\\
		\qquad
		$1-2\alpha \epsilon +  O\left(\epsilon^2\right)$
		\quad
		&
		\qquad
		$2\alpha \epsilon +  O\left(\epsilon^2\right)$
		\qquad
		&
		\qquad
		$2\gamma \epsilon +  O\left(\epsilon^2\right)$
		\qquad
		&
		\qquad
		$1-2\gamma \epsilon +  O\left(\epsilon^2\right)$
		\qquad
		\\
	\end{tabular}
\end{align}
The double-row operator \eqref{eq: A row-op ASEP limit} can be determined by calculating for specific configurations $\mu,\nu\in\mathbb{W}$ the partition function
using the weights in \eqref{epsilon bulk weights},\eqref{epsilon K weights} and matching with the action of the ASEP generator \eqref{Half-line ASEP generator}.
\end{proof}
\begin{proof}[Proof of Proposition \ref{ASEP cts time limit thm}]
Using the double-row operator definition of the partition \eqref{G partition function defn A eq} and the scaling \eqref{eq: A row-op ASEP limit} with $\epsilon=t/(2L)$, we can arrive at the following expression for the partition function
\begin{equation}
    G_{\nu/\mu}(x_1\dots,x_L|\yalph)\Big|_{y_j=1,\ x_i=1-(1-q)t/(2L)} = \bra{\mu} \left(1+\frac{t}{L}\mathscr{L}+ O\left(\frac{1}{L^2}\right)\right)^L \ket{\nu}.
\end{equation}
In order to take the limit we use the definition of the operator exponential. This yields
\begin{equation}
    \lim_{L\to\infty} \left(1+\frac{t}{L}\mathscr{L}+O\left(\frac{1}{L^2}\right)\right)^L = \lim_{L\to\infty} \left(1+\frac{t}{L}\mathscr{L}\right)^L = \exp(t\mathscr{L}).
\end{equation}
This recovers the formal solution for ASEP transition probability \eqref{eq: transition prob exponential}.
\end{proof}

\subsection{Particle injection case}
In this section we provide an explicit expression for the ASEP transition probability \eqref{eq: transition prob exponential} under the specialization $\gamma=0$, whereby particles may enter from the boundary at rate $\alpha$ but may not exit the system.
\begin{defn}
    \label{defn: ASEP contours}
    Fix an alphabet $(x_1,\dots,x_L)$ and let $\{\mathcal{C}_1,\dots,\mathcal{C}_n\}$ be a set of contours satisfying the conditions of Definition \ref{defn: nested contours} whilst also surrounding 1. We define $\{\mathcal{D}_1,\dots,\mathcal{D}_n\}$ to be these nested contours satisfying the conditions of Definition \ref{defn: nested contours} having taken $x_i=1$. 
\end{defn}
\begin{thm}
    Under the limit $c\to\infty$ ($\gamma\to0$) for $\alpha+q\neq 1$, the ASEP transition probability is given by
    \begin{multline}
	    \mathbb{P}_t(\emptyset\to\nu) = \alpha^n\mathrm{e}^{-\alpha t}\oint_{\mathcal{D}_1}\frac{\dd w_1}{2\pi\ii} \cdots \oint_{\mathcal{D}_n}\frac{\dd w_n}{2\pi\ii} \prod_{1\leq i<j\leq n} \left[\frac{w_j-w_i}{qw_j-w_i}\frac{1-qw_iw_j}{1- w_iw_j}\right] \\
        \times\prod_{i=1}^n \left[\frac{1-q w_i^2}{w_i(q+\alpha-1-\alpha w_i)(1-qw_i)} \left(\frac{1-w_i}{1-qw_i}\right)^{\nu_i-1} \exp(\frac{(1-q)^2 w_i t}{(1-w_i)(1-qw_i)})\right],
    \end{multline}
     where the contours satisfy the conditions of Definition \ref{defn: ASEP contours}. That is, they surround the essential singularity at $w_i=1$.
\end{thm}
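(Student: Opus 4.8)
The plan is to feed the symmetric function $G_\nu$ into the continuous-time reduction of Proposition \ref{ASEP cts time limit thm} and then to perform the $L\to\infty$ asymptotics directly on the integral representation of $G_\nu$. Taking $\mu=\emptyset$ in \eqref{ASEP cts time limit} gives
\begin{equation*}
\mathbb{P}_t(\emptyset\to\nu) = \lim_{L\to\infty} G_\nu(x_1,\dots,x_L|\yalph)\Big|_{y_j=1,\ x_i=1-(1-q)t/(2L)},
\end{equation*}
valid for finite $a,c$ and hence finite $\alpha,\gamma$. Since $\gamma\to0$ is realised by $c\to\infty$ in the parametrisation \eqref{eq:a2alpha}, I would pass to the $c\to\infty$ reduction of the integral formula, namely \eqref{eq: G nu integral forumla thm-cinf}, and only then take the scaling limit. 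Setting $y_j=1$ throughout collapses the vertical products, $\frac{y_{\nu_i}}{1-qw_iy_{\nu_i}}\mapsto \frac{1}{1-qw_i}$ and $\prod_{j=1}^{\nu_i-1}\frac{1-w_iy_j}{1-qw_iy_j}\mapsto\bigl(\frac{1-w_i}{1-qw_i}\bigr)^{\nu_i-1}$, leaving only the $x$-dependent factors to be treated asymptotically.

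The two $L\to\infty$ computations are then elementary. The scalar prefactor $\prod_{i=1}^L \frac{x_i(1-ax_i)}{x_i-a}$ of \eqref{eq: G nu integral forumla thm-cinf} sits outside the integral; expanding $\frac{x(1-ax)}{x-a}=1+\frac{2a}{1-a}\epsilon+O(\epsilon^2)$ at $x=1-\epsilon$ with $\epsilon=(1-q)t/(2L)$ and using $(1+X/L)^L\to e^X$ produces $\exp\bigl(\tfrac{a(1-q)}{1-a}t\bigr)$. Inside the integral, the only $x$-dependent factor is $\prod_{i=1}^n\prod_{j=1}^L \frac{qw_i-x_j}{w_i-x_j}\frac{1-w_ix_j}{1-qw_ix_j}$; each factor equals $1$ at $x_j=1$, and its first-order expansion at $x_j=1-\epsilon$ contributes $\frac{2w_i(1-q)}{(1-w_i)(1-qw_i)}\epsilon$ per $j$, so the product over $j$ converges to $\prod_{i=1}^n\exp\bigl(\tfrac{(1-q)^2 w_i t}{(1-w_i)(1-qw_i)}\bigr)$. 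Finally I would eliminate $a$ in favour of $\alpha$ using the $c\to\infty$ limit of \eqref{eq:a2alpha}, which gives $a=\alpha/(q+\alpha-1)$ — here the hypothesis $\alpha+q\neq1$ guarantees $a$ is finite and the factor $q+\alpha-1-\alpha w_i$ is non-degenerate. This turns $\frac{a(1-q)}{1-a}$ into $-\alpha$, yielding the prefactor $e^{-\alpha t}$, and turns $\frac{a}{1-aw_i}$ into $\frac{\alpha}{q+\alpha-1-\alpha w_i}$, whose $n$ copies of $\alpha$ assemble into the global factor $\alpha^n$. Collecting all pieces reproduces the claimed integrand exactly.

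The genuine difficulties are analytic rather than algebraic and concentrate in two places. First, one must justify interchanging $\lim_{L\to\infty}$ with the $n$-fold contour integration, which requires uniform convergence of the integrand on a fixed family of contours; this should follow from the compactness of the contours together with the uniform bounds underlying the convergence constraints, but it must be checked that no pole of the pre-limit integrand meets the contour along the way. Second, and relatedly, the poles $w_i=x_j$ all coalesce at $w_i=1$ as $x_j=1-(1-q)t/(2L)\to1$, while in the limit an essential singularity develops at $w_i=1$ from $\exp\bigl(\tfrac{(1-q)^2 w_i t}{(1-w_i)(1-qw_i)}\bigr)$; the contours $\mathcal{C}_1,\dots,\mathcal{C}_n$ of Definition \ref{defn: nested contours} must therefore be continuously deformed so as to keep the coalescing poles enclosed, landing on the contours $\mathcal{D}_1,\dots,\mathcal{D}_n$ of Definition \ref{defn: ASEP contours} that surround $w_i=1$. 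Controlling this deformation — equivalently, justifying the commutation of the $c\to\infty$ and $L\to\infty$ limits so that one may legitimately start from \eqref{eq: G nu integral forumla thm-cinf} — is the main obstacle; the remainder is the bookkeeping of the two elementary exponential limits above.
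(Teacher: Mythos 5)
Your proposal is correct and follows essentially the same route as the paper's proof: both take $c\to\infty$ first (trivializing the triangular partition function via Corollary \ref{cor:triangle-freeze}, which is exactly how \eqref{eq: G nu integral forumla thm-cinf} was obtained), then specialize $y_j=1$, $x_i=1-(1-q)t/(2L)$, compute the two elementary exponential limits on the prefactor and on the double product inside the integrand, rewrite $a=\alpha/(\alpha+q-1)$, and deform the contours $\mathcal{C}_i\mapsto\mathcal{D}_i$ as the poles at $x_j$ coalesce at $1$. The two analytic points you flag (interchanging $\lim_{L\to\infty}$ with the contour integrals, and the contour deformation toward the emerging essential singularity at $w_i=1$) are precisely the steps the paper asserts without detailed justification, so your treatment is, if anything, more candid about where the work lies.
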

\begin{proof}
Consider the integral formula \eqref{eq: G nu integral forumla thm} with contours $\{\mathcal{C}_1,\dots,\mathcal{C}_n\}$ taken to surround 1 as well as the points $x_1\dots,x_L$. The conditions of the contours from Definition \ref{defn: nested contours} also means that the contours must be nested whilst they do not intersect. 

In the limit $c\to\infty$, only the term $r=0$ in the sum over subset formula \eqref{eq:Zsubset} for $Z_L$ survives, see Corollary~\ref{cor:triangle-freeze}. Hence $Z_L$ trivializes and completely factorizes into factors of the form $1-h(x_i)$. 

Upon substitution in \eqref{eq: G nu integral forumla thm} and setting $x_i=1-(1-q)t/(2L)$, the limit $L\to\infty$ can be taken on the integrand in a straightforward manner. Note that in this limit we have written  $a=\alpha/(\alpha+q-1)$ for $\alpha+q\neq 1$. We have also calculated under the ASEP limit
\[\lim_{L\to\infty}\left.\prod_{j=1}^L \left[\frac{q w_i-x_j}{w_i-x_j}\frac{1-w_ix_j}{1-qw_ix_j}\right]\right|_{x_j=1-(1-q)t/(2L)} = \exp(\frac{(1-q)^2 w_i t}{(1-w_i)(1-qw_i)}).\]

In order to obtain the result of the theorem, the $L\to\infty$ limit must must also be simultaneously applied to the contours. This will deform the contours $\mathcal{C}_i\mapsto\mathcal{D}_i$ using the contours from Definitions \ref{defn: nested contours} and \ref{defn: ASEP contours}. This deformation occurs without crossing over any other singularities of the integrand, thus yielding the result.

\end{proof}

\section*{Compliance with ethical standards}

The authors are not aware of any potential conflict of interest.

\appendix
\section{Properties of Pfaffians}
A Pfaffian is taken over a $2m\times2m$-dimensional skew-symmetric matrix. It is defined as 
\begin{equation}
	\pf A = \frac{1}{2^m m!} \sum_{\sigma\in S_m} (-1)^\abs{\sigma} \prod_{i=1}^{m} a_{\sigma(2i-1),\sigma(2i)}.
\end{equation}
Importantly, the determinant of skew-symmetric matrix is the square of a polynomial in its entries. The Pfaffian can be identified as this polynomial. That is, so long as $A$ is an even-dimensional skew-symmetric matrix then
\begin{equation}
	\det A = \left(\pf A\right)^2.
\end{equation}
The following identity appears in \cite{stembridge_nonintersecting_1990}.
\begin{lm}
\label{lm:PFsum}
    Let $A$ and $B$ be skew-symmetric $m\times m$ matrices. Then we have 
\begin{align}
    \pf \left( A+ B\right) = 
    \sum_{r=0}^m(-1)^{r/2}\sum_{S\subseteq [m]\atop \abs{S}=r} (-1)^{\sum_i S_i} 
    \pf \left( A_S\right)
    \pf \left( B_{\comp{S}}\right),
    \label{eq:PFsum}
\end{align}
where $\comp{S}$ denotes the set which is the complement to the set $S$ w.r.t. $[m]$.
\end{lm}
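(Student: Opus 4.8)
The plan is to prove Lemma~\ref{lm:PFsum} using the representation of the Pfaffian inside the exterior (Grassmann) algebra, where the bilinear splitting of $A+B$ into its $A$- and $B$-parts becomes transparent. Since $\pf(C_S)=0$ whenever $\abs{S}$ is odd, only even $r$ contribute, so I may assume $m=2n$ throughout. Let $e_1,\dots,e_m$ be the standard basis of an $m$-dimensional space and, for any skew-symmetric matrix $C=(c_{ij})$, introduce the two-form
\begin{equation*}
\omega_C = \sum_{1\le i<j\le m} c_{ij}\, e_i\wedge e_j .
\end{equation*}
The starting point is the classical dictionary between wedge powers and Pfaffians of principal submatrices,
\begin{equation*}
\frac{1}{k!}\,\omega_C^{\wedge k} = \sum_{\substack{S\subseteq[m]\\ \abs{S}=2k}} \pf(C_S)\, e_S,
\qquad
e_S := e_{s_1}\wedge\cdots\wedge e_{s_{2k}}\quad (S=\{s_1<\cdots<s_{2k}\}),
\end{equation*}
which I would establish first by expanding the wedge power and matching each monomial with the signed matching-sum definition of the Pfaffian; its $k=n$, $S=[m]$ case is exactly $\tfrac{1}{n!}\omega_C^{\wedge n}=\pf(C)\,e_1\wedge\cdots\wedge e_m$.

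Next I would exploit that $\omega_A$ and $\omega_B$ are even-degree forms and hence commute in the exterior algebra, so that $\omega_{A+B}=\omega_A+\omega_B$ may be expanded by the ordinary binomial theorem:
\begin{equation*}
\frac{1}{n!}\,\omega_{A+B}^{\wedge n}
= \frac{1}{n!}\sum_{k=0}^{n}\binom{n}{k}\,\omega_A^{\wedge k}\wedge \omega_B^{\wedge(n-k)} .
\end{equation*}
Substituting the coefficient formula for both factors, the combinatorial prefactors collapse because $\binom{n}{k}k!(n-k)!=n!$, leaving
\begin{equation*}
\frac{1}{n!}\,\omega_{A+B}^{\wedge n}
= \sum_{k=0}^{n}\ \sum_{\substack{S,T\subseteq[m]\\ \abs{S}=2k,\ \abs{T}=2(n-k)}} \pf(A_S)\,\pf(B_T)\; e_S\wedge e_T .
\end{equation*}
Reading off the coefficient of $e_1\wedge\cdots\wedge e_m$ gives $\pf(A+B)$ on the left, while on the right only the pairs with $T=\comp{S}$ survive.

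The remaining step, and the only place needing genuine care, is the sign bookkeeping. For $S=\{s_1<\cdots<s_r\}$ one has $e_S\wedge e_{\comp{S}}=\epsilon(S)\,e_1\wedge\cdots\wedge e_m$, where $\epsilon(S)$ is the sign of the shuffle sorting $(S,\comp{S})$ into increasing order. A direct count of inversions gives $\epsilon(S)=(-1)^{\sum_i S_i - r(r+1)/2}$, and since $r$ is even one has $r(r+1)/2\equiv r/2 \pmod 2$, whence $\epsilon(S)=(-1)^{r/2}(-1)^{\sum_i S_i}$. These are precisely the two sign factors in \eqref{eq:PFsum}, so equating coefficients yields
\begin{equation*}
\pf(A+B) = \sum_{r=0}^{m}(-1)^{r/2}\sum_{\substack{S\subseteq[m]\\ \abs{S}=r}}(-1)^{\sum_i S_i}\,\pf(A_S)\,\pf(B_{\comp{S}}),
\end{equation*}
which is the claim. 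I expect the main obstacle to be exactly this sign identity; the algebraic skeleton is forced once the Grassmann dictionary is in place. As an alternative that avoids the exterior algebra, I could instead expand $\pf(A+B)$ directly over signed perfect matchings of $[m]$, colour each matched edge according to whether its weight $a_{ij}+b_{ij}$ contributes its $A$- or $B$-part, and regroup the matchings by the vertex set $S$ covered by the $A$-edges; the same shuffle sign $\epsilon(S)$ then arises from reordering the matching, reproducing \eqref{eq:PFsum}.
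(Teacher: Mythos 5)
Your proof is correct. Note that the paper itself gives no argument for this lemma at all: it simply cites Stembridge's work on nonintersecting paths and Pfaffians, so your write-up supplies the details that the paper delegates to a reference. Moreover, the route you take --- encoding a skew-symmetric matrix $C$ as the two-form $\omega_C$, using the dictionary $\tfrac{1}{k!}\omega_C^{\wedge k}=\sum_{\abs{S}=2k}\pf(C_S)\,e_S$, expanding $\omega_{A+B}^{\wedge n}$ by the binomial theorem (legitimate since even-degree forms commute), and extracting the coefficient of the top form --- is essentially the standard proof of this identity, and in substance the one found in Stembridge's paper itself, so there is no methodological divergence to speak of. The one delicate point, the shuffle sign, you have handled correctly: the inversion count gives $\epsilon(S)=(-1)^{\sum_i S_i - r(r+1)/2}$, and for even $r$ one has $r(r+1)/2\equiv r/2\pmod 2$, which reproduces exactly the factors $(-1)^{r/2}(-1)^{\sum_i S_i}$ in \eqref{eq:PFsum}. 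Two cosmetic remarks: first, your reduction to $m=2n$ deserves one extra clause --- when $m$ is odd the left-hand side also vanishes (a skew-symmetric matrix of odd order is singular), so the identity is trivially $0=0$ in that case; second, it would be worth stating explicitly that in each summand $e_S\wedge e_T$ with $\abs{S}+\abs{T}=m$, the term survives precisely when $T=\comp{S}$, since otherwise a repeated basis vector kills the wedge product. Neither of these is a gap, merely a matter of completeness.
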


\bibliography{Library}{}
\bibliographystyle{alphamod}

\end{document}